\def\nameit#1{\textrm{#1}~}
\def\thex{\nameit{Theorem}}
\def\prox{\nameit{Proposition}}
\def\corx{\nameit{Corollary}}
\def\lemx{\nameit{Lemma}}
\def\defx{\nameit{Definition}}
\def\remx{\nameit{Remark}}
\def\conx{\nameit{Construction}}
\def\dfn#1{{\itshape #1}}
\newcommand{\refs}[1]{\textup{(}\ref{#1}\textup{)}}
\tikzset{iso/.style={draw=none,every to/.append style={edge node={node [sloped, allow upside down, auto=false]{$\cong$}}}}}
\tikzset{proarrowS/.style={draw=none,every to/.append style={edge node={node [sloped, allow upside down, auto=false]{\raisebox{1.6pt}{\small$\shortmid$}}}}}}
\tikzset{proarrow/.style={proarrowS,preaction={draw,->}}}
\newcolumntype{F}{>{$}c<{\hspace{-0.9ex}$}}
\newcolumntype{:}{>{$}m{0.8ex}<{$}}
\newcolumntype{R}{>{$}r<{$}}
\newcolumntype{C}{>{$}c<{$}}
\newcolumntype{L}{>{$}l<{$}}
\newcolumntype{N}{@{}>{$}l<{$}}
\NewDocumentEnvironment{cd}{s O{6} O{6} b}{%
	\IfBooleanF{#1}{\begin{equation*}}\begin{tikzcd}[row sep=#2ex,column sep=#3ex,ampersand replacement=\&]
			#4
		\end{tikzcd}\IfBooleanF{#1}{\end{equation*}}\ignorespacesafterend}{}
\newenvironment{enumT}{\begin{enumerate}[label=$($\hspace{-0.1ex}\roman*\hspace{0.13ex}$)$]}{\end{enumerate}}
\newenvironment{fun}{\[\begin{tabular}{F:RCL}}{\end{tabular}\]\ignorespacesafterend}
\newenvironment{eqD}[1]{\begin{equation}\label{#1}}{\end{equation}\ignorespacesafterend}
\newenvironment{eqD*}{\begin{equation*}}{\end{equation*}\ignorespacesafterend}
\def\:{\colon}
\def\c{\circ}
\newcommand{\iso}{\cong}
\def\phi{\varphi}
\DeclareMathOperator{\ps}{ps}
\DeclareMathOperator{\nps}{ps}
\DeclareMathOperator{\lax}{lax}
\DeclareMathOperator{\nlax}{lax}
\DeclareMathOperator{\noplax}{opl}
\DeclareMathOperator{\oplax}{opl}
\newcommand{\sqm}{(-)^{\to}}
\DeclareFontFamily{OT1}{pzc}{}
\DeclareFontShape{OT1}{pzc}{m}{it}{<->s*[1.19]pzcmi7t}{}
\DeclareMathAlphabet{\mathpzc}{OT1}{pzc}{m}{it}
\newcommand{\catfont}[1]{\mathpzc{#1}\xspace}
\newcommand{\B}{\mathcal{B}}
\newcommand{\C}{\mathcal{C}}
\newcommand{\D}{\mathcal{D}}
\newcommand{\E}{\mathcal{E}}
\newcommand{\F}{\mathcal{F}}
\newcommand{\cL}{\mathcal{L}}
\newcommand{\cR}{\mathcal{R}}
\newcommand{\dB}{\mathbb{B}}
\newcommand{\dC}{\mathbb{C}}
\newcommand{\dD}{\mathbb{D}}
\newcommand{\dE}{\mathbb{E}}
\newcommand{\dL}{\mathbb{L}}
\newcommand{\dR}{\mathbb{R}}
\newcommand{\K}{\catfont{K}}
\renewcommand{\S}{\catfont{S}}
\newcommand{\T}{\catfont{T}}
\newcommand{\Set}{\catfont{Set}}
\newcommand{\Cat}{\catfont{Cat}}
\newcommand{\VCat}[1]{{#1}\mbox{-}\Cat}
\newcommand{\Fact}{\catfont{Fact}}
\newcommand{\Factlax}{\catfont{Fact}_{\lax}}
\newcommand{\Factcolax}{\catfont{Fact}_{\operatorname{colax}}}
\newcommand{\Factstr}{\catfont{Fact}_{\operatorname{str}}}
\newcommand{\Factps}{\catfont{Fact}_{\operatorname{ps}}}
\newcommand{\Fib}{\catfont{Fib}}
\newcommand{\DblCat}{\catfont{DblCat}}
\newcommand{\DblCatnps}{\DblCat_{\nps}}
\newcommand{\DblCatnlax}{\DblCat_{\nlax}}
\newcommand{\DblCatnoplax}{\DblCat_{\noplax}}
\newcommand{\DblCatnpsnoplax}{\DblCat_{\nps/\noplax}}
\newcommand{\Prof}{\mathbb{P}\mathrm{rof}}
\newcommand{\Span}{\mathbb{S}\mathrm{pan}}
\newcommand{\Cospan}{\mathbb{C}\mathrm{ospan}}
\newcommand{\Rel}{\mathbb{R}\mathrm{el}}
\newcommand{\dEl}[1]{\mathbb{E}\mathrm{l}(#1)}
\newcommand{\dpath}{\mathbb{P}\mathrm{ath}}
\NewDocumentCommand{\Catint}{t+ t' O{n} m}{
% Ps s lax-pseudo-oplax:l-p-o cat
\catfont{\IfBooleanT{#1}{Ps}Cat}
\ifx#3l{_{\lax}}\else{\ifx#3p{_{\ps}}\else{\ifx#3o{_{\oplax}}\fi}\fi}\fi\IfBooleanT{#2}{^{\operatorname{s}}}(#4)
}
\NewDocumentCommand{\Alg}{t+ O{n} m}{
% Ps lax-pseudo-oplax:l-p-o monad
#3\mbox{-}\catfont{\IfBooleanT{#1}{Ps}Alg}\ifx#2l{_{\lax}}\else{\ifx#2p{_{\ps}}\else{\ifx#2o{_{\oplax}}\fi}\fi}\fi}
\newcommand{\src}{\operatorname{src}}
\newcommand{\tgt}{\operatorname{tgt}}
\newcommand{\im}{\operatorname{Im}}
\newcommand{\HomC}[3]{{#1}\left({#2},\hspace{0.1ex}{#3}\right)}
\newcommand{\op}{^{\operatorname{op}}}
\newcommand{\tight}{_\tau}
\newcommand{\loose}{_\lambda}
\newcommand{\id}[1]{\operatorname{id}_{#1}}
\newcommand{\Id}[1]{\operatorname{Id}_{#1}}
\newcommand{\dofs}[2]{(({#1}_0, {#2}_0),({#1}_1, {#2}_1))}
\newcommand{\too}{\longrightarrow}
\newcommand{\mto}{\mapsto}
\newcommand{\ar}[2][]{\xrightarrow[#1]{#2}}
\newcommand{\aR}[2][]{%
	\ext@arrow 0055{\Rightarrowfill@}{#1}{#2}}
\def\xmapstofill@{\arrowfill@{\mapstochar\relbar}\relbar\rightarrow}
\newcommand{\am}[2][]{%
	\ext@arrow 0395\xmapstofill@{#1}{#2}}
\def\aitofill@{\arrowfill@{\lhook\joinrel\relbar}\relbar\rightarrow}
\newcommand{\aito}[2][]{%
	\ext@arrow 3095\aitofill@{#1}{#2}}
\def\atoefill@{\arrowfill@\relbar\relbar\twoheadrightarrow}
\newcommand{\atoe}[2][]{%
	\ext@arrow 3095\atoefill@{#1}{#2}}
\def\xprorightarrowfill@{\arrowfill@{\relbar\joinrel\raisebox{0.6pt}{\small$\shortmid$}\joinrel{\relbar}}\relbar\rightarrow}
\newcommand{\aproarrow}[2][]{%
	\ext@arrow 0099\xprorightarrowfill@{#1}{#2}}
\newcommand{\proarrow}{\aproarrow{}}
\newcommand{\PB}[1]{\arrow[#1,phantom,"\scalebox{1.6}{\color{black}$\lrcorner$}",very near start]}
\newcommand{\dom}{\operatorname{dom}}
\newcommand{\cod}{\operatorname{cod}}
\NewDocumentCommand{\sq}{s O{n} O{6} O{6} O{} O{2.7} O{2.2} O{0.5} O{n}}{%
	% N PB-lax-oplax-dbcell:p-l-o-d 6 6 twocell sh< sh> pos equal:l-d-r Square
	\def\foosq##1##2##3##4##5##6##7##8{%
		\IfBooleanTF{#1}{\begin{cd}*}{\begin{cd}}[#3][#4]
				{##1}\ifx#2p{\PB{rd}}\fi\arrow[r,"{##5}"]\ifx#9l{\arrow[d,equal,"{##6}"']}\else{\ifx#2d{\arrow[d,"{##6}\hspace{0.4ex}"',proarrow]}\else{\arrow[d,"{##6}"']}\fi}\fi\&{##2}\ifx#9r{\arrow[d,equal,"{##7}"]}\else{\ifx#2d{\arrow[d,"\hspace{0.4ex}{##7}",proarrow]}\else{\arrow[d,"{##7}"]}\fi}\fi\ifx#2l{\arrow[ld,Rightarrow,shorten <=#6ex,shorten >=#7ex,"{#5}"{pos=#8}]}\fi\\
				{##3}\ifx#9d{\arrow[r,equal,"{##8}"']}\else{\arrow[r,"{##8}"']}\fi\ifx#2o{\arrow[ur,Rightarrow,shorten <=#6ex,shorten >=#7ex,"{#5}"{pos=#8}]}\fi\ifx#2d{\arrow[ur,phantom,"{#5}"{pos=#8}]}\fi\&{##4}
		\end{cd}}%
		\foosq}
\NewDocumentCommand{\sqleft}{s O{n} O{6} O{6} O{} O{2.7} O{2.2} O{0.5} O{n}}{%
	% N PB-lax-oplax-dbcell:p-l-o-d 6 6 twocell sh< sh> pos equal:l-d-r Square
	\def\foosqleft##1##2##3##4##5##6##7##8{%
		\IfBooleanTF{#1}{\begin{cd}*}{\begin{cd}}[#3][#4]
				{##1}\ifx#2p{\PB{rd}}\fi\arrow[r,"{##5}"]\ifx#9l{\arrow[d,equal,"{##6}"']}\else{\ifx#2d{\arrow[d,twoheadrightarrow,"{##6}\hspace{0.4ex}"',proarrow]}\else{\arrow[d,twoheadrightarrow,"{##6}"']}\fi}\fi\&{##2}\ifx#9r{\arrow[d,equal,"{##7}"]}\else{\ifx#2d{\arrow[d,twoheadrightarrow,"\hspace{0.4ex}{##7}",proarrow]}\else{\arrow[d,twoheadrightarrow,"{##7}"]}\fi}\fi\ifx#2l{\arrow[ld,Rightarrow,shorten <=#6ex,shorten >=#7ex,"{#5}"{pos=#8}]}\fi\\
				{##3}\ifx#9d{\arrow[r,equal,"{##8}"']}\else{\arrow[r,"{##8}"']}\fi\ifx#2o{\arrow[ur,Rightarrow,shorten <=#6ex,shorten >=#7ex,"{#5}"{pos=#8}]}\fi\ifx#2d{\arrow[ur,phantom,"{#5}"{pos=#8}]}\fi\&{##4}
		\end{cd}}%
		\foosqleft}
\NewDocumentCommand{\sqright}{s O{n} O{6} O{6} O{} O{2.7} O{2.2} O{0.5} O{n}}{%
	% N PB-lax-oplax-dbcell:p-l-o-d 6 6 twocell sh< sh> pos equal:l-d-r Square
	\def\foosqright##1##2##3##4##5##6##7##8{%
		\IfBooleanTF{#1}{\begin{cd}*}{\begin{cd}}[#3][#4]
				{##1}\ifx#2p{\PB{rd}}\fi\arrow[r,"{##5}"]\ifx#9l{\arrow[d,equal,"{##6}"']}\else{\ifx#2d{\arrow[d,hookrightarrow,"{##6}\hspace{0.4ex}"',proarrow]}\else{\arrow[d,hookrightarrow,"{##6}"']}\fi}\fi\&{##2}\ifx#9r{\arrow[d,equal,"{##7}"]}\else{\ifx#2d{\arrow[d,hookrightarrow,"\hspace{0.4ex}{##7}",proarrow]}\else{\arrow[d,hookrightarrow,"{##7}"]}\fi}\fi\ifx#2l{\arrow[ld,Rightarrow,shorten <=#6ex,shorten >=#7ex,"{#5}"{pos=#8}]}\fi\\
				{##3}\ifx#9d{\arrow[r,equal,"{##8}"']}\else{\arrow[r,"{##8}"']}\fi\ifx#2o{\arrow[ur,Rightarrow,shorten <=#6ex,shorten >=#7ex,"{#5}"{pos=#8}]}\fi\ifx#2d{\arrow[ur,phantom,"{#5}"{pos=#8}]}\fi\&{##4}
		\end{cd}}%
		\foosqright}
\NewDocumentCommand{\fib}{O{n} O{2.3} mmm}{%
	% mode:hook;equal;Right;op:i-e-R-o dim top mid bot
	\begin{cd}*[#2][5]
		{#3}\ifx#1n{\arrow[d,"{\,\scaleu{#4}}"]}\else{\ifx#1i{\arrow[d,hookrightarrow,"{\,\scaleu{#4}}"]}\else{\ifx#1e{\arrow[d,equal,"{\,\scaleu{#4}}"]}\else{\ifx#1R{\arrow[d,Rightarrow,"{\,\scaleu{#4}}"]}\fi}\fi}\fi}\fi\\
		{#5}\ifx#1o{\arrow[u,"{\,\scaleu{#4}}"']}\fi
	\end{cd}\xspace
}
\theoremstyle{thmstyleone}%
\newtheorem{theorem}{Theorem}[section]
\newtheorem{proposition}[theorem]{Proposition}
\newtheorem{corollary}[theorem]{Corollary}
\newtheorem{lemma}[theorem]{Lemma}
\newtheorem{thm}[theorem]{Theorem}
\newtheorem*{thm*}{Theorem}
\theoremstyle{thmstyletwo}%
\theoremstyle{thmstylethree}%
\newtheorem{definition}[theorem]{Definition}
\newtheorem{remark}[theorem]{Remark}
\newtheorem{rmk}[theorem]{Remark}
\newtheorem{example}[theorem]{Example}
\newtheorem{construction}[theorem]{Construction}
\newtheorem{notation}[theorem]{Notation}
\newtheorem{observation}[theorem]{Observation}
\begin{document}

\title[Double Orthogonal Factorization Systems]{Double Orthogonal Factorization Systems\\{
\large
 {\it In honour of Robert Par\'e on the occasion of his 80th birthday.}}}

%%=============================================================%%
%% GivenName	-> \fnm{Joergen W.}
%% Particle	-> \spfx{van der} -> surname prefix
%% FamilyName	-> \sur{Ploeg}
%% Suffix	-> \sfx{IV}
%% \author*[1,2]{\fnm{Joergen W.} \spfx{van der} \sur{Ploeg} 
%%  \sfx{IV}}\email{iauthor@gmail.com}
%%=============================================================%%

\author[1]{\fnm{C. B.} \sur{Aberl\'e}}\email{caberle@andrew.cmu.edu}

\author[2]{\fnm{Elena} \sur{Caviglia}}\email{elena.caviglia@outlook.com}

\author*[3]{\fnm{Matthew} \sur{Kukla}}\email{matthew.kukla@gwmail.gwu.edu}

\author[4]{\fnm{Ruben} \sur{Maldonado}}\email{rubasmh@matmor.unam.mx}

\author[5]{\fnm{Luca} \sur{Mesiti}}\email{luca.mesiti@outlook.com}

\author[6]{\fnm{Dorette} \sur{Pronk}}\email{dorette.pronk@dal.ca}

\author[7]{\fnm{Tanjona} \sur{Ralaivaosaona}}\email{t.f.r.ralaivaosaona@liacs.leidenuniv.nl}

\affil[1]{\orgname{Carnegie-Mellon University}, \orgaddress{ \city{Pittsburgh,} \country{USA}}}

\affil[2]{\orgname{National Institute for Theoretical and Computational Sciences}, \orgaddress{\country{South Africa}}}

\affil[3]{\orgname{The George Washington University}, \orgaddress{\city{District of Columbia,} \country{USA}}}

\affil[4]{ \orgname{Universidad Nacional Aut\'onoma de M\'exico}, \orgaddress{\country{M\'exico}}}

\affil[5]{\orgname{Stellenbosch University, South Africa}, \orgaddress{\country{South Africa}}}

\affil[6]{ \orgname{Dalhousie University}, \orgaddress{\city{Halifax}, \country{Canada}}}

\affil[7]{ \orgname{Leiden Institute of Advanced Computer Science}, \orgaddress{\city{Leiden University}, \country{The Netherlands}}}

%%==================================%%
%% Sample for unstructured abstract %%
%%==================================%%

\abstract{We define strict and lax orthogonal factorization systems on double categories. These consist of an orthogonal factorization system on arrows and one on double cells that are compatible with each other. Our definitions are motivated by several explicit examples, including factorization systems on double categories of spans, relations and bimodules. We then prove monadicity results for orthogonal factorization systems on double categories in order to justify our definitions. For fibrant double categories we discuss the structure of the double orthogonal factorization systems that have a given orthogonal factorization system on the arrows in common. Finally, we study the interaction of orthogonal factorization systems on double categories with double fibrations.}

\keywords{double categories, double fibrations, factorization systems, monads}

%%\pacs[JEL Classification]{D8, H51}

\pacs[MSC Classification]{18N10, 18A32, 18B10, 18C15}

\maketitle

\section{Introduction}\label{sec1}
Orthogonal factorization systems (OFS) for ordinary categories were introduced by Isbell \citep{isbell1957}, Freyd and Kelly \citep{freyd1972categories} and Bousfield \citep{bousfield1977constructions}. An orthogonal factorization system provides a category with a notion of \emph{images} along its arrows, generalizing the usual image factorization on categories such as $\mathbf{Set}$. For instance, various factorization systems on the category of categories give rise to the following images for functors: the replete images, the essential image and the full image.
This concept of image is integral to the foundational concepts of homological algebra.
One of the most well-known applications of factorization systems (both of orthogonal ones and of weak algebraic ones) is in homotopy theory, as part of Quillen model structures.  More generally, orthogonal factorization  systems reveal important properties of the structure of a category, and may assist in defining a functor on a category. The orthogonal factorization system can be viewed as a type of distributive law for the interaction between the two classes of arrows: if one can compose the arrows in each class and has a way of re-factorizing a composition of an arrow in the left class after an arrow in the right class, one has now a composition law for the whole category.

A related aspect of orthogonal factorization systems is their interaction with \emph{fibrations} between categories. Each fibration gives rise to what has been called  a Cartesian factorization system on its domain: the left class consists of the vertical arrows in the fibration (and satisfies the 3-for-2 property) and the right class consists of the Cartesian arrows. This result can be generalized to show that fibrations have the property that one can lift any factorization system in the base to a factorization system in the total category in a canonical way.

In this paper we will introduce a notion of orthogonal factorization system for double categories that generalizes that for ordinary categories. We will call these {\em double orthogonal factorization systems} (DOFS). We will show how a number of well-known orthogonal factorization systems generalizes to double orthogonal factorization systems.

With the recent introduction of the notion of a fibration between double categories, we believe it is time to develop an analogous notion of orthogonal factorization system for double categories that interacts well with the double fibrations. 

Motivated by examples of factorization systems on the double categories of quartets and spans in an arbitrary category $\C$ namely $\mathbb{S}\mathrm{q} (\mathcal{C})$ and $\Span(\C)$ respectively, as well as the double category $\Rel(\C)$ of relations in a regular category $\C$, we see how the orthogonal factorization systems on $\C$ can be lifted to factorization systems on the arrows and on the double cells of the double categories, in a way that makes the two factorization systems suitably compatible. 

This leads us to our internal definition of a double orthogonal factorization system (DOFS): a pseudo category object in a suitable category of categories equipped with an orthogonal factorization system and a choice of factorizations. We require that the source, target and unit functors of such category objects preserve the factorizations, but the composition functor in the category diagram can be more `loose'. If the composition functor preserves both classes, we call this a DOFS (our default is `pseudo'), if it preserves the factorizations we will call it a strict DOFS, if it preserves only the right class we call it a lax DOFS, and if it preserves only the left class we call it a colax DOFS.

Note the similarity with the notion of double fibration \citep{df}. For double fibrations one takes internal pseudo categories in the category of fibrations with a chosen cleavage but whose arrows only preserve cartesian arrows; however, one requires that the source, target and unit functors in a double fibration diagram preserve the chosen cleavages. The reason for this restriction is the same as it is for us: we require the existence of the pullback of the source along the target as an object in our category. 

Classically, a good way to capture and understand orthogonal factorization systems and morphisms between them is in terms of Eilenberg-Moore algebras for a particular 2-monad. The first result in this direction was published by Coppey \citep{coppey-thesis} showing that strict factorization systems can be viewed as strict algebras for the squaring 2-monad. The full result characterizing orthogonal factorization systems with a chosen factorization as the pseudo-algebras for this 2-monad was established by Korostenski and Tholen in \citep{kt93}. This result leads us to four natural notions of morphism between categories with an OFS: the strict algebra maps preserve the chosen factorizations, the pseudo maps preserve both classes of morphisms, the lax maps preserve the right class and the oplax maps preserve the left class. This provides us with four 2-categories in which to take internal pseudo-categories in order to define a corresponding notion of DOFS.
We illustrate this with several examples in each category of algebras.

However, there is a different way to approach the generalization from OFSs to DOFSs. In Theorem \ref{teorgenmonadicity} we establish a monadicity result that intuitively say that suitable 2-monads on a 2-category give rise to a 2-monad on the 2-category of internal pseudo-categories with internal (pseudo or oplax) functors as arrows, and  levelwise (also called, internal) transformations as 2-cells, with the property that the 2-category of  algebras for the 2-monad on the internal categories is isomorphic to the 2-category of internal categories in the algebras of the original 2-monad.
To be precise,

\begin{theorem}[{Theorem \ref{teorgenmonadicity}}] 
    Let $\K$ be a 2-category and let $(T\: \K\to \K, \eta\:\Id{}\aR{}T, \mu\:T^2\aR{} T)$ be a 2-monad whose underlying 2-functor preserves pullbacks. Then $T$ induces 2-monads
    $$\overline{T}_{\oplax}\:\Catint+[o]{\K}\to \Catint+[o]{\K}$$
    $$\overline{T}_{\ps}\:\Catint+[p]{\K}\to \Catint+[p]{\K}$$
    such that
    $$\Alg+[l]{\overline{T}_{\oplax}}\iso \Catint+'[o]{\Alg+[l]{T}}$$
    $$\Alg+[p]{\overline{T}_{\ps}}\iso \Catint+'[p]{\Alg+[p]{T}}$$
    where $\Catint+'[o]{\Alg+[l]{T}}$ (respectively, $\Catint+'[p]{\Alg+[p]{T}}$) is the full sub-2-category of $\Catint+[o]{\Alg+[l]{T}}$ (respectively, $\Catint+[p]{\Alg+[p]{T}}$) on those pseudo-categories whose source, target and identity assignment are strict morphisms between normal pseudo-algebras.
\end{theorem}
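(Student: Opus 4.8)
The plan is to obtain $\overline{T}_{\oplax}$ and $\overline{T}_{\ps}$ by applying $T$ levelwise to internal pseudo-categories and their functors and transformations, and then to identify the algebras by an unravelling in which the conditions carving out $\Catint+'[o]{\Alg+[l]{T}}$ and $\Catint+'[p]{\Alg+[p]{T}}$ turn up precisely as the constraints for ``being an internal oplax (resp.\ pseudo) functor out of a $T$-image''. For the underlying 2-functors: given an internal pseudo-category $\dC=(C_0,C_1,s,t,i,c)$ in $\K$ with its coherence constraints, the object of composable pairs is the pullback $C_1\times_{C_0}C_1$ of $s$ along $t$; since $T$ preserves pullbacks, $T(C_1\times_{C_0}C_1)$ is canonically the pullback $TC_1\times_{TC_0}TC_1$, so $Ts,Tt,Ti,Tc$ together with the $T$-images of the constraints assemble into an internal pseudo-category $\overline{T}\dC$. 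Applying $T$ to all the data of an internal oplax (resp.\ pseudo) functor and of an internal transformation --- using again preservation of the relevant pullbacks and that a 2-functor preserves composites, identities and invertible 2-cells --- yields a 2-functor $\overline{T}_{\oplax}\colon\Catint+[o]{\K}\to\Catint+[o]{\K}$ together with its restriction $\overline{T}_{\ps}\colon\Catint+[p]{\K}\to\Catint+[p]{\K}$.

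I then lift the monad structure. Take the components of the lifted unit $\overline{\eta}_{\dC}$ and multiplication $\overline{\mu}_{\dC}$ to be $(\eta_{C_0},\eta_{C_1})$ and $(\mu_{C_0},\mu_{C_1})$; 2-naturality of $\eta$ and $\mu$ with respect to $s,t,i,c$, and with respect to the coherence constraints of $\dC$, shows these are strict internal functors and assemble into internal transformations $\overline{\eta}\colon\Id{}\aR{}\overline{T}$ and $\overline{\mu}\colon\overline{T}^2\aR{}\overline{T}$, while 2-naturality of $\eta,\mu$ at an arbitrary oplax or pseudo internal functor makes these 2-natural. The monad axioms and the 2-monad coherence for $(\overline{T}_{\oplax},\overline{\eta},\overline{\mu})$ and $(\overline{T}_{\ps},\overline{\eta},\overline{\mu})$ then reduce to those of $(T,\eta,\mu)$, checked separately in the two levels $C_0$ and $C_1$.

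Finally I identify the algebras. Let $(\dC,a)$ be a pseudo $\overline{T}_{\oplax}$-algebra, so $a\colon\overline{T}_{\oplax}\dC\to\dC$ is an internal oplax functor equipped with invertible coherence 2-cells for the unit and multiplication laws. Read levelwise, the components $a_0\colon TC_0\to C_0$ and $a_1\colon TC_1\to C_1$, together with the levelwise components of those coherence 2-cells, make $C_0$ and $C_1$ into normal pseudo $T$-algebras. Because an internal oplax functor commutes strictly with source, target and unit (the comparison 2-cell being carried by the composition alone), $s,t,i$ become strict $T$-algebra morphisms --- which is exactly what is needed for the pullback $C_1\times_{C_0}C_1$ to exist in $\Alg+[l]{T}$ and be computed as in $\K$ --- whereas the composition comparison of $a$ equips $c$ with a lax $T$-algebra morphism structure, and the oplax-functor coherence axioms of $a$ say precisely that the coherence constraints of $\dC$ are algebra 2-cells. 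Thus $\dC$ becomes an internal pseudo-category in $\Alg+[l]{T}$ lying in $\Catint+'[o]{\Alg+[l]{T}}$. Conversely, from an object of $\Catint+'[o]{\Alg+[l]{T}}$ one reads off a pseudo $\overline{T}_{\oplax}$-algebra: the algebra actions on $C_0$ and $C_1$ (and the induced one on $C_1\times_{C_0}C_1$, which exists precisely because $s,t$ are strict) organize, using strictness of $s,t,i$, into an internal oplax functor $a$, and normality together with the pseudo $T$-algebra coherences supplies the coherence 2-cells and the pseudoalgebra axioms. These two assignments are mutually inverse; the same levelwise analysis, applied to lax algebra morphisms (which unwind to internal oplax functors whose components are lax $T$-algebra morphisms) and to algebra 2-cells, upgrades them to the isomorphism $\Alg+[l]{\overline{T}_{\oplax}}\iso\Catint+'[o]{\Alg+[l]{T}}$. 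The case of $\overline{T}_{\ps}$, giving $\Alg+[p]{\overline{T}_{\ps}}\iso\Catint+'[p]{\Alg+[p]{T}}$, is entirely parallel, with pseudo internal functors in place of oplax ones.

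The bulk of the argument is a long but mechanical string of levelwise verifications, made painless by $T$ being a pullback-preserving 2-functor. The step demanding real care is the last one: matching on the nose the coherence data of a $\overline{T}$-pseudoalgebra with that of an internal pseudo-category in $T$-pseudoalgebras, and confirming that the defining conditions of $\Catint+'[o]{\Alg+[l]{T}}$ --- strictness of $s,t,i$ and normality of $C_0,C_1$ --- cut out exactly the image of the construction. In particular, strictness of $s$ and $t$ (and, for the unit constraints, of $i$) cannot be dropped: without it the pullback of the source along the target need not exist in the category of algebras, which is the 2-monadic counterpart of the restriction imposed on the structure morphisms of a double fibration.
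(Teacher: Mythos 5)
Your proposal is correct and follows essentially the same route as the paper's proof: lift $T$ levelwise using pullback preservation, take $\overline{\eta}$ and $\overline{\mu}$ to be the levelwise strict internal functors supplied by 2-naturality, and then unravel a normal pseudo-algebra levelwise into an internal pseudo-category in $\Alg+[l]{T}$, using strictness of $\src$, $\tgt$, $i$ to guarantee the pullback of algebras (the paper isolates this as Lemma \ref{lemmapullbacksofalgebras}). The identification of morphisms and 2-cells and the restriction to the pseudo case also match the paper's argument.
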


The intuitive idea behind this theorem is that, interestingly, objects, arrows and 2-cells on the two sides of the isomorphisms are presented by the same data, grouped with each other in two different ways. When applied to the 2-category of categories, this general monadicity theorem allows us to view our various flavours of double categories with a DOFS as pseudo algebras for 2-monads defined by sending a double category $\dD$ to its cotensor with a single arrow double category, $\dD^\to$.

This lets us establish the correct notions of arrows between double categories with a DOFS.
The natural notion of 2-cell coming from the algebra perspective is that of an internal, or levelwise, transformation. These are the transformations with components that are arrows in the strict direction of the double category. We simply take all such transformations between the appropriate (lax, colax or pseudo) morphisms. However, we can also add transformations in the proarrow direction: these correspond to suitable functors from the object category of the domain to the arrow category of the codomain, in a manner compatible with the OFS structures on the underlying categories.

We end the paper with a large variety of examples. We show how extensions and restrictions in fibrant double categories allow us to lift each orthogonal factorizatiom system on its category of arrows to two double ortogonal factorization systems. We also show that these are initial and terminal among all possible liftings, and illustrate this in the double categories of spans, relations and profunctors. We discuss the relationship between double orthogonal factorization systems and double fibrations and show how a double factorization system on the base can be lifted to one on the total double category.  In particular for the double category of spans we have four distinct double orthogonal factorization systems extending the epi-mon factorization system on sets.

\subsection*{Outline of the paper}

Section \ref{sectioncategoriesoffs} recalls the definition of orthogonal factorization systems for ordinary categories, as well as their monadicity result. We then apply such monadicity result to discuss strict, pseudo, lax and oplax arrows and 2-cells between categories equipped with an orthogonal factorization systems. This provides us with various 2-categories of categories with an OFS. We also establish a useful result on pullbacks of categories with an OFS.

In Section \ref{sectionDOFS}, after presenting our motivating examples (the double categories of quartets, relations and spans), we give the internal definition of a double orthogonal factorization system (as a suitable internal category in one of the 2-categories of categories with an OFS). We then present an equivalent characterization in terms of cells, and conclude the section with examples related to the double category of quintets.

Section \ref{sectionmonadicity} establishes the monadicity of double categories equipped with a double orthogonal factorization systems as algebras for an $\F$-monad. This sheds light on the various flavours (strict, pseudo, lax and oplax) of double orthogonal factorization systems, naturally forming various 2-categories of DOFS. However, note that in the larger world of double categories there are more arrows and 2-cells available, placing all of these 2-categories inside a larger double category.

In Section \ref{appsandex} we first prove some general results about DOFS on fibrant double categories. We then explore a list of examples (modules, profunctors, quantale-valued relations, and spans) of fibrant double categories with multiple DOFS lifting a common OFS on the arrows.

Section \ref{doublefibs} discusses the lifting of double orthogonal factorization systems along double fibrations. We show many examples, including explicit descriptions of (pseudo-vertical, cartesian) factorization systems and the way they interact with the double factorization systems introduced in previous sections.

\subsection*{Notation}

Given a 2-category $\K$, we will write $\Catint+[p]{\K}$ (respectively, $\Catint+[l]{\K}$, $\Catint+[o]{\K}$) for the 2-category of normal (i.e, unitary) pseudo-categories internal to $\K$, with normal (i.e, unitary) internal pseudo (respectively, normal lax, normal oplax) functors and internal, also called levelwise, natural transformations; we will review these concepts in detail in  \defx\ref{recallinternalcats}.

Given a monad $T\colon\C\to \C$ on a category $\C$, we will write $\Alg{T}$ for the category of Eilenberg-Moore $T$-algebras and algebra morphisms.

Given a 2-monad $T\colon\K\to \K$ on a 2-category $\K$, we will denote by $\Alg+[s]{T}$ (respectively, $\Alg+[p]{T}$, $\Alg+[l]{T}$) the 2-category of normal pseudo-algebras with strict (respectively, pseudo, lax) morphisms and 2-cells (i.e, $T$-transformations).

\section{Categories of factorization systems}\label{sectioncategoriesoffs}

In this section, we first recall ordinary orthogonal factorization systems and their monadicity over the 2-category $\Cat$ of categories. 
We then extract from this monadicity result three flavours of morphisms between categories with an orthogonal factorization system. 
Finally, we study the corresponding 2-categories of categories equipped with an orthogonal factorization system.

\begin{definition}\label{defnofs}
    Let $\catfont{C}$ be a category.  An \dfn{orthogonal factorization system} on $\catfont{C}$ consists of two classes of morphisms 
    $\catfont{L}$ and  $\catfont{R}$ such that:
    \begin{itemize}
        \item The classes $\catfont{L}$ and $\catfont{R}$ are closed under composition, and contain all
        isomorphisms.
        \item Every morphism $f\colon A \to B$ in $\catfont{C}$ can be written as $f=r_f \c \ell_f$ where $r_f \in \catfont{R}$ and $ \ell_f \in \catfont{L}$, giving commutativity of the following diagram:
        \[\begin{tikzcd}
	A && B \\
	& {\text{Im}(f)}
	\arrow["f", from=1-1, to=1-3]
	\arrow["\ell_f"', from=1-1, to=2-2]
	\arrow["r_f"', from=2-2, to=1-3]
\end{tikzcd}\]
        \item For any commutative square
        \[\begin{tikzcd}
	\bullet & \bullet \\
	\bullet & \bullet
	\arrow["f", from=1-1, to=1-2]
	\arrow["u"', from=1-1, to=2-1]
	\arrow["v", from=1-2, to=2-2]
	\arrow["\exists ! \hspace{0.1ex} e"', dashed, from=2-1, to=1-2]
	\arrow["g"', from=2-1, to=2-2]
\end{tikzcd}\]
where $u \in \catfont{L}, v \in \catfont{R}$, there exists a unique $e$ such that $g=v\c e$ and $f=e\c u$.
    \end{itemize}
\end{definition}

The target object of $\ell_f$ (equivalently, the source of $r_f$) is often called the \emph{image} of $f$, with respect to $(\cL,\cR)$.
Note that the image is unique up to unique isomorphism. When we choose a specific factorization for each arrow, the object $\im(f)$ is uniquely determined.

\begin{definition}
    A \emph{functorial factorization system} on $\mathcal{C}$ is an assignment, to each morphism $f \in \text{Mor}(\mathcal{C})$, 
    of a pair of morphisms $\ell_f, r_f$ such that $f = r_f \circ \ell_f$, that extends to a functor 
    $Q\colon\mathcal{C}^{\rightarrow} \to \mathcal{C}$ (where $\mathcal{C}^{\rightarrow}$ is the arrow category of $\mathcal{C}$), 
    mapping every $f$ to its image $Q(f)$.

\[\begin{tikzcd}[ampersand replacement=\&]
	\bullet \& {Q(f)} \& \bullet \\
	\bullet \& {Q(g)} \& \bullet
	\arrow["{\ell_f}", from=1-1, to=1-2]
	\arrow["f", curve={height=-24pt}, from=1-1, to=1-3]
	\arrow["u"', from=1-1, to=2-1]
	\arrow["{r_f}", from=1-2, to=1-3]
	\arrow["\exists !"',"{Q(u,v)}", dashed, from=1-2, to=2-2]
	\arrow["v", from=1-3, to=2-3]
	\arrow["{\ell_g}"', from=2-1, to=2-2]
	\arrow["g"', curve={height=24pt}, from=2-1, to=2-3]
	\arrow["{r_g}"', from=2-2, to=2-3]
\end{tikzcd}\]

Note that every OFS with a choice of a factorization for each arrow is a functorial factorization system.
\end{definition}

\begin{rmk}\label{monadic-dfn}
 In this paper, when we say that a category has an OFS, we will always mean that it is also equipped with a choice of factorizations $f=r_f\circ \ell_f$ for every arrow $f$ in the category. (Note that this is analogous to choosing a cleavage for a fibration.) We will require that for each identity arrow $\id{A}$ we have $\ell_{\id{A}}=\id{A}=r_{\id{A}}$.
\end{rmk}

In \citep{kt93}, Korostenski and Tholen proved a 2-monadicity result for categories with an orthogonal factorization system. The 2-monad they considered is the squaring 2-monad $\sqm$ on $\Cat$, sending any category to its arrow category (also known as its category of commutative squares). More precisely, they proved the following.

\begin{theorem}[\hspace{1sp}\citep{kt93}]\label{theorkorostenskitholen}
    Orthogonal factorization systems, with a choice of factorizations for the arrows as in Remark \ref{monadic-dfn}, are the normal pseudo (Eilenberg-Moore) algebras for the squaring 2-monad $\sqm\:\Cat\to \Cat$.
\end{theorem}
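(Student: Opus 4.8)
The plan is to reconstruct the argument of \citep{kt93}, producing mutually inverse passages between orthogonal factorization systems with chosen factorizations and normal pseudo-algebras for $\sqm$. First I would fix the $2$-monad data: the unit $\eta_{\C}\:\C\to\C^{\to}$ sends an object $A$ to $\id{A}$ and an arrow $f$ to the commuting square with both edges equal to $f$, while the multiplication $\mu_{\C}\:(\C^{\to})^{\to}\to\C^{\to}$ sends a commuting square (an object of $(\C^{\to})^{\to}$) to its diagonal arrow.

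From an OFS $(\cL,\cR)$ with chosen factorizations $f=r_f\c\ell_f$ I would build an algebra structure $a\:\C^{\to}\to\C$ by $a(f)=\im(f)$ on objects and, on a morphism $(u,v)\:f\to g$ of $\C^{\to}$, by letting $a(u,v)\:\im(f)\to\im(g)$ be the unique diagonal filler of the commuting square with sides $\ell_f\in\cL$, $r_g\in\cR$, $\ell_g\c u$ and $v\c r_f$. Functoriality of $a$ is forced by uniqueness of fillers, and the convention $\ell_{\id{A}}=\id{A}=r_{\id{A}}$ of Remark \ref{monadic-dfn} gives $a\c\eta_{\C}=\iid_{\C}$, so the unit constraint is an identity (normality). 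The only non-formal step is the multiplication constraint: for a square $(p,q)\:f_0\to f_1$ I would verify that its diagonal $d$ satisfies $d=r_{f_1}\c a(p,q)\c\ell_{f_0}$, so that combining the chosen factorization of $a(p,q)$ with $\ell_{f_0}$ and $r_{f_1}$ produces an $(\cL,\cR)$-factorization of $d$; comparing it with the chosen factorization of $d$ gives a canonical isomorphism $\im(a(p,q))\iso\im(d)$, and these assemble into the invertible $2$-cell $a\c a^{\to}\iso a\c\mu_{\C}$. The two pseudo-algebra coherence axioms then follow from uniqueness of orthogonal fillers and of factorizations up to unique isomorphism.

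Conversely, from a normal pseudo-algebra $(\C,a)$ I would set $\im(f)=a(f)$ and, using $a(\id{A})=A$, define $\ell_f=a(\id{A},f)\:A\to\im(f)$ and $r_f=a(f,\id{B})\:\im(f)\to B$ from the evident morphisms $\id{A}\to f$ and $f\to\id{B}$ of $\C^{\to}$; functoriality of $a$ and $a\c\eta_{\C}=\iid_{\C}$ give $r_f\c\ell_f=a(\eta_{\C}f)=f$. Then I would put $\cL=\{f\mid r_f\text{ invertible}\}$ and $\cR=\{f\mid\ell_f\text{ invertible}\}$; both contain all isomorphisms, and the facts that $\ell_f\in\cL$, that $r_f\in\cR$, and that $\cL$, $\cR$ are closed under composition I would deduce from the multiplication constraint applied to the squares $(\id{A},f)$, $(f,\id{B})$ and $(f,g)$ — for instance the constraint identifies $\im(\ell_f)$ with $\im(f)=\cod(\ell_f)$. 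For a commuting square $(p,q)$ with $u\in\cL$ on the left and $v\in\cR$ on the right, $\ell_v^{-1}\c a(p,q)\c r_u^{-1}$ is a diagonal filler, whose uniqueness I would extract from naturality and coherence of the algebra structure.

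Finally I would check that the two passages are mutually inverse — the delicate point being that the filler built from $a$ recovers $a(u,v)$ and vice versa — and, for the $2$-categorical form implicit in the statement, that they respect algebra/OFS morphisms and $2$-cells. I expect the main obstacle to be the interplay between the multiplication constraint and orthogonality: in one direction, establishing the image-of-the-diagonal identity and then the two coherence axioms; in the other, deriving $\ell_f\in\cL$, $r_f\in\cR$, closure under composition, and especially the \emph{uniqueness} half of orthogonality from the pseudo-algebra axioms alone.
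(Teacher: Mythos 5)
The paper does not actually prove this theorem: it is imported from \citep{kt93}, and the only thing the paper supplies is the explicit dictionary in Remark~\ref{remdetailsmonadicity}. Your two passages agree with that dictionary exactly — the structure map $a$ sends $f$ to the middle object of its chosen factorization and $(u,v)$ to the unique orthogonal filler; conversely $\ell_f=a(\id{A},f)$, $r_f=a(f,\id{B})$, $f=a(f,f)$ by normality, and $\cL$, $\cR$ are recovered as the arrows for which $r_f$, respectively $\ell_f$, is invertible. Your identification of $\mu_{\C}$ with ``take the diagonal'' and the key computation $d=r_{f_1}\c a(p,q)\c\ell_{f_0}$ (which exhibits $\im(a(p,q))$ as another $(\cL,\cR)$-image of $d$ and hence produces the associativity constraint) are correct, so the direction OFS $\Rightarrow$ normal pseudo-algebra is essentially complete.

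Be aware, though, that in the converse direction the steps you defer are not routine bookkeeping — they are the actual content of Korostenski--Tholen's theorem. Existence of the filler $\ell_v^{-1}\c a(p,q)\c r_u^{-1}$ follows from functoriality and normality alone (your computation is fine), but a functor $a\:\C^{\to}\to\C$ with $a\c\eta_{\C}=\iid_{\C}$ is merely a functorial factorization; such data in general satisfies neither $\ell_f\in\cL$, nor closure of $\cL$ and $\cR$ under composition, nor \emph{uniqueness} of diagonal fillers. All of these must be extracted from the invertibility of the associativity constraint $a\c a^{\to}\iso a\c\mu_{\C}$ together with its two coherence axioms (for instance, the coherence with $\sqm(\eta_{\C})$ and $\eta_{\C^{\to}}$ is what pins the component of the constraint at $(\id{A},f)$ down to something identifiable with $r_{\ell_f}$, whence $\ell_f\in\cL$). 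So the proposal is a faithful reconstruction of the intended argument and of the paper's stated correspondence, but as written it establishes a bijection only up to these deferred verifications, which is where the proof of \citep{kt93} does its real work.
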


\begin{remark}\label{remdetailsmonadicity}
    Explicitly, the normal pseudo-algebra structure map associated to an orthogonal factorization system $(\cL,\cR)$ on a category $\C$ is the functor $Q\:\C^{\to}\to \C$ that sends every $(A\ar{f}B)\in \C^{\to}$ to the middle object of its chosen factorization $A\atoe{\ell_f} Q(f)\aito{r_f} B$, and sends every commutative square as on the left below to the unique morphism $Q(u,v)$ on the right below induced by the functoriality of the orthogonal factorization system:
    \begin{eqD*}
        \sq*{A}{C}{B}{D}{u}{f}{g}{v} \qquad\qquad \begin{cd}*[3][6]
	A \arrow[d,twoheadrightarrow,"{\ell_f}"']\arrow[r,"{u}"] \& C \arrow[d,twoheadrightarrow,"{\ell_g}"]\\
	Q(f) \arrow[d,hookrightarrow,"{r_f}"']\arrow[r,dashed,"{Q(u,v)}"] \& Q(g) \arrow[d,hookrightarrow,"{r_g}"] \\
	B \arrow[r,"{v}"'] \& D
\end{cd}
    \end{eqD*}
Interestingly, the functor $Q$ contains all the data of the orthogonal factorization system $(\cL,\cR)$. The normality condition precisely imposes that $Q$ is a (strict) retraction of the diagonal $\C\to \C^{\to}$, which sends $A$ to $\id{A}$ and $f\:A\to B$ to $(f,f)$. Considering the factorization
\begin{cd}[5.5][5.5]
	A \arrow[r,"{f}"] \arrow[d,equal,"{}"] \& B \arrow[d,equal,"{}"{name=B}] \& A \arrow[d,equal,"{}"'{name=A}] \arrow[r,equal,"{}"] \& A \arrow[r,"{f}"] \arrow[d,"{f}"]\& B\arrow[d,equal,"{}"] \\
	A \arrow[r,"{f}"'] \& B \& A \arrow[r,"{f}"'] \& B \arrow[r,equal,"{}"]\& B
	\arrow[from=B, to=A,equal,shorten <=3ex, shorten >=3ex]
\end{cd}
in $\C^{\to}$, one then finds that
$$Q(\id{A},f)=\ell_f, \qquad Q(f,\id{B})=r_f, \qquad f=Q(f,f)=r_f\c \ell_f$$
provided that the choice of factorization for the identity arrows is $\ell_{\id{A}}=\id{A}=r_{\id{A}}$. We will always assume that in this paper.
The morphisms in $\cL$ are precisely those $f$ for which $Q(f,\id{})$ is an isomorphism, and the morphisms in $\cR$ are precisely 
those $f$ for which $Q(\id{},f)$ is an isomorphism.

As was noted in \citep{Grandis200217}, $\C^\to$ is the free category with an orthogonal factorization system on a given category $\C$. 
Its factorization system is given as follows: the left class consists of the morphisms of the form $(i,u)$ with $i$ iso and $u$ any morphism in $\C$, 
and the right class consists of the morphisms of the form $(v,j)$ with $v$ any morphism in $\C$ and $j$ iso. A general morphism $(u,v)$ then 
factorizes as $(u,\id{})\c (\id{},v)$. This is indeed the free algebra on $\C$ with respect to the squaring 2-monad $\sqm$. One can in fact check 
that the multiplication $\mu_{\C}\:{(\C^\to)}^\to\to \C^\to$ of the monad precisely translates to the left and right classes described above.
\end{remark}

\begin{remark}\label{remLR}
Orthogonal factorization systems are in particular algebraic weak factorization systems, which were introduced in \citep{grandistholen} with the name ``natural weak factorization systems". As a consequence, we obtain the further monadicity results shown in \prox\ref{propfurthermonadicity}.

We can extract from $Q\:\C^{\to}\to \C$ two other functors $L,R\:\C^{\to}\to\C^{\to}$, sending 
\begin{eqD*}
    \sq*[5][5]{A}{C}{B}{D}{u}{f}{g}{v} \hspace{0.4ex} \am{L}\hspace{0.3ex}
    \sqleft*[5][5]{A}{C}{Q(f)}{Q(g)}{u}{\ell_f}{\ell_g}{Q(u,v)}\qquad\hspace{0.5ex}
    \sq*[5][5]{A}{C}{B}{D}{u}{f}{g}{v} \hspace{0.4ex} \am{R}\hspace{0.3ex}
    \sqright*[5][5]{Q(f)}{Q(g)}{B}{D}{Q(u,v)}{r_f}{r_g}{v}
\end{eqD*}
\end{remark}

\begin{proposition}[\hspace{1sp}\citep{grandistholen}]\label{propfurthermonadicity}
    Let $(\cL,\cR)$ be an orthogonal factorization system on a category $\C$. Its associated functors $L,R\:\C^{\to}\to\C^{\to}$ of \remx\ref{remLR} extend respectively to an idempotent comonad and to an idempotent monad. The left class $\cL$, viewed as a full subcategory of $\C^{\to}$, is precisely the category of Eilenberg-Moore coalgebras for $L$; as a consequence, it has all colimits, created by the projection to $\C^{\to}$. The right class $\cR$, viewed as a full subcategory of $\C^{\to}$, is precisely the category of Eilenberg-Moore algebras for $R$; as a consequence, it has all limits, created by the projection to $\C^{\to}$.
\end{proposition}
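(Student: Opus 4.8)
The plan is to avoid verifying (co)monad axioms by hand: instead I would exhibit $\cL$ and $\cR$ as a coreflective, respectively reflective, full subcategory of $\C^{\to}$, and then read off the idempotent comonad $L$, the idempotent monad $R$, and the identification of their Eilenberg--Moore (co)algebras from the standard theory (which is exactly what makes this a result attributable to Grandis--Tholen). First I would write down the counit and unit. For $f\: A\to B$ in $\C$, let $\epsilon_f\: L(f)\to f$ be the morphism of $\C^{\to}$ with components $\id{A}$ on domains and $r_f\: \im(f)\to B$ on codomains; this is a commutative square precisely because $f=r_f\c\ell_f$. Dually, let $\eta_f\: f\to R(f)$ be the square with components $\ell_f$ on domains and $\id{B}$ on codomains. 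Naturality of $\epsilon$ and of $\eta$ in $f$ is nothing but the commutativity of the two squares used to define $Q(u,v)$ in \remx\ref{remdetailsmonadicity} (the upper square witnesses naturality of $\epsilon$, the lower one that of $\eta$). I would also note that, by construction, $L$ corestricts to a functor $\C^{\to}\to\cL$ and $R$ to a functor $\C^{\to}\to\cR$, where $\cL$ and $\cR$ are viewed as full subcategories of $\C^{\to}$ — full and moreover replete, since they contain all isomorphisms and are closed under composition.

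The key step is the universal property of $\epsilon$ (that of $\eta$ being dual). Given $s\in\cL$, regarded as an object of $\C^{\to}$, and any morphism $(a,b)\: s\to f$ of $\C^{\to}$, a factorization $(a,b)=\epsilon_f\c(a',b')$ forces $a'=a$ and $r_f\c b'=b$, and the surviving requirement that $(a,b')\: s\to L(f)$ be a commutative square says exactly that $b'$ is a diagonal filler for the square with left leg $s$, right leg $r_f$, top $\ell_f\c a$ and bottom $b$ (commutative because $r_f\c\ell_f\c a=f\c a=b\c s$). Since $s\in\cL$ and $r_f\in\cR$, orthogonality of $(\cL,\cR)$ gives a unique such $b'$. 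Hence $\epsilon$ exhibits the corestriction $\C^{\to}\to\cL$ of $L$ as right adjoint to the inclusion $\cL\hookrightarrow\C^{\to}$; so $\cL$ is a coreflective full subcategory, $L$ is the induced comonad on $\C^{\to}$, this comonad is idempotent because the inclusion is fully faithful, and the comparison functor identifies its category of Eilenberg--Moore coalgebras with $\cL$. To see that this identification is exactly the full subcategory $\cL$, observe that $f$ carries an $L$-coalgebra structure iff $\epsilon_f$ is invertible, iff $r_f$ is invertible, and by \remx\ref{remdetailsmonadicity} (where $r_f=Q(f,\id{})$) this holds precisely when $f\in\cL$. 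Running the dual argument with $\eta_f$ — for $t\in\cR$ and $(a,b)\: f\to t$ the relevant filler is for the square with left leg $\ell_f\in\cL$ and right leg $t\in\cR$ — shows that $\cR$ is a reflective full subcategory, $R$ the associated idempotent monad, and its Eilenberg--Moore algebras precisely $\cR$ (again using $\ell_f=Q(\id{},f)$).

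For the remaining assertions I would invoke formal facts: for any comonad on $\C^{\to}$ the forgetful functor from its category of coalgebras creates colimits, so transported across the identification of $L$-coalgebras with $\cL$ this says the inclusion $\cL\hookrightarrow\C^{\to}$ creates all colimits — a diagram in $\cL$ has a colimit exactly when its image in $\C^{\to}$ does, and it is computed there; in particular $\cL$ is cocomplete as soon as $\C$, hence $\C^{\to}$, is. Dually, the forgetful functor from the Eilenberg--Moore algebras of $R$ creates limits, so $\cR\hookrightarrow\C^{\to}$ creates all limits. I do not expect a real obstacle here: once the universal properties of $\epsilon_f$ and $\eta_f$ are established from orthogonality, the rest is the standard theory of idempotent (co)monads and (co)reflective subcategories, and even the description of the (co)algebras reuses the characterizations of $\cL$ and $\cR$ via invertibility of $r_f$ and $\ell_f$ already recorded in \remx\ref{remdetailsmonadicity}. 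The only point that warrants care is keeping the variances straight: $\cL$ is \emph{co}reflective, with inclusion a left adjoint preserving colimits, whereas $\cR$ is reflective, with inclusion a right adjoint preserving limits.
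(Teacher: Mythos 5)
Your argument is correct, and it is worth noting that the paper itself gives no proof of this proposition at all --- it is simply cited from Grandis--Tholen --- so there is nothing to compare against except the standard literature. Your route (exhibit $\cL$ as coreflective and $\cR$ as reflective in $\C^{\to}$ via the universal properties of $\epsilon_f=(\id{},r_f)$ and $\eta_f=(\ell_f,\id{})$, then read off the idempotent (co)monads, the identification of the (co)algebras with the objects where the (co)unit is invertible, and the creation of (co)limits from the general theory) is exactly the clean standard argument, and every step checks out: the factorization condition $\epsilon_f\c(a,b')=(a,b)$ does reduce to a unique diagonal filler against $r_f\in\cR$, and the characterizations $f\in\cL\iff r_f$ iso, $f\in\cR\iff \ell_f$ iso are precisely what \remx\ref{remdetailsmonadicity} records. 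One tiny slip: you have the two naturality squares swapped. Naturality of $\epsilon$ requires $r_g\c Q(u,v)=v\c r_f$, which is the \emph{lower} square in the diagram defining $Q(u,v)$, while naturality of $\eta$ requires $Q(u,v)\c\ell_f=\ell_g\c u$, which is the \emph{upper} one. Since both squares commute by construction this does not affect the proof. You might also make explicit that ``has all colimits'' should be read relative to $\C^{\to}$ (i.e., the inclusion creates those colimits that exist in $\C^{\to}$), as you do implicitly in your final paragraph.
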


We can now extract from \thex\ref{theorkorostenskitholen} four flavours of morphisms between categories with an orthogonal factorization system (see also Remark \ref{monadic-dfn}), corresponding to   pseudo, lax, colax, and strict morphisms between normal pseudo-algebras.  Each flavour of factorization yields a 2-category of categories equipped with an OFS, whose 1-arrows are given by morphisms of the corresponding type, and 2-cells are natural transformations.

\begin{definition}\label{defmorfact}
    A {\em morphism} of categories with an orthogonal factorization system from $(\C,(\cL_{\C}, \cR_{\C}))$ to $(\D,(\cL_{\D}, \cR_{\D}))$ is a functor $F\:\C\to \D$ that preserves both the left and the right classes; i.e, that sends morphisms in $\cL_{\C}$ to morphisms in $\cL_{\D}$ and morphisms in $\cR_{\C}$ to morphisms in $\cR_{\D}$.
    
    A {\em lax morphism} of categories with an orthogonal factorization system from $(\C,(\cL_{\C}, \cR_{\C}))$ to $(\D,(\cL_{\D}, \cR_{\D}))$ 
    is a functor $F\:\C\to \D$ that preserves the right class: $F(\cR_{\C})\subseteq\cR_{\D}$.  

    A {\em colax morphism} of categories with an orthogonal factorization system from $(\C,(\cL_{\C}, \cR_{\C}))$ to $(\D,(\cL_{\D}, \cR_{\D}))$ is 
    a functor $F\:\C\to \D$ that preserves the left class: $F(\cL_{\C})\subseteq\cL_{\D}$.

    A {\em strict morphism} of categories with an orthogonal factorization system (with a choice of factorizations) from $(\C,(\cL_{\C}, \cR_{\C}))$ to $(\D,(\cL_{\D}, \cR_{\D}))$ is a functor $F\:\C\to \D$ that preserves the chosen factorizations on the nose: $\ell_{F(f)}=F(\ell_f)$ and $r_{F(f)}=F(r_f)$ for all $f$ in $\C_1$.
\end{definition}

\begin{notation}\label{notationFact}
   We obtain the following 2-categories of categories equipped with an OFS, from the flavours of morphisms above:
\begin{itemize}
    \item $\Fact$, whose 1-arrows are morphisms of categories with an orthogonal factorization system;
    \item $\Factlax$, whose 1-arrows are lax morphisms;
    \item $\Factcolax$, whose 1-arrows are colax morphisms;
    \item $\Factstr$, whose 1-arrows are strict morphisms.
\end{itemize}
In all of them we use all natural transformations as 2-cells.
\end{notation}

\begin{remark}\label{rempreservefactorizationslaxly}
    Let $F\:(\C,(\cL_{\C}, \cR_{\C})) \to (\D,(\cL_{\D}, \cR_{\D}))$ be a lax morphism of categories with an orthogonal factorization system. Call $Q_{\C}$ and $Q_{\D}$ the normal pseudo-algebra maps corresponding to $(\cL_{\C}, \cR_{\C})$ and $(\cL_{\D}, \cR_{\D})$ and their chosen factorizations respectively. Then $F$ preserves the factorizations of morphisms in $\C$ in a lax way. More precisely, given a morphism $f\:A\to B$ in $\C$ and considering its factorization
    \begin{cd}[2.5][6]
        A \arrow[rr,"{f}"] \arrow[rd,twoheadrightarrow,"{\ell_f}"']\& \& B \\
        \& Q_{\C}(f) \arrow[ru,hookrightarrow,"{r_f}"']
    \end{cd}
    with $\ell_f\in \cL$ and $r_f\in \cR$, there exists a unique morphism $\rho_f\:Q_{\D}(F(f))\to F(Q_{\C}(f))$ which makes the following diagram 
    commute:
    \begin{cd}[4.5][6]
    	{F(A)} \& Q_{\D}(F(f)) \& {F(B)} \\
    	\& {F(Q_{\C}(f))}
    	\arrow[twoheadrightarrow,"{\ell_{F(f)}}", from=1-1, to=1-2]
    	\arrow["{F(\ell_{f})}"', from=1-1, to=2-2]
    	\arrow[hookrightarrow,"{r_{F(f)}}", from=1-2, to=1-3]
    	\arrow[dashed,"{\rho_f}", from=1-2, to=2-2]
    	\arrow[hookrightarrow,"{F(r_{f})}"', from=2-2, to=1-3]
    \end{cd}
    Indeed $F(r_f)\in \cR_{\D}$, as $F$ preserves the right class, and we obtain the unique lifting $\rho_f$ by definition of orthogonal 
    factorization system. By uniqueness the morphisms $\rho_f$ form the components of a natural transformation $\rho\:Q_{\D}F^{\rightarrow}\Rightarrow FQ_{\C}$.

    Analogously, a colax morphism $F\:(\C,(\cL_{\C}, \cR_{\C})) \to (\D,(\cL_{\D}, \cR_{\D}))$ of categories with with an orthogonal factorization system gives 
    rise to  a natural transformation $\lambda\:FQ_{\C}\Rightarrow Q_{\D}F^{\rightarrow}$.

    If $F$ is a morphism of categories with an orthogonal factorization system, then for every $f\:A\to B$ in $\C$ the morphism $\rho_f$ 
    is an isomorphism. So in this case $F$ preserves the factorizations up to isomorphism.
    
    Notice then that a strict morphism $F$ of categories with an orthogonal factorization system preserves in particular both the left and the right classes. Indeed, a morphism is in the left class if and only if the right part of its factorization is an isomorphism, and $F$ preserves the factorizations.
\end{remark}

\begin{proposition}\label{propmorphismsofalgebrassqmonad}
    Let $(\C,(\cL_{\C}, \cR_{\C}))$ and $(\D,(\cL_{\D}, \cR_{\D}))$ be categories with an orthogonal factorization system and corresponding normal pseudo-algebra maps $Q_\C$ and $Q_\D$. 
    For a functor $F\:\C\to \D$, the following are equivalent:
    \begin{enumT}
        \item $F$ is a lax morphism (respectively, morphism, strict morphism) of categories with an orthogonal factorization system;
        \item $F$ extends to a lax morphism (respectively, pseudo morphism, strict morphism) between normal pseudo-algebras for the squaring 2-monad $\sqm$ on $\Cat$; 
        i.e, there exists a natural transformation (respectively, a natural isomorphism, an identity)
        \begin{eqD}{diagramrho}
            \sq*[l][5.5][5.5][\rho]{\C^{\to}}{\D^{\to}}{\C}{\D}{F^{\to}}{Q_\C}{Q_\D}{F}
        \end{eqD}
        such that for every $A\in \C$ we have $\rho_{\id{A}}=\id{F(A)}$ and, for every commutative square in $\C$ as on the left below, the pentagon on the right below commutes:
        \begin{eqD*}
    	\sq*[n][5][5]{A}{C}{B}{D}{u}{f}{g}{v} \quad 
    	\begin{cd}*[2.2][1.2]
    		Q_\D(F(g\c u)) \arrow[rr,"{\rho_{g\c u}}"] \arrow[d,iso,"{}"] \&\& F(Q_\C(g\c u))\arrow[d,iso,"{}"] \\
    		Q_\D(Q_\D(F(u),F(v)))\arrow[rd,"{Q_\D(\rho_f,\rho_g)}"'{pos=0.67,inner sep=0.3ex}] \&\& F(Q_\C(Q_\C(u,v))) \\[-1.7ex]
    		\& Q_\D(F(Q_\C(u,v))) \arrow[ru,"{\rho_{Q_\C(u,v)}}"'{pos=0.4,inner sep=0.3ex},shorten <=-0.33ex]
    	\end{cd}    
        \end{eqD*}
        where the isomorphisms are given by the normal pseudo-algebra structures;
        \item (based on \citep[Section I.6.2]{riehl_phdthesis}) there exists a natural transformation (respectively, a natural isomorphism, an identity) 
        $\rho$ as in \refs{diagramrho} such that $(1,\rho)\:L_\D\c F\to F\c L_\C$ is a lax morphism of comonads and $(\rho,1)\:R_\D\c F\to F\c R_\C$ 
        is a lax morphism of monads, where $L_\C,R_\C,L_\D,R_\D$ are the functors of \prox\ref{propfurthermonadicity}, for $\C$ and $\D$ respectively; 
        i.e, for every $f\: A\to B$ in $\C$ the following three diagrams commute:
\begin{cd}[5][5]
	F(A) \arrow[r,"{F(\ell_f)}"]\arrow[d,twoheadrightarrow,"{\ell_{F(f)}}"'] \& F(Q_\C(f)) \arrow[d,"{F(r_f)}"]\\
	Q_\D(F(f)) \arrow[ru,"{\rho_f}"{inner sep=0.3ex}]\arrow[r,hookrightarrow,"{r_{F(f)}}"'] \& F(B)
\end{cd}

\begin{eqD*}
\begin{cd}*[2.2][1.2]
	Q_\D(F(f)) \arrow[d,"{\delta_{F(f)}}"']\arrow[rr,"{\rho_f}"]\&\& F(Q_\C(f)) \arrow[d,"{F(\delta_f)}"] \\
	Q_\D(\ell_{F(f)}) \arrow[rd,"{Q_\D(\id{},\rho_f)}"'{inner sep=0.3ex},shorten <=-0.3ex, shorten >=-0.3ex]\&\& F(Q_\C(\ell_f)) \\[-1.7ex]
	\& Q_\D(F(\ell_f)) \arrow[ru,"{\rho_{\ell_f}}"'{inner sep=0.3ex},shorten <=-0.45ex, shorten >=-0.3ex]
\end{cd}\quad
\begin{cd}*[2.2][1.2]
	Q_\D(F(f)) \arrow[rr,"{\rho_f}"]\&\& F(Q_\C(f)) \\
	Q_\D(r_{F(f)}) \arrow[u,"{\mu_{F(f)}}"]\arrow[rd,"{Q_\D(\rho_f,\id{})}"'{inner sep=0.3ex},shorten <=-0.3ex, shorten >=-0.3ex]\&\& F(Q_\C(r_f)) \arrow[u,"{F(\mu_f)}"']\\[-1.7ex]
	\& Q_\D(F(r_f)) \arrow[ru,"{\rho_{r_f}}"'{inner sep=0.3ex},shorten <=-0.45ex, shorten >=-0.3ex]
\end{cd}
\end{eqD*}
where $\delta_h$ and $\mu_h$ are given by
\begin{eqD*}
\begin{cd}*[5][5]
	A \arrow[d,twoheadrightarrow,"{\ell_h}"']\arrow[r,twoheadrightarrow,"{\ell_{\ell_h}}"] \& Q(\ell_h) \arrow[d,hookrightarrow,"{r_{\ell_h}}"] \\
	Q(h) \arrow[ru,dashed,"{\exists ! \hspace{0.1ex} \delta_h}"{inner sep=0.3ex}] \arrow[r,equal,"{}"] \& Q(h)
\end{cd}
\qquad\quad
\begin{cd}*[5][5]
	Q(h)\arrow[d,twoheadrightarrow,"{\ell_{r_f}}"'] \arrow[r,equal,"{}"] \& Q(h) \arrow[d,hookrightarrow,"{r_h}"]\\
	Q(r_h) \arrow[ru,dashed,"{\exists ! \hspace{0.1ex} \mu_h}"{inner sep=0.3ex}] \arrow[r,hookrightarrow,"{r_{r_h}}"'] \& B
\end{cd}
\end{eqD*}
    \end{enumT}
Moreover, the 2-cells between (lax) morphisms of normal pseudo-algebras for $\sqm$ are precisely the natural transformations between the underlying functors.
\end{proposition}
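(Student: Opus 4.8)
The plan is to prove the cycle (i)$\Rightarrow$(ii)$\Rightarrow$(iii)$\Rightarrow$(i), carrying the lax, pseudo and strict cases in parallel, and then to deduce the statement about $2$-cells. For the implication (i)$\Rightarrow$(ii): if $F$ preserves the right class, Remark~\ref{rempreservefactorizationslaxly} already supplies the natural transformation $\rho\colon Q_\D F^{\to}\Rightarrow FQ_\C$ whose component $\rho_f$ is the \emph{unique} diagonal filler of the lifting problem posed by $\ell_{F(f)}\in\cL_\D$ against $F(r_f)\in\cR_\D$, and records moreover that $\rho_f$ is an isomorphism when $F$ additionally preserves the left class and an identity when $F$ is strict. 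It then remains to check the coherence of $\rho$ as a lax morphism of normal pseudo-algebras: the normality $\rho_{\id{A}}=\id{F(A)}$ follows from our convention $\ell_{\id{}}=\id{}=r_{\id{}}$ together with uniqueness of the filler, and for the pentagon one inserts the explicit descriptions — recalled in Remark~\ref{remdetailsmonadicity} — of the pseudo-algebra structure isomorphisms of $(\D,Q_\D)$ and of the monad multiplication on $\C^{\to}$; since $\rho_{g\c u}$ is pinned down by the two identities $\rho_{g\c u}\c\ell_{F(g\c u)}=F(\ell_{g\c u})$ and $F(r_{g\c u})\c\rho_{g\c u}=r_{F(g\c u)}$, it suffices to check that the other composite in the pentagon satisfies these as well, which is a diagram chase using functoriality of $Q_\D$, naturality of $\rho$, and the triangle identities for $\rho_f$ and $\rho_g$.

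For the implication (ii)$\Rightarrow$(iii): given $\rho$ natural with $\rho_{\id{}}=\id{}$ and satisfying the pentagon, naturality of $\rho$ on the morphisms $(\id{A},f)\colon\id{A}\to f$ and $(f,\id{B})\colon f\to\id{B}$ of $\C^{\to}$ — which compose to $(f,f)$ and whose images under $Q_\C$ are $\ell_f$ and $r_f$ by Remark~\ref{remdetailsmonadicity} — yields, using $\rho_{\id{}}=\id{}$, the triangle identities $\rho_f\c\ell_{F(f)}=F(\ell_f)$ and $F(r_f)\c\rho_f=r_{F(f)}$, which are precisely the first displayed diagram of (iii). The remaining two diagrams assert that $(1,\rho)$ is compatible with the comultiplications of $L_\D,L_\C$ and $(\rho,1)$ with the multiplications of $R_\D,R_\C$; following \citep[Section I.6.2]{riehl_phdthesis}, these are extracted from the pentagon by restricting along the canonical morphisms of $\C^{\to}$ whose image under $Q_\C$ is $\delta$ (respectively $\mu$), the verification again reducing to orthogonality in $\D$. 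For the implication (iii)$\Rightarrow$(i): the first diagram of (iii) is exactly the two triangle identities above, so suppose $f\in\cR_\C$; then $\ell_f$, hence $F(\ell_f)$, is invertible, and with $g:=F(\ell_f)^{-1}\c\rho_f$ we get $g\c\ell_{F(f)}=\id{F(A)}$. A short orthogonality argument in $\D$ — comparing $\ell_{F(f)}\c g$ with $\id{}$ as diagonal fillers of the square whose left side is $\ell_{F(f)}\in\cL_\D$ and whose right side is $r_{F(f)}\in\cR_\D$, the bottom identity $F(f)\c g=r_{F(f)}$ coming from $F(f)=F(r_f)\c F(\ell_f)$ and the second triangle identity — gives $\ell_{F(f)}\c g=\id{}$, so $\ell_{F(f)}$ is invertible and $F(f)\in\cR_\D$; thus $F$ is a lax morphism. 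If $\rho$ is an isomorphism then, for $f\in\cL_\C$, the morphism $r_{F(f)}=F(r_f)\c\rho_f$ is a composite of isomorphisms, so $F$ also preserves the left class; and if $\rho$ is an identity the triangle identities become $\ell_{F(f)}=F(\ell_f)$, $r_{F(f)}=F(r_f)$, i.e.\ $F$ is strict. This closes the cycle.

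For the statement on $2$-cells: a $\sqm$-transformation between lax morphisms of normal pseudo-algebras $(F,\rho_F),(G,\rho_G)\colon(\C,Q_\C)\to(\D,Q_\D)$ is a natural transformation $\alpha\colon F\Rightarrow G$ such that $\alpha_{Q_\C(f)}\c(\rho_F)_f=(\rho_G)_f\c Q_\D(\alpha_A,\alpha_B)$ for every $f\colon A\to B$ in $\C$, and I claim this holds for \emph{every} natural $\alpha$. Indeed, both sides are diagonal fillers of the commutative square in $\D$ whose left side is $\ell_{F(f)}\in\cL_\D$, whose right side is $G(r_f)\in\cR_\D$ (which lies in $\cR_\D$ because $G$ preserves the right class, by the equivalence just established), whose top is $G(\ell_f)\c\alpha_A$ and whose bottom is $\alpha_B\c r_{F(f)}$: precomposing each candidate with $\ell_{F(f)}$ and postcomposing with $G(r_f)$, and invoking the triangle identities for $\rho_F,\rho_G$, the defining commutativities of $L_\D,R_\D$ from Remark~\ref{remLR} applied to the square $(\alpha_A,\alpha_B)$, and naturality of $\alpha$ at $\ell_f$ and $r_f$, shows that both composites solve this lifting problem and hence coincide; the same argument applies verbatim in the pseudo and strict cases.

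I expect the main obstacle to be making the passage from the pentagon of (ii) to the $\delta$- and $\mu$-diagrams of (iii) fully precise — pinning down the relevant morphisms of $\C^{\to}$ and matching the pseudo-algebra structure isomorphisms with the (co)monad structures of $L$ and $R$ — whereas the preservation-of-classes arguments and the $2$-cell computation are routine diagram chases by orthogonality.
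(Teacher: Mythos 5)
Your proposal is correct and follows the same overall strategy as the paper: the cycle (i)$\Rightarrow$(ii)$\Rightarrow$(iii)$\Rightarrow$(i), with (i)$\Rightarrow$(ii) obtained from Remark~\ref{rempreservefactorizationslaxly} and uniqueness of liftings, (ii)$\Rightarrow$(iii) by evaluating naturality of $\rho$ and the pentagon at the morphisms $(\id{},f)$ and $(f,\id{})$ of $\C^{\to}$, and the $2$-cell statement by exhibiting both composites as solutions of one lifting problem. The one place where you genuinely diverge is (iii)$\Rightarrow$(i): the paper deduces preservation of the right class abstractly, from the fact that the lax monad morphism $(\rho,1)$ induces a functor $R_\C\mbox{-}\catfont{Alg}\to R_\D\mbox{-}\catfont{Alg}$ combined with \prox\ref{propfurthermonadicity}, whereas you argue directly: for $f\in\cR_\C$ the morphism $\ell_f$ is invertible, so the two triangle identities let you manufacture a two-sided inverse to $\ell_{F(f)}$ by orthogonality, whence $F(f)\in\cR_\D$. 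Your version is more elementary and self-contained (it uses only the first diagram of (iii) and the characterization of $\cR$ via invertibility of $\ell_f$ from Remark~\ref{remdetailsmonadicity}), at the cost of not exploiting the monad-theoretic packaging; the paper's version is shorter given that \prox\ref{propfurthermonadicity} is already in place. Your $2$-cell verification is also a correct, fully spelled-out version of what the paper dismisses as ``straightforward by uniqueness of liftings''. The only soft spot is the passage from the pentagon of (ii) to the $\delta$- and $\mu$-diagrams of (iii), which you flag yourself; there the precise bookkeeping (identifying the pseudo-algebra structure isomorphisms with $\delta$ and $\mu$ after instantiating the pentagon at $(\id{},f)$ and $(f,\id{})$) is left at the same level of detail as in the paper, so nothing is lost relative to the published argument.
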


\begin{proof}
   We prove $(i)\aR{}(ii)$. Given $f\:A\to B$ in $\C$, we can produce the component $\rho_f$ of $\rho$ on $f$ as in \remx\ref{rempreservefactorizationslaxly}. Then $\rho$ is natural by the uniqueness of the liftings required by definition of orthogonal factorization system. And clearly $\rho_{\id{}}=\id{}$, because the choice of factorization for the identity is always by identities. It is straightforward to show that 
   the pentagon commutes, using again the uniqueness of the liftings.

   We now prove $(ii)\aR{}(iii)$. We can take the needed $\rho$ to be the natural transformation $\rho$ given by $(ii)$. By \remx\ref{remdetailsmonadicity}, naturality of $\rho$ and $\rho_{\id{}}=\id{}$, we obtain that the two triangles in the first diagram in $(iii)$ commute. Indeed, considering the morphism in $\C^{\to}$ on the left below, naturality of $\rho$ gives the square on the right below
   \begin{eqD*}
       \begin{cd}*[5][5]
       A \arrow[d,equal]\arrow[r,equal] \& A \arrow[d,"f"] \\
       A \arrow[r,"f"'] \& B
   \end{cd}\qquad
   \begin{cd}*[4.5][4.5]
	{Q_\D(F(\id{A}))} \arrow[d,"{\ell_{F(f)}}"']\arrow[r,equal, "{\rho_{\id{A}}}"] \& F(Q_\C(\id{A}))\arrow[d,"{F(\ell_f)}"] \\
	Q_\D(F(f)) \arrow[r, "{\rho_{f}}"'] \& F(Q_\C(f))
\end{cd}
   \end{eqD*}
   and considering the morphism $(f,\id{})$ we obtain the other triangle. Evaluating the pentagon of $(ii)$ for the morphism $(\id{},f)$ (respectively, $(f,\id{})$) in $\C^{\to}$, we obtain the pentagon on the left (respectively, on the right) of $(iii)$. It is indeed straightforward to show that the isomorphisms of the pentagon of $(ii)$ become the appropriate morphisms $\delta$ and $\mu$.

   Finally, we prove $(iii)\aR{}(i)$. The lax morphism of monads $(\rho,1)$ induces a functor $R_\C\mbox{-}\catfont{Alg}\to R_\D\mbox{-}\catfont{Alg}$. 
   So by \prox\ref{propfurthermonadicity}, $F$ preserves the right class. If $\rho$ is a natural isomorphism, the two triangles in the first diagram in $(iii)$ show that $F$ preserves both the left and the right classes. If $\rho$ is the identity, the same triangles show that $F$ preserves the factorizations.

It is then straightforward to show that the 2-cells between (lax) morphisms of normal pseudo-algebras for $\sqm$ are just the natural transformations, by uniqueness of liftings.
\end{proof}

\begin{corollary}
    $\Fact$ and $\Factlax$ are precisely the 2-categories $\Alg+[p]{\sqm}$ (respectively, $\Alg+[l]{\sqm}$) of normal pseudo-algebras for $\sqm$ on $\Cat$ and pseudo morphisms (respectively, lax morphisms) between them.
\end{corollary}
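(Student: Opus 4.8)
The plan is to read the statement off the two results that precede it: \thex\ref{theorkorostenskitholen} supplies the objects, while \prox\ref{propmorphismsofalgebrassqmonad} supplies the $1$-cells and the $2$-cells, so the proof is essentially a matter of lining up definitions.

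First I would observe that on objects there is nothing left to prove: \thex\ref{theorkorostenskitholen} already identifies the objects of $\Alg+[p]{\sqm}$, the normal pseudo-algebras for the squaring $2$-monad, with categories carrying an orthogonal factorization system together with a choice of factorizations in the sense of \remx\ref{monadic-dfn}. These are by construction exactly the objects of $\Fact$ and of $\Factlax$ (Notation~\ref{notationFact}).

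Next, for the $1$-cells I would apply the equivalence $(i)\Leftrightarrow(ii)$ of \prox\ref{propmorphismsofalgebrassqmonad}: a functor $F\colon\C\to\D$ is a morphism (respectively, a lax morphism) of categories with an orthogonal factorization system exactly when it underlies a pseudo morphism (respectively, a lax morphism) between the corresponding normal pseudo-algebras for $\sqm$. The key point to stress is that the comparison cell $\rho$ of such an algebra morphism is uniquely determined by $F$: by \remx\ref{rempreservefactorizationslaxly} each component $\rho_f$ is the unique diagonal filler provided by the orthogonality condition of \defx\ref{defnofs}. Hence forgetting $\rho$ is a bijection between the $1$-cells of $\Alg+[p]{\sqm}$ (respectively, $\Alg+[l]{\sqm}$) and those of $\Fact$ (respectively, $\Factlax$). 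For the $2$-cells, the ``Moreover'' clause of \prox\ref{propmorphismsofalgebrassqmonad} says that a $2$-cell between (lax) morphisms of normal pseudo-algebras for $\sqm$ is precisely a natural transformation of the underlying functors, and those are by definition all the $2$-cells of $\Fact$ and $\Factlax$; so the hom-categories coincide.

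Finally I would check that these level-wise bijections are compatible with $2$-categorical composition and units, so that they assemble into an isomorphism of $2$-categories rather than a mere level-wise correspondence. Units match because the comparison cell of $\id{\C}$ is the identity (the chosen factorization of an identity arrow is an identity, so $\rho_{\id{A}}=\id{}$ by \remx\ref{rempreservefactorizationslaxly}), and a composite of (lax) algebra morphisms has underlying functor the composite of the underlying functors, with comparison cell the evident pasting; since the composite functor again preserves the right class (and both classes in the pseudo case), \remx\ref{rempreservefactorizationslaxly} forces that pasting to be \emph{the} comparison cell attached to the composite functor, so composition agrees with that of $\Fact$ and $\Factlax$. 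Vertical and horizontal composition of $2$-cells, and whiskering of a natural transformation by an algebra morphism, are on both sides just the corresponding operations on natural transformations. I expect no single hard step here: the only genuine content is the uniqueness of $\rho$ and its behaviour under pasting, both of which are already packaged into \remx\ref{rempreservefactorizationslaxly} and \prox\ref{propmorphismsofalgebrassqmonad}, so the remaining verifications are routine orthogonality chases.
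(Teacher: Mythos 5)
Your proposal is correct and follows the same route as the paper, whose entire proof is the single sentence ``This follows immediately from Proposition~\ref{propmorphismsofalgebrassqmonad}''; you have simply unpacked the details (objects from Theorem~\ref{theorkorostenskitholen}, $1$-cells and $2$-cells from the proposition, plus the routine compatibility checks) that the paper leaves implicit.
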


\begin{proof}
    This follows immediately from \prox\ref{propmorphismsofalgebrassqmonad}.
\end{proof}

The following result will be needed to define double orthogonal factorization systems.

\begin{proposition}\label{propfacthaspullbacks}
    The 2-categories $\Fact, \Factlax$, and $\Factcolax$ have all pullbacks of strict morphisms.
\end{proposition}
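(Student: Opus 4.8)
The plan is to build the pullback on underlying categories and transport the orthogonal factorization structure componentwise, strictness of the two legs being exactly what makes this work. Let $F\:(\C,(\cL_\C,\cR_\C))\to(\E,(\cL_\E,\cR_\E))$ and $G\:(\D,(\cL_\D,\cR_\D))\to(\E,(\cL_\E,\cR_\E))$ be strict morphisms of categories with an OFS, and let $P=\C\times_\E\D$ be their pullback in $\Cat$, with objects the pairs $(c,d)$ satisfying $Fc=Gd$ and morphisms the pairs $(f,g)$ satisfying $Ff=Gg$. First I would equip $P$ with the factorization system whose left class $\cL_P$ (resp.\ right class $\cR_P$) consists of those $(f,g)$ with $f\in\cL_\C$ and $g\in\cL_\D$ (resp.\ $f\in\cR_\C$ and $g\in\cR_\D$), and whose chosen factorization of $(f,g)$ is the pair of chosen factorizations $(c,d)\ar{(\ell_f,\ell_g)}(\im f,\im g)\ar{(r_f,r_g)}(c',d')$. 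The point making this well defined is strictness: $F$ strict gives $F(\ell_f)=\ell_{Ff}$, $F(r_f)=r_{Ff}$ and $F(\im f)=\im(Ff)$, and likewise for $G$; since $Ff=Gg$ these chosen data agree, so $(\ell_f,\ell_g)$ and $(r_f,r_g)$ really are morphisms of $P$, and the identity-on-identities convention of Remark~\ref{monadic-dfn} is inherited componentwise.

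Next I would check the three clauses of Definition~\ref{defnofs} for $(\cL_P,\cR_P)$. Closure under composition and containment of all isomorphisms hold because they do so componentwise; the factorization exists by the previous paragraph. For orthogonality, given a commutative square in $P$ with left edge $(u_\C,u_\D)\in\cL_P$ and right edge $(v_\C,v_\D)\in\cR_P$, I project to $\C$ and to $\D$, obtain the unique diagonal fillers $e_\C$ and $e_\D$ there, and verify that $(e_\C,e_\D)$ is a morphism of $P$, i.e.\ $F(e_\C)=G(e_\D)$. Applying the strict --- hence left- and right-class-preserving, by Remark~\ref{rempreservefactorizationslaxly} --- morphism $F$ to the square in $\C$ exhibits $F(e_\C)$ as a diagonal filler of the image square in $\E$, and applying $G$ exhibits $G(e_\D)$ as a filler of the same square (the two squares in $\E$ coincide because all four edges of the original square are, by definition, morphisms of $P$); uniqueness of fillers in $\E$ then forces $F(e_\C)=G(e_\D)$. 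Uniqueness of $(e_\C,e_\D)$ in $P$ follows from uniqueness of $e_\C$ and of $e_\D$.

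I would then note that the projections $\pi_\C\:P\to\C$ and $\pi_\D\:P\to\D$ are strict morphisms of categories with an OFS, since they send the chosen factorization of $(f,g)$ to the chosen factorizations of $f$ and of $g$ on the nose; in particular they are morphisms, lax morphisms and colax morphisms, so they live in each of $\Fact$, $\Factlax$ and $\Factcolax$. For the one-dimensional universal property in, say, $\Factlax$: given lax morphisms $H\:\catfont{X}\to\C$ and $K\:\catfont{X}\to\D$ from a category with an OFS with $FH=GK$, the induced functor $\langle H,K\rangle\:\catfont{X}\to P$ over $\C$ and $\D$ preserves $\cR_{\catfont{X}}$ because $H$ and $K$ do and $\cR_P$ is detected componentwise, so it is a lax morphism; it is unique with this property since a lax morphism carries no data beyond its underlying functor. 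The identical argument, replacing ``right class'' by ``left class'' or by ``both classes'', handles $\Factcolax$ and $\Fact$. Since the 2-cells in all three 2-categories are just natural transformations, the two-dimensional universal property reduces immediately to that of $P$ in $\Cat$.

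The main obstacle is the orthogonality step together with seeing precisely why strictness is required: for merely lax or colax legs, the componentwise image $(\im f,\im g)$ would land in $P$ only up to the coherence isomorphisms $\rho$ of Remark~\ref{rempreservefactorizationslaxly}, not on the nose, so the pair of chosen factorizations would fail to be a factorization in $P$ --- which is exactly why the statement is restricted to cospans of strict morphisms. Everything else is a routine componentwise verification.
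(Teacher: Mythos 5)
Your proposal is correct and follows essentially the same route as the paper's proof: form the pullback in $\Cat$, equip it with the componentwise classes and componentwise chosen factorizations (well defined precisely because the two legs are strict), obtain the orthogonal filler componentwise and identify $F(e_\C)=G(e_\D)$ as the unique filler of the common square in the base, and then check that projections are strict and induced cone maps inherit the appropriate flavour, with the 2-dimensional universal property reducing to that of $\Cat$. Your closing remark on why strictness of the cospan is indispensable is a correct and slightly more explicit articulation of the point the paper makes only in passing.
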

\begin{proof}
    Let $\catfont{A,B,C}$ be categories equipped with orthogonal factorization systems, and strict morphisms $F\:\catfont{A} \to \catfont{C}$ and $G:\catfont{B} \to \catfont{C}$.  We show that the pullback category $\catfont{A} \times_{\catfont{C}}\catfont{B}$ 
    admits an orthogonal factorization system, and that the associated projections and the unique arrows determined by its universal property are factorization-preserving.  Additionally, when working in $\Factlax$ or $\Factps$, we show that the induced pullback maps are respectively strict or lax.
    Denote the factorization systems on $\catfont{A}$ and $\catfont{B}$ by $(\catfont{L}_1, \catfont{R}_1)$ and $(\catfont{L_2}, \catfont{R}_2)$ respectively.  We form a factorization system  on $\catfont{A} \times_{\catfont{C}} \catfont{B}$ whose classes are given by:
    \[(\catfont{L}^\star, \catfont{R}^\star) = (\catfont{L}_1 \times \catfont{L}_2, \catfont{R}_1 \times \catfont{R}_2)\]

    For a  morphism $(f,g)\: (A,B) \to (A', B')$ in $\catfont{A} \times_\catfont{C} \catfont{B}$ we choose the factorization:
    \[\begin{tikzcd}
	{(A, B)} && {(A', B')} \\
	& {(\operatorname{Im}(f), \operatorname{Im}(g))}
	\arrow["{(f,g)}", from=1-1, to=1-3]
	\arrow["{(\ell_f, \ell_g)}"', from=1-1, to=2-2]
	\arrow["{(r_f, r_g)}"', from=2-2, to=1-3]
\end{tikzcd} \label{pbfact}\]
where $f=r_f\circ\ell_f$ and $g=r_g\circ\ell_g$ are the factorizations of $f$, $g$ in $\catfont{A}$, and  $\catfont{B}$ respectively. 
Note that this is well-defined because $F$ and $G$ are strict morphisms between categories with an OFS.
Because $\catfont{L}_1$ and $\catfont{L}_2$ are closed under composition, so is $\catfont{L^\star}$.
Likewise, by the fact that $\catfont{L}_1$ and $\catfont{L}_2$ contain all isomorphisms, $\catfont{L}^\star$ does as well.  An identical argument holds for $\catfont{R}^\star$, therefore, $(\catfont{L}^\star, \catfont{R}^\star)$ is a factorization system.  We now show orthogonality.  Given a diagram in $\catfont{A} \times _\catfont{C} \catfont{B}$ of the form:

\[\begin{tikzcd}
	{(A, B)} & {(A', B')} \\
	{(C, D)} & {(C', D')}
	\arrow["{(f, f')}", from=1-1, to=1-2]
	\arrow["{(u, u')}"', from=1-1, to=2-1]
	\arrow["{(v, v')}", from=1-2, to=2-2]
	\arrow["{(g,g')}"', from=2-1, to=2-2]
\end{tikzcd}\]

 with $(u, u') \in \catfont{L}^\star$ and $(v, v') \in \catfont{R}^\star$, consider the following square in $\catfont{A}$:
\[\begin{tikzcd}
	A & {A'} \\
	C & {C'}
	\arrow["f", from=1-1, to=1-2]
	\arrow["u"', from=1-1, to=2-1]
	\arrow["v", from=1-2, to=2-2]
	\arrow["g"', from=2-1, to=2-2]
\end{tikzcd}\]

We have $u \in \catfont{L}_1, v \in 
\catfont{R}_1$, so by orthogonality there exists a unique morphism $e\:C \to A'$ making the following commute:

\[\begin{tikzcd}
	A & {A'} \\
	C & {C'}
	\arrow["f", from=1-1, to=1-2]
	\arrow["u"', from=1-1, to=2-1]
	\arrow["v", from=1-2, to=2-2]
	\arrow["e"', dashed, from=2-1, to=1-2]
	\arrow["g"', from=2-1, to=2-2]
\end{tikzcd}\]

    Applying an identical argument to the square likewise constructed in $\catfont{B}$ yields $e'\:D \to B'$ satisfying a similar condition.  
    Furthermore, $F(e)$ and $G(e')$ are both the unique diagonal arrow in the following diagram in $\C$,
\[\begin{tikzcd}[ampersand replacement=\&]
	FA \& {FA'} \&\& GB \& {GB'} \\
	FC \& {FC'} \&\& GD \& {GD'}
        \arrow[dashed, from=2-1, to=1-2]
        \arrow[dashed, from=2-4, to=1-5]
	\arrow["Ff", from=1-1, to=1-2]
	\arrow["Fu"', from=1-1, to=2-1]
	\arrow[""{name=0, anchor=center, inner sep=0}, "Fv", from=1-2, to=2-2]
	\arrow["{Gf'}", from=1-4, to=1-5]
	\arrow[""{name=1, anchor=center, inner sep=0}, "{Gu'}"', from=1-4, to=2-4]
	\arrow["{Gv'}", from=1-5, to=2-5]
	\arrow["Fg"', from=2-1, to=2-2]
	\arrow["{Gg'}"', from=2-4, to=2-5]
	\arrow["{=}"{description}, draw=none, from=0, to=1]
\end{tikzcd}\]
    
    Hence, $F(e)=G(e')$  and $(e,e')$ is a well-defined arrow in the pullback category.
    Now, $(e, e')$ is the unique arrow making the following diagram commute:

\[\begin{tikzcd}
	{(A, B)} & {(A', B')} \\
	{(C, D)} & {(C', D')}
	\arrow["{{(f, f')}}", from=1-1, to=1-2]
	\arrow["{{(u, u')}}"', from=1-1, to=2-1]
	\arrow["{{(v, v')}}", from=1-2, to=2-2]
	\arrow["{(e, e')}", dashed, from=2-1, to=1-2]
	\arrow["{{(g,g')}}"', from=2-1, to=2-2]
\end{tikzcd}\]
This yields orthogonality of the factorization system $(\catfont{L}^\star, \catfont{R}^\star)$ on $\catfont{A} \times_\catfont{C} \catfont{B}$.  With this established, we now show that the induced pullback maps satisfy the relevant properties for $\Fact, \Factlax, \Factcolax$.

Consider the projection map $P_1\: \catfont{A} \times_\catfont{C} \catfont{B} \to \catfont{A}$.  For $(\ell_f, \ell_g) \in \catfont{L}^\star$,  we have $P_1(\ell_f, \ell_g) = \ell_f$, hence $P_1$ preserves the left class, and likewise for the right class.  Therefore, $P_1$ is a morphism of categories equipped with a factorization system, as in Definition \ref{defmorfact}.  An analogous argument applies to $P_2$.

Suppose that $F'\: \catfont{D} \to \catfont{A}$, and $G'\: \catfont{D} \to \catfont{B}$ are lax morphisms such that $F\circ F'=G\circ G'$.  By the universal property of pullbacks, there exists a unique $I\: \catfont{D} \to \catfont{A} \times_\catfont{C} \catfont{B}$ such that $F' = P_1\circ I$ and $G' = P_2\circ I$.  Explicitly, $I$ maps an object $D \in \catfont{D}$ to $(F'(D), G'(D))$, and a morphism $g\: D \to D'$ to $(F'(f), G'(f))$.  

If $F'$ and $G'$ both preserve right classes, and $u \in \catfont{R}$, we have $F'(u) \in \catfont{R}_1, G'(u) \in \catfont{R}_2$, hence $(F'(u), G'(u)) \in \catfont{R}^\star$.  Thus,  $I$ is a lax morphism of categories with an OFS. If $F'$ and $G'$ both preserve the left class as well, an identical argument shows that $I$ preserves the left class, so $I$ is a morphism of categories with an OFS.
Hence, the pullback morphisms will be strict, lax, or colax as required.

When $F'$ and $G'$ are strict, let $u$ be any morphism $u$ in $\catfont{D}$, with chosen factorization $u=r_u\circ\ell_u$. Since $P_1$ and $P_2$ are strict morphisms of categories with an OFS, $I(u)$ in $\catfont{A} \times_\catfont{C} \catfont{B}$ factors as $(F'(u), G'(u)) = (F'(r_u), G'(r_u)) \circ (F'(\ell_u), G'(\ell_u))$; therefore, $I$ preserves factorizations strictly.

Because $\catfont{Cat}$ admits 2-pullbacks, and 2-cells in $\Fact$ and $\Factlax$ are ordinary
natural transformations, the relevant 2-dimensional universal properties are satisfied.

We have shown that $\catfont{A} \times_\catfont{C} \catfont{B}$ admits an orthogonal factorization system.  Furthermore, all morphisms induced by the pullback are appropriately strict, lax, or colax. Therefore, pullbacks of strict morphisms exist in $\Fact, \Factlax$, and $\Factcolax$.
\end{proof}

\section{Double factorization systems}\label{sectionDOFS}

In this section, we propose our notion of double orthogonal factorization system. We first present some motivating examples that led us to 
this notion. We observe that some examples are more lax than others, and then focus primarily on two cases of double orthogonal factorization systems: the lax and the pseudo case. As an extension to what has been discussed in the previous section, one may also develop strict and colax versions of our theory.

We define double orthogonal factorization systems with an approach of internal category theory. We then explicitly describe them in terms of compatible orthogonal factorization systems on the arrows and the double cells of a double category. In Section~\ref{sectionmonadicity}, we will prove monadicity results for double orthogonal factorization systems, justifying our notions.

\subsection{Motivating Examples}
\begin{example}[The quartet double category on $\C$, $\mathbb{S}\mathrm{q} (\mathcal{C})$]\label{ex_quartet}

For any category $\C$, the double category $\mathbb{S}\mathrm{q} (\mathcal{C})$, introduced by Ehresmann in his original work on double categories \citep{Ehres}, has the same objects as $\C$, both the arrows and the proarrows of $\mathbb{S}\mathrm{q} (\mathcal{C})$ are the arrows of $\C$, and the double cells are the commutative squares in $\C$.
When $\C$ is a category with an OFS $(\mathcal{L},\mathcal{R})$, we can lift the factorization structure on the arrows of $\mathbb{S}\mathrm{q} (\mathcal{C})$, here drawn horizontally, to a factorization structure on the double cells of $\mathbb{S}\mathrm{q} (\mathcal{C})$ as follows.

     Given any cell  $\alpha$ of $\mathbb{S}\mathrm{q} (\mathcal{C})$ (which is completely determined by its boundary arrows), we factor the (horizontal) arrows in its boundary as $f=r_f\circ\ell_f$ and $g=r_g\circ\ell_g$ according to the factorization system on $\C$. By the functoriality of the factorization system there is a unique arrow $q$ as in the following diagram.
     
\begin{equation}\label{chosenfact-squares}
    \begin{tikzcd}
	A & B && A & {\text{Im}(f)} & B \\
	C & D && C & {\text{Im}(g)} & D
	\arrow["f", from=1-1, to=1-2]
	\arrow["u"', from=1-1, to=2-1]
	\arrow[""{name=0, anchor=center, inner sep=0}, "v", from=1-2, to=2-2]
	\arrow["\ell_f", two heads, from=1-4, to=1-5]
	\arrow[""{name=1, anchor=center, inner sep=0}, "u"', from=1-4, to=2-4]
	\arrow["r_f",hook, from=1-5, to=1-6]
	\arrow["q", dashed, from=1-5, to=2-5]
	\arrow["v", from=1-6, to=2-6]
	\arrow["g"', from=2-1, to=2-2]
	\arrow["\ell_g"', two heads, from=2-4, to=2-5]
	\arrow["r_g"', hook, from=2-5, to=2-6]
	\arrow["{{=}}"{description}, draw=none, from=0, to=1]
\end{tikzcd}\end{equation}

This shows that $\alpha$ factors into a horizontal (arrow-direction) composition of a cell $l_{\alpha}$ with (horizontal) arrows in $\mathcal{L}$, followed by a cell $r_{\alpha}$ with (horizontal) arrows from $\mathcal{R}$.
Hence, $\mathbb{S}\mathrm{q} (\mathcal{C})$ has a factorization system $(\mathcal{L}', \mathcal{R}')$ on its double cells where the left class $\mathcal{L}'$ consists of cells with morphisms from $\mathcal{L}$ as arrows and the right class $\mathcal{R}'$ consists of cells with morphisms from $\mathcal{R}$ as arrows. Both classes of cells are closed under horizontal composition.
It is not hard to show that the factorization system on the double cells is orthogonal as factorization system on the category with vertical arrows as objects and double cells as arrows (using horizontal composition). 

Thus, we have an OFS $(\mathcal{L},\mathcal{R})$ on the category of arrows $\mathbb{S}\mathrm{q} (\mathcal{C})_0$ and an OFS $(\mathcal{L}', \mathcal{R}')$ on the category of double cells $\mathbb{S}\mathrm{q} (\mathcal{C})_1$. We will now investigate how these factorization systems interact with the structure of the double category.
First note that the source, target and unit morphisms preserve the factorizations, so they are strict morphisms of categories with an OFS.
%We will now investigate how proarrow (vertical) composition interacts with the factorizations.

The vertical composition of two cells factors as follows:
\[\begin{tikzcd}
	A & B & A & {\text{Im}(f)} & B \\
	C & D & C && D \\
	E & F & E & {\text{Im}(g)} & F
	\arrow["f", from=1-1, to=1-2]
	\arrow[""{name=0, anchor=center, inner sep=0}, from=1-1, to=2-1]
	\arrow[""{name=1, anchor=center, inner sep=0}, from=1-2, to=2-2]
	\arrow[two heads, from=1-3, to=1-4]
	\arrow[from=1-3, to=2-3]
	\arrow[hook, from=1-4, to=1-5]
	\arrow[from=1-4, to=3-4]
	\arrow[from=1-5, to=2-5]
	\arrow["g", from=2-1, to=2-2]
	\arrow[""{name=2, anchor=center, inner sep=0}, from=2-1, to=3-1]
	\arrow["{=}"{description}, draw=none, from=2-2, to=2-3]
	\arrow[""{name=3, anchor=center, inner sep=0}, from=2-2, to=3-2]
	\arrow[from=2-3, to=3-3]
	\arrow[from=2-5, to=3-5]
	\arrow["h"', from=3-1, to=3-2]
	\arrow[two heads, from=3-3, to=3-4]
	\arrow[hook, from=3-4, to=3-5]
	\arrow["\alpha"{description}, draw=none, from=0, to=1]
	\arrow["\beta"{description}, draw=none, from=2, to=3]
        \arrow["\ell_{\beta\otimes\alpha}"{description}, draw=none, from=1-3, to=3-4]
        \arrow["r_{\beta\otimes\alpha}"{description}, draw=none, from=1-4, to=3-5]
\end{tikzcd}\]
where the center arrow exists by functoriality of the OFS on $\catfont{C}$.
On the other hand, the composition of the factorizations of both cells is given by:
\[\begin{tikzcd}
	A & B & A & {\text{Im}(f)} & B \\
	C & D & C & {\text{Im}(g)} & D \\
	E & F & E & {\text{Im}(h)} & F
	\arrow["f", from=1-1, to=1-2]
	\arrow[from=1-1, to=2-1]
	\arrow[from=1-2, to=2-2]
	\arrow[two heads, from=1-3, to=1-4]
	\arrow[""{name=0, anchor=center, inner sep=0}, from=1-3, to=2-3]
	\arrow[hook, from=1-4, to=1-5]
	\arrow[""{name=1, anchor=center, inner sep=0}, from=1-4, to=2-4]
	\arrow[""{name=2, anchor=center, inner sep=0}, from=1-5, to=2-5]
	\arrow["g", from=2-1, to=2-2]
	\arrow[from=2-1, to=3-1]
	\arrow["{{=}}"{description}, draw=none, from=2-2, to=2-3]
	\arrow[from=2-2, to=3-2]
	\arrow[two heads, from=2-3, to=2-4]
	\arrow[""{name=3, anchor=center, inner sep=0}, from=2-3, to=3-3]
	\arrow[hook, from=2-4, to=2-5]
	\arrow[""{name=4, anchor=center, inner sep=0}, from=2-4, to=3-4]
	\arrow[""{name=5, anchor=center, inner sep=0}, from=2-5, to=3-5]
	\arrow["h"', from=3-1, to=3-2]
	\arrow[two heads, from=3-3, to=3-4]
	\arrow[hook, from=3-4, to=3-5]
	\arrow["{{l_{\alpha}}}"{description}, draw=none, from=0, to=1]
	\arrow["{{r_{\alpha}}}"{description}, draw=none, from=1, to=2]
	\arrow["{{l_{\beta}}}"{description}, draw=none, from=3, to=4]
	\arrow["{{r_{\beta}}}"{description}, draw=none, from=4, to=5]
        \arrow["\alpha"{description},draw=none, from=1-1, to=2-2]
        \arrow["\beta"{description},draw=none, from=2-1, to=3-2]
\end{tikzcd}\]

The equalities $l_{\beta \otimes \alpha} = l_{\beta} \otimes e_{\alpha}$ and $r_{\beta \otimes \alpha} = r_{\beta} \otimes r_{\alpha}$ hold by uniqueness of the middle arrow, which is a consequence of  functoriality of the OFS. It follows that the composition of the codomains of $l_{\beta}$ and $l_{\alpha}$ is equal to the the codomain of the composition of $l_{\beta \otimes \alpha}$.

Hence, the double category $\mathbb{S}\mathrm{q} (\mathcal{C})$ has a factorization system $(\mathcal{L}', \mathcal{R}')$ on its double cells, such that $\mathcal{L}'$ and $\mathcal{R}'$ are closed under vertical composition and furthermore, with the chosen factorization for cells indicated in diagram (\ref{chosenfact-squares}), all structure arrows are strict morphisms between categories with an OFS.
\end{example}

\begin{rmk}
In our first example, all structure maps preserve the factorization systems strictly. The next  example presents a factorization system on $\mathbb{S}\mathrm{pan}(\mathcal{C})$ where vertical composition only preserves the right class. However, source, target and unit arrows are strict.
\end{rmk}

\begin{example}[The span double category on $\C$, $\mathbb{S}\mathrm{pan}(\mathcal{C})$]\label{examplespan} 

        Let $\mathcal{C}$ be a category with all pullbacks and an OFS $(\mathcal{L}, \mathcal{R})$ with chosen factorizations,
 
\[\begin{tikzcd}[ampersand replacement=\&]
	{(A} \& {B)} \& {(A} \& {\im(f)} \& {B)}
	\arrow["f", from=1-1, to=1-2]
	\arrow["{=}"{description}, draw=none, from=1-2, to=1-3]
	\arrow["{\ell_f}", two heads, from=1-3, to=1-4]
	\arrow["{r_f}", hook, from=1-4, to=1-5]
\end{tikzcd}\]
        The double category $\mathbb{S}\mathrm{pan}(\mathcal{C})$ has morphisms in $\mathcal{C}$ as arrows and spans of $\mathcal{C}$ as proarrows (proarrow composition is through chosen pullbacks); further, a double cell is defined by an arrow from the apex of its domain span to the apex of its codomain making both squares commute as in the following diagram.
\[\begin{tikzcd}\label{span-dbl-cell}
	& A && C \\
	X && Y \\
	& B && D
	\arrow["f", from=1-2, to=1-4]
	\arrow[from=2-1, to=1-2]
	\arrow["\alpha", from=2-1, to=2-3]
	\arrow[from=2-1, to=3-2]
	\arrow[from=2-3, to=1-4]
	\arrow[from=2-3, to=3-4]
	\arrow["g", from=3-2, to=3-4]
\end{tikzcd}\]

 Since $\mathcal{C}$ has an OFS, any cell in $\mathbb{S}\mathrm{pan}(\mathcal{C})$ can be factorized in the following manner. 

\[\begin{tikzcd}
	& A && C && A & {\text{Im}(f)} & C \\
	X && Y && X & {\text{Im}(\alpha)} & Y \\
	& B && D && B & {\text{Im}(g)} & D
	\arrow["f", from=1-2, to=1-4]
	\arrow[tail reversed, no head, from=1-4, to=2-3]
	\arrow["{\ell_f}", two heads, from=1-6, to=1-7]
	\arrow["{r_f}", hook, from=1-7, to=1-8]
	\arrow[from=2-1, to=1-2]
	\arrow["\alpha", from=2-1, to=2-3]
	\arrow[from=2-1, to=3-2]
	\arrow["{{{{=}}}}"{description, pos=0.6}, draw=none, from=2-3, to=2-5]
	\arrow[from=2-3, to=3-4]
	\arrow[from=2-5, to=1-6]
	\arrow["{\ell_\alpha}", two heads, from=2-5, to=2-6]
	\arrow[from=2-5, to=3-6]
	\arrow[from=2-6, to=1-7]
	\arrow["{r_\alpha}", hook, from=2-6, to=2-7]
	\arrow[from=2-6, to=3-7]
	\arrow[from=2-7, to=1-8]
	\arrow[from=2-7, to=3-8]
	\arrow["g"', from=3-2, to=3-4]
	\arrow["{\ell_g}"', two heads, from=3-6, to=3-7]
	\arrow["{r_g}"', hook, from=3-7, to=3-8]
\end{tikzcd}\]
giving rise to the factorization system on $\mathbb{S}\mathrm{pan}(\mathcal{C})_1$ where cells are in the left class $\mathcal{L}'$ when all three horizontal arrows $f$, $g$, and $\alpha$ are in $\mathcal{L}$, and cells are in the right class $\mathcal{R}'$ if all three of these arrows are in $\mathcal{R}$.
It is clear that both classes of this system are closed under horizontal composition. It is also clear that the source, target and units functors preserve the chosen factorizations. 

We now show that the right class $\mathcal{R}'$ is  closed under vertical composition.  Let $\theta$ and $\psi$ be vertically composable double cells in the right class. Composing them yields the following.

\[\begin{tikzcd}
	&& A & {A'} \\
	& X &&& Y \\
	{X \times_BX'} && B & {B'} && {Y \times_{B'} Y'} \\
	& {X'} &&& {Y'} \\
	&& C & {C'}
	\arrow["f", hook, from=1-3, to=1-4]
	\arrow[from=1-4, to=2-5]
	\arrow[from=2-2, to=1-3]
	\arrow["\theta", hook, from=2-2, to=2-5]
	\arrow[from=2-2, to=3-3]
	\arrow["{p_1}", from=3-1, to=2-2]
	\arrow["\lrcorner"{anchor=center, pos=0.125, rotate=45}, draw=none, from=3-1, to=3-3]
	\arrow["q"{description}, curve={height=-18pt}, dashed, from=3-1, to=3-6]
	\arrow["{p_2}"', from=3-1, to=4-2]
	\arrow["g"', hook, from=3-3, to=3-4]
	\arrow[from=3-4, to=2-5]
	\arrow[from=3-4, to=4-5]
	\arrow["{p'_1}"', from=3-6, to=2-5]
	\arrow["\lrcorner"{anchor=center, pos=0.125, rotate=-135}, draw=none, from=3-6, to=3-4]
	\arrow["{p'_2}", from=3-6, to=4-5]
	\arrow[from=4-2, to=3-3]
	\arrow["\psi", hook, from=4-2, to=4-5]
	\arrow[from=4-2, to=5-3]
	\arrow["h"', hook, from=5-3, to=5-4]
	\arrow[from=5-4, to=4-5]
\end{tikzcd}\]

We need to show that the center arrow $q\:X\times_BX'\to Y\times_{B'}Y'$ induced by pullback is also in $\mathcal{R}$.  To this end, we will show that it satisfies the left lifting property with respect to any morphism in $\mathcal{L}$.  Suppose we have any morphism  $e\: R \twoheadrightarrow S$ in $\mathcal{L}$ which is part of a commutative square on the left, as 
in the following diagram:

\[\begin{tikzcd}
	R && {X \times_BX'} && {X'} \\
	&&& X && B \\
	S && {Y \times_{B'} Y'} && {Y'} \\
	&&& Y && {B'}
	\arrow[from=1-1, to=1-3]
	\arrow["e"', two heads, from=1-1, to=3-1]
	\arrow[from=1-3, to=1-5]
	\arrow[from=1-3, to=2-4]
	\arrow["\lrcorner"{anchor=center, pos=0.125, rotate=45}, draw=none, from=1-3, to=2-6]
	\arrow["q", from=1-3, to=3-3]
	\arrow[from=1-5, to=2-6]
	\arrow["\psi"{pos=0.8}, hook, from=1-5, to=3-5]
	\arrow[from=2-4, to=2-6]
	\arrow["\theta"'{pos=0.3}, hook, from=2-4, to=4-4]
	\arrow["g", hook, from=2-6, to=4-6]
	\arrow["{\exists! r}", dashed, from=3-1, to=1-3]
	\arrow[from=3-1, to=3-3]
	\arrow[from=3-3, to=3-5]
	\arrow[from=3-3, to=4-4]
	\arrow["\lrcorner"{anchor=center, pos=0.125, rotate=45}, draw=none, from=3-3, to=4-6]
	\arrow[from=3-5, to=4-6]
	\arrow[from=4-4, to=4-6]
\end{tikzcd}\]

We wish to show that there exists a unique $r\: S \to X \times_B X'$ giving us two commutative triangles to make up the square.  By lifting $e$ against $\theta, g$, and $\psi$ (with squares formed by composing upper segments of the diagram appropriately), we obtain unique morphisms $f_1, f_2$, and $f_3$ as in the following diagram,

\if{false}
\[\begin{tikzcd}
	R && {X \times_BX'} && {X'} \\
	&&& X && B \\
	S && {Y \times_{B'} Y'} && {Y'} \\
	&&& Y && {B'}
	\arrow[from=1-1, to=1-3]
	\arrow["e"', two heads, from=1-1, to=3-1]
	\arrow[from=1-3, to=1-5]
	\arrow["\lrcorner"{anchor=center, pos=0.125, rotate=45}, draw=none, from=1-3, to=2-6]
	\arrow[from=1-5, to=2-6]
	\arrow["\psi"{pos=0.8}, hook, from=1-5, to=3-5]
	\arrow[from=2-4, to=2-6]
	\arrow["\theta"'{pos=0.3}, hook, from=2-4, to=4-4]
	\arrow["g", hook, from=2-6, to=4-6]
	\arrow["{f_2}", curve={height=-18pt}, dashed, from=3-1, to=1-5]
	\arrow["{f_4}"', curve={height=-6pt}, dashed, from=3-1, to=2-4]
	\arrow["{f_3}"', curve={height=-30pt}, dashed, from=3-1, to=2-6]
	\arrow[from=3-1, to=3-3]
	\arrow[from=3-3, to=3-5]
	\arrow[from=3-3, to=4-4]
	\arrow["\lrcorner"{anchor=center, pos=0.125, rotate=45}, draw=none, from=3-3, to=4-6]
	\arrow[from=3-5, to=4-6]
	\arrow[from=4-4, to=4-6]
\end{tikzcd}\]
\fi

\[\begin{tikzcd}
	R & {X'} & R & B & R & X \\
	S & {Y'} & S & {B'} & S & Y
	\arrow[from=1-1, to=1-2]
	\arrow["e"', two heads, from=1-1, to=2-1]
	\arrow["\psi", hook, from=1-2, to=2-2]
	\arrow[from=1-3, to=1-4]
	\arrow["e"', two heads, from=1-3, to=2-3]
	\arrow["g", hook, from=1-4, to=2-4]
	\arrow[from=1-5, to=1-6]
	\arrow["e"', two heads, from=1-5, to=2-5]
	\arrow["\theta", hook, from=1-6, to=2-6]
	\arrow["{f_1}", dashed, from=2-1, to=1-2]
	\arrow[from=2-1, to=2-2]
	\arrow["{f_2}", dashed, from=2-3, to=1-4]
	\arrow[from=2-3, to=2-4]
	\arrow["{f_3}", dashed, from=2-5, to=1-6]
	\arrow[from=2-5, to=2-6]
\end{tikzcd}\]
Uniqueness of each lifting, along with the pullback's universal property, guarantees a unique $r$ filling the left square as desired. 

\[\begin{tikzcd}
	S \\
	& {X \times_{B}X'} & {X'} \\
	& X & B
	\arrow["r", dashed, from=1-1, to=2-2]
	\arrow["{f_1}", curve={height=-12pt}, from=1-1, to=2-3]
	\arrow["{f_3}"', curve={height=12pt}, from=1-1, to=3-2]
	\arrow[from=2-2, to=2-3]
	\arrow[from=2-2, to=3-2]
	\arrow["\lrcorner"{anchor=center, pos=0.125}, draw=none, from=2-2, to=3-3]
	\arrow[from=2-3, to=3-3]
	\arrow[from=3-2, to=3-3]
\end{tikzcd}\] Thus, $q$ is right-orthogonal to any map in the left class $\mathcal{L}$, so $q$ is in the right class $\mathcal{R}$.

We show in Example \ref{counter-ex} that vertical composition does not preserve the left class of cells, $\mathcal{L}'$ in this case.
\end{example}

\begin{example}\label{ex:rel}[The double category $\mathbb{R}\mathrm{el}(\mathcal{C})$ of relations on any regular category $\C$] \label{examplerel} 
     
     Let $\catfont{C}$ be a regular category. Then the canonical factorization system on $\C$ has regular epimorphisms as its left class and monomorphisms as its right class.
     We assume again that a factorization $f=r_f\circ \ell_f$ has been chosen for each arrow $f$ in $\C$.
     The double category of relations on $\C$ has morphisms of $\C$ as arrows and relations in $\C$ as proarrows. 
     This means that a proarrow $R$ from $X$ to $Y$ is given by a monic arrow $R\hookrightarrow X\times Y$ in $\C$.
     A double cell 
\[\begin{tikzcd}
	X && {X'} \\
	\\
	Y && {Y'}
	\arrow["{e_0}", from=1-1, to=1-3]
	\arrow[""{name=0, anchor=center, inner sep=0}, "R"', "\shortmid"{marking}, from=1-1, to=3-1]
	\arrow[""{name=1, anchor=center, inner sep=0}, "{R'}", "\shortmid"{marking}, from=1-3, to=3-3]
	\arrow["{e_1}"', from=3-1, to=3-3]
	\arrow["\theta"{description}, draw=none, from=0, to=1]
\end{tikzcd}\]
in $\mathbb{R}\mathrm{el}(\mathcal{C})$ is given by an arrow $\theta\: R\to R'$ in $\C$ such that the following diagram commutes in $\C$:
\[\begin{tikzcd}
	R && {X\times Y} \\
	\\
	{R'} && {X'\times Y'}
	\arrow[hook, from=1-1, to=1-3]
	\arrow["\theta"', from=1-1, to=3-1]
	\arrow["{e_0\times e_1}", from=1-3, to=3-3]
	\arrow[hook, from=3-1, to=3-3]
\end{tikzcd}\]
     We introduce a factorization system for cells in $\mathbb{R}\mathrm{el}(\mathcal{C})$ as follows:
\begin{itemize}
    \item the left class consists of cells with regular epimorphisms as arrow domain and codomain and a regular epimorphism for the arrow $\theta$ between the relations.
    
\[\begin{tikzcd}
	X && {X'} & &&R && {X\times Y} \\
	\\
	Y && {Y'} &&& {R'} && {X'\times Y'}
	\arrow[""{name=0, anchor=center, inner sep=0}, "{e_0}", two heads, from=1-1, to=1-3]
	\arrow["R"', "\shortmid"{marking}, from=1-1, to=3-1]
	\arrow[""{name=1, anchor=center, inner sep=0}, "{R'}", "\shortmid"{marking}, from=1-3, to=3-3]
	\arrow[""{name=2, anchor=center, inner sep=0}, hook, from=1-6, to=1-8]
	\arrow[""{name=3, anchor=center, inner sep=0}, "\theta"', two heads, from=1-6, to=3-6]
	\arrow["{e_0\times e_1}", two heads, from=1-8, to=3-8]
	\arrow[""{name=4, anchor=center, inner sep=0}, "{e_1}"', two heads, from=3-1, to=3-3]
	\arrow[""{name=5, anchor=center, inner sep=0}, hook, from=3-6, to=3-8]
	\arrow["\theta"{description}, draw=none, from=0, to=4]
	\arrow["{\mbox{\scriptsize corresponds to}}"{description}, draw=none, from=1, to=3]
	\arrow[""{description}, draw=none, from=2, to=5]
\end{tikzcd}\]

\item the right class consists of cells with monomorphisms as arrow domain and codomain and a monomorphisms for the arrow $\theta$ between the relations.

\[\begin{tikzcd}
	X && {X'} &&& R && {X\times Y} \\
	\\
	Y && {Y'} &&& {R'} && {X'\times Y'}
	\arrow[""{name=0, anchor=center, inner sep=0}, "{m_0}", hook, from=1-1, to=1-3]
	\arrow["R"', "\shortmid"{marking}, from=1-1, to=3-1]
	\arrow[""{name=1, anchor=center, inner sep=0}, "{R'}", "\shortmid"{marking}, from=1-3, to=3-3]
	\arrow[""{name=2, anchor=center, inner sep=0}, hook, from=1-6, to=1-8]
	\arrow[""{name=3, anchor=center, inner sep=0}, "\theta"', hook, from=1-6, to=3-6]
	\arrow["{m_0\times m_1}", hook, from=1-8, to=3-8]
	\arrow[""{name=4, anchor=center, inner sep=0}, "{m_1}"', hook, from=3-1, to=3-3]
	\arrow[""{name=5, anchor=center, inner sep=0}, hook, from=3-6, to=3-8]
	\arrow["\theta"{description}, draw=none, from=0, to=4]
	\arrow["{\mbox{\scriptsize corresponds to}}"{description}, draw=none, from=1, to=3]
	\arrow[""{description}, draw=none, from=2, to=5]
\end{tikzcd}\]
\end{itemize}

  We now verify that the right class of this factorization system is closed under vertical composition.  Given cells $\theta$ and $\alpha$ in $\mathbb{R}\mathrm{el}(\mathcal{C})$, recall that their vertical composition corresponds to an arrow $\theta \otimes \alpha$ in $\C$:
\[\begin{tikzcd}
	X & {X'} \\
	&&& X & {X'} &&& {R\otimes S} & {X\times Z} \\
	Y & {Y'} \\
	&&& Z & {Z'} &&& {R'\otimes S'} & {X'\times Z'} \\
	Z & {Z'}
	\arrow[""{name=0, anchor=center, inner sep=0}, "f", from=1-1, to=1-2]
	\arrow["R"', "\shortmid"{marking}, from=1-1, to=3-1]
	\arrow["{R'}", "\shortmid"{marking}, from=1-2, to=3-2]
	\arrow[""{name=1, anchor=center, inner sep=0}, "f", from=2-4, to=2-5]
	\arrow[""{name=2, anchor=center, inner sep=0}, "{R\otimes S}"', "\shortmid"{marking}, from=2-4, to=4-4]
	\arrow[""{name=3, anchor=center, inner sep=0}, "{R'\otimes S'}", "\shortmid"{marking}, from=2-5, to=4-5]
	\arrow[""{name=4, anchor=center, inner sep=0}, hook, from=2-8, to=2-9]
	\arrow[""{name=5, anchor=center, inner sep=0}, "{\theta\otimes\alpha}"', from=2-8, to=4-8]
	\arrow["{f\times h}", from=2-9, to=4-9]
	\arrow[""{name=6, anchor=center, inner sep=0}, "g"', from=3-1, to=3-2]
	\arrow[""{name=7, anchor=center, inner sep=0}, "S"', "\shortmid"{marking}, from=3-1, to=5-1]
	\arrow[""{name=8, anchor=center, inner sep=0}, "{S'}", "\shortmid"{marking}, from=3-2, to=5-2]
	\arrow[""{name=9, anchor=center, inner sep=0}, "h"', from=4-4, to=4-5]
	\arrow[""{name=10, anchor=center, inner sep=0}, hook, from=4-8, to=4-9]
	\arrow["h"', from=5-1, to=5-2]
	\arrow["\theta"{description}, draw=none, from=0, to=6]
	\arrow["{\theta\otimes\alpha}"{description}, draw=none, from=1, to=9]
	\arrow["{\mbox{\scriptsize corresponds to}}"{description}, draw=none, from=3, to=5]
	\arrow[""{description}, draw=none, from=4, to=10]
	\arrow["\alpha"{description}, draw=none, from=7, to=8]
	\arrow["{=}"{description}, draw=none, from=3-2, to=2]
\end{tikzcd}\]
\end{example}

A morphism $t\colon R \times_Y S \to X \times Z$ is obtained by the universal property of $X \times Z$ via the diagram:

\[\begin{tikzcd}
	& R & {X \times Y} & X \\
	{R\times_YS} &&&& {X \times Z} \\
	& S & {Y \times Z} & Z
	\arrow[hook, from=1-2, to=1-3]
	\arrow[from=1-3, to=1-4]
	\arrow[from=2-1, to=1-2]
	\arrow["{\exists ! t}", dashed, from=2-1, to=2-5]
	\arrow[from=2-1, to=3-2]
	\arrow[from=2-5, to=1-4]
	\arrow[from=2-5, to=3-4]
	\arrow[hook, from=3-2, to=3-3]
	\arrow[from=3-3, to=3-4]
\end{tikzcd}\]

The image of $t$ (from the canonical OFS on $\catfont{C}$) gives a subobject $R \otimes S$ of $X \times Z$:
\[\begin{tikzcd}
	{R \times_YS} & {R \otimes S} & {X \times Z}
	\arrow["{\ell_t}", two heads, from=1-1, to=1-2]
	\arrow["t"', curve={height=18pt}, from=1-1, to=1-3]
	\arrow["{r_t}", hook, from=1-2, to=1-3]
\end{tikzcd}\]

Furthermore, by the universal property of the rightmost pullback, we have a morphism $u\: R \times_Y S \to R' \times_{Y'}S'$:

\[\begin{tikzcd}
	&& R & {R'} \\
	{X \times Z} & {R\times_Y S} &&& {R'\times_{Y'} S'} & {X' \times Z'} \\
	&& S & {S'}
	\arrow["\theta", from=1-3, to=1-4]
	\arrow[from=2-2, to=1-3]
	\arrow["t", dashed, from=2-2, to=2-1]
	\arrow[""{name=0, anchor=center, inner sep=0}, "\lrcorner"{anchor=center, pos=0.125, rotate=45}, draw=none, from=2-2, to=2-5]
	\arrow["u", curve={height=-12pt}, dashed, from=2-2, to=2-5]
	\arrow[from=2-2, to=3-3]
	\arrow[from=2-5, to=1-4]
	\arrow["{{t'}}"', dashed, from=2-5, to=2-6]
	\arrow[from=2-5, to=3-4]
	\arrow["\alpha"', from=3-3, to=3-4]
	\arrow["\lrcorner"{anchor=center, pos=0.125, rotate=-135}, draw=none, from=2-5, to=0]
\end{tikzcd}\]

\if{false}
\[\begin{tikzcd}
	&&& R & {R'} \\
	{X\times Z} & {R\cdot S} & {R\times_Y S} &&& {R'\times_{Y'}S'} & {R'.S'} & {X'\times Z'} \\
	&&& S & {S'}
	\arrow["\theta", from=1-4, to=1-5]
	\arrow["{{f\times h}}"{description}, curve={height=24pt}, from=2-1, to=2-8]
	\arrow["m"', hook', from=2-2, to=2-1]
	\arrow["{{\theta.\alpha}}", curve={height=12pt}, from=2-2, to=2-7]
	\arrow[from=2-3, to=1-4]
	\arrow["e"', two heads, from=2-3, to=2-2]
	\arrow["u"{description}, curve={height=-12pt}, dashed, from=2-3, to=2-6]
	\arrow["\lrcorner"{anchor=center, pos=0.125, rotate=45}, draw=none, from=2-3, to=2-6]
	\arrow[from=2-3, to=3-4]
	\arrow[from=2-6, to=1-5]
	\arrow["\lrcorner"{anchor=center, pos=0.125, rotate=-135}, draw=none, from=2-6, to=2-3]
	\arrow["{{e'}}", two heads, from=2-6, to=2-7]
	\arrow[from=2-6, to=3-5]
	\arrow["{{m'}}", hook, from=2-7, to=2-8]
	\arrow["\alpha"', from=3-4, to=3-5]
\end{tikzcd}\]
\fi

We now examine factorizations of cells in $\mathbb{R}\mathrm{el}(\mathcal{C})$.  We can write any double cell $\alpha$ as a morphism of spans:

\[\begin{tikzcd}
	R & {X \times Y} && X && {X'} \\
	&& R && S \\
	S & {X' \times Y'} && Y && {Y'}
	\arrow["{m_1}", hook, from=1-1, to=1-2]
	\arrow["\alpha"', from=1-1, to=3-1]
	\arrow[""{name=0, anchor=center, inner sep=0}, "{f \times g}", from=1-2, to=3-2]
	\arrow["f", from=1-4, to=1-6]
	\arrow[from=2-3, to=1-4]
	\arrow["\alpha", from=2-3, to=2-5]
	\arrow[from=2-3, to=3-4]
	\arrow[from=2-5, to=1-6]
	\arrow[from=2-5, to=3-6]
	\arrow["{m_2}"', hook, from=3-1, to=3-2]
	\arrow["g"', from=3-4, to=3-6]
	\arrow["\leadsto"{marking, allow upside down, pos=0.8}, draw=none, from=0, to=2-3]
\end{tikzcd}\]

Applying the results from \ref{examplespan}, we obtain a factorization

\[\begin{tikzcd}
	& X & {\text{Im}(f)} & {X'} \\
	R & {\text{Im}(\alpha)} & S \\
	& Y & {\text{Im}(g)} & {Y'}
	\arrow["{{l_f}}", two heads, from=1-2, to=1-3]
	\arrow["{{r_f}}", hook, from=1-3, to=1-4]
	\arrow[from=2-1, to=1-2]
	\arrow["{{l_\alpha}}"', two heads, from=2-1, to=2-2]
	\arrow[from=2-1, to=3-2]
	\arrow["{{\bar f}}", dashed, from=2-2, to=1-3]
	\arrow["{{r_\alpha}}"', hook, from=2-2, to=2-3]
	\arrow["{{\bar g}}"', dashed, from=2-2, to=3-3]
	\arrow[from=2-3, to=1-4]
	\arrow[from=2-3, to=3-4]
	\arrow["{{l_g}}"', two heads, from=3-2, to=3-3]
	\arrow["{{r_g}}"', hook, from=3-3, to=3-4]
\end{tikzcd}\]

where $\bar f , \bar g$ exist by functoriality of the OFS on $\catfont{C}$.  This yields double cells:

\[\begin{tikzcd}
	R & {X \times Y} && {\text{Im}(\alpha)} & {\text{Im}(f) \times \text{Im}(g)} \\
	{\text{Im}(\alpha)} & {\text{Im}(f) \times \text{Im}(g)} && S & {X' \times Y'}
	\arrow[hook, from=1-1, to=1-2]
	\arrow["{l_\alpha}"', two heads, from=1-1, to=2-1]
	\arrow["{l_f \times l_g}", two heads, from=1-2, to=2-2]
	\arrow[hook, from=1-4, to=1-5]
	\arrow["{r_\alpha}"', hook, from=1-4, to=2-4]
	\arrow["{r_f \times r_g}", hook, from=1-5, to=2-5]
	\arrow[hook, from=2-1, to=2-2]
	\arrow[hook, from=2-4, to=2-5]
\end{tikzcd}\]

Hence, we can write any double cell in $\mathbb{R}\mathrm{el}(\mathcal{C})$ as a cell whose arrows are in the right class of $\catfont{C}$, followed by a cell whose arrows are in the left class.  Vertical closure of the right class follows the same argument as in \ref{examplespan}.  The left class, however, is not vertically closed, as we shall demonstrate later.

Next, we examine orthogonality.  Specifically, we show that there exists $v=\theta\otimes\alpha\: R\otimes S \to R'\otimes S' $ which makes the following commute: 

\[\begin{tikzcd}
	{R\otimes S} && {X\times Z} \\
	\\
	{R'\otimes S'} && {X'\times Z'}
	\arrow["r_t", hook, from=1-1, to=1-3]
	\arrow["{v}"', dashed, from=1-1, to=3-1]
	\arrow["{f\times h}", hook, from=1-3, to=3-3]
	\arrow["{r_{t'}}"', hook, from=3-1, to=3-3]
\end{tikzcd}\]

Consider the diagram:

\[\begin{tikzcd}
	{R \times_Y S} & {R' \times_Y' S'} & {R' \otimes S'} \\
	{R  \otimes S} & {X \times Z} & {X' \times Z'}
	\arrow["u", from=1-1, to=1-2]
	\arrow["\ell_t"', two heads, from=1-1, to=2-1]
	\arrow["{{{\ell_{t'}}}}", two heads, from=1-2, to=1-3]
	\arrow["r_{t'}", hook, from=1-3, to=2-3]
	\arrow["v", dashed, from=2-1, to=1-3]
	\arrow["r_t"', hook, from=2-1, to=2-2]
	\arrow["{{{f \times h}}}"',hook, from=2-2, to=2-3]
\end{tikzcd}\]

By orthogonality of the factorization system on $\catfont{C}$, there exists a unique $v$ commuting with the upper and lower triangles as required.  Furthermore, $v$ is a monomorphism because it is postcomposed with a monomorphism.  Thus, the right class of cells we have described in $\mathbb{R}\mathrm{el}(\catfont{C})$ is closed under vertical composition.

It is not always the case that the vertical composition preserves the left class, as per the next example.

\begin{example}\label{counter-ex}
    Consider the following diagram of double cells in $\mathbb{S}\mathrm{pan}(\mathcal{C})$. 
\[\begin{tikzcd}
	{\{s_0\}} && {\{*\}} && {\{s_0\}} && {\{*\}} \\
	{\{(s_0,t_0)\}} && {\{*\}} \\
	{\{t_0,t_1\}} && {\{*\}} && \varnothing && {\{(*,*)\}} \\
	{\{(t_1,s_1)\}} && {\{*\}} \\
	{\{s_1\}} && {\{*\}} && {\{s_1\}} && {\{*\}}
	\arrow[from=1-1, to=1-3]
	\arrow[from=1-5, to=1-7]
	\arrow["{\pi_1}", from=2-1, to=1-1]
	\arrow[from=2-1, to=2-3]
	\arrow["{\pi_2}"', from=2-1, to=3-1]
	\arrow[from=2-3, to=1-3]
	\arrow[from=2-3, to=3-3]
	\arrow[from=3-1, to=3-3]
	\arrow[from=3-5, to=1-5]
	\arrow[from=3-5, to=3-7]
	\arrow[from=3-5, to=5-5]
	\arrow[from=3-7, to=1-7]
	\arrow[from=3-7, to=5-7]
	\arrow["{\pi_1}", from=4-1, to=3-1]
	\arrow[from=4-1, to=4-3]
	\arrow["{\pi_2}"', from=4-1, to=5-1]
	\arrow[from=4-3, to=3-3]
	\arrow[from=4-3, to=5-3]
	\arrow[from=5-1, to=5-3]
	\arrow[from=5-5, to=5-7]
\end{tikzcd}\]
    On the left side we have two vertically  composable cells that are both in the left class $\mathcal{L}'$.
    However, their vertical composition, given on the right is clearly not in $\mathcal{L}'$: the arrow $\varnothing\rightarrow \{(*,*)\}$ is clearly not surjective.
   
\end{example}

\begin{example}{(The cospan double category on $\mathcal{C}$, $\Cospan(\mathcal{C})$).}
    Let $\mathcal{C}$ be a category with pushouts and an orthogonal factorization system $(\mathcal{L}, \mathcal{R})$.  Consider the double category $\Cospan(\mathcal{C})$, whose cells are given by commutative diagrams of the form:

\[\begin{tikzcd}
	& A && C \\
	X && Y \\
	& B && D
	\arrow["f", from=1-2, to=1-4]
	\arrow[from=1-2, to=2-1]
	\arrow[from=1-4, to=2-3]
	\arrow["\theta", from=2-1, to=2-3]
	\arrow[from=3-2, to=2-1]
	\arrow["g"', from=3-2, to=3-4]
	\arrow[from=3-4, to=2-3]
\end{tikzcd}\]

Using the underlying OFS on $\mathcal{C}$, we can factorize any cell in the following manner, with $\bar{f}, \bar{g}$ existing by functoriality of the OFS on $\mathcal{C}$:

\[\begin{tikzcd}
	& A && C && A & {\im(f)} & C \\
	X && Y && X & {\im(\theta)} & Y \\
	& B && D && B & {\im(g)} & D
	\arrow["f", from=1-2, to=1-4]
	\arrow[from=1-2, to=2-1]
	\arrow[from=1-4, to=2-3]
	\arrow["{l_f}", two heads, from=1-6, to=1-7]
	\arrow[from=1-6, to=2-5]
	\arrow["{r_f}", hook, from=1-7, to=1-8]
	\arrow["{{\bar{f}}}"', dashed, from=1-7, to=2-6]
	\arrow[from=1-8, to=2-7]
	\arrow["\theta", from=2-1, to=2-3]
	\arrow["{{{=}}}"{description}, draw=none, from=2-3, to=2-5]
	\arrow["{l_\theta}", two heads, from=2-5, to=2-6]
	\arrow["{r_\theta}", hook, from=2-6, to=2-7]
	\arrow[from=3-2, to=2-1]
	\arrow["g"', from=3-2, to=3-4]
	\arrow[from=3-4, to=2-3]
	\arrow[from=3-6, to=2-5]
	\arrow["{l_g}"', two heads, from=3-6, to=3-7]
	\arrow["{{\bar{g}}}", dashed, from=3-7, to=2-6]
	\arrow["{r_g}"', hook, from=3-7, to=3-8]
	\arrow[from=3-8, to=2-7]
\end{tikzcd}\]

Using a dual argument to the case of spans, it can be shown that the left class of double cells (whose arrows are in $\mathcal{R}$) is closed under vertical composition.  However, the right class need not be closed, as the following example in $\Cospan(\Set)$ shall demonstrate.  Recall that the pushout of $A \xleftarrow{\mathit{f}} X \xrightarrow{\mathit{g}} B$ in $\operatorname{Set}$ is given by $A \sqcup B / \sim$, where $\sqcup$ is disjoint union, and \[\forall x \in X, (f(x), A) \sim (g(x),B) \] 

Consider the vertical composition of cells $\theta, \varphi$ (where $P$ and $Q$ are pushouts):
{\footnotesize
\[\begin{tikzcd}
	& X & {X'} \\
	&& {A=\{x,y\}} & {\{x,y\}} \\
	{P=\{x_A \sim x_B, y_A \sim y_B, z_B\}} & {\{x,y\}} & {\{x,y,z\}} && {\{x_A \sim x_B \sim y_A \sim y_B, z_B\}=Q} \\
	&& {B=\{x,y,z\}} & {\{x,y,z\}} \\
	& Z & {Z'}
	\arrow[hook, from=1-2, to=1-3]
	\arrow[from=1-2, to=2-3]
	\arrow[from=1-3, to=2-4]
	\arrow["{\operatorname{id}}", hook, from=2-3, to=2-4]
	\arrow[curve={height=6pt}, from=2-3, to=3-1]
	\arrow[curve={height=-6pt}, from=2-4, to=3-5]
	\arrow["\lrcorner"{anchor=center, pos=0.125, rotate=45}, draw=none, from=3-1, to=3-2]
	\arrow[from=3-2, to=2-3]
	\arrow[hook, from=3-2, to=3-3]
	\arrow[from=3-2, to=4-3]
	\arrow["\begin{array}{c} \begin{array}{c} \begin{array}{c} \begin{array}{c} \substack{f'(x)=x, \\ f'(y)=f'(z)=y} \end{array} \end{array} \end{array} \end{array}"'{pos=1}, from=3-3, to=2-4]
	\arrow["\begin{array}{c} \begin{array}{c} \begin{array}{c} \begin{array}{c} \substack{g'(x)=g'(y)=x \\ g'(z)=y} \end{array} \end{array} \end{array} \end{array}"{pos=1}, from=3-3, to=4-4]
	\arrow["\lrcorner"{anchor=center, pos=0.125, rotate=-135}, draw=none, from=3-5, to=3-3]
	\arrow[curve={height=-6pt}, from=4-3, to=3-1]
	\arrow["{\operatorname{id}}"', hook, from=4-3, to=4-4]
	\arrow[curve={height=6pt}, from=4-4, to=3-5]
	\arrow[from=5-2, to=4-3]
	\arrow[hook, from=5-2, to=5-3]
	\arrow[from=5-3, to=4-4]
\end{tikzcd}\]
}
There exists a morphism from $P$ to $Q$ induced by the pushout's universal property. However, this morphism cannot be injective, as $P$ is larger than $Q$. Therefore, the vertical composition of $\varphi$ and $\theta$ does not have a monomorphism as its center arrow, and is not in the right class.
\end{example}

\begin{example}\label{ex:PathC}[The double category of paths in $\mathcal{C}$]
    For a category $\mathcal{C}$, we define the double category $\dpath \,\C$, following the construction and notation in \citep{path-double}, such that the objects are the objects of $\C$, horizontal arrows are morphisms of $\C$, vertical arrows are paths in $\C$, i.e., composable sequences of arrows, 
\[\begin{tikzcd}[sep=scriptsize]
	{A_0} & {A_1} & {A_2} & \cdots & {A_{m-1}} & {A_m}
	\arrow["{f_1}", from=1-1, to=1-2]
	\arrow["{f_2}", from=1-2, to=1-3]
	\arrow[from=1-3, to=1-4]
	\arrow[from=1-4, to=1-5]
	\arrow["{f_m}", from=1-5, to=1-6]
\end{tikzcd}\]
We denote such a path by $\langle f_i\rangle_{1\le i \le m}$. To define the double cells in this double category, we will use the following notation for arrows in a path: 
For any $i,j \in \{1,\cdots, m\}$, such that $i\le j$, we denote the composition of the arrows in the path between $A_i$ and $A_j$ by 
\begin{equation}
f_j^i = \begin{cases}
    f_jf_{j-1}\cdots f_{i+1} & \text{if } i< j\\ \mathrm{id_{A_i}}& \text{if } i=j
\end{cases}
\end{equation}
We will also write $[m+1]$ for the set $\{0,1,\ldots,m\}$.

A double cell with boundary
\[\begin{tikzcd}
	{A_0} & {B_0} \\
	\\
	{A_m} & {B_n}
	\arrow["k", from=1-1, to=1-2]
	\arrow["{\langle f_i\rangle_{1\le i\le m}}"', from=1-1, to=3-1]
	\arrow["{\langle g_j\rangle_{1\le j\le n}}", from=1-2, to=3-2]
	\arrow["\ell"', from=3-1, to=3-2]
\end{tikzcd}\]
is then given by a pair $(\varphi, \langle h_i \rangle_{0\le i \le m})$ where
$$\varphi \: [m+1] \to [n+1]$$ is an order preserving function with $\varphi(0) = 0$ and $\varphi(m) = n$, and the $h_i \: A_i\to B_{\varphi(i)}$ are morphisms of $\C$ such that $h_0=k$, $h_m=\ell$ and for every $i \in [m+1]$, the diagram \\
\[\begin{tikzcd}[sep=scriptsize]
	{A_{i-1}} && {B_{\varphi(i-1)}} \\
	{A_i} && {B_{\varphi(i)}}
	\arrow["{h_{i-1}}", from=1-1, to=1-3]
	\arrow["{f_i}"', from=1-1, to=2-1]
	\arrow["{g^{\varphi(i-1)}_{\varphi(i)}}", from=1-3, to=2-3]
	\arrow["{=}"{marking, allow upside down}, shift left, draw=none, from=2-1, to=1-3]
	\arrow["{h_i}"', from=2-1, to=2-3]
\end{tikzcd}\]\\
commutes. These double cells are called {\em fences} in \citep{path-double}. An example of a double cell of $\dpath\,\C$ is given by,

\begin{equation}\label{ex_path_cell}
\begin{tikzcd}[sep=scriptsize]
	{A_0} & {B_0} \\
	{A_1} & {B_1} \\
	{A_2} & {B_2} \\
	{A_3} & {B_3} \\
	& {B_4}
	\arrow["{h_0}", from=1-1, to=1-2]
	\arrow["{f_1}"', from=1-1, to=2-1]
	\arrow["{=}"{marking, allow upside down}, draw=none, from=1-2, to=2-1]
	\arrow["{g_1}", from=1-2, to=2-2]
	\arrow[""{name=0, anchor=center, inner sep=0}, "{h_1}", from=2-1, to=2-2]
	\arrow["{f_2}"', from=2-1, to=3-1]
	\arrow["{g_2}", from=2-2, to=3-2]
	\arrow["{=}"{marking, allow upside down, pos=0.7}, shift left=3, draw=none, from=2-2, to=4-1]
	\arrow["{h_2}"', from=3-1, to=2-2]
	\arrow["{f_3}"', from=3-1, to=4-1]
	\arrow["{g_3}", from=3-2, to=4-2]
	\arrow["{h_3}"', from=4-1, to=5-2]
	\arrow["{g_4}", from=4-2, to=5-2]
	\arrow["{=}"{marking, allow upside down}, draw=none, from=3-1, to=0]
\end{tikzcd}\end{equation}
Vertical composition of these cells is defined by concatenation. Horizontal composition is defined by following the threads of the cross arrows and composing:
$(\psi,k_j)(\varphi,h_i)=(\psi\varphi,k_{\varphi(i)}h_i)$.
It is straightforward to check that the double category $\dpath\,\C$ is strict. 

Suppose that the category $\C$ has an OFS $(\cL,\cR)$ and $(\varphi, \langle h_i \rangle_{1\le i \le m})$ is a fence; i.e., a double cell in $\dpath\,\C$. We may than choose to take the chosen factorization in $(\cL,\cR)$ for each of the cross arrows $h_i$, $h_i=r_{h_{i}}\circ\ell_{h_i}$. We can extend this to a factorization of the fence by observing that  for all $i\in \{1, \cdots, m\}$, the commuting squares making up the fence can be factored as indicated (see example \ref{ex_quartet}):

\[\begin{tikzcd}[sep=small]
	{A_{i-1}} && {B_{\varphi(i-1)}} && {A_{i-1}} && {\mathrm{Im}(h_{i-1})} && {B_{\varphi(i-1)}} \\
	&&& {=} \\
	{A_i} && {B_{\varphi(i)}} && {A_i} && {\text{Im}(h_i)} && {B_{\varphi(i)}}
	\arrow["{h_{i-1}}", from=1-1, to=1-3]
	\arrow["{f_i}"', from=1-1, to=3-1]
	\arrow["{g^{\varphi(i-1)}_{\varphi(i)}}", from=1-3, to=3-3]
	\arrow["{l_{h_{i-1}}}", two heads, from=1-5, to=1-7]
	\arrow["{f_i}"', from=1-5, to=3-5]
	\arrow["{r_{h_{i-1}}}", hook, from=1-7, to=1-9]
	\arrow["{u_i}", dashed, from=1-7, to=3-7]
	\arrow["{g^{\varphi(i-1)}_{\varphi(i)}}", from=1-9, to=3-9]
	\arrow["{=}"{marking, allow upside down}, shift left, draw=none, from=3-1, to=1-3]
	\arrow["{h_i}"', from=3-1, to=3-3]
	\arrow["{=}"{marking, allow upside down}, draw=none, from=3-5, to=1-7]
	\arrow["{l_{h_{i}}}"', two heads, from=3-5, to=3-7]
	\arrow["{=}"{marking, allow upside down}, draw=none, from=3-7, to=1-9]
	\arrow["{r_{h_{i}}}"', hook, from=3-7, to=3-9]
\end{tikzcd}\]
The morphisms $u_i$ exist and are unique because the factorisation $(\cL, \cR)$ is orthogonal. 
Hence, every double cell $(\varphi, \langle h_i \rangle_{0\le i \le m})$ factors as the composition of a (left) double cell $(\varphi_l =\mathrm{id}_{[m+1]}, \langle l_{h_i}\rangle_{1\le i\le m})$  from $\langle f_i\rangle_{1\le i\le m}$ to $\langle u_i\rangle_{1\le i\le m}$ and a (right) double cell $(\varphi_r= \varphi, \langle r_{h_i}\rangle_{0\le i\le m})$ from $\langle u_i\rangle_{1\le i\le m}$ to $\langle g_i\rangle_{1\le i\le n}$.  For example, the double cell \ref{ex_path_cell} factorises as follows.

\[\begin{tikzcd}[sep=scriptsize]
	{A_0} && {\mathrm{Im(h_0)}} && {B_0} \\
	{A_1} && {\mathrm{Im(h_1)}} && {B_1} \\
	{A_2} && {\mathrm{Im(h_2)}} && {B_2} \\
	{A_3} && {\mathrm{Im(h_3)}} && {B_3} \\
	&&&& {B_4}
	\arrow["{l_{h_0}}", two heads, from=1-1, to=1-3]
	\arrow["{f_1}"', from=1-1, to=2-1]
	\arrow["{r_{h_0}}", hook, from=1-3, to=1-5]
	\arrow["{u_1}"', dashed, from=1-3, to=2-3]
	\arrow["{g_1}", from=1-5, to=2-5]
	\arrow["{l_{h_1}}", two heads, from=2-1, to=2-3]
	\arrow["{f_2}"', from=2-1, to=3-1]
	\arrow["{r_{h_1}}", hook, from=2-3, to=2-5]
	\arrow["{u_2}"', dashed, from=2-3, to=3-3]
	\arrow["{g_2}", from=2-5, to=3-5]
	\arrow["{l_{h_2}}"', two heads, from=3-1, to=3-3]
	\arrow["{f_3}"', from=3-1, to=4-1]
	\arrow["{r_{h_2}}"', hook, from=3-3, to=2-5]
	\arrow["{u_3}"', dashed, from=3-3, to=4-3]
	\arrow["{g_3}", from=3-5, to=4-5]
	\arrow["{l_{h_3}}"', two heads, from=4-1, to=4-3]
	\arrow["{r_{h_3}}"', hook, from=4-3, to=5-5]
	\arrow["{g_4}", from=4-5, to=5-5]
\end{tikzcd}\]
Note that double cells where the indexing map $\varphi$ is the identity are called {\em neat fences} in \citep{path-double}.

This leads us to introduce the  factorization system $(\cL',\cR')$ on the double cells of $\dpath\,\C$ where $\cL'$ 
is the class of neat fences $(\mathrm{id}_{[m+1]}, \langle l_{h_i}\rangle_{0\le i\le m})$ with cross arrows  $h_i\in\cL$ and $\cR'$ is the class of fences $(\varphi, \langle l_{h_i}\rangle_{0\le i\le m})$ where all $h_i$ are in $\cR$. In short, $\cL'$ consists of neat $\cL$-fences and $\cR'$ consists of arbitrary $\cR$-fences.
It is clear that both $\cL'$ and $\cR'$ contain all horizontally invertible double cells, so in order to prove that 
$(\cL',\cR')$ is an OFS, it is sufficient to check the unique lifting property.
So consider a horizontal commutative square of fences,

\begin{equation}\label{fence-lifting}
\begin{tikzcd}
	{\langle f_i\rangle_{1\le i\le m}} && {\langle g_i\rangle_{1\le i\le n}} \\
	{\langle f'_i\rangle_{1\le i\le m}} && {\langle g_i'\rangle_{1\le i\le n'}}
	\arrow["{(\varphi,\langle  h_i\rangle_{0\le i\le m})}", from=1-1, to=1-3]
	\arrow["{(\mathrm{id}_{[m+1]},\langle\ell_i\rangle_{0\le i\le m})}"', from=1-1, to=2-1]
	\arrow["{(\psi,\langle  r_i\rangle_{0\le i\le n})}", from=1-3, to=2-3]
	\arrow["D"{description}, dashed, from=2-1, to=1-3]
	\arrow["{(\varphi',\langle  h'_i\rangle_{0\le i\le m})}"', from=2-1, to=2-3]
\end{tikzcd}
\end{equation}
where the arrows $\ell_i$ in the left fence are in $\cL$ and the arrows $r_i$ in the right fence are in $\cR$.
We want to show that there is a unique fence $D$ providing the diagonal lifting.
First note that by the commutativity of the square, we have that for each $i\in[m+1]$, there is a commutative square in $\C$,

\begin{equation}\label{lifting-in-C}
\begin{tikzcd}
	{A_i} & {B_{\varphi(i)}} \\
	{A'_i} & {B'_{\varphi'(i)}}
	\arrow["{h_i}", from=1-1, to=1-2]
	\arrow["{\ell_i}"', from=1-1, to=2-1]
	\arrow["{r_{\varphi(i)}}", from=1-2, to=2-2]
	\arrow["{d_i}"', dashed, from=2-1, to=1-2]
	\arrow["{h_i'}"', from=2-1, to=2-2]
\end{tikzcd}
\end{equation}
Since $\ell_i\in\cL$ and $r_{\varphi(i)}\in\cR$, there is a unique arrow $d_i\in\C$ making this diagram commute. Now these arrows together define a fence $D=(\varphi,\langle d_i\rangle_{0\le i\le m})$
which forms a diagonal lifting for diagram (\ref{fence-lifting}).
To show its uniqueness, let $D'=(\theta,\langle d_i'\rangle_{0\le i\le m})$ be another lifting for this diagram.
This implies that $\theta=\varphi$, because the top triangle commutes. So each arrow $d_i'$ is of the form $d_i'\colon A'_i\to B_{\varphi(i)}$ and the way composition of fences is defined this implies that $d_i'$ is a lifting for (\ref{lifting-in-C}).
Hence, $d_i=d_i'$ and hence $D'=D$: (\ref{fence-lifting}) has a unique diagonal lifting and we conclude that $(\cL',\cR')$ is an OFS for the double cells of $\dpath\,\C$.

Since vertical composition is defined by concatenation, it is clear that it preserves both the left and the right class of cells in this case. Furthermore, it preserves the chosen factorizations, so it is a strict map of orthogonal factorization systems.
\end{example}

\begin{rmk}
We have presented examples of cell-wise factorization systems, suitably compatible with a factorization system on the arrows, on three common families of double categories.  
\begin{enumerate}
    \item Note that in all cases the source, target and unit functors are strict morphisms of categories with an OFS.  In light of Proposition \ref{propfacthaspullbacks} we will see that this is essential in both the internal category and the monadic approach toward double factorization systems. 
    \item Vertical composition can be allowed to be a strict, lax, colax or pseudo morphism of categories with an OFS. We have seen here an example of two double categories with a strict vertical composition morphisms (quartets and paths), two  with a lax vertical composition (the examples of relations and spans) and one with a  colax vertical composition (the example of copans). 
    \item These examples also lead us to ask whether there may be more than one factorization system on the double cells of a double category that `fits' with a given OFS on the arrows. We will discuss this in Proposition \ref{prop:ofsExtensions} and Theorem \ref{thm:comparaisonCartDOFS}. In particular we will see that $\mathbb{R}\mathrm{el}(\mathcal{C})$ has another DOFS with the same OFS on the arrows, and  $\mathbb{S}\mathrm{pan}(\mathcal{C})$ has at least three other factorization systems and we will discuss the ways the factorization systems are related to each other.
\end{enumerate} 
\end{rmk}

\subsection{Internal definition}
We will define the notion of a double orthogonal factorization system (DOFS) (respectively a lax DOFS) as a normal pseudo-category internal to  $\Fact$ (respectively $\Factlax$), with strict source, target and identity assignment. (Note that one can define a colax DOFS in a similar way, but we will leave this to the reader.) Recall from Section~\ref{sectioncategoriesoffs} the definition of these 2-categories of categories with an orthogonal factorization system, and the result (\prox\ref{propfacthaspullbacks}) that they have the pullbacks we need below.

We first recall the definition of pseudo-category internal to a 2-category from \citep{ferreira2006pseudo}.

\begin{definition}\label{recallinternalcats}
    Let $\K$ be a 2-category where all pullbacks depicted below exist.  A \dfn{pseudo-category $\dC$ internal to $\K$} consists of objects $C_0$ and $C_1$ of $\K$ (called the object of objects and the object of arrows, respectively) together with morphisms $\src$, $\tgt$, $i$ and $\otimes$ (source, target, identity and composition, respectively) as below
    \[\begin{tikzcd}
	{C_1\times_{C_0}C_1} & {C_1} & {C_0}
	\arrow["\otimes", from=1-1, to=1-2]
	\arrow["{\operatorname{src}}", shift left=2, from=1-2, to=1-3]
	\arrow["{\operatorname{tgt}}"', shift right=2, from=1-2, to=1-3]
	\arrow["i"{description}, from=1-3, to=1-2]
    \end{tikzcd}\]
    and invertible 2-cells $\alpha, \lambda$, and $\rho$:
\[\begin{tikzcd}
	{C_1 \times_{C_0}C_1 \times _{C_0} C_1} & {C_1 \times_{C_0}C_1} \\
	{C_1 \times_{C_0} C_1} & {C_1}
	\arrow["{{1 \times \otimes}}", from=1-1, to=1-2]
	\arrow["{{{\otimes \times 1}}}"', from=1-1, to=2-1]
	\arrow["\alpha"', Rightarrow, from=1-2, to=2-1, shorten <=3.4ex, shorten >=2.9ex]
	\arrow["\otimes", from=1-2, to=2-2]
	\arrow["\otimes"', from=2-1, to=2-2]
\end{tikzcd}\]
\[\begin{tikzcd}
	{C_1} & {C_1\times_{C_0} C_1} & {C_1} \\
	& {C_1}
	\arrow["{{{{ \left< i,1 \right>}}}}", from=1-1, to=1-2]
	\arrow[""{name=0, anchor=center, inner sep=0}, "{\operatorname{id_{C_1}}}"', curve={height=6pt}, from=1-1, to=2-2]
	\arrow["\otimes"{description}, from=1-2, to=2-2]
	\arrow["{{{{ \left< 1,i \right>}}}}"', from=1-3, to=1-2]
	\arrow[""{name=1, anchor=center, inner sep=0}, "{\operatorname{id_{C_1}}}", curve={height=-6pt}, from=1-3, to=2-2]
	\arrow["\rho", shorten >=4pt, Rightarrow, from=1-2, to=1]
	\arrow["\lambda"', shorten >=3pt, Rightarrow, from=1-2, to=0]
\end{tikzcd}\]

The above cells are further required to satisfy  the usual  coherence conditions, along with the usual equalities for arrow composition (with several weakened by isomorphisms satisfying further coherence conditions).  These requirements are fully listed in \citep{ferreira2006pseudo}.  When the 2-cells $\alpha, \lambda$, and $\rho$ are identities, we say that the pseudo-category internal to $\K$ is \dfn{strict}, or a \dfn{category internal to $\K$}.
We will assume pseudo-categories to be normal; i.e, with $\lambda$ and $\rho$ identities.
\end{definition}

In this paper we will work with a 2-category of pseudo-categories internal to a 2-category $\K$. The morphisms will be the internal lax functors as described in the next definition.

\begin{definition}{}
    Suppose that $\mathbb{C}, \mathbb{D}$ are normal pseudo-categories internal to a 2-category $\catfont{K}$.  An internal lax functor $F: \mathbb{C} \to \mathbb{D}$ consists of the following:
    \begin{itemize}
        \item Morphisms $F_0\: C_0 \to D_0$ and $F_1\: C_1 \to D_1$ in $\K$;
        \item 2-Cells $\varphi, \theta$ in $\K$ as follows:
        \[\begin{tikzcd}
	{C_1 \times_{C_0}C_1} & {C_1} && {C_0} & {C_1} \\
	{D_1 \times_{D_0}D_1} & {D_1} && {D_0} & {D_1}
	\arrow["\otimes", from=1-1, to=1-2]
	\arrow[""{name=0, anchor=center, inner sep=0}, "{F_1 \times_{F_0}F_1}"', from=1-1, to=2-1]
	\arrow[""{name=1, anchor=center, inner sep=0}, "{F_1}", from=1-2, to=2-2]
	\arrow["{i}", from=1-4, to=1-5]
	\arrow[""{name=2, anchor=center, inner sep=0}, "{F_0}"', from=1-4, to=2-4]
	\arrow[""{name=3, anchor=center, inner sep=0}, "{F_1}", from=1-5, to=2-5]
	\arrow["\otimes"', from=2-1, to=2-2]
	\arrow["{i}"', from=2-4, to=2-5]
	\arrow["\varphi", shorten <=8pt, shorten >=8pt, Rightarrow, from=0, to=1]
	\arrow["\theta", shorten <=6pt, shorten >=6pt, Rightarrow, from=2, to=3]
\end{tikzcd}\]
satisfying several coherence conditions (see \citep{ferreira2006pseudo}).
        \end{itemize}
\end{definition}
When $\theta$ is an identity, $F$ is said to be \emph{unitary} or \emph{normal}. When $\phi$ and $\theta$ are isomorphisms, $F$ is said to be a pseudofunctor. Oplax notions of internal functors are obtained by dualizing appropriately.
\begin{definition}
    Given lax internal functors $F,G\: \mathbb{C} \to \mathbb{D}$, an internal, or levelwise, natural transformation $\eta\: F \Rightarrow G$ consists of 2-cells $\eta_0\: F_0 \Rightarrow G_0, \eta_1\: F_1 \Rightarrow G_1$ satisfying the coherence conditions in Section 3.2 of \citep{ferreira2006pseudo}.
\end{definition}

\begin{notation}
    Given a 2-category $\K$, we denote by $\Catint+[p]{\K}$ (respectively, $\Catint+[l]{\K}$, $\Catint+[o]{\K}$) the 2-category of normal (i.e, unitary) pseudo-categories internal to $\K$, normal internal pseudo (respectively, normal lax, normal oplax) functors and internal natural transformations. It is straightforward to show that these are indeed 2-categories.
\end{notation}

\begin{definition}\label{def:internalDOFS}
    A strict (resp.~lax, resp.~colax) double orthogonal factorization system (DOFS) is a pseudo-category internal to the 2-category $\Fact$ (resp.~$\Factlax$; 
    resp.~$\Factcolax$), such that $\src$, $\tgt$ and $i$ are strict morphisms of categories with an orthogonal factorization system.
\end{definition}

\begin{remark}
    Thus, a (lax) DOFS $(\dL,\dR)$ on a double category $\dC$ amounts to an OFS $(\cL_0,\cR_0)$ for horizontal arrows (with chosen factorizations), along with left and right classes of double cells which form an OFS $(\cL_1,\cR_1)$ with respect to horizontal composition of cells (viewed as arrows of $\mathcal{C}_1$ in \ref{def:internalDOFS}) and chosen factorizations of the cells.  These factorization systems are compatible, in the sense that source and target functors preserve the chosen factorizations. The identity cell on a horizontal arrow of the left class (respectively, right class) is in the left class (respectively, right class) and the chosen factorizations on identity cells for horizontal arrows consist of the identity cells for the chosen factorization of the horizontal arrow. In a strict DOFS, vertical composition preserves the chosen factorizations of the cells; for a pseudo DOFS, both classes of cells are closed under vertical composition; in the lax case, only the right class is vertically closed, and in the colax case only the left class is closed.  To make this more explicit, we offer a cell-wise definition of a DOFS, and show that it is equivalent to the internal one.
\end{remark}

\begin{proposition}\label{prop:explicitDOFS}
    A DOFS
 is equivalent to a double category $\dD$ with a pair of classes of cells $(\dL,\dR)$ satisfying: 
    \begin{enumerate}
        \item $\dL$ and $\dR$ are closed under horizontal composition, and contain all the horizontally invertible cells;
        \item $\dL$ and $\dR$ are closed under vertical composition;
        \item for any cell in either class, the source and target unit cells are also contained in the respective class; e.g., if $\alpha\in \dL$, then $i(\src(\alpha))\in\dL$.
        \item For every cell $\varphi$ there are chosen cells $\ell_\varphi\in \dL$ and $r_\varphi\in\dR$ such that
        \begin{enumerate}
        \item  
        \[\begin{tikzcd}
	\bullet & \bullet & \bullet & \bullet & \bullet \\
	\bullet & \bullet & \bullet & \bullet & \bullet
	\arrow[from=1-1, to=1-2]
	\arrow[""{name=0, anchor=center, inner sep=0}, "\shortmid"{marking}, from=1-1, to=2-1]
	\arrow[""{name=1, anchor=center, inner sep=0}, "\shortmid"{marking}, from=1-2, to=2-2]
	\arrow[from=1-3, to=1-4]
	\arrow[""{name=2, anchor=center, inner sep=0}, "\shortmid"{marking}, from=1-3, to=2-3]
	\arrow[from=1-4, to=1-5]
	\arrow[""{name=3, anchor=center, inner sep=0}, "\shortmid"{marking}, from=1-4, to=2-4]
	\arrow[""{name=4, anchor=center, inner sep=0}, "\shortmid"{marking}, from=1-5, to=2-5]
	\arrow[from=2-1, to=2-2]
	\arrow[from=2-3, to=2-4]
	\arrow[from=2-4, to=2-5]
	\arrow["\varphi"{description}, draw=none, from=0, to=1]
	\arrow["{=}"{description}, draw=none, from=1, to=2]
	\arrow["{{\ell_\varphi}}"{description}, draw=none, from=2, to=3]
	\arrow["{{r_\varphi}}"{description}, draw=none, from=3, to=4]
\end{tikzcd} \]
        \item If two cells share the same source (respectively, target), then the corresponding chosen cells have the same source (respectively, target); e.g., if $\src(\alpha)=\src(\beta)$ then $\src(\ell_\alpha)=\src(\ell_\beta)$.
        \item The chosen cells of a unit cell are again unit cells.
        \end{enumerate}
        
        \item (the lifting property) for any equation \[\begin{tikzcd}
	\bullet & \bullet & \bullet & \bullet & \bullet & \bullet \\
	\bullet & \bullet & \bullet & \bullet & \bullet & \bullet
	\arrow[from=1-1, to=1-2]
	\arrow[""{name=0, anchor=center, inner sep=0}, "\shortmid"{marking}, from=1-1, to=2-1]
	\arrow[from=1-2, to=1-3]
	\arrow[""{name=1, anchor=center, inner sep=0}, "\shortmid"{marking}, from=1-2, to=2-2]
	\arrow[""{name=2, anchor=center, inner sep=0}, "\shortmid"{marking}, from=1-3, to=2-3]
	\arrow[from=1-4, to=1-5]
	\arrow[""{name=3, anchor=center, inner sep=0}, "\shortmid"{marking}, from=1-4, to=2-4]
	\arrow[from=1-5, to=1-6]
	\arrow[""{name=4, anchor=center, inner sep=0}, "\shortmid"{marking}, from=1-5, to=2-5]
	\arrow[""{name=5, anchor=center, inner sep=0}, "\shortmid"{marking}, from=1-6, to=2-6]
	\arrow[from=2-1, to=2-2]
	\arrow[from=2-2, to=2-3]
	\arrow[from=2-4, to=2-5]
	\arrow[from=2-5, to=2-6]
	\arrow["{{\ell}}"{description}, draw=none, from=0, to=1]
	\arrow["\varphi"{description}, draw=none, from=1, to=2]
	\arrow["{{=}}"{description}, draw=none, from=2, to=3]
	\arrow["\psi"{description}, draw=none, from=3, to=4]
	\arrow["{{r}}"{description}, draw=none, from=4, to=5]
\end{tikzcd}\] with $\ell \in \dL$ and $r\in\dR$, there is a unique cell $\theta$ that satisfies the two equations
\[\begin{tikzcd}
	\bullet & \bullet & \bullet & \bullet & \bullet && \bullet & \bullet & \bullet & \bullet & \bullet \\
	\bullet & \bullet & \bullet & \bullet & \bullet && \bullet & \bullet & \bullet & \bullet & \bullet
	\arrow[from=1-1, to=1-2]
	\arrow[""{name=0, anchor=center, inner sep=0}, "\shortmid"{marking}, from=1-1, to=2-1]
	\arrow[""{name=1, anchor=center, inner sep=0}, "\shortmid"{marking}, from=1-2, to=2-2]
	\arrow[from=1-3, to=1-4]
	\arrow[""{name=2, anchor=center, inner sep=0}, "\shortmid"{marking}, from=1-3, to=2-3]
	\arrow[from=1-4, to=1-5]
	\arrow[""{name=3, anchor=center, inner sep=0}, "\shortmid"{marking}, from=1-4, to=2-4]
	\arrow[""{name=4, anchor=center, inner sep=0}, "\shortmid"{marking}, from=1-5, to=2-5]
	\arrow[from=1-7, to=1-8]
	\arrow[""{name=5, anchor=center, inner sep=0}, "\shortmid"{marking}, from=1-7, to=2-7]
	\arrow[""{name=6, anchor=center, inner sep=0}, "\shortmid"{marking}, from=1-8, to=2-8]
	\arrow[from=1-9, to=1-10]
	\arrow[""{name=7, anchor=center, inner sep=0}, "\shortmid"{marking}, from=1-9, to=2-9]
	\arrow[from=1-10, to=1-11]
	\arrow[""{name=8, anchor=center, inner sep=0}, "\shortmid"{marking}, from=1-10, to=2-10]
	\arrow[""{name=9, anchor=center, inner sep=0}, "\shortmid"{marking}, from=1-11, to=2-11]
	\arrow[from=2-1, to=2-2]
	\arrow[from=2-3, to=2-4]
	\arrow[from=2-4, to=2-5]
	\arrow[from=2-7, to=2-8]
	\arrow[from=2-9, to=2-10]
	\arrow[from=2-10, to=2-11]
	\arrow["\psi"{description}, draw=none, from=0, to=1]
	\arrow["{{=}}"{description}, draw=none, from=1, to=2]
	\arrow["{{\ell}}"{description}, draw=none, from=2, to=3]
	\arrow["\theta"{description}, draw=none, from=3, to=4]
	\arrow["{{\text{and}}}"{description}, draw=none, from=4, to=5]
	\arrow["\varphi"{description}, draw=none, from=5, to=6]
	\arrow["{{=}}"{description}, draw=none, from=6, to=7]
	\arrow["\theta"{description}, draw=none, from=7, to=8]
	\arrow["{{r}}"{description}, draw=none, from=8, to=9]
\end{tikzcd}\]
In the particular case that the cells $\ell, \varphi, \psi,r$ are unit cells, then $\theta$ is also a unit cell.
    \end{enumerate}
    
    A lax DOFS is equivalent to a double category $\dD$ with a pair of classes $(\dL, \dR)$ satisfying the above conditions, but replacing (2) by:
    \begin{enumerate}
        \item[2'] $\dR$ is closed under vertical composition. 
    \end{enumerate}

    A colax DOFS is equivalent to a double category $\dD$ with a pair of classes $(\dL, \dR)$ satisfying the above conditions, but replacing (2) by: 
    \begin{enumerate}
        \item[2''] $\dL$ is closed under vertical composition. 
    \end{enumerate}
\end{proposition}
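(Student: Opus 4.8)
The plan is to unwind the internal definition of Definition~\ref{def:internalDOFS} one layer at a time, reading the structure of a normal pseudo-category internal to $\Fact$ (resp.~$\Factlax$, $\Factcolax$) through the explicit descriptions of $1$- and $2$-cells of these $2$-categories from Section~\ref{sectioncategoriesoffs} (Definition~\ref{defmorfact}, Notation~\ref{notationFact}, Remark~\ref{rempreservefactorizationslaxly}), together with the standard identification of normal pseudo-categories internal to $\Cat$ with pseudo double categories. So for a double category $\dD$ I take $C_0$ to be the category of objects and horizontal arrows, $C_1$ the category whose objects are the proarrows and whose morphisms are the cells with horizontal composition, and $\src,\tgt\colon C_1\to C_0$, $i\colon C_0\to C_1$, $\otimes\colon C_1\times_{C_0}C_1\to C_1$ the usual structure homomorphisms (the last being vertical composition). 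I would first note that Proposition~\ref{propfacthaspullbacks} makes the pullback $C_1\times_{C_0}C_1$ along the \emph{strict} maps $\src,\tgt$ available in each of $\Fact,\Factlax,\Factcolax$, carrying the componentwise factorization system, which is exactly what makes it meaningful to ask that $\otimes$ be a morphism (resp.\ lax, resp.\ colax morphism) there; this is why strictness of $\src,\tgt,i$ is built into Definition~\ref{def:internalDOFS}.

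For the direction from an internal DOFS to the cell-wise data, I would set $\dL:=\cL_1$ and $\dR:=\cR_1$, the OFS carried by $C_1$, and read off the conditions: (1) is closure of $\cL_1,\cR_1$ under composition in $C_1$ (i.e.\ under horizontal composition of cells) and their containment of isomorphisms (i.e.\ of horizontally invertible cells); (4a) is the chosen factorization $\varphi=r_\varphi\circ\ell_\varphi$ in $C_1$; (4b) is strictness of $\src$ and $\tgt$, which by Remark~\ref{rempreservefactorizationslaxly} preserve chosen factorizations; (4c) is strictness of $i$; and since $i\circ\src$ and $i\circ\tgt$ are then strict endomorphisms of $C_1$, they preserve both classes, which gives~(3). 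For~(5) I would check that a commutative square in $C_1$ with left side in $\dL$ and right side in $\dR$ unwinds to exactly the displayed horizontal pasting equation, so orthogonality of $(\cL_1,\cR_1)$ produces the unique $\theta$; the unit-cell clause follows because a square built from unit cells is the $i$-image of a square in $C_0$, whose unique diagonal maps under $i$ to a cell satisfying the two equations, hence equals $\theta$ by uniqueness. Finally~(2) (resp.~(2$'$), (2$''$)) is precisely the statement that $\otimes$ preserves both classes (resp.\ only $\dR$, only $\dL$), i.e.\ that it is a $1$-cell of $\Fact$ (resp.\ $\Factlax$, $\Factcolax$) --- the only place in which the three flavours of internal pseudo-category differ.

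For the converse I would start from $\dD$ with $(\dL,\dR)$ satisfying the list and manufacture the internal structure: equip $C_1$ with the OFS $(\dL,\dR)$ and the chosen factorizations of~(4a), so that~(1) and~(5) become the OFS axioms for $C_1$. On $C_0$ I would set $\cL_0:=\{f:i(f)\in\dL\}$, $\cR_0:=\{f:i(f)\in\dR\}$; since $i$ is a section of $\src$ it is injective on arrows, so the factorization of $i(f)$ into unit cells $i(\ell_f),i(r_f)$ (by~(4a) and~(4c)) determines $\ell_f,r_f$ with $f=r_f\circ\ell_f$, and functoriality of $i$ together with~(1) and~(5) (its unit-cell clause included) shows $(\cL_0,\cR_0)$ is an OFS --- this recovery of the arrow-OFS from the cell data is the conceptual heart of the argument, since the statement lists only cell-classes. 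Then $\src,\tgt$ come out strict using~(4a)--(4c) (e.g.\ $\src\ell_\varphi=\src\ell_{i(\src\varphi)}=\ell_{\src\varphi}$, via $\src\circ i=\operatorname{id}$), $i$ is strict by construction, and~(3) is subsumed; $\otimes$ is a functor by the interchange law and, by~(2) (resp.~(2$'$), (2$''$)), a $1$-cell of $\Fact$ (resp.\ $\Factlax$, $\Factcolax$). The associativity and trivial unit constraints of $\dD$ are invertible globular cells, hence $2$-cells of these $2$-categories with no extra condition, and all pseudo-category coherence axioms already hold in the underlying double category; normality is inherited. The two constructions are then visibly mutually inverse, since neither alters the underlying double category, the classes, or the chosen factorizations.

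The step I expect to be the main obstacle is the bookkeeping around condition~(5): one has to match the order of the horizontal composites when translating ``commutative square in the category $C_1$'' into the displayed pasting equation, and verify that the unit-cell clause is \emph{exactly} strong enough --- it must suffice to recover orthogonality of $(\cL_0,\cR_0)$ in the converse direction while being automatically forced by functoriality of $i$ in the forward direction, so that no condition is silently lost or added. A secondary point needing care is confirming that strictness of $\src,\tgt,i$ contributes nothing beyond making the pullback $C_1\times_{C_0}C_1$ exist in $\Fact$ etc.\ (via Proposition~\ref{propfacthaspullbacks}) and pinning down the arrow-OFS, so that the entire difference between strict, pseudo, lax and colax DOFS really is confined to the composition functor $\otimes$ and the single varying condition~(2)/(2$'$)/(2$''$).
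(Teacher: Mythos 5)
Your proposal is correct and follows essentially the same route as the paper: both directions proceed by setting $\dL=\cL_1$, $\dR=\cR_1$ and, conversely, recovering the arrow-level classes as those $f$ with $i(f)\in\dL$ (resp.\ $\dR$), with the unit-cell clauses of (4) and (5) supplying the OFS axioms on $\D_0$ and strictness of $\src$, $\tgt$, $i$, and with (2)/(2$'$)/(2$''$) isolating the laxity of $\otimes$. The extra care you flag around condition (5) and the role of Proposition~\ref{propfacthaspullbacks} matches what the paper does implicitly.
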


\begin{proof} We first show that a double category $\mathbb{D}$ with a DOFS $(\mathbb{L},\mathbb{R})$ satisfies the conditions of this Proposition, by taking  $\dL := \mathcal{L}_1$ and $\dR := \mathcal{R}_1$. 

Condition (1): Both of $\dL$ and $\dR$ are closed under horizontal composition and contain the horizontally invertible cells by assumption. 

Condition (2): This follows from the fact that the vertical composition is required to be a morphism of categories with orthogonal factorization systems. 

Condition (3): This is a consequence of $\src$, $\tgt$, and $i$ being morphisms of categories with orthogonal factorization systems. 

Condition (4): Here we use the chosen factorization of the cells as a cells in $\dR$ after a cell in $\dL$.
This is expressed in (4)(a). Condition (4)(b) follows from the fact that $\src$ and $\tgt$ are strict morphisms of categories with orthogonal factorization systems. Condition (4)(c) follows from thet fact that $i$ is a strict morphism of categories with orthogonal factorization systems. 

Condition (5): This  follows from the lifting property on both orthogonal factorization systems $(\mathcal{L}_1,\mathcal{R}_1)$ and $(\mathcal{L}_0,\mathcal{R}_0)$. 

 Now, a double category $\dD$ with a pair of classes of cells $(\dL,\dR)$ satisfying the conditions of the proposition gives rise to: the classes $\mathcal{L}_0$ and $\mathcal{R}_0$  of morphisms in $\D_0$ that consist of the horizontal morphisms whose associated unit cells lie in $\dL$ and $\dR$, respectively; and the classes $\mathcal{L}_1 := \dL$ and $\mathcal{R}_1 := \dR$, of $\D_1$. 
 
 The pair $(\mathcal{L}_1,\mathcal{R}_1)$ is a factorization system for $\mathcal{D}_1$ because $\dL$ and $\dR$ are both closed under horizontal composition, both have all horizontally invertible cells, condition (4)(a) gives the chosen factorizations and condition (5) gives the lifting property. 
 
 We now show that the pair $(\mathcal{L}_0,\mathcal{R}_0)$ is an orthogonal factorization system in $\D_0$. Indeed, from the fact that horizontal composition of unit cells is again a unit, and $\dL$ and $\dR$ are closed under horizontal composition, it follows that $\mathcal{L}_0$ and  $\mathcal{R}_0$ are closed under composition. They contain all isomorphisms in $\D_0$ because $\dL$ and $\dR$ contain all horizontally invertible cells. Every morphism $f$ in $\D_0$ factors as a morphism in $\mathcal{L}_0$ followed by morphism in $\mathcal{R}_0$ by conditions (4)(a) and (4)(c), and we take these factorizations as the chosen ones. The lifting property in $(\mathcal{L}_0,\mathcal{R}_0)$ follows from the last part of condition (5). 
 
 The functors $\src{}$ and $\tgt$ are strict morphisms of categories with orthogonal factorization systems by condition (3) and condition (4)(b). The unit functor $i$ is a strict morphism of categories with orthogonal factorization systems by definition of the pairs $(\mathcal{L}_0,\mathcal{R}_0)$ and $(\mathcal{L}_1,\mathcal{R}_1)$ and conditions (4)(b) and (4)(c) and our choice of factorizations. The vertical composition is a morphism of categories with orthogonal factorization systems by condition (2).
 The proof of the equivalence for the lax (respectively, colax) DOFS is exactly the same, but noticing that axiom (2') (respectively, axiom (2'')) implies the vertical composition is a lax (respectively, colax) morphism of categories with orthogonal factorization system.  
\end{proof}

We now present a result giving an equivalent formulation of Condition (5).  This result will serve as a supporting tool in examples in Subsection \ref{subsec:2ortFactSys}.

\begin{proposition}\label{prop:ax5eqv}
	Condition (5) in Proposition \ref{prop:explicitDOFS} is equivalent to the following statements:
	\begin{enumerate}
    \item[5'] For any equation 
	\[\begin{tikzcd}
	\bullet & \bullet & \bullet & \bullet & \bullet & \bullet \\
	\bullet & \bullet & \bullet & \bullet & \bullet & \bullet
	\arrow[from=1-1, to=1-2]
	\arrow[""{name=0, anchor=center, inner sep=0}, "\shortmid"{marking}, from=1-1, to=2-1]
	\arrow[from=1-2, to=1-3]
	\arrow[""{name=1, anchor=center, inner sep=0}, "\shortmid"{marking}, from=1-2, to=2-2]
	\arrow[""{name=2, anchor=center, inner sep=0}, "\shortmid"{marking}, from=1-3, to=2-3]
	\arrow[from=1-4, to=1-5]
	\arrow[""{name=3, anchor=center, inner sep=0}, "\shortmid"{marking}, from=1-4, to=2-4]
	\arrow[from=1-5, to=1-6]
	\arrow[""{name=4, anchor=center, inner sep=0}, "\shortmid"{marking}, from=1-5, to=2-5]
	\arrow[""{name=5, anchor=center, inner sep=0}, "\shortmid"{marking}, from=1-6, to=2-6]
	\arrow[from=2-1, to=2-2]
	\arrow[from=2-2, to=2-3]
	\arrow[from=2-4, to=2-5]
	\arrow[from=2-5, to=2-6]
	\arrow["{{\ell}}"{description}, draw=none, from=0, to=1]
	\arrow["{{r}}"{description}, draw=none, from=1, to=2]
	\arrow["{{=}}"{description}, draw=none, from=2, to=3]
	\arrow["{{\ell'}}"{description}, draw=none, from=3, to=4]
	\arrow["{{r'}}"{description}, draw=none, from=4, to=5]
\end{tikzcd}\]
		with $\ell, \ell' \in \mathbb{L}$ and $r$, $r'\in\mathbb{R}$, there is a unique horizontally invertible cell $\theta$ that satisfies the following two equations
		\[\begin{tikzcd}
	\bullet & \bullet & \bullet & \bullet & \bullet && \bullet & \bullet & \bullet & \bullet & \bullet \\
	\bullet & \bullet & \bullet & \bullet & \bullet && \bullet & \bullet & \bullet & \bullet & \bullet
	\arrow[from=1-1, to=1-2]
	\arrow[""{name=0, anchor=center, inner sep=0}, "\shortmid"{marking}, from=1-1, to=2-1]
	\arrow["\cong" {description}, from=1-2, to=1-3]
	\arrow[""{name=1, anchor=center, inner sep=0}, "\shortmid"{marking}, from=1-2, to=2-2]
	\arrow[""{name=2, anchor=center, inner sep=0}, "\shortmid"{marking}, from=1-3, to=2-3]
	\arrow[from=1-4, to=1-5]
	\arrow[""{name=3, anchor=center, inner sep=0}, "\shortmid"{marking}, from=1-4, to=2-4]
	\arrow[""{name=4, anchor=center, inner sep=0}, "\shortmid"{marking}, from=1-5, to=2-5]
	\arrow["\cong" {description}, from=1-7, to=1-8]
	\arrow[""{name=5, anchor=center, inner sep=0}, "\shortmid"{marking}, from=1-7, to=2-7]
	\arrow[from=1-8, to=1-9]
	\arrow[""{name=6, anchor=center, inner sep=0}, "\shortmid"{marking}, from=1-8, to=2-8]
	\arrow[""{name=7, anchor=center, inner sep=0}, "\shortmid"{marking}, from=1-9, to=2-9]
	\arrow[from=1-10, to=1-11]
	\arrow[""{name=8, anchor=center, inner sep=0}, "\shortmid"{marking}, from=1-10, to=2-10]
	\arrow[""{name=9, anchor=center, inner sep=0}, "\shortmid"{marking}, from=1-11, to=2-11]
	\arrow[from=2-1, to=2-2]
	\arrow["\cong" {description}, from=2-2, to=2-3]
	\arrow[from=2-4, to=2-5]
	\arrow["\cong" {description}, from=2-7, to=2-8]
	\arrow[from=2-8, to=2-9]
	\arrow[from=2-10, to=2-11]
	\arrow["{{\ell}}"{description}, draw=none, from=0, to=1]
	\arrow["\theta"{description}, draw=none, from=1, to=2]
	\arrow["{{=}}"{description}, draw=none, from=2, to=3]
	\arrow["{{\ell'}}"{description}, draw=none, from=3, to=4]
	\arrow["{{\text{and}}}"{description}, draw=none, from=4, to=5]
	\arrow["\theta"{description}, draw=none, from=5, to=6]
	\arrow["{{r'}}"{description}, draw=none, from=6, to=7]
	\arrow["{{=}}"{description}, draw=none, from=7, to=8]
	\arrow["{{r}}"{description}, draw=none, from=8, to=9]
\end{tikzcd}\]
When $\ell,r,\ell', r'$ are unit cells, $\theta$ is also a unit cell.
    \end{enumerate} 
\end{proposition}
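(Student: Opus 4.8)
The plan is to read Condition~(5) as the assertion that every cell of $\dL$ is left-orthogonal to every cell of $\dR$ inside the category $\D_1$ of cells under horizontal composition, and Condition~(5') as the assertion that any two $(\dL,\dR)$-factorizations of a cell of $\D_1$ are linked by a unique horizontally invertible comparison cell. The equivalence is then the familiar fact that, for composition-closed classes containing all isomorphisms and admitting factorizations, unique diagonal fill-ins and uniqueness of factorizations amount to the same thing; I would run this argument inside $\D_1$, using from Proposition~\ref{prop:explicitDOFS} only Condition~(1) (closure of $\dL,\dR$ under horizontal composition and containment of horizontally invertible cells), Condition~(4)(a) (chosen factorizations of cells), and Condition~(4)(c) (the chosen factorization of a unit cell is built from unit cells). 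Throughout I write $\circ$ for composition in $\D_1$, i.e.\ for horizontal composition of cells, so that the square of Condition~(5) becomes a genuine commutative square of $\D_1$ with left edge $\ell\in\dL$, right edge $r\in\dR$, horizontal edges $\psi$ and $\varphi$, and the sought filler is a $\theta$ with $\theta\circ\ell=\psi$ and $r\circ\theta=\varphi$.

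For the implication $(5)\Rightarrow(5')$, I would take $\ell,\ell'\in\dL$ and $r,r'\in\dR$ with $r\circ\ell=r'\circ\ell'$ and feed the resulting commutative square of $\D_1$ (left edge $\ell$, right edge $r'$, horizontal edges $\ell'$ and $r$) into Condition~(5), obtaining a unique cell $\theta$ with $\theta\circ\ell=\ell'$ and $r'\circ\theta=r$. Applying Condition~(5) again to the square with left edge $\ell'$ and right edge $r$ yields $\theta'$ with $\theta'\circ\ell'=\ell$ and $r\circ\theta'=r'$, and a short computation shows that $\theta'\circ\theta$ and the identity cell are both fillers for the square with left and right edges $\ell$ and $r$; by uniqueness in~(5) they coincide, and symmetrically $\theta\circ\theta'$ is the appropriate identity, so $\theta$ is horizontally invertible, and it is the unique such comparison since it is already the unique filler. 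The unit-cell clause of~(5') is then inherited immediately from that of~(5).

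For $(5')\Rightarrow(5)$, starting from the square of Condition~(5), I would choose factorizations $\psi=r_\psi\circ\ell_\psi$ and $\varphi=r_\varphi\circ\ell_\varphi$ (with images $m_\psi$, $m_\varphi$) via Condition~(4)(a). Using closure of $\dL,\dR$ under composition, $r_\varphi\circ(\ell_\varphi\circ\ell)$ and $(r\circ r_\psi)\circ\ell_\psi$ are two $(\dL,\dR)$-factorizations of the single cell $r\circ\psi=\varphi\circ\ell$, so Condition~(5') supplies a unique horizontally invertible $\sigma\colon m_\psi\to m_\varphi$ with $\sigma\circ\ell_\psi=\ell_\varphi\circ\ell$ and $r_\varphi\circ\sigma=r\circ r_\psi$; I then put $\theta:=r_\psi\circ\sigma^{-1}\circ\ell_\varphi$ and check $\theta\circ\ell=\psi$ and $r\circ\theta=\varphi$ by substituting those two identities. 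For uniqueness, an arbitrary filler $\theta'$ has a chosen factorization $\theta'=b\circ a$ with $a\in\dL$, $b\in\dR$, and then $b\circ(a\circ\ell)$ and $(r\circ b)\circ a$ are $(\dL,\dR)$-factorizations of $\psi$ and of $\varphi$ respectively; comparing each with the chosen factorization via~(5') produces horizontally invertible $u$ and $w$, and a routine check shows that $w\circ u$ satisfies the two equations that define $\sigma$, hence equals $\sigma$ by the uniqueness clause of~(5'), which unwinds to $\theta'=r_\psi\circ(w\circ u)^{-1}\circ\ell_\varphi=\theta$. Finally, if $\ell,\varphi,\psi,r$ are unit cells, Condition~(4)(c) makes $\ell_\psi,r_\psi,\ell_\varphi,r_\varphi$ unit cells, so~(5') is applied to an equation between unit cells, making $\sigma$ and hence $\sigma^{-1}$ unit cells, and $\theta$ a horizontal composite of unit cells, i.e.\ a unit cell.

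The content here is conceptually routine, so the main obstacle is bookkeeping: one must track the vertical source and target proarrows of every cell to be sure each horizontal composite typechecks, and---more delicately---set up each invocation of Condition~(5') with the correct choice of which of the two factorizations plays the role of the ``primed'' one, so that the equations it returns are precisely those consumed in the next step. I expect the uniqueness half of $(5')\Rightarrow(5)$ to be the fiddliest part, as it chains three applications of~(5') and then appeals once more to the uniqueness clause of~(5') to identify $w\circ u$ with $\sigma$.
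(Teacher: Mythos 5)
Your proposal is correct and takes essentially the same route as the paper: the paper's proof is a one-line appeal to the fact that, for a one-dimensional orthogonal factorization system, the unique-lifting property and uniqueness of factorizations up to unique isomorphism are equivalent, applied to the OFS on $\D_1$ under horizontal composition. You simply write out that standard argument explicitly (including the unit-cell clauses via Condition (4)(c)), which the paper leaves implicit.
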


\begin{proof}
    The equivalence follows from the analogous versions on orthogonal factorization systems in one-dimensional categories.
\end{proof}

The following observation shows that the cell $\theta$ in Proposition \ref{prop:ax5eqv} is a globular cell if the horizontal sources and targets coincide in two different factorizations.

\begin{observation}\label{obs:globularTheta}
    In Proposition \ref{prop:ax5eqv}, consider the particular case
    \[\begin{tikzcd}
	\bullet & \bullet & \bullet & \bullet & \bullet & \bullet \\
	\bullet & \bullet & \bullet & \bullet & \bullet & \bullet
	\arrow["e", from=1-1, to=1-2]
	\arrow[""{name=0, anchor=center, inner sep=0}, "\shortmid"{marking}, from=1-1, to=2-1]
	\arrow["m", from=1-2, to=1-3]
	\arrow[""{name=1, anchor=center, inner sep=0}, "\shortmid"{marking}, from=1-2, to=2-2]
	\arrow[""{name=2, anchor=center, inner sep=0}, "\shortmid"{marking}, from=1-3, to=2-3]
	\arrow["e", from=1-4, to=1-5]
	\arrow[""{name=3, anchor=center, inner sep=0}, "\shortmid"{marking}, from=1-4, to=2-4]
	\arrow["m", from=1-5, to=1-6]
	\arrow[""{name=4, anchor=center, inner sep=0}, "\shortmid"{marking}, from=1-5, to=2-5]
	\arrow[""{name=5, anchor=center, inner sep=0}, "\shortmid"{marking}, from=1-6, to=2-6]
	\arrow["{e'}"', from=2-1, to=2-2]
	\arrow["{m'}"', from=2-2, to=2-3]
	\arrow["{e'}"', from=2-4, to=2-5]
	\arrow["{m'}"', from=2-5, to=2-6]
	\arrow["\ell"{description}, draw=none, from=0, to=1]
	\arrow["r"{description}, draw=none, from=1, to=2]
	\arrow["{=}"{description}, draw=none, from=2, to=3]
	\arrow["{\ell'}"{description}, draw=none, from=3, to=4]
	\arrow["{r'}"{description}, draw=none, from=4, to=5]
\end{tikzcd}\]
where $\ell$ and $\ell'$ have the same source and target, and  $r$ and $r'$ have the same source and target. Now consider the equations $\src(\theta)\circ e = e$ and $m\circ \src(\theta)=m$. The orthogonal property in the orthogonal factorization system in $\mathbb{D}_0$ implies that $src(\theta) = \id{}$. Analogously $\tgt(\theta) = \id{}$. This asserts that the unique cell arising from two different factorizations of the same cell, with the same sources and targets, is horizontally globular.
\end{observation}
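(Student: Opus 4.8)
The plan is to push the two defining equations of $\theta$ through the source and target functors $\src,\tgt\colon\dD_1\to\dD_0$ and then invoke uniqueness of diagonal fill-ins in the orthogonal factorization system $(\mathcal{L}_0,\mathcal{R}_0)$ on $\dD_0$. Recall from Definition~\ref{def:internalDOFS} and Proposition~\ref{prop:explicitDOFS} that $\src$ and $\tgt$ are strict morphisms of categories with an OFS; in particular they are functors with respect to horizontal composition of cells, and by Remark~\ref{rempreservefactorizationslaxly} they preserve both the left and the right classes.

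First I would fix notation. Let $\theta$ be the horizontally invertible cell provided by condition~(5') of Proposition~\ref{prop:ax5eqv}, characterised (writing horizontal composition of cells as composition in $\dD_1$) by $\theta\circ\ell=\ell'$ and $r'\circ\theta=r$. Write $e$ and $m$ for the horizontal sources of $\ell$ and $r$ respectively, and $e'$, $m'$ for their horizontal targets; by the hypothesis of the observation these are also the horizontal sources and targets of $\ell'$ and $r'$. Since $\ell,\ell'\in\dL=\mathcal{L}_1$ and $r,r'\in\dR=\mathcal{R}_1$ and $\src,\tgt$ preserve the two classes, we get $e,e'\in\mathcal{L}_0$ and $m,m'\in\mathcal{R}_0$; write $Q$ and $Q'$ for the two intermediate objects, so that $e$, $e'$ have codomain $Q$, $Q'$ and $m$, $m'$ have domain $Q$, $Q'$.

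Applying $\src$ to the two equations for $\theta$ yields $\src(\theta)\circ e=e$ and $m\circ\src(\theta)=m$ in $\dD_0$, with $\src(\theta)$ an endomorphism of $Q$. Now consider the square with $e$ along the top, $e$ down the left, $m$ down the right and $m$ along the bottom: it commutes since both composites equal $m\circ e$, its left edge lies in $\mathcal{L}_0$ and its right edge in $\mathcal{R}_0$, so by orthogonality it has a unique diagonal fill-in; both $\id{Q}$ and $\src(\theta)$ are such fill-ins, whence $\src(\theta)=\id{Q}$. The identical argument with $\tgt$ — now using $\tgt(\theta)\circ e'=e'$, $m'\circ\tgt(\theta)=m'$ and the square built from $e'$ and $m'$ — gives $\tgt(\theta)=\id{Q'}$. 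Thus $\theta$ has identity horizontal source and target, i.e.\ it is horizontally globular.

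I do not expect a genuine obstacle: once the conventions are committed to — $\src$ (resp.\ $\tgt$) extracts the top (resp.\ bottom) boundary arrow of a cell, and horizontal composition of cells is composition in $\dD_1$ — the argument is pure bookkeeping. The only point needing a moment's care is verifying that $\theta\circ\ell=\ell'$ and $r'\circ\theta=r$ transport correctly under the functors $\src$ and $\tgt$, and that the resulting pair of equations is precisely what is needed to exhibit $\id{}$ as a diagonal fill-in of the relevant tautological square.
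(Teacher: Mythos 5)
Your argument is correct and is essentially the paper's own: apply $\src$ (resp.\ $\tgt$) to the two defining equations of $\theta$, observe that both $\src(\theta)$ and the identity are diagonal fill-ins of the tautological commutative square with $e\in\mathcal{L}_0$ on the left and $m\in\mathcal{R}_0$ on the right, and conclude by uniqueness of fill-ins. The paper states this more tersely; your version just makes the orthogonality step explicit.
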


\subsection{Examples of DOFS on double categories constructed from 2-categories}\label{subsec:2ortFactSys}

In this subsection, we present an example of a DOFS in the double category of quintets on $\Cat$, together with an example in the double category of paths over the double category of quintets on a 2-category $\mathcal{B}$.  

\begin{example}
(The quintet double category on the 2-category $\Cat$, $\mathbb{Q}(\Cat)$). Our aim is that the class $\mathbb{L}$ that consists of cells with final functors in their horizontal arrows and the class $\mathbb{R}$ that consists of cells that are natural isomorphism with discrete fibrations in their horizontal arrows is a DOFS in $\Cat$. First of all, final functors and discrete fibrations form an OFS in the category of categories \citep{Street1973TheCF}. Thanks to that fact, it is clear that both $\mathbb{L}$ and $\mathbb{R}$ are closed under horizontal composition. Also, by definition, the classes $\mathbb{L}$ and $\mathbb{R}$, contain all the horizontally invertible cells. Now, we want to show that the axioms (1), (2), (3), (4) in Definition \ref{prop:explicitDOFS} and axiom (5') in Proposition \ref{prop:ax5eqv} hold for $\mathbb{L}$ and $\mathbb{R}$. It is clear that axioms (1), (3), and (4) hold. We will focus on axiom (2) and axiom (5'). 

    Axiom (2). We wish to factor an arbitrary cell $\alpha$ into two cells 
    \[\begin{tikzcd}[ampersand replacement=\&]
	A \& B \& A \& {X_F} \& B \\
	C \& D \& D \& {X_G} \& D
	\arrow["F", from=1-1, to=1-2]
	\arrow["U"', from=1-1, to=2-1]
	\arrow[""{name=0, anchor=center, inner sep=0}, "V", from=1-2, to=2-2]
	\arrow["{\ell_1}", from=1-3, to=1-4]
	\arrow[""{name=1, anchor=center, inner sep=0}, "U"', from=1-3, to=2-3]
	\arrow["{r_1}", from=1-4, to=1-5]
	\arrow["T"{description}, from=1-4, to=2-4]
	\arrow["V", from=1-5, to=2-5]
	\arrow["\alpha", shift right, shorten <=6pt, shorten >=6pt, Rightarrow, from=2-1, to=1-2]
	\arrow["G"', from=2-1, to=2-2]
	\arrow["\lambda", shift right, shorten <=6pt, shorten >=6pt, Rightarrow, from=2-3, to=1-4]
	\arrow["{\ell_2}"', from=2-3, to=2-4]
	\arrow["\rho", shift right, shorten <=6pt, shorten >=6pt, Rightarrow, from=2-4, to=1-5]
	\arrow["{r_2}"', from=2-4, to=2-5]
	\arrow["{=}"{description}, draw=none, from=0, to=1]
\end{tikzcd}\]
where $\lambda\in\mathbb{L}$ and $\rho\in\mathbb{R}$. Step 1, we chose $\ell_1,r_1,\ell_2,r_2$ using the comprehensive factorization system as in \citep{nlab:comprehensive_factorization_system} or on page 74 of \citep{KellyBook}. Step 2, we define the functor $T$ as follows: For an object $(b,[h\: Fa\to b])$ in $X_F$ , $T(b,[h]):= (Vb, [Vh\circ \alpha_a])$ in $X_G$, and for a morphism $t$ in $X_F$, $T(t) := V(t)$. Step 3, we define $\lambda$ and $\rho$. Observe that for an object $a\in A$, $\ell_2U(a) = (GUa, [id])$, and $T\ell_1(a) = T(Fa, [id]) = (VFa, [\alpha_a])$. We define $\lambda_a$ by the morphism $\alpha_a : (GUA, [\id{}]) \to (VFa,[\alpha_a])$. The natural isomorphisms $\rho_{(b,[h])}$ are defined by the identity $\id{b}\: r_2T(b,[h])=Vb \to Vr_1(b,[h]) =Vb$. 
    It is clear that $\lambda\in \mathbb{L}$, $\rho\in \mathbb{R}$, and $\alpha = \lambda\ast \rho$.
    
    Axiom (5'). Assume there are two different ways to factor a cell
    \[\begin{tikzcd}[ampersand replacement=\&]
	A \& X \& B \& A \& {X'} \& B \\
	C \& Y \& D \& C \& {Y'} \& D
	\arrow["{\ell_1}", from=1-1, to=1-2]
	\arrow["U"', from=1-1, to=2-1]
	\arrow["{r_1}", from=1-2, to=1-3]
	\arrow["W"{description}, from=1-2, to=2-2]
	\arrow[""{name=0, anchor=center, inner sep=0}, "V", from=1-3, to=2-3]
	\arrow["{\ell_1'}", from=1-4, to=1-5]
	\arrow[""{name=1, anchor=center, inner sep=0}, "U"', from=1-4, to=2-4]
	\arrow["{r_1'}", from=1-5, to=1-6]
	\arrow["{W'}"{description}, from=1-5, to=2-5]
	\arrow["V", from=1-6, to=2-6]
	\arrow["\lambda", shift right, shorten <=6pt, shorten >=6pt, Rightarrow, from=2-1, to=1-2]
	\arrow["{\ell_2}"', from=2-1, to=2-2]
	\arrow["\rho", shift right, shorten <=6pt, shorten >=6pt, Rightarrow, from=2-2, to=1-3]
	\arrow["{r_2}"', from=2-2, to=2-3]
	\arrow["{\lambda'}", shift right, shorten <=6pt, shorten >=6pt, Rightarrow, from=2-4, to=1-5]
	\arrow["{\ell_2'}"', from=2-4, to=2-5]
	\arrow["{\rho'}", shift right, shorten <=6pt, shorten >=6pt, Rightarrow, from=2-5, to=1-6]
	\arrow["{r_2'}"', from=2-5, to=2-6]
	\arrow["{=}"{description}, draw=none, from=0, to=1]
\end{tikzcd}\]

    We wish to define a horizontally invertible cell
    \[\begin{tikzcd}[ampersand replacement=\&]
	X \& {X'} \\
	Y \& {Y'}
	\arrow["{q_1}", from=1-1, to=1-2]
	\arrow["W"', from=1-1, to=2-1]
	\arrow["{W'}", from=1-2, to=2-2]
	\arrow["\theta", shift right, shorten <=6pt, shorten >=6pt, Rightarrow, from=2-1, to=1-2]
	\arrow["{q_2}"', from=2-1, to=2-2]
\end{tikzcd}\]
    such that $\theta\ast \lambda = \lambda'$ and $\rho'\ast \theta = \rho$. By the orthogonal property of an OFS, there exist isomorphisms $q_1\: X\to X'$ and $q_2\: Y\to Y'$ that make the following diagrams commute
    \[\begin{tikzcd}[ampersand replacement=\&]
	A \& X \& B \& C \& Y \& D \\
	\& {X'} \&\&\& {Y'}
	\arrow["{\ell_1}", from=1-1, to=1-2]
	\arrow["{\ell_1'}"', from=1-1, to=2-2]
	\arrow["{r_1}", from=1-2, to=1-3]
	\arrow["{q_1}"{pos=0.3}, dashed, from=1-2, to=2-2]
	\arrow["{,}"', shift right=5, draw=none, from=1-3, to=1-4]
	\arrow["{\ell_2}", from=1-4, to=1-5]
	\arrow["{\ell_2'}"', from=1-4, to=2-5]
	\arrow["{r_2}", from=1-5, to=1-6]
	\arrow["{q_2}"{pos=0.3}, dashed, from=1-5, to=2-5]
	\arrow["{r_1'}"', from=2-2, to=1-3]
	\arrow["{r_2'}"', from=2-5, to=1-6]
\end{tikzcd}\]

    Finally, we wish to find a natural isomorphisms $\theta\: q_2W\to W'q_1$. First, observe that 
    \[\begin{tikzcd}[ampersand replacement=\&]
	{r_2'q_2W=r_2W} \& {Vr_1=Vr_1'q_1} \& {r_2'W'q_1}
	\arrow["\rho", from=1-1, to=1-2]
	\arrow["{\rho'^{-1}}", from=1-2, to=1-3]
\end{tikzcd}\]

    From the fact that $r_2'$ is a discrete fibration, there is a unique cartesian lifting $\theta\: q_2W\to W'q_1$ of $\rho'^{-1}\rho$. It is a natural isomorphism because $\rho'$ and $\rho$ are natural isomorphisms and $r_2'$ is a discrete fibration. Now, by definition, $r_2'(\theta) = \rho'^{-1}\rho$, which implies that $\rho' r_2'(\theta) = \rho$ i.e. $\rho'\ast \theta = \rho$. Finally, we wish to prove that $\theta\ast \lambda = \lambda'$, i.e. $\theta q_2(\lambda) = \lambda'$. We know that $\rho r_2(\lambda) = \rho\ast \lambda =\rho'\ast \lambda' = \rho'r_2'(\lambda')$. Then, $r_2'(\theta)r_2(\lambda)=\rho'^{-1}\rho r_2(\lambda)  = r_2'(\lambda')$. Again, since $r_2'$ is a discrete fibration, there exists a unique morphism $s$ such that $r_2'(s) = r_2'(\lambda)$. We also have that $ \rho \rho'^{-1}r_2'(s) =  r_2'(\lambda')$, and by the cartesian property $\theta s = \lambda'$. Observe that $r_2'q_2(\lambda) = r_2(\lambda)$, by definition of $q_2$; then, by uniqueness of $s$,  $s=q_2(\lambda)$, which concludes the example.     
\end{example}

\begin{example}[The double category of paths in $\mathbb{Q}(\mathcal{B})$] In Example \ref{ex:PathC} we describe a DOFS in the double category of paths in a category $\mathcal{C}$; we refer the reader to that example for a clearer understanding of the notation. Here, we extend the DOFS to a more general setting. For a 2-category $\mathcal{B}$, the double category of paths $\dpath\mathbb{Q}\mathcal{B}$ in the double category of quintets $\mathbb{Q}\mathcal{B}$ has the following structure: its objects are those of $\mathcal{B}$; its horizontal arrows are the 1-morphisms in $\mathcal{B}$; its vertical arrows are paths of 1-morphisms in $\mathcal{B}$; and the cells 
\[\begin{tikzcd}[ampersand replacement=\&]
	{A_0} \& {B_0} \\
	{A_m} \& {B_n}
	\arrow[""{name=0, anchor=center, inner sep=0}, "{h_0}", from=1-1, to=1-2]
	\arrow["{\langle f_i\rangle_{0\leq i\leq m}}"', from=1-1, to=2-1]
	\arrow["{\langle g_j\rangle_{0\leq j\leq n}}", from=1-2, to=2-2]
	\arrow[""{name=1, anchor=center, inner sep=0}, "{h_m}"', from=2-1, to=2-2]
	\arrow["{\langle\alpha_i\rangle_{0\leq i\leq m}}"{description}, draw=none, from=0, to=1]
\end{tikzcd}\] 
are pairs $(\varphi,\langle\alpha_i\rangle_{0\leq i\leq m},)$, where $\varphi:[m+1]\to [n+1]$ is an order preserving function with $\varphi(0)=0$ and $\varphi(m)=n$; and for every $i\in [m+1]$, $\alpha_i$ is a 2-morphism in $\mathbb{Q}\mathcal{B}$ of the form 
\[\begin{tikzcd}[ampersand replacement=\&]
	{A_{i-1}} \& {B_{\varphi(i-1)}} \\
	{A_i} \& {B_{\varphi(i)}}
	\arrow["{h_{i-1}}", from=1-1, to=1-2]
	\arrow["{f_i}"', from=1-1, to=2-1]
	\arrow["{g^{\varphi(i-1)}_{\varphi(i)}}", from=1-2, to=2-2]
	\arrow["\alpha_i", shift right, shorten <=4pt, shorten >=4pt, Rightarrow, from=2-1, to=1-2]
	\arrow["{h_i}"', from=2-1, to=2-2]
\end{tikzcd}\]
The cells $\alpha_i$ are called \textit{fences}. Vertical composition of cells is defined by concatenation, while horizontal composition is induced by composition of 2-morphisms, following the threads of the cross arrows. 

Suppose that the 2-category $\mathcal{B}$ has a 2OFS $(L, R)$ with a choice, and consider a cell $(\varphi, \langle\alpha_i\rangle_{0\leq i\leq m})$. Then, the fences $\alpha_i$ factor as
\[\begin{tikzcd}[ampersand replacement=\&]
	{A_{i-1}} \& {B_{i-1}} \&\& {A_{i-1}} \& {\text{Im}h_{i-1}} \& {B_{i-1}} \\
	{A_i} \& {B_i} \&\& {A_i} \& {\text{Im}h_i} \& {B_i}
	\arrow["{h_{i-1}}", from=1-1, to=1-2]
	\arrow["{f_i}"', from=1-1, to=2-1]
	\arrow[""{name=0, anchor=center, inner sep=0}, "{g^{\varphi(i-1)}_{\varphi(i)}}", from=1-2, to=2-2]
	\arrow["{\ell_{h_{i-1}}}", from=1-4, to=1-5]
	\arrow[""{name=1, anchor=center, inner sep=0}, "{f_i}"', from=1-4, to=2-4]
	\arrow["{r_{h_{i-1}}}", from=1-5, to=1-6]
	\arrow["t_i"{description}, from=1-5, to=2-5]
	\arrow["{g^{\varphi(i-1)}_{\varphi(i)}}", from=1-6, to=2-6]
	\arrow["{ \alpha_i}", shift right, shorten <=4pt, shorten >=4pt, Rightarrow, from=2-1, to=1-2]
	\arrow["{h_i}"', from=2-1, to=2-2]
	\arrow["{\ell_i}", shorten <=4pt, shorten >=4pt, Rightarrow, from=2-4, to=1-5]
	\arrow["{\ell_{h_i}}"', from=2-4, to=2-5]
	\arrow["{r_i}", shorten <=4pt, shorten >=4pt, Rightarrow, from=2-5, to=1-6]
	\arrow["{r_{h_i}}"', from=2-5, to=2-6]
	\arrow["{=}"{description, pos=0.6}, draw=none, from=0, to=1]
\end{tikzcd}\]
where $r_i$ and $\ell_i$ are in the chosen 2-morphisms. This provides a factorization for the cell $(\varphi, \langle\alpha_i\rangle_{0\leq i\leq m})$. Now, we can introduce a double orthogonal factorization system $(\mathbb{L},\mathbb{R})$ in $\dpath\mathbb{Q}\mathcal{B}$. The class $\mathbb{L}$ consists of the cells $(id_{[m+1]},\langle\ell_i\rangle_{0\leq i\leq m} )$, where the horizontal morphisms in the fences $\ell_i$ are in the class $L$. Similarly, the class $\mathbb{R}$ consists of the cells $(\varphi, \langle r_i\rangle_{0\leq i \leq m})$, where the horizontal morphisms in the fences $r_i$ are in the class $R$. Observe that the fences in $\dL$ and the fences in $\dR$ are cells in $\mathbb{Q}\mathcal{B}$, and recall that a 2OFS with a choice in $\B$ corresponds to a DOFS in $\mathbb{Q}\B$. Moreover, every cell in $\dpath\mathbb{Q}\B$ can be expressed as a vertical composition of fences. These observations allow it to be derived that the pair $(\dL,\dR)$ satisfies the axioms in Proposition \ref{prop:explicitDOFS}, and therefore constitutes a DOFS in the double category $\dpath\mathbb{Q}\B$.
\end{example}

\begin{remark}
It may be tempting to compare the DOFS on a double category of quintets with the 2-dimensional OFS introduced by Stefan Milius \citep[Definition~7.3]{StefanMilius}.
However, the goals of these two types of systems are different: our systems provide a notion of factorization and hence an image for double cells, whereas the goal of Milius' work is to provide a weakened version of orthogonal factorization systems for 2-categories. It is not obvious how to construct a left and right class of 2-cells from Milius' concept. Also, the diagonal arrow goes in the wrong direction to form the image of a 2-cell.
\end{remark}

\section{Monadicity of double factorization systems}\label{sectionmonadicity}

In this section, we prove a monadicity result for double orthogonal factorization systems and for lax or colax double orthogonal factorization systems, achieving double categorical generalizations of the fundamental monadicity result of \thex\ref{theorkorostenskitholen}. To reach this, we first prove a conceptually important general monadicity result for categories internal to algebras, \thex\ref{teorgenmonadicity}. This theorem is particularly helpful in our double categorical context, but is more widely applicable.

We then merge the two flavours of DOFS together via $\F$-category theory, also known as enhanced 2-category theory (see \citep{lackshulman_enhancedtwocatlimlaxmor}). The right $\F$-categorical monad to consider to achieve our monadicity theorem is a generalization of the squaring 2-monad on $\Cat$ to the double categorical setting.

The results of this section can be thought of as giving a monadic definition of (lax or colax) double orthogonal factorization systems and showing that this definition is equivalent to the internal one we presented in Section~\ref{sectionDOFS}.

Aiming at the general monadicity theorem, we prove the following lemma. This is a generalization of \prox\ref{propfacthaspullbacks}.

\begin{lemma}\label{lemmapullbacksofalgebras}
    Let $\K$ be a 2-category and let $(T\: \K\to \K, \eta\:\Id{}\aR{}T, \mu\:T^2\aR{} T)$ be a 2-monad whose underlying 2-functor preserves pullbacks. The 2-category $\Alg+[l]{T}$ (respectively, $\Alg+[p]{T}$) of normal pseudo-algebras and lax morphisms (respectively, pseudo morphisms) between them has all pullbacks of strict morphisms along strict morphisms that exist in $\K$ (and these pullback morphisms are again strict).
\end{lemma}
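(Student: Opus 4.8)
The plan is to show that when $F\:A\to C$ and $G\:B\to C$ are strict morphisms of normal pseudo $T$-algebras whose underlying cospan admits a pullback $P$ in $\K$ (understood as a genuine $2$-limit), the object $P$ carries a canonical normal pseudo $T$-algebra structure for which the projections $p_A\:P\to A$ and $p_B\:P\to B$ are strict, and that $(P,p_A,p_B)$ is then a pullback in $\Alg+[l]{T}$, respectively in $\Alg+[p]{T}$. Throughout I write $(a_A,\gamma_A)$, $(a_B,\gamma_B)$, $(a_C,\gamma_C)$ for the structure maps and the invertible associativity $2$-cells of the three given algebras, and I use that $T$ preserves pullbacks, so that $TP$, $T^2P$, $T^3P$ are again pullbacks in $\K$ with projections $T^np_A$, $T^np_B$; in particular $\{p_A,p_B\}$ is jointly faithful and jointly $2$-surjective on $2$-cells with codomain $P$, which is the key leverage in every step.

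First I would build the algebra structure on $P$. Since $F$ and $G$ are strict, $F\c a_A=a_C\c TF$ and $G\c a_B=a_C\c TG$, whence $F\c a_A\c Tp_A=a_C\c T(F\c p_A)=a_C\c T(G\c p_B)=G\c a_B\c Tp_B$; so the pair $(a_A\c Tp_A,\,a_B\c Tp_B)$ is a cone over the cospan and defines a unique $a_P\:TP\to P$ with $p_A\c a_P=a_A\c Tp_A$ and $p_B\c a_P=a_B\c Tp_B$. Normality $a_P\c\eta_P=\id{P}$ follows from the uniqueness part of the universal property, using $p_A\c a_P\c\eta_P=a_A\c Tp_A\c\eta_P=a_A\c\eta_A\c p_A=p_A$ (naturality of $\eta$ and normality of $A$), and similarly for $p_B$. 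For the associativity cell, a \emph{strict} morphism of pseudo-algebras preserves the associativity $2$-cell on the nose, i.e.\ $F\c\gamma_A=\gamma_C\c T^2F$ and $G\c\gamma_B=\gamma_C\c T^2G$; hence the whiskered cells $\gamma_A\c T^2p_A$ and $\gamma_B\c T^2p_B$ agree after whiskering with $F$ and $G$ respectively, and by the $2$-dimensional universal property of $P$ they glue to a unique $\gamma_P\:a_P\c Ta_P\aR{}a_P\c\mu_P$ with $p_A\c\gamma_P=\gamma_A\c T^2p_A$ and $p_B\c\gamma_P=\gamma_B\c T^2p_B$. Applying the same recipe to $\gamma_A^{-1},\gamma_B^{-1}$ produces an inverse for $\gamma_P$, and the pseudo-algebra coherence axioms for $(P,a_P,\gamma_P)$ hold because, after whiskering with $p_A$ (resp.\ $p_B$), they become the corresponding axioms for $A$ (resp.\ $B$) whiskered by a suitable $T^np_A$ (resp.\ $T^np_B$), and $\{p_A,p_B\}$ is jointly faithful. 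By construction $p_A$ and $p_B$ are strict.

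Next I would check the universal property in $\Alg+[l]{T}$; the case of $\Alg+[p]{T}$ is identical, carrying along invertibility of the structure $2$-cells. Given a normal pseudo $T$-algebra $D$ and lax morphisms $(F',\bar{F'})\:D\to A$, $(G',\bar{G'})\:D\to B$ with $F\c F'=G\c G'$ as lax morphisms, the equality of underlying $1$-cells $F\c F'=G\c G'$ yields a unique $H\:D\to P$ in $\K$ with $p_A\c H=F'$ and $p_B\c H=G'$. Its lax structure cell $\bar H\:a_P\c TH\aR{}H\c a_D$ is again produced by the $2$-dimensional universal property of $P$ from $\bar{F'}$ and $\bar{G'}$: these agree after whiskering with $F$ and $G$ precisely because the structure cell of a strict-after-lax composite is $F\c\bar{F'}$, resp.\ $G\c\bar{G'}$, and $F\c F'=G\c G'$ \emph{as lax morphisms}. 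The lax-morphism axioms for $(H,\bar H)$ reduce, after whiskering with $p_A$ and $p_B$, to those for $(F',\bar{F'})$ and $(G',\bar{G'})$; when $F',G'$ are pseudo (resp.\ strict) the cells $\bar{F'},\bar{G'}$ are invertible (resp.\ identities), hence so is $\bar H$, so the induced map is pseudo (resp.\ strict). Uniqueness of $H$ as a lax morphism is forced: its underlying $1$-cell is forced by the pullback in $\K$, and then $\bar H$ is forced because $p_A,p_B$ are strict and restrict $\bar H$ to $\bar{F'},\bar{G'}$. For the $2$-dimensional part of the universal property, a compatible pair of $T$-transformations $\sigma_A\:p_A\c H\aR{}p_A\c K$, $\sigma_B\:p_B\c H\aR{}p_B\c K$ with $F\c\sigma_A=G\c\sigma_B$ lifts uniquely to a $2$-cell $\sigma\:H\aR{}K$ in $\K$, whose $T$-transformation axiom reduces by whiskering with $p_A,p_B$ to those of $\sigma_A,\sigma_B$.

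I expect the only real obstacle to be organisational. Every piece of the pseudo-algebra and (lax or pseudo) morphism structure on $P$ must be manufactured through the $2$-dimensional universal property of the pullback in $\K$, and this works only because (i) that pullback is a genuine $2$-limit, so $2$-cells with codomain $P$ are jointly controlled by $p_A$ and $p_B$, and (ii) $F$ and $G$ are \emph{strict}, so they preserve the associativity cells on the nose, which is exactly what makes $\gamma_A\c T^2p_A$ and $\gamma_B\c T^2p_B$ (and later $\bar{F'}$ and $\bar{G'}$) compatible. The remaining work is bookkeeping: tracking which $2$-cells are whiskered by which projections when checking the several coherence identities, and observing that the lax/pseudo dichotomy amounts to nothing more than invertibility of $\gamma_P$ and of the lifted structure cells — the hypothesis that $T$ preserves pullbacks being used throughout to keep the higher pullbacks $T^nP$ available for these verifications.
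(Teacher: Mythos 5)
Your proposal is correct and follows essentially the same route as the paper: identify $T(P)$ with the pullback of the $T$-images, induce the structure map and the associativity $2$-cell through the one- and two-dimensional universal properties of the pullback (using strictness of $F$ and $G$ for the compatibility of the cones), and then verify the universal property for lax/pseudo morphisms and for $2$-cells by the same gluing. The only difference is that you spell out a few verifications (normality of $a_P$, coherence via joint faithfulness of the projections) that the paper leaves as ``straightforward.''
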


\begin{proof}
    Let $(C_0,Q_0\:T(C_0)\to C_0,\alpha_{\mu,0})$, $(C_1,Q_1\:T(C_1)\to C_1,\alpha_{\mu,1})$ and $(C_2,Q_2\:T(C_2)\to C_2,\alpha_{\mu,2})$ be normal pseudo-algebras for $T$. Furthermore, let $f\:C_1\to C_0$ and $g\:C_2\to C_0$ be strict morphisms of normal pseudo-algebras, and assume that the pullback $C_1\times_{C_0}C_2$ of $f$ and $g$ exists in $\K$,
    \sq[p]{C_1\times_{C_0}C_2}{C_1}{C_2}{C_0}{\pi_1}{\pi_2}{f}{g}
    We prove that $(C_1\times_{C_0}C_2,Q_1\times_{Q_0}Q_2\:T(C_1\times_{C_0}C_2)\to C_1\times_{C_0}C_2,\alpha_{\mu,1}\times_{\alpha_{\mu,0}}\alpha_{\mu,2})$ is the pullback of $f$ and $g$ in $\Alg+[l]{T}$.

    Since $T$ preserves pullbacks, we can choose $T(C_1)\times_{T(C_0)} T(C_2)$ to be $T(C_1\times_{C_0}C_2)$. Then the pullback square of $C_1\times_{C_0}C_2$ with its image under $T$, connected by $Q_0,Q_1,Q_2$ induce a morphism $Q_1\times_{Q_0}Q_2\:T(C_1\times_{C_0}C_2)\to C_1\times_{C_0}C_2$, by the universal property of the pullback. Analogously, by the 2-dimensional universal property of the pullback, we can induce an invertible 2-cell
    \sq[l][6][10][\alpha_{\mu,1}\times_{\alpha_{\mu,0}}\alpha_{\mu,2}]{T^2(C_1\times_{C_0}C_2)}{T(C_1\times_{C_0}C_2)}{T(C_1\times_{C_0}C_2)}{C_1\times_{C_0}C_2}{T(Q_1\times_{Q_0}Q_2)}{\mu_{C_1\times_{C_0}C_2}}{Q_1\times_{Q_0}Q_2}{Q_1\times_{Q_0}Q_2}
    again using that $f$ and $g$ are strict morphisms of normal pseudo-algebras. It is straightforward to show that $(C_1\times_{C_0}C_2,Q_1\times_{Q_0}Q_2\:T(C_1\times_{C_0}C_2)\to C_1\times_{C_0}C_2,\alpha_{\mu,1}\times_{\alpha_{\mu,0}}\alpha_{\mu,2})$ is a normal pseudo-algebra for $T$. We then notice that $\pi_1$ and $\pi_2$ are strict morphisms of normal pseudo-algebras.

    Consider now $(C_3,Q_3\:T(C_3)\to C_3,\alpha_{\mu,3})$ a normal pseudo-algebras for $T$ together with lax morphisms $(h,\phi^h)\:(C_3,Q_3,\alpha_{\mu,3})\to (C_1,Q_1,\alpha_{\mu,1})$ and $(k,\phi^k)\:(C_3,Q_3,\alpha_{\mu,3})\to (C_2,Q_2,\alpha_{\mu,2})$ between normal pseudo-algebras such that $(f,\id{})\c (h,\phi^h)=(g,\id{})\c (k,\phi^k)$. Then by the universal property of the pullback, these data induce both a morphism $\langle h,k\rangle\:C_3\to C_1\times_{C_0}C_2$ and a 2-cell 
    \sq[l][6][6][\langle\phi^h,\phi^k\rangle]{T(C_3)}{T(C_1\times_{C_0}C_2)}{C_3}{C_1\times_{C_0}C_2}{T(\langle h,k\rangle)}{Q_3}{Q_1\times_{Q_0}Q_2}{\langle h,k \rangle}
    Notice that the morphism on top is indeed $T(\langle h,k\rangle)$ by the uniqueness part of the universal property of the pullback. It is straightforward to check that $(\langle h,k \rangle,\langle\phi^h,\phi^k\rangle)$ is a lax morphism of normal pseudo-algebras, and that it is the unique lax morphism that fulfills the 1-dimensional universal property of the pullback in $\Alg+[l]{T}$. Notice that if both $\phi^h$ and $\phi^k$ are iso then also $\langle\phi^h,\phi^k\rangle$ is iso. So pseudo morphisms between normal pseudo-algebras induce a pseudo morphism.

    Taking then 2-cells $\sigma\:h\aR{}h'$ and $\tau\:k\aR{}k'$ between lax morphisms of algebras such that $f\ast \sigma=g\ast \tau$, we can analogously induce a 2-cell $\langle\sigma,\tau\rangle\:\langle h,k\rangle\aR{}\langle h',k'\rangle$ between lax morphisms of algebras, which is straightforwardly shown to be the unique 2-cell that fulfills the 2-dimensional universal property of the pullback in $\Alg+[l]{T}$.
\end{proof}

We now prove a general monadicity theorem which provides a way to extend a 2-monad on a 2-category $\mathcal{K}$ to canonical 2-monads on the 2-categories of categories internal to $\mathcal{K}$ with either lax, oplax or pseudo functors, and shows how the 2-categories of algebras for the latter are isomorphic to the corresponding 2-categories of  internal categories in algebras for the former. This will be particularly useful for us in the context of double categories.

\begin{theorem}\label{teorgenmonadicity}
    Let $\K$ be a 2-category and let $(T\: \K\to \K, \eta\:\Id{}\aR{}T, \mu\:T^2\aR{} T)$ be a 2-monad whose underlying 2-functor preserves pullbacks. Then $T$ induces 2-monads
    $$\overline{T}_{\oplax}\:\Catint+[o]{\K}\to \Catint+[o]{\K}$$
    $$\overline{T}_{\ps}\:\Catint+[p]{\K}\to \Catint+[p]{\K}$$
    such that
    $$\Alg+[l]{\overline{T}_{\oplax}}\iso \Catint+'[o]{\Alg+[l]{T}}$$
    $$\Alg+[p]{\overline{T}_{\ps}}\iso \Catint+'[p]{\Alg+[p]{T}}$$
    where $\Catint+'[o]{\Alg+[l]{T}}$ (respectively, $\Catint+'[p]{\Alg+[p]{T}}$) is the full sub-2-category of $\Catint+[o]{\Alg+[l]{T}}$ (respectively, $\Catint+[p]{\Alg+[p]{T}}$) on those pseudo-categories whose source, target and identity assignment are strict morphisms between normal pseudo-algebras.
\end{theorem}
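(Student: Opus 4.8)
The plan is to construct $\overline{T}_{\oplax}$ and $\overline{T}_{\ps}$ by applying $T$ \emph{levelwise} to a pseudo-category internal to $\K$ --- this makes sense precisely because $T$ preserves pullbacks --- and then to observe that a $\overline{T}$-algebra is literally the same data as a normal pseudo-category internal to $\Alg{T}$ with strict source, target and identity, merely regrouped: a normal pseudo-algebra structure on an internal (pseudo/oplax) functor $\overline{Q}\colon\overline{T}\dC\to\dC$ decomposes into a normal pseudo-algebra structure on $C_0$, one on $C_1$, and coherences saying exactly that $\src,\tgt,i$ are strict morphisms of algebras and $\otimes$ is a pseudo (resp.\ oplax) morphism of algebras.

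\emph{Construction of $\overline{T}$.} For $\dC=(C_0,C_1,\src,\tgt,i,\otimes,\alpha,\lambda,\rho)$ put $(\overline{T}\dC)_0:=T(C_0)$, $(\overline{T}\dC)_1:=T(C_1)$ with source and target $T\src,T\tgt$ and identity $Ti$. Since $T$ preserves pullbacks, $T(C_1\times_{C_0}C_1)$ with $T\pi_1,T\pi_2$ is a pullback of $T\src,T\tgt$; let $\chi\colon T(C_1)\times_{T(C_0)}T(C_1)\to T(C_1\times_{C_0}C_1)$ be the canonical isomorphism determined by $T\pi_j\circ\chi=\pi_j$, and let composition in $\overline{T}\dC$ be $T(\otimes)\circ\chi$, with associator obtained by transporting $T\alpha$ along $\chi$ (and the unitors, transported from $T\lambda=T\rho=\iid$, remain identities). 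One checks that $\overline{T}\dC$ is again a normal pseudo-category internal to $\K$; preserving normality is the point at which one uses that $\chi$ is compatible with $\langle i,1\rangle$ and $\langle 1,i\rangle$. On a pseudo (resp.\ oplax) internal functor $F=(F_0,F_1,\phi^F,\theta^F)$ set $\overline{T}F:=(TF_0,TF_1,\,T\phi^F\text{ transported along }\chi,\,T\theta^F)$; its coherence conditions follow by applying the $2$-functor $T$ to those of $F$ and pasting with naturality squares for $\chi$, and similarly on internal natural transformations. Functoriality of $\overline{T}$ is inherited from that of $T$. For the monad structure take $\overline{\eta}_{\dC}:=(\eta_{C_0},\eta_{C_1})$ and $\overline{\mu}_{\dC}:=(\mu_{C_0},\mu_{C_1})$: $2$-naturality of $\eta$ and $\mu$, together with the characterization of $\chi$ by the projections, shows these commute strictly with $\src,\tgt,i$ and with composition, so they are \emph{strict} internal functors; they assemble into $2$-natural transformations $\Id{}\aR{}\overline{T}$ and $\overline{T}^{\,2}\aR{}\overline{T}$, and the monad axioms for $\overline{T}$ hold because they hold for $T$ at each level.

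\emph{The isomorphism.} An object of $\Alg+[p]{\overline{T}_{\ps}}$ is a normal pseudo-category $\dC$ internal to $\K$ together with a normal internal pseudofunctor $\overline{Q}=(Q_0,Q_1,\phi^Q)\colon\overline{T}\dC\to\dC$ and an invertible internal natural transformation $\overline{a}=(a_0,a_1)\colon\overline{Q}\circ\overline{T}\,\overline{Q}\aR{}\overline{Q}\circ\overline{\mu}_{\dC}$ satisfying the normal pseudo-algebra axioms. Unwinding: the $\overline{T}$-algebra axioms restrict, level by level, to the $T$-algebra axioms, so $(C_0,Q_0,a_0)$ and $(C_1,Q_1,a_1)$ are normal pseudo-algebras for $T$; strict commutation of $Q_0,Q_1$ with $T\src,T\tgt$ together with $\overline{a}$ being levelwise makes $\src,\tgt$ strict algebra morphisms, and $\theta^Q=\iid$ with normality of $\overline{a}$ makes $i$ strict; finally $\phi^Q$, with its internal-pseudofunctor coherences, is precisely the coherence isomorphism exhibiting $\otimes$ as a pseudo morphism of $T$-algebras --- here one invokes Lemma~\ref{lemmapullbacksofalgebras}, which guarantees that $C_1\times_{C_0}C_1$ with its induced action \emph{is} the pullback in $\Alg+[p]{T}$, so that $\otimes$ has the right domain --- and the remaining coherence says $\alpha$ is a $T$-transformation. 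This produces the object map of a $2$-functor $\Alg+[p]{\overline{T}_{\ps}}\to\Catint+'[p]{\Alg+[p]{T}}$; on $1$-cells and $2$-cells one performs the same repackaging, using that a lax morphism of $\overline{T}_{\ps}$-algebras is an internal pseudofunctor carrying a levelwise structure $2$-cell, equivalently an internal pseudofunctor in $\Alg+[p]{T}$ with pseudo-morphism components, and that internal natural transformations match $T$-transformation-valued internal natural transformations. The inverse $2$-functor runs the bookkeeping backwards; the two composites are identities because each direction merely rearranges the same underlying data. The oplax case is strictly parallel, with internal pseudofunctors replaced by internal oplax functors and pseudo morphisms of algebras by lax ones --- the variances are forced to match because an oplax internal functor equipped with a lax $\overline{T}_{\oplax}$-algebra structure has lax morphism components $F_0,F_1$.

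\emph{Main obstacle.} The conceptual input is small; the work is coherence bookkeeping. The two genuinely delicate points are: (1) transporting the pseudo-category, functor and transformation structures along the canonical pullback isomorphism $\chi$ and checking that normality and every coherence axiom survive, as well as strictness of $\overline{\eta}$ and $\overline{\mu}$; and (2) matching, in both directions, the internal-pseudofunctor plus normal-pseudo-algebra axioms for $\overline{Q}$ against the package ``$C_0,C_1$ are algebras, $\src,\tgt,i$ are strict, $\otimes$ is pseudo/oplax, $\alpha$ is a $T$-transformation''. Lemma~\ref{lemmapullbacksofalgebras} is what makes the statement even well-posed: without source, target and identity being strict, the pullback $C_1\times_{C_0}C_1$ need not exist in $\Alg{T}$, and the phrase ``pseudo-category internal to $\Alg{T}$'' --- in particular the domain of $\otimes$ --- would be meaningless.
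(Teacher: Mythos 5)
Your proposal is correct and follows essentially the same route as the paper: define $\overline{T}$ by applying $T$ levelwise (using pullback-preservation to identify $T(C_1\times_{C_0}C_1)$ with the pullback of $T\src,T\tgt$), take $\overline{\eta},\overline{\mu}$ levelwise as strict internal functors, and then exhibit the isomorphism of 2-categories as a regrouping of the same data, with Lemma~\ref{lemmapullbacksofalgebras} guaranteeing that the domain of $\otimes$ exists in the 2-category of algebras. The only cosmetic difference is that you transport structure along an explicit canonical isomorphism $\chi$ where the paper simply chooses $T(C_1\times_{C_0}C_1)$ as the pullback.
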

\begin{proof}
    Given $C$ a pseudo-category
    \[\begin{tikzcd}
	{C_1\times_{C_0}C_1} & {C_1} & {C_0}
	\arrow["\otimes", from=1-1, to=1-2]
	\arrow["{\operatorname{src}}", shift left=2, from=1-2, to=1-3]
	\arrow["{\operatorname{tgt}}"', shift right=2, from=1-2, to=1-3]
	\arrow["i"{description}, from=1-3, to=1-2]
    \end{tikzcd}\]
    in $\K$, we define $\overline{T}_{\oplax}(C)$ to be the pseudo-category
     \[\begin{tikzcd}
	 T({C_1\times_{C_0}C_1}) & T({C_1}) & T({C_0})
	\arrow["T(\otimes)", from=1-1, to=1-2]
	\arrow["{T(\operatorname{src})}", shift left=3, from=1-2, to=1-3]
	\arrow["{T(\operatorname{tgt})}"', shift right=3, from=1-2, to=1-3]
	\arrow["T(i)"{description}, from=1-3, to=1-2]
    \end{tikzcd}\]
    choosing ${T(C_1)\times_{T(C_0)}T(C_1)}$ to be $T({C_1\times_{C_0}C_1})$ (and similarly with iterated pullbacks), thanks to the fact that $T$ preserves pullbacks. Its associator is just $T(\Phi)$, where $\Phi$ is the associator of $C$. It is straightforward to show that this gives indeed a pseudo-category.
    
    \noindent Given then a normal internal oplax functor $(F_0\:C_0\to D_0,F_1\:C_1\to D_1,\phi)$, we send it to the normal internal oplax functor $(T(F_0),T(F_1),T(\phi))\:\overline{T}_{\oplax}(C)\to \overline{T}_{\oplax}(D)$. The required axioms are all images under $T$ of the corresponding axioms for $(F_0,F_1,\phi)$. And given an internal natural transformation $(\alpha_0,\alpha_1)$ between normal internal oplax functors, we send it to the internal natural transformation $(T(\alpha_0),T(\alpha_1))$. Again, all the required axioms are images under $T$ of the corresponding axioms for $(\alpha_0,\alpha_1)$. It is straightforward to check that $\overline{T}_{\oplax}$ is a 2-functor.

    We then define a unit $\overline{\eta}\:\Id{}\aR{}\overline{T}_{\oplax}$ and a multiplication $\overline{\mu}\:\overline{T}_{\oplax}^2\to \overline{T}_{\oplax}$ for $\overline{T}_{\oplax}$ as follows. Given $C\in \Catint+[o]{\K}$, we define $\overline{\mu}_C$ to be the normal internal strict functor $(\mu_{C_0}\:T^2(C_0)\to T(C_0),\mu_{C_1}\:T^2(C_1)\to T(C_1),\id{})$, thanks to the naturality of $\mu$. Notice that $\mu_{C_1}\times_{\mu_{C_0}}\mu_{C_1}$ coincides with $\mu_{C_1\times_{C_0}C_1}$ by the universal property of the pullback, and we can thus choose $\id{}$ as structure 2-cell
    \sq{T^2(C_1\times_{C_0}C_1)}{T^2(C_1)}{T(C_1\times_{C_0}C_1)}{T(C_1)}{T^2(\otimes)}{\mu_{C_1\times_{C_0}C_1}}{\mu_{C_1}}{T(\otimes)}
    by naturality of $\mu$, making $\overline{\mu}_C$ into an internal strict functor between internal pseudo-categories (the required axioms hold thanks to the 2-naturality of $\mu$). 2-naturality of $\mu$ then also guarantees that $\overline{\mu}$ is 2-natural.

    \noindent Given $C\in \Catint+[o]{\K}$, we then analogously define $\overline{\eta}_C$ to be the normal internal strict functor $(\eta_{C_0}\:C_0\to T(C_0),\eta_{C_1}\:C_1\to T(C_1),\id{})$. 2-Naturality of $\eta$ guarantees that $\overline{\eta}$ is well defined and 2-natural.

    It is straightforward to check that $(\overline{T}_{\oplax},\overline{\eta},\overline{\mu})$ satisfies the axioms for a 2-monad on $\Catint+[o]{\K}$, thanks to the corresponding ones for $(T,\eta,\mu)$ and the fact that both $\overline{\eta}$ and $\overline{\mu}$ have internal strict functors as components, with structure 2-cells given by identities.

    Notice then that the 2-monad $\overline{T}_{\oplax}$ on $\Catint+[o]{\K}$ restricts to a 2-monad $\overline{T}_{\ps}$ on $\Catint+[p]{\K}$. Indeed if $(F_0\:C_0\to D_0,F_1\:C_1\to D_1,\phi)$ is a normal internal pseudofunctor, which means that $\phi$ is invertible, then also $T(\phi)$ is invertible and $$(T(F_0),T(F_1),T(\phi))\:\overline{T}_{\oplax}(C)\to \overline{T}_{\oplax}(D)$$ is a normal internal pseudofunctor. Moreover, both $\overline{\eta}$ and $\overline{\mu}$ have as components internal strict functors.

    We now prove that
    $$\Alg+[l]{\overline{T}_{\oplax}}\iso \Catint+'[o]{\Alg+[l]{T}}.$$
    So consider $(C,Q\:\overline{T}_{\oplax}(C)\to C,(\alpha_{\mu,0},\alpha_{\mu,1}))$ a normal pseudo-algebra for $\overline{T}_{\oplax}$. Here, $C=(C_0,C_1,\operatorname{src},\operatorname{tgt},i,\Phi)$, with $\Phi$ the associator, and $(\alpha_{\mu,0},\alpha_{\mu,1})\:Q\circ \overline{T}_{\oplax}Q\Rightarrow Q\circ\mu_C$ is the internal (levelwise) natural transformation which gives the structure 2-cell of a normal pseudo-algebra. We write $Q=(Q_0,Q_1,\phi^Q)$, with $\phi^Q$ the structure 2-cell for the internal oplax functor $Q$. The following diagrams help visualizing the situation:
    $$
\begin{tikzcd}
	{T(C_1) \times_{T(C_0)} T(C_1)} && {T(C_1)} && {T(C_0)} \\
	\\
	{C_1 \times_{C_0} C_1} && {C_1} && {C_0}
	\arrow["{T(\otimes)}"{description}, from=1-1, to=1-3]
	\arrow["{Q_1 \times_{Q_0} Q_1}"{description}, from=1-1, to=3-1]
	\arrow["{T(\src)}"{description}, shift left=4, from=1-3, to=1-5]
	\arrow["{T(\tgt)}"{description}, shift right=4, from=1-3, to=1-5]
	\arrow["{\cong\,\,\varphi^Q}"{description}, draw=none, from=1-3, to=3-1]
	\arrow["{Q_1}"{description}, from=1-3, to=3-3]
	\arrow["{T(i)}"{description}, from=1-5, to=1-3]
	\arrow["{Q_0}"{description}, from=1-5, to=3-5]
	\arrow["\otimes"{description}, from=3-1, to=3-3]
	\arrow["\src"{description}, shift left=4, from=3-3, to=3-5]
	\arrow["\tgt"{description}, shift right=4, from=3-3, to=3-5]
	\arrow["i"{description}, from=3-5, to=3-3]
\end{tikzcd}
$$

\[\begin{tikzcd}[ampersand replacement=\&,row sep=3ex, column sep=5ex]
	{\overline{T}^2_{opl}C} \&\& {\overline{T}_{opl}C} \&\& {T^2C_1} \&\&\& {TC_1} \\
	\&\&\&\&\& {TC_1} \&\&\& {C_1} \\
	{\overline{T}_{opl}C} \&\& C \&\& {T^2C_0} \&\&\& {TC_0} \\
	\&\&\&\&\& {TC_0} \&\&\& {C_0}
	\arrow["{\mu_C}", from=1-1, to=1-3]
	\arrow[""{name=0, anchor=center, inner sep=0}, "{\overline{T}_{opl}Q}"', from=1-1, to=3-1]
	\arrow[""{name=1, anchor=center, inner sep=0}, "Q", from=1-3, to=3-3]
	\arrow["{\mu_{C_1}}", from=1-5, to=1-8]
	\arrow[""{name=2, anchor=center, inner sep=0}, "{TQ_1}"{description}, from=1-5, to=2-6]
	\arrow[shift left, from=1-5, to=3-5]
	\arrow[""{name=3, anchor=center, inner sep=0}, shift right, from=1-5, to=3-5]
	\arrow["{Q_1}", from=1-8, to=2-9]
	\arrow[shift left, from=1-8, to=3-8]
	\arrow[shift right, from=1-8, to=3-8]
	\arrow["{Q_1}"{description}, from=2-6, to=2-9]
	\arrow[shift left, from=2-6, to=4-6]
	\arrow[shift right, from=2-6, to=4-6]
	\arrow[shift right, from=2-9, to=4-9]
	\arrow[shift left, from=2-9, to=4-9]
	\arrow["Q"', from=3-1, to=3-3]
	\arrow[from=3-5, to=1-5]
	\arrow["{\mu_{C_0}}", from=3-5, to=3-8]
	\arrow["{TQ_0}"', from=3-5, to=4-6]
	\arrow[from=3-8, to=1-8]
	\arrow[""{name=4, anchor=center, inner sep=0}, "{Q_0}", from=3-8, to=4-9]
	\arrow[from=4-6, to=2-6]
	\arrow["{Q_0}"', from=4-6, to=4-9]
	\arrow[from=4-9, to=2-9]
	\arrow["{\alpha_\mu}"', shorten <=12pt, Rightarrow, from=0, to=1-3]
	\arrow["{=}"{description}, draw=none, from=1, to=3]
	\arrow["{\alpha_{\mu,1}}"{description}, shorten <=14pt, shorten >=14pt, Rightarrow, from=2, to=1-8]
	\arrow["{\alpha_{\mu,0}}"{description}, shorten >=14pt, Rightarrow, from=4-6, to=4]
\end{tikzcd}\]
    It is straightforward to show that:
    \begin{itemize}
        \item 
        The 2-cells $\alpha_{\mu,0}\: Q_0\circ TQ_0\Rightarrow Q_0\circ \mu_{C_0}$ and $\alpha_{\mu,1}\colon Q_1\circ TQ_1\Rightarrow Q_1\circ \mu_{C_1}$ give 
        $(C_0,Q_0\:T(C_0)\to C_0,\alpha_{\mu,0})$ and $(C_1,Q_1\:T(C_1)\to C_1,\alpha_{\mu,1})$ respectively the structure of  normal pseudo-algebras for $T$;
        \item 
        The commutative diagram
        \[\begin{tikzcd}
	{T(C_1)} && {T(C_0)} \\
	\\
	{C_1} && {C_0}
	\arrow["{T(\operatorname{src})}", shift left=4, from=1-1, to=1-3]
	\arrow["{T(\operatorname{tgt})}"', shift right=4, from=1-1, to=1-3]
	\arrow["{Q_1}"', from=1-1, to=3-1]
	\arrow["{T(i)}"{description}, from=1-3, to=1-1]
	\arrow["{Q_0}", from=1-3, to=3-3]
	\arrow["{\operatorname{src}}", shift left=3, from=3-1, to=3-3]
	\arrow["{\operatorname{tgt}}"', shift right=3, from=3-1, to=3-3]
	\arrow["i"{description}, from=3-3, to=3-1]
\end{tikzcd}\]
    can be viewed as establishing that $\operatorname{src},\operatorname{tgt}\:C_1\to C_0$ and $i\: C_0\to C_1$ are strict morphisms of $T$-algebras.
    \end{itemize}
 By \lemx\ref{lemmapullbacksofalgebras}, $\operatorname{src}$ and $\operatorname{tgt}$ have a pullback in $\Alg+[l]{T}$, given by $$(C_1\times_{C_0} C_1, Q_1\times_{Q_0}Q_1\:T(C_1\times_{C_0} C_1)\to C_1\times_{C_0} C_1, \alpha_{\mu,1}\times_{\alpha_{\mu,0}} \alpha_{\mu,1}).$$ And analogously, $\operatorname{src}$ and $\operatorname{tgt}$ have all iterated pullbacks. We can then use the vertical composition $\otimes$ of $C$ and $\phi^Q$ to produce a vertical composition for $(Q_0,Q_1)$. It is indeed straightforward to show that the 2-cell
\sq[l][6][6][\phi^Q]{T(C_1\times_{C_0} C_1)}{T(C_1)}{C_1\times_{C_0} C_1}{C_1}{T(\otimes)}{Q_1\times_{Q_0} Q_1}{Q_1}{\otimes}
is a lax morphism of normal pseudo-algebras for $T$, and then that
$$(Q_0,Q_1,(\operatorname{src},\id{}),(\operatorname{tgt},\id{}),(i,\id{}),(\otimes,\phi^Q),\Phi)$$
is a pseudo-category internal to $\Alg+[l]{T}$.

If we now start from a pseudo-category internal to $\Alg+[l]{T}$ with source, target and identity being strict morphisms between normal pseudo-algebras, we can produce a normal pseudo-algebra for $\overline{T}_{\oplax}$. Indeed we can first produce a pseudo-category $C$ internal to $\K$. Then the structure 2-cell $\phi^\otimes$ of the vertical composition $\otimes$ extends $Q_0$ and $Q_1$ to an internal oplax functor $Q\:\overline{T}_{\oplax} (C)\to C$, also thanks to the associator $\Phi$ being a 2-cell between algebras. Finally, $\alpha_{\mu,0}$ and $\alpha_{\mu,1}$ form an internal natural transformation thanks to the fact that $\operatorname{src},\operatorname{tgt},i$ are strict morphisms of normal pseudo-algebras. So we produce a normal pseudo-algebra for $\overline{T}_{\oplax}$. It is straightforward to see that the two constructions we have given are inverses of each other, giving a bijection on objects between $\Alg+[l]{\overline{T}_{\oplax}}$ and $\Catint+'[o]{\Alg+[l]{T}}$.

Consider now a lax morphism
$$(C,Q\:\overline{T}_{\oplax}(C)\to C,(\alpha_{\mu,0},\alpha_{\mu,1}))\to (D,R\:\overline{T}_{\oplax}(D)\to D,(\beta_{\mu,0},\beta_{\mu,1}))$$
between normal pseudo-algebras for $\overline{T}_{\oplax}$. This consist of an oplax functor $(F_0,F_1,\lambda)\:C\to D$ internal to $\K$ equipped with a structure internal natural transformation $(\xi_0,\xi_1)$ where
\sq[l][6][6][\xi_i]{T(C_i)}{T(D_i)}{C_i}{D_i}{T(F_i)}{Q_i}{R_i}{F_i}
for $i=1,2$. This gives us a lax morphism of normal pseudo-algebras 
$$(F_0,\xi_0)\: (C_0,Q_0\:T(C_0)\to C_0,\alpha_{\mu,0})\to (D_0,R_0\:T(D_0)\to D_0,\beta_{\mu,0})$$
and analogously $(F_1,\xi_1)\:Q_1\to R_1$. We can thus associate to $((F_0,F_1,\lambda),(\xi_0,\xi_1))$ the oplax functor internal to $\Alg+[l]{T}$ given by
$$((F_0,\xi_0)\:Q_0\to R_0, (F_1,\xi_1)\:Q_1\to R_1,\lambda).$$
Indeed, it is straightforward to show that
\sq[o][6][6][\lambda]{C_1\times_{C_0}C_1}{D_1\times_{D_0}D_1}{C_1}{D_1}{F_1\times_{F_0}F_1}{\otimes}{\otimes}{F_1}
is a 2-cell between lax morphisms of normal pseudo-algebras and that the structure defined above is an oplax functor internal to $\Alg+[l]{T}$. Notice that, for the latter, the required axioms are equalities of 2-cells between normal pseudo-algebras, which can then be checked in $\K$ using the underlying 2-cells. The following diagram helps visualizing the situation:
$$
{\footnotesize  
\begin{tikzcd}
	{T(C_1) \times_{T(C_0)} T(C_1)} && {T(C_1)} && {T(C_0)} \\
	& {T(D_1) \times_{T(D_0)} T(D_1)} && {T(D_1)} && {T(D_0)} \\
	{C_1 \times_{C_0} C_1} && {C_1} && {C_0} \\
	& {D_1 \times_{D_0} D_1} && {D_1} && {D_0}
	\arrow[from=1-1, to=1-3]
	\arrow[from=1-1, to=2-2]
	\arrow[from=1-1, to=3-1]
	\arrow[shift left=4, from=1-3, to=1-5]
	\arrow[shift right=4, from=1-3, to=1-5]
	\arrow["{T(\lambda)}"{description, pos=0.6}, shift right=2, shorten <=8pt, Rightarrow, from=1-3, to=2-2]
	\arrow[from=1-3, to=2-4]
	\arrow["\cong"{description, pos=0.4}, curve={height=6pt}, draw=none, from=1-3, to=3-1]
	\arrow[from=1-3, to=3-3]
	\arrow[from=1-5, to=1-3]
	\arrow[from=1-5, to=2-6]
	\arrow[from=1-5, to=3-5]
	\arrow[from=2-2, to=2-4]
	\arrow["{\xi_1\times_{\xi_0}\xi_1}", shorten <=4pt, shorten >=4pt, Rightarrow, from=2-2, to=3-1]
	\arrow[from=2-2, to=4-2]
	\arrow[shift left=1.5, from=2-4, to=2-6]
	\arrow[shift right=1.5, from=2-4, to=2-6]
	\arrow["\xi_1", shorten <=4pt, shorten >=4pt, Rightarrow, from=2-4, to=3-3]
	\arrow["\cong"{description}, shift right=3, draw=none, from=2-4, to=4-2]
	\arrow[from=2-4, to=4-4]
	\arrow[from=2-6, to=2-4]
	\arrow["\xi_0", shorten <=4pt, shorten >=2pt, Rightarrow, from=2-6, to=3-5]
	\arrow[from=2-6, to=4-6]
	\arrow[from=3-1, to=3-3]
	\arrow[from=3-1, to=4-2]
	\arrow[shift right=1.5, from=3-3, to=3-5]
	\arrow[shift left=1.5, from=3-3, to=3-5]
	\arrow["\lambda"{description, pos=0.6}, shorten <=8pt, shift right=2, Rightarrow, from=3-3, to=4-2]
	\arrow[from=3-3, to=4-4]
	\arrow[from=3-5, to=3-3]
	\arrow[from=3-5, to=4-6]
	\arrow[from=4-2, to=4-4]
	\arrow[shift left=1.5, from=4-4, to=4-6]
	\arrow[shift right=1.5, from=4-4, to=4-6]
	\arrow[from=4-6, to=4-4]
\end{tikzcd} }$$ 

Starting now from an oplax functor $((F_0,\xi_0)\:Q_0\to R_0,(F_1,\xi_1)\:Q_1\to R_1,\lambda)$ internal to $\Alg+[l]{T}$ between pseudo-categories with source, target and identity being strict morphisms between normal pseudo-algebras, we produce a lax morphism between normal pseudo-algebras for $\overline{T}_{\oplax}$. It is straightforward to show that $(F_0,F_1,\lambda)$ is an oplax functor internal to $\K$, thanks to the fact that $((F_0,\xi_0),(F_1,\xi_1),\lambda)$ is an oplax functor and $\lambda$ is a 2-cell between normal pseudo-algebras. Then $(\xi_0,\xi_1)$ is an internal natural transformation that makes 
$((F_0,F_1,\lambda),(\xi_0,\xi_1))$ into a lax morphism between normal pseudo-algebras for $\overline{T}_{\oplax}$. It is straightforward to show that the two assignments we have described are functorial and inverses of each other, giving the 1-dimensional part of 
$$\Alg+[l]{\overline{T}_{\oplax}}\iso \Catint+'[o]{\Alg+[l]{T}}.$$

We show that the 2-dimensional part holds as well. So consider a 2-cell between normal pseudo-algebras for $\overline{T}_{\oplax}$. This is an internal natural transformation $(\gamma_0\:F_0\aR{} G_0,\gamma_1\:F_1\aR{}G_1)$ that satisfies some axioms. It is straightforward to see that $\gamma_0$ is a 2-cell from the lax morphism $(F_0,\xi_0)$ of normal pseudo-algebras to the corresponding $(G_0,\xi'_0)$, and analogously for $\gamma_1$. One can then see that sending $(\gamma_0,\gamma_1)$ to $(\gamma_0,\gamma_1)$ and vice versa gives an isomorphic 2-functor, exhibiting thus the isomorphism of 2-categories 
$$\Alg+[l]{\overline{T}_{\oplax}}\iso \Catint+'[o]{\Alg+[l]{T}}.$$

Finally, it is straightforward to see that this isomorphism of 2-categories restricts to
$$\Alg+[p]{\overline{T}_{\ps}}\iso \Catint+'[p]{\Alg+[p]{T}}.$$
Indeed the structure 2-cell $\phi^Q$ of the internal oplax functor $Q$ and the structure 2-cell $\phi^\otimes$ of the lax morphism of normal pseudo-algebras which gives the vertical composition for $(Q_0,Q_1)$ correspond to each other, and if one is invertible then the other one is invertible. Analogously, for $\xi_0,\xi_1$ and $\lambda$ in the correspondence between morphisms.

So we have established the required isomorphisms of 2-categories.
\end{proof}

\begin{remark}
The intuitive idea behind the proof of \thex\ref{teorgenmonadicity} is that, interestingly, objects, morphisms, and 2-cells of the 2-categories $$\Alg+[l]{\overline{T}_{\oplax}} \quad \text{and} \quad \Catint+'[o]{\Alg+[l]{T}}$$ and $$\Alg+[p]{\overline{T}_{\ps}}\quad \text{and} \quad \Catint+'[p]{\Alg+[p]{T}}$$ respectively, are all presented by the same underlying data, grouped with each other in two different ways.
\end{remark}

We can now apply this general monadicity theorem to generalize the fundamental monadicity result of \ref{theorkorostenskitholen} to the double 
categorical setting.

\begin{construction}\label{conssquaringmonaddouble}
    We construct the needed generalization of the squaring 2-monad on $\Cat$ to the double categorical setting. We use the recipe given by \thex\ref{teorgenmonadicity}.

    The underlying 2-functor of the squaring 2-monad $\sqm\:\Cat\to \Cat$ preserves pullbacks. Indeed taking exponentials by $\to$ in $\Cat$ gives a right adjoint, that then preserves limits. By \thex\ref{teorgenmonadicity}, then $\sqm\:\Cat\to \Cat$ induces a 2-monad
    $$\sqm\loose\:\Catint+[o]{\Cat}\to \Catint+[o]{\Cat}.$$
    (Note that in the notation of \thex\ref{teorgenmonadicity} this would be called $\sqm_{\oplax}$, but for reasons that will become clear in Remark \ref{remFmonad} we choose this new notation.)
    But notice that $\Catint+[o]{\Cat}$ precisely gives the 2-category $\DblCatnoplax$ of pseudo double categories, normal (i.e.\ unitary) oplax double functors and horizontal natural transformations (i.e.\ with arrow components). So we get a 2-monad
    $$\sqm\loose\:\DblCatnoplax\to \DblCatnoplax$$
    which restricts to a 2-monad $\sqm\tight$ on the 2-category $\DblCatnps$ of pseudo double categories, normal pseudo double functors and horizontal natural transformations. By \thex\ref{teorgenmonadicity}, the 2-monad $\sqm\loose$ sends a pseudo double category $\dD$ to the pseudo double category $\dD^\to$ given by
    \[\begin{tikzcd}
	{\D_1^{\to}\times_{\D_0^{\to}}\D_1^{\to}} & {\D_1^{\to}} & {\D_0^{\to}}
	\arrow["\otimes^{\to}", from=1-1, to=1-2]
	\arrow["{\operatorname{src}^{\to}}", shift left=2, from=1-2, to=1-3]
	\arrow["{\operatorname{tgt}^{\to}}"', shift right=2, from=1-2, to=1-3]
	\arrow["i^{\to}"{description}, from=1-3, to=1-2]
    \end{tikzcd}\]
    with associators induced from $\dD$. Equivalently, $\dD^\to$ is given by the exponential (also called cotensor) of $\dD$ to the walking horizontal arrow $\to$ (i.e, the double category which has a single non-trivial horizontal arrow and only has vertical identities both as proarrows and double cells). See \citep[Section 7]{grandispare} for the explicit construction of the exponential as the pseudo double category of (normal lax) double functors, horizontal transformations as arrows, (strong) vertical transformations as proarrows and (strong) modifications as double cells. Explicitly, $\dD^{\to}$ has the following description:
    \begin{description}
        \item[an object] is a (horizontal) arrow $f\:A\to B$ in $\dD$;
        \item[an arrow $f\to g$] is a pair $(u,v)$ of arrows in $\dD$ forming a commutative square in $\D_0$
        \sq[n][5][5]{A}{A'}{B}{B'}{u}{f}{g}{v}
        \item[a proarrow $f\proarrow h$] is a double cell in $\dD$
        \sq[d][5][5][\alpha]{A}{B}{C}{D}{f}{m}{n}{h}
        \item[a double cell {\sq*[d][5][5][\Xi]{f}{g}{h}{k}{(u,v)}{\alpha}{\beta}{(y,z)}}] is a pair $\Xi=(\sigma,\tau)$ making the following cube commute (as a square in $\D_1$)
        \begin{cd}[3.75][3.05]
	\& A' \arrow[dd,proarrow,"{m'}"'{pos=0.6}]\arrow[rr,"{g}"] \&\& B' \arrow[dd,proarrow,"{n'}"'{pos=0.6}]\\[-2.9ex]
	A \arrow[dd,proarrow,"{m}"'{pos=0.4}]\arrow[ru,"{u}"]\arrow[rr,"{f}"{pos=0.6}, crossing over,crossing over clearance=1.2ex]\&\& B \arrow[ru,"{v}"'] \\
	\& C' \arrow[rruu,phantom,"{\beta}"{pos=0.35}]\arrow[rr,"{k}"'{pos=0.4}] \&\& D'\\[-2.9ex]
	C \arrow[ruuu,phantom,"{\sigma}"{pos=0.47}]\arrow[rruu,phantom,"{\alpha}"{pos=0.65}] \arrow[ru,"{y}"] \arrow[rr,"{h}"'{pos=0.4}]\&\& D \arrow[ruuu,phantom,"{\tau}"{pos=0.47}] \arrow[ru,"{z}"']
	\arrow[from=2-3,to=4-3,"{n}"{pos=0.4},crossing over,crossing over clearance=1.2ex,proarrow]
\end{cd}
i.e, $(\alpha|\tau)=(\sigma|\beta)$.
\item[identities and composition] are induced from $\dD$.
    \end{description}
The functor  $\sqm\loose$ then acts on morphisms and 2-cells in the following way:
    \begin{fun}
	   \sqm\loose & \: & \DblCatnoplax & \too & \DblCatnoplax \\[1ex]
       && \begin{cd}*[4][3]
           \dD \arrow[d,bend right=40,"{H}"',""{name=A}]\arrow[d,bend left=40,"{K}",""'{name=B}] \\
           \dE 
        \arrow[from=A,to=B,Rightarrow,"\alpha",shorten <=-0.2ex, shorten >=-0.5ex]
       \end{cd}
       & \mto &
       \begin{cd}*[4][3]
           \dD^{\to} \arrow[d,bend right=40,"{H\c -}"',""{name=A}]\arrow[d,bend left=40,"{K\c -}",""'{name=B}] \\
           \dE^{\to}
        \arrow[from=A,to=B,Rightarrow,"\alpha\ast -",shorten <=-0.2ex, shorten >=-0.5ex]
       \end{cd}
    \end{fun}
\end{construction}

\begin{remark}\label{remFmonad}
    We can merge the two 2-monads $\sqm\loose\:\DblCatnoplax\to \DblCatnoplax$ and $\sqm\tight\:\DblCatnps\to \DblCatnps$ into an $\F$-categorical monad.

    Recall from \citep{lackshulman_enhancedtwocatlimlaxmor} that $\F$-category theory coincides with category theory enriched over $\F$, which is the cartesian closed full subcategory of $\Cat^{\to}$ determined by the functors which are injective on objects and fully faithful (i.e, full embeddings). An $\F$-category $\S$ is given by a collection of objects, a hom-category ${\HomC{\S}{X}{Y}}\tight$ of tight morphisms and a second hom-category ${\HomC{\S}{X}{Y}}\loose$ of loose morphisms that give $2$-category structures (respectively) $\S\tight$ and $\S\loose$ to $\S$, together with an identity on objects, faithful and locally fully faithful $2$-functor $J_\S\:\S\tight\to\S\loose$; this is equivalent to a 2-category $\S\loose$ with a selected subclass of morphisms called tight forming a sub-2-category (with all 2-cells). An $\F$-functor $F\:\S\to \T$ is a $2$-functor $F\loose\:\S\loose\to\T\loose$ that restricts to a $2$-functor $F\tight\:\S\tight\to\T\tight$ (forming a commutative square); this is equivalent to $F\loose$ preserving tightness. And an $\F$-natural transformation is a $2$-natural transformation $\alpha\loose$ between the loose parts that restricts to one between the tight parts; this is equivalent to $\alpha\loose$ having tight components. 

    $\F$-categories, $\F$-functors and $\F$-natural transformations then form a 2-category $\VCat{\F}$. An $\F$-monad is a pseudomonoid in $\VCat{\F}$, which is equivalent to a 2-monad on the loose part whose underlying 2-functor preserves tightness and such that the multiplication and the unit have tight components. 

    Notice that an $\F$-monad $T$ on $\S$ always induces two 2-monads $T\tight$ and         $T\loose$ on $\S\tight$ and $\S\loose$ respectively, together with a 2-monad morphism $(\S\tight,T\tight)\to (\S\loose,T\loose)$. As a consequence, it also induces a 2-functor (of inclusion) $\Alg+[p]{T\tight}\to \Alg+[p]{T\loose}$ (and an analogous one with lax morphisms of pseudo-algebras), by \citep[Lemma 3.1]{gambinolobbia}.
\end{remark}

\begin{proposition}
    The two squaring $2$-monads of \conx\ref{conssquaringmonaddouble} form an $\F$-monad $\sqm$ on the $\F$-category $\DblCatnpsnoplax$ of pseudo double categories, normal pseudo double functors as tight morphisms, normal oplax double functors as loose morphisms and horizontal natural transformations as 2-cells.
\end{proposition}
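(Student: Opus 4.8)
The plan is to reduce everything to the characterization of $\F$-monads recalled in \remx\ref{remFmonad}: an $\F$-monad on an $\F$-category $\S$ is the same data as a 2-monad on the loose 2-category $\S\loose$ whose underlying 2-functor preserves tightness and whose unit and multiplication are $\F$-natural, i.e.\ have tight components. Taking $\S=\DblCatnpsnoplax$, so that $\S\loose=\DblCatnoplax=\Catint+[o]{\Cat}$ and $\S\tight=\DblCatnps=\Catint+[p]{\Cat}$, the candidate 2-monad on the loose part is $\sqm\loose$ of \conx\ref{conssquaringmonaddouble}, which was already shown there (via \thex\ref{teorgenmonadicity}) to be a 2-monad. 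So it remains to verify three points.

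First I would confirm that $\DblCatnpsnoplax$ is genuinely an $\F$-category; concretely, that the inclusion of normal pseudo double functors into normal oplax double functors is identity-on-objects, faithful, and locally fully faithful. This is immediate: normal pseudo double functors are closed under composition and contain identities, and a horizontal natural transformation between two normal pseudo double functors is exactly a horizontal natural transformation between their underlying normal oplax functors (same component arrows, same naturality conditions). Secondly, preservation of tightness: if $H$ is a normal pseudo double functor then $\sqm\loose(H)=H\c -$ is again one. This is precisely the restriction statement of \conx\ref{conssquaringmonaddouble}: since $\sqm\loose$ is built from $\overline{T}_{\oplax}$, and $\overline{T}_{\oplax}$ restricts to $\overline{T}_{\ps}$ on $\Catint+[p]{\K}$ because $T=\sqm$ sends invertible structure 2-cells to invertible ones, we obtain that $\sqm\loose$ restricts to $\sqm\tight$ on $\DblCatnps$; the action on 2-cells between tight morphisms automatically lands in the tight hom-category, as that is a full sub-hom-category. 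Thirdly, the unit $\overline{\eta}$ and multiplication $\overline{\mu}$ have tight components: by the construction in the proof of \thex\ref{teorgenmonadicity}, each $\overline{\eta}_{\dD}$ and $\overline{\mu}_{\dD}$ is a normal internal \emph{strict} double functor, hence a fortiori a normal pseudo double functor, that is, a tight morphism of $\DblCatnpsnoplax$. Assembling these facts with the defining data of the $\F$-category then exhibits the $\F$-monad $\sqm$.

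I do not expect a serious obstacle here: the genuine mathematical content has already been extracted in \thex\ref{teorgenmonadicity} and \conx\ref{conssquaringmonaddouble}, and what is left is essentially bookkeeping, translating the statements that $\sqm\loose$ preserves pseudofunctors and that $\overline{\eta},\overline{\mu}$ have strict-functor components into the language of $\F$-enriched category theory. The one mildly delicate point is the check that horizontal natural transformations between pseudo double functors coincide whether computed in $\DblCatnps$ or in $\DblCatnoplax$, which is exactly what makes $\DblCatnpsnoplax$ an $\F$-category; but this follows directly from the explicit description of horizontal transformations.
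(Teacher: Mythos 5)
Your proposal is correct and follows essentially the same route as the paper: the paper's proof likewise invokes the characterization of an $\F$-monad as a 2-monad on the loose part that preserves tightness and has tight unit and multiplication components, observing that $\sqm\loose$ restricts to $\sqm\tight$ by the proof of \thex\ref{teorgenmonadicity} and that the components of $\overline{\eta}$ and $\overline{\mu}$ are internal strict functors, hence tight. Your additional check that $\DblCatnpsnoplax$ is genuinely an $\F$-category is a reasonable piece of bookkeeping that the paper leaves implicit.
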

\begin{proof}
By the proof of \thex\ref{teorgenmonadicity}, the 2-monad $\sqm\loose$ on the loose part $\DblCatnoplax$ of $\DblCatnpsnoplax$ restricts to the 2-functor $\sqm\tight$. Moreover the multiplication and the unit of $\sqm\loose$ have tight components. Indeed, their components are actually internal strict functors.
\end{proof}

We can now deduce the main theorem of this section, which is a monadicity result for both DOFS and lax DOFS, generalizing the fundamental result of \thex\ref{theorkorostenskitholen} to the double categorical setting. This theorem further justifies the internal definition of (lax) double orthogonal factorization system we presented in Section~\ref{sectionDOFS}.

\begin{theorem}\label{teormonadicitydouble}
    Normal pseudo-algebras for the 2-monad $\sqm\tight$ on $\DblCatnps$ are precisely the double orthogonal factorization systems.
    
    Normal pseudo-algebras for the 2-monad $\sqm\loose$ on $\DblCatnoplax$ are precisely the lax double orthogonal factorization systems.
\end{theorem}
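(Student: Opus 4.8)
The plan is to read the statement off the general monadicity theorem \thex\ref{teorgenmonadicity}, instantiated at the base $2$-category $\K=\Cat$ with the squaring $2$-monad $T=\sqm$. By \conx\ref{conssquaringmonaddouble} the underlying $2$-functor of $\sqm$ preserves pullbacks, $\Catint+[o]{\Cat}=\DblCatnoplax$ and $\Catint+[p]{\Cat}=\DblCatnps$, and the two $2$-monads that \thex\ref{teorgenmonadicity} produces from $\sqm$ are exactly the ones denoted $\sqm\loose$ and $\sqm\tight$ there, i.e.\ $\overline{\sqm}_{\oplax}=\sqm\loose$ and $\overline{\sqm}_{\ps}=\sqm\tight$. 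So \thex\ref{teorgenmonadicity} directly supplies isomorphisms of $2$-categories
\[\Alg+[p]{\sqm\tight}\;\iso\;\Catint+'[p]{\Alg+[p]{\sqm}}\qquad\text{and}\qquad\Alg+[l]{\sqm\loose}\;\iso\;\Catint+'[o]{\Alg+[l]{\sqm}}.\]

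Next I would identify the right-hand sides. By \thex\ref{theorkorostenskitholen} together with \prox\ref{propmorphismsofalgebrassqmonad} we have $\Alg+[p]{\sqm}=\Fact$ and $\Alg+[l]{\sqm}=\Factlax$, and, again by \prox\ref{propmorphismsofalgebrassqmonad}, a functor is a \emph{strict} morphism between the corresponding normal pseudo-algebras for $\sqm$ precisely when it is a strict morphism of categories with an OFS. Hence the full sub-$2$-category $\Catint+'[p]{\Alg+[p]{\sqm}}$ is exactly the $2$-category of normal pseudo-categories internal to $\Fact$ whose source, target and identity assignment are strict morphisms of categories with an OFS, which is the definition of a DOFS (\defx\ref{def:internalDOFS}); likewise $\Catint+'[o]{\Alg+[l]{\sqm}}$ is exactly the $2$-category of lax DOFS. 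Passing to the object level of the displayed isomorphisms then gives: the normal pseudo-algebras for $\sqm\tight$ are precisely the DOFS.

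One bookkeeping point remains for the lax half. \thex\ref{teorgenmonadicity} gives an isomorphism involving $\Alg+[l]{\sqm\loose}$ (with \emph{lax} morphisms of pseudo-algebras), whereas the statement concerns \emph{normal pseudo-algebras} for $\sqm\loose$. Since $\Alg+[p]{\sqm\loose}$ and $\Alg+[l]{\sqm\loose}$ have the same objects — they differ only in their $1$-cells — the object-level conclusion is the same, so the normal pseudo-algebras for $\sqm\loose$ are precisely the lax DOFS. (The identical observation lets one rephrase the first half with $\Alg+[l]{\sqm\tight}$ if desired.)

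I do not expect a genuine obstacle here: the theorem is essentially a repackaging of \thex\ref{teorgenmonadicity}, \conx\ref{conssquaringmonaddouble}, \thex\ref{theorkorostenskitholen}, \prox\ref{propmorphismsofalgebrassqmonad} and \defx\ref{def:internalDOFS}. The work is purely in the bookkeeping — checking that $\overline{\sqm}_{\ps}$ really is the restriction named $\sqm\tight$ (and not a different $2$-monad), that the full-subcategory hypothesis of \thex\ref{teorgenmonadicity} (strict source/target/identity between normal pseudo-algebras) transcribes verbatim into the strictness clause of \defx\ref{def:internalDOFS}, and that the several normality conventions in play (normal pseudo-category, normal pseudo-algebra, the convention $\ell_{\id{A}}=\id{A}=r_{\id{A}}$ from \remx\ref{monadic-dfn}) line up on both sides.
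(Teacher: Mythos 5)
Your proposal is correct and follows essentially the same route as the paper's own proof: apply \thex\ref{teorgenmonadicity} with $\K=\Cat$ and $T=\sqm$ via \conx\ref{conssquaringmonaddouble}, identify $\Alg+[p]{\sqm}$ and $\Alg+[l]{\sqm}$ with $\Fact$ and $\Factlax$ using \thex\ref{theorkorostenskitholen} and \prox\ref{propmorphismsofalgebrassqmonad}, and read off \defx\ref{def:internalDOFS}. Your extra bookkeeping remark that the object-level conclusion is insensitive to the choice of lax versus pseudo algebra morphisms is a sound clarification that the paper leaves implicit.
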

\begin{proof}
    By \thex\ref{teorgenmonadicity} and \conx\ref{conssquaringmonaddouble}, the 2-monad $\sqm\loose\:\DblCatnoplax\to \DblCatnoplax$ is such that 
    $$\Alg+[l]{\sqm\loose}\iso \Catint+'[o]{\Alg+[l]{\sqm}}.$$
    But by \thex\ref{theorkorostenskitholen} and Notation \ref{notationFact} (thanks to \prox\ref{propmorphismsofalgebrassqmonad}), $\Alg+[l]{\sqm}$ precisely coincides with $\Factlax$. And then, by \defx\ref{def:internalDOFS}, $\Catint+'[o]{\Alg+[l]{\sqm}}$ is precisely the 2-category of lax double orthogonal factorization systems. We have thus proved that normal pseudo-algebras for the 2-monad $\sqm\loose$ give precisely lax DOFS. The restricted isomorphism of 2-categories
    $$\Alg+[p]{\sqm\tight}\iso \Catint+'[p]{\Alg+[p]{\sqm}}$$
    then shows that normal pseudo-algebras for the 2-monad $\sqm\tight$ give precisely DOFS.
    \end{proof}

\begin{remark}
    By \thex\ref{teormonadicitydouble}, we can rewrite the axioms of a lax DOFS as the axioms of a normal pseudo-algebra for the 2-monad $\sqm\loose$ on $\DblCatnoplax$. The latter is given by a double category $\dD$ equipped with an algebra structure $Q\:\dD^\to\to \dD$. $Q$ has components $Q_0\:\D_0^\to \to\D_0$ and $Q_1\:\D_1^\to\to \D_1$, expressing the orthogonal factorization systems on $\D_0$ and $\D_1$ (i.e, on horizontal arrows and double cells viewed in the horizontal direction), by \thex\ref{theorkorostenskitholen} and \remx\ref{remdetailsmonadicity}. Interestingly, the axioms of compatibility between the two orthogonal factorization systems on $\D_0$ and $\D_1$ precisely translate into making $Q$  a normal oplax functor. For example, the axiom of the oplax functor $Q$ respecting sources 
    \sq{\dD_1^\to}{\dD_0^\to}{\dD_1}{\dD_0}{\operatorname{src}^\to}{Q_1}{Q_0}{\operatorname{src}}
    captures the axiom of $\operatorname{src}$ being a strict morphism of orthogonal factorization systems.
\end{remark}

    Moreover, by essentially the same line of reasoning as in \thex\ref{teormonadicitydouble}, we can describe a lax morphism between lax DOFS's as a lax morphism between normal pseudo-algebras for the 2-monad $\sqm\loose$ on $\DblCatnoplax$. The latter, for a morphism between normal pseudo-algebras $Q^\dD \: \dD^\to_\lambda \to \dD$ and $Q^\dE \: \dE^\to_\lambda \to \dE$, is given by an oplax double functor $F \: \dD \to \dE$ equipped with a transformation $\digamma$ as in the following diagram: 
    $$
    \begin{tikzcd}
	{\mathbb{D}^\to} & {\mathbb{E}^\to} \\
	{\mathbb{D}} & {\mathbb{E}}
	\arrow["{F^\to}", from=1-1, to=1-2]
	\arrow["{Q^\mathbb{D}}"', from=1-1, to=2-1]
	\arrow["\digamma"{description}, shorten <=4pt, shorten >=4pt, Rightarrow, from=1-2, to=2-1]
	\arrow["{Q^\mathbb{E}}", from=1-2, to=2-2]
	\arrow["F"', from=2-1, to=2-2]
\end{tikzcd}
    $$ 
    that commutes appropriately with the data of $Q^\dD$ and $Q^\dE$. In particular, by \prox\ref{propmorphismsofalgebrassqmonad}, the underlying natural transformations $$\digamma_i : Q^\dE_i \circ F^\to_i \Rightarrow F_i \circ Q_i^\dD$$ for $i = 0,1$, equivalently witness that $F_i$ preserves the \emph{right class} of morphisms in the factorization system on $\D_i$ given by $Q_i^\dD$. 
    Furthermore, 2-cells between such morphisms can be given by arrow-direction 2-cells that give rise to commuting cylinders,
\[\begin{tikzcd}
	{\mathbb{D}^\rightarrow} && {\mathbb{E}^\rightarrow} && {\mathbb{D}^\rightarrow} && {\mathbb{E}^\rightarrow} \\
	{\mathbb{D}} && {\mathbb{E}} && {\mathbb{D}} && {\mathbb{E}}
	\arrow["{F^\rightarrow}", from=1-1, to=1-3]
	\arrow[""{name=0, anchor=center, inner sep=0}, "{Q^{\mathbb{D}}}"', from=1-1, to=2-1]
	\arrow[""{name=1, anchor=center, inner sep=0}, "{Q^{\mathbb{E}}}", from=1-3, to=2-3]
	\arrow[""{name=2, anchor=center, inner sep=0}, "{G^{\rightarrow}}"', from=1-5, to=1-7]
	\arrow[""{name=3, anchor=center, inner sep=0}, "{F^\rightarrow}", curve={height=-18pt}, from=1-5, to=1-7]
	\arrow[""{name=4, anchor=center, inner sep=0}, "{Q^{\mathbb{D}}}"', from=1-5, to=2-5]
	\arrow[""{name=5, anchor=center, inner sep=0}, "{Q^{\mathbb{E}}}", from=1-7, to=2-7]
	\arrow[""{name=6, anchor=center, inner sep=0}, "F", from=2-1, to=2-3]
	\arrow[""{name=7, anchor=center, inner sep=0}, "G"', curve={height=18pt}, from=2-1, to=2-3]
	\arrow["G"', from=2-5, to=2-7]
	\arrow["\digamma"{description}, between={0.2}{0.8}, Rightarrow, from=1, to=0]
	\arrow["{=}"{description}, draw=none, from=1, to=4]
	\arrow["{\alpha^\rightarrow}", between={0.2}{0.8}, Rightarrow, from=3, to=2]
	\arrow["\gamma"{description}, between={0.2}{0.8}, Rightarrow, from=5, to=4]
	\arrow["\alpha", between={0.2}{0.8}, Rightarrow, from=6, to=7]
\end{tikzcd}\]
    i.e., pairs of compatible 2-cells between lax morphisms of categories with orthogonal factorization systems.
    So we can conclude: 

\begin{proposition}
    A lax morphism between lax DOFS's is an oplax double functor that preserves the right class -- both of horizontal morphisms, and of double cells. Similarly, a (pseudo) morphism between DOFS's is a (pseudo) double functor that preserves both left and right classes of horizontal arrows and double cells. 2-Cells between these morphisms correspond to arrow direction (horizontal) transformations between the double functors that are compatible with the algebra structure on both the arrows and the double cells.
\end{proposition}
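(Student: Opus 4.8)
The plan is to deduce all three assertions by tracing through the monadicity identifications already in place, so that essentially no direct manipulation of double cells is required. For the lax statement: by \thex\ref{teormonadicitydouble} a lax DOFS on a double category $\dD$ is the same thing as a normal pseudo-algebra $Q^{\dD}\:\dD^{\to}\to\dD$ for the 2-monad $\sqm\loose$ on $\DblCatnoplax$, and similarly for $\dE$; and by definition a lax morphism of lax DOFS's is a lax morphism of these pseudo-algebras. Applying \thex\ref{teorgenmonadicity} with $T=\sqm$ on $\Cat$ (as in \conx\ref{conssquaringmonaddouble}), we have $\Alg+[l]{\sqm\loose}\iso\Catint+'[o]{\Alg+[l]{\sqm}}=\Catint+'[o]{\Factlax}$, so such a morphism is precisely an oplax functor $F$ internal to $\Factlax$ with strict source, target and identity (the strictness being a property of the DOFS's, not a constraint on $F$), together with the structure transformation $\digamma$ drawn before the statement. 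I would then unwind this internal functor: an oplax functor internal to $\Cat$ is exactly a normal oplax double functor, and since $\Factlax$ lies over $\Cat$ the underlying datum is an oplax double functor $F\:\dD\to\dE$; the further requirement that the object-part $F_0\:\D_0\to\E_0$ and the arrow-part $F_1\:\D_1\to\E_1$ both be lax morphisms in $\Factlax$ is, by the equivalence of $(i)$ and $(ii)$ in \prox\ref{propmorphismsofalgebrassqmonad}, exactly the assertion that $F_0$ preserves the right class of horizontal arrows and $F_1$ preserves the right class of double cells, with $\digamma_0$ and $\digamma_1$ as the associated comparison natural transformations. This gives the first sentence.

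For the pseudo statement I would run the identical argument through the restricted isomorphism $\Alg+[p]{\sqm\tight}\iso\Catint+'[p]{\Alg+[p]{\sqm}}=\Catint+'[p]{\Fact}$ of \thex\ref{teorgenmonadicity}: a morphism of DOFS's is a pseudo morphism of normal pseudo-algebras for $\sqm\tight$, hence an internal pseudofunctor in $\Fact$ with strict source, target and identity, hence a normal pseudo double functor $F\:\dD\to\dE$ whose components $F_0,F_1$ are morphisms of categories with an OFS; by \prox\ref{propmorphismsofalgebrassqmonad} this means $F_0$ and $F_1$ each preserve both the left and the right class, i.e.\ $F$ preserves both classes of horizontal arrows and of double cells.

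For the 2-cells: a 2-cell between two such morphisms is, again by \thex\ref{teorgenmonadicity}, an internal (levelwise) natural transformation in $\Factlax$ (resp.\ $\Fact$) between the corresponding internal oplax (resp.\ pseudo) functors, i.e.\ a pair $(\gamma_0,\gamma_1)$ of 2-cells in $\Factlax$ (resp.\ $\Fact$) satisfying the internal-naturality equations, which are exactly the two commuting cylinders drawn before the statement. By the final sentence of \prox\ref{propmorphismsofalgebrassqmonad}, 2-cells in $\Factlax$ and in $\Fact$ are just natural transformations of the underlying functors, so $(\gamma_0,\gamma_1)$ amounts to an arrow-direction (horizontal) transformation $F\Rightarrow G$ of double functors whose components on $\D_0$ and on $\D_1$ make those cylinders with $\digamma^{\dD}$ and $\digamma^{\dE}$ commute; this is precisely compatibility with the algebra structure on the arrows and on the double cells.

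The one step I expect to require genuine care --- the main obstacle --- is the translation, inside the lax case, between ``$F_1$ is a lax morphism in $\Factlax$'' and ``$F$ preserves the right class of double cells''. Here $\D_1$ is the category whose objects are the proarrows of $\dD$ and whose morphisms are the double cells under horizontal composition; by the cell-wise description of a (lax) DOFS in \prox\ref{prop:explicitDOFS} (equivalently, by the internal definition \defx\ref{def:internalDOFS}) the OFS carried by $\D_1$ has left class $\dL$ and right class $\dR$ equal literally to the left and right classes of cells. Hence ``$F_1$ sends cells in the right class of $\dD$ to cells in the right class of $\dE$'' is verbatim the statement that $F$ preserves the right class of cells, and likewise $F_0$ preserving the right class $\cR_0$ on $\D_0$ is preservation of the right class of horizontal arrows. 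Once this dictionary is fixed, everything else is a routine matching of coherence data already packaged by \thex\ref{teorgenmonadicity}, \thex\ref{teormonadicitydouble} and \prox\ref{propmorphismsofalgebrassqmonad}.
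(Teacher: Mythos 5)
Your proposal is correct and follows essentially the same route as the paper: the text preceding the proposition derives it exactly by identifying lax morphisms of lax DOFS's with lax morphisms of normal pseudo-algebras for $\sqm\loose$ via \thex\ref{teormonadicitydouble} and \thex\ref{teorgenmonadicity}, and then invoking \prox\ref{propmorphismsofalgebrassqmonad} to translate the structure transformations $\digamma_0,\digamma_1$ into preservation of the right classes, with the 2-cell statement handled by the commuting-cylinder compatibility. Your extra care about the dictionary between the OFS on $\D_1$ and the classes of double cells (via \prox\ref{prop:explicitDOFS}) is exactly the right point to pin down and is consistent with the paper's implicit use of it.
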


\begin{remark}
    The inclusion of algebras $\Alg+[l]{\sqm\tight}\to \Alg+[l]{\sqm\loose}$ described in \remx\ref{remFmonad}, guaranteed from having an $\F$-monad $\sqm$ on $\DblCatnpsnoplax$, is the inclusion of DOFS's into lax DOFS's.
\end{remark}

\begin{remark}
    The whole theory of this section can be dualized for colax DOFS's. This way we obtain normal pseudo-algebras for the 2-monad $(-)^\to_{\lax}$ on $\DblCatnlax$ are precisely the colax double orthogonal factorization systems. A lax morphism between colax DOFS's is a lax double functor that preserves the left class -- both of horizontal morphisms, and of double cells.
\end{remark}

\begin{remark}\label{morearrows}
We see here that the treatment of double orthogonal factorization systems as algebras for a monad gets us various 2-categories of double orthogonal factorization systems on double categories (one for each flavour: oplax, lax, pseudo, strict, where obviously the last two are sub-2-categories of the former). However, keeping the algebra functors within the larger context of all double categories with various flavours of double functors gives us further options for morphisms and 2-cells between them. In \citep{grandispare-adjoint} there is a double category with lax functors in one direction and oplax functors in the other direction. The cells for this double category would give rise to a notion of morphism between double categories with a lax orthogonal factorization systems that preserves the left class (where the vertical composition preserves the right class). In this paper we will not explore this further. 

Another type of morphism that is afforded us by this approach is to go from a double category with a lax DOFS to one with an oplax DOFS (or vice versa). In this case the morphism $F$ needs to be a pseudofunctor. We will have use for these morphisms in Section \ref{appsandex} when we want to compare different DOFS with a common OFS on the arrows of the double category.
\end{remark}

\begin{remark}
    Double categories also give us the option to turn the 2-category of double categories with a lax DOFS into a category enriched in double categories. We do this by adding vertical transformations to the mix; i.e., transformations in the proarrow direction. Let $\mathbb{D}$ and $\mathbb{E}$ be double categories with a lax DOFS and $F,G\colon \mathbb{D}\to\mathbb{E}$ be oplax double functors which also preserve the right classes.
    A vertical transformation $\theta\colon F\Rightarrow\negthinspace\negthinspace\negthinspace\negthinspace\negthinspace\negthinspace|\,\,\,\, G$ would correspond to a functor $\theta\colon \D_0\to\E_1$ satisfying the usual properties. In this case, we only need to add the requirement that $\theta$ be a lax morphism of algebras; i.e., that it preserves the right class. We can now get the double category structure on the homs by taking the full sub double category on these horizontal and vertical transformations: simply, take all modifications that fit.

    There is even more possible: we can combine all the algebras for the arrow monad, the lax, the colax, the pseudo and the strict, into a single 2-category with the use of 2-cells and arrows as indicated in \citep{grandispare-adjoint}. We will not need this result in full. We will only use it in the next section for DOFS extending a common OFS on the arrows of a double category.
\end{remark}

\section{Extending an arrow OFS to a DOFS}\label{appsandex}
In this section we present some results around the question when a given OFS on the arrows of a double category can be extended to a DOFS and how these extensions are related to each other. We will illustrate this by extending our examples from Section \ref{sectionDOFS} and adding some new ones.

\subsection{Theoretical results}
We present further examples of double orthogonal factorization systems. 
Every double orthogonal factorization system $(\dL,\dR)$ on $\dD$ has an underlying factorization system $(\cL_0,\cR_0)$ on its category of arrows $\D_0$. In this section we will consider the extensions of various well-known orthogonal factorization systems to double factorization systems and we will explore the relationships between these extensions.

For our first general result we want to address the requirement that the source, target and identity structure arrows, $\src$, $\tgt$ and $i$, of $\dD$ need to be strict morphisms of categories with an orthogonal factorization system. We give a condition under which a pair of orthogonal factorization systems for which these structure arrows only preserve the left and the right classes can be adjusted to a pair for which these maps are strict; i.e, preserve the chosen factorizations. 

\begin{proposition}\label{propisoextension}
    Let  $\dD$ be a double category such that both $\D_0$ and $\D_1$ are categories with an OFS and chosen factorizations such that the source, target and unit functors preserve the left and right classes, but not necessarily the chosen factorizations. Then the factorizations of the cells in $\dD$  can be adjusted to obtain a DOFS for $\dD$ if for every diagram of invertible arrows and a proarrow on the left there is a horizontally invertible cell $\theta$ as on the right:
\[\begin{tikzcd}[ampersand replacement=\&]
	\bullet \&\& \bullet \&\& \bullet \&\& \bullet \\
	\bullet \&\& \bullet \&\& \bullet \&\& \bullet
	\arrow["\cong", from=1-1, to=1-3]
	\arrow["u"', "\shortmid"{marking}, from=1-1, to=2-1]
	\arrow["\cong", from=1-5, to=1-7]
	\arrow[""{name=0, anchor=center, inner sep=0}, "u"', "\shortmid"{marking}, from=1-5, to=2-5]
	\arrow[""{name=1, anchor=center, inner sep=0}, "v", "\shortmid"{marking}, from=1-7, to=2-7]
	\arrow["\cong"', from=2-1, to=2-3]
	\arrow["\cong"', from=2-5, to=2-7]
	\arrow["\theta"{description}, draw=none, from=0, to=1]
\end{tikzcd}\]
\end{proposition}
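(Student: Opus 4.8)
The idea is to recalibrate the chosen factorizations on $\D_1$ so that they become strictly compatible with $\src$, $\tgt$, and $i$, while leaving the left and right classes $(\cL_1,\cR_1)$ (and hence the underlying OFS on $\D_0$) unchanged. Concretely, I would fix, once and for all, a chosen factorization $f = r_f \circ \ell_f$ for every horizontal arrow $f$ in $\D_0$. For a double cell $\varphi$ with horizontal source $f$ and horizontal target $g$, the given OFS on $\D_1$ provides \emph{some} factorization $\varphi = r'_\varphi \circ \ell'_\varphi$ through a proarrow $u_\varphi$; but the horizontal source of $r'_\varphi \circ \ell'_\varphi$ need not be the chosen $\ell_f$ etc. However, $\src$ and $\tgt$ preserve the left and right classes, so $\src(\ell'_\varphi) \in \cL_0$, $\src(r'_\varphi) \in \cR_0$, and likewise for $\tgt$; by uniqueness of OFS-factorizations up to unique isomorphism in $\D_0$, there are unique isomorphisms comparing the two factorizations of $f$ (and of $g$). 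I would use these isomorphisms, fed into the hypothesis, to transport $\varphi$'s factorization to a new one with the correct chosen boundary.

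The key step is the application of the hypothesis. Given $\varphi$, take the two factorizations of $\src(\varphi)=f$: the chosen one $A \xrightarrow{\ell_f} \im(f) \xrightarrow{r_f} B$ and the image-one $A \xrightarrow{\src(\ell'_\varphi)} (\cdot) \xrightarrow{\src(r'_\varphi)} B$, related by a unique iso. Feeding the resulting diagram of invertible horizontal arrows together with the proarrow $u_\varphi = \src(\ell'_\varphi)\text{-side}$ into the hypothesis produces a horizontally invertible cell $\theta_\varphi$; doing the same on the $\tgt$-side gives another, and pasting $\theta_\varphi$ onto $\ell'_\varphi$ and the $\tgt$-side cell onto $r'_\varphi$ yields a new factorization $\varphi = \tilde r_\varphi \circ \tilde\ell_\varphi$ whose horizontal source/target boundaries are now \emph{exactly} the chosen $\ell_f, r_f, \ell_g, r_g$. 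Since $\theta_\varphi$ is horizontally invertible, $\tilde\ell_\varphi$ is still in $\cL_1$ and $\tilde r_\varphi$ still in $\cR_1$, so the classes are unchanged. This defines the adjusted chosen factorization of cells.

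Then I would verify the conditions of Proposition \ref{prop:explicitDOFS} (cell-wise description of a DOFS) for $(\cL_1,\cR_1)$ with these new chosen factorizations. Conditions (1), (2), (3) and (5) do not involve the \emph{chosen} factorizations, only the classes, so they hold as before. Condition (4)(a) holds by construction. For (4)(b)--(4)(c) and the strictness of $\src,\tgt,i$: by construction $\src(\tilde\ell_\varphi)=\ell_{\src(\varphi)}$, $\src(\tilde r_\varphi)=r_{\src(\varphi)}$, and similarly for $\tgt$, which is precisely the statement that $\src$ and $\tgt$ are strict morphisms of categories with an OFS; for $i$, I would additionally arrange that the chosen factorization of a unit cell $i(f)$ is taken to be $i(\ell_f) \,|\, i(r_f)$ (the hypothesis applied to unit data returns a unit cell, using that both classes contain horizontally invertible cells, so this is consistent with the recipe), giving strictness of $i$. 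Condition (4)(b) on matching sources of chosen cells then follows from the corresponding OFS property in $\D_0$ together with strictness of $\src,\tgt$. The main obstacle I expect is bookkeeping: one must check that the transported factorizations are \emph{functorial} in the sense demanded — i.e., that the adjusted $\tilde\ell, \tilde r$ still assemble into the algebra structure map $Q_1 \colon \D_1^\to \to \D_1$ of a normal pseudo-algebra — but this reduces, via Theorem \ref{theorkorostenskitholen} and Remark \ref{remdetailsmonadicity}, to the fact that the original $(\cL_1,\cR_1)$ already is an OFS and that horizontally invertible cells act by canonical (hence coherent) isomorphisms, so the pseudo-algebra coherences are inherited. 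I would note that the hypothesis is used exactly to guarantee there is \emph{enough room} in the proarrow direction to perform the transport; without it, the comparison isomorphisms on boundaries need not lift to cells.
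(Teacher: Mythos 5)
Your proposal is correct and follows essentially the same route as the paper: compare the boundary factorizations with the chosen ones via the unique isomorphisms $d_1,d_2$ supplied by orthogonality in $\D_0$, invoke the hypothesis to lift them to a horizontally invertible cell $\theta$, and replace $(\ell_\alpha,r_\alpha)$ by $(\theta\circ\ell_\alpha,\,r_\alpha\circ\theta^{-1})$, handling units by decreeing $\ell_{i_f}=i_{\ell_f}$ and $r_{i_f}=i_{r_f}$. One small correction: the hypothesis is applied \emph{once} per cell, to the middle proarrow together with $d_1$ on top and $d_2$ on the bottom, yielding a single $\theta$ whose inverse is pasted onto the right half — not once per side producing two separate cells as your wording suggests.
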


\begin{proof}
Given a choice of factorizations for the arrows, $f=r_f\circ\ell_f$ where $\ell_f\in\cL$ and $r_f\in\cR$, we will show how to adjust the factorization for the double cells so that all of $\src$, $\tgt$ and $i$ preserve the chosen factorizations. 

We make the units functor strict by choosing the corresponding factorizations for the unit cells: $\ell_{i_f}=i_{\ell_f}$ and $r_{i_f}=i_{r_f}$.

  To adjust the factorizations of the other cells so that source and target become strict, let 

\[\begin{tikzcd}[ampersand replacement=\&]
	A \&\& C \& A \&\& {A'} \&\& C \\
	B \&\& D \& B \&\& {B'} \&\& D
	\arrow["f", from=1-1, to=1-3]
	\arrow[""{name=0, anchor=center, inner sep=0}, "u"', "\shortmid"{marking}, from=1-1, to=2-1]
	\arrow[""{name=1, anchor=center, inner sep=0}, "v", "\shortmid"{marking}, from=1-3, to=2-3]
	\arrow["{\src(\ell_\alpha)}", two heads, from=1-4, to=1-6]
	\arrow[""{name=2, anchor=center, inner sep=0}, "u"', "\shortmid"{marking}, from=1-4, to=2-4]
	\arrow["{\src(r_\alpha)}", from=1-6, to=1-8]
	\arrow[""{name=3, anchor=center, inner sep=0}, "{Im(\alpha)}", "\shortmid"{marking}, from=1-6, to=2-6]
	\arrow[""{name=4, anchor=center, inner sep=0}, "v", "\shortmid"{marking}, from=1-8, to=2-8]
	\arrow["g"', from=2-1, to=2-3]
	\arrow["{\tgt(\ell_\alpha)}"', two heads, from=2-4, to=2-6]
	\arrow["{\tgt(r_\alpha)}"', from=2-6, to=2-8]
	\arrow["\alpha"{description}, draw=none, from=0, to=1]
	\arrow["{=}"{description}, draw=none, from=1, to=2]
	\arrow["{\ell_\alpha}"{description}, draw=none, from=2, to=3]
	\arrow["{r_\alpha}"{description}, draw=none, from=3, to=4]
\end{tikzcd}\]
be the chosen factorization for a cell $\alpha$.
By orthogonality of the factorization system on $D_0$, there are unique isomorphisms $d_1$ and $d_2$ making the following diagrams commute:

\[\begin{tikzcd}[ampersand replacement=\&]
	\& {A'} \&\&\& {B'} \\
	A \&\& C \& B \&\& D \\
	\& {\im(f)} \&\&\& {\im(g)}
	\arrow["{\src(r_\alpha)}", hook, from=1-2, to=2-3]
	\arrow["{d_1}"{description}, dashed, from=1-2, to=3-2]
	\arrow["{\tgt(r_\alpha)}", hook, from=1-5, to=2-6]
	\arrow["{d_2}"{description}, dashed, from=1-5, to=3-5]
	\arrow["{\src(\ell_\alpha)}", two heads, from=2-1, to=1-2]
	\arrow["{\ell_f}"', two heads, from=2-1, to=3-2]
	\arrow["{\tgt(\ell_\alpha)}", two heads, from=2-4, to=1-5]
	\arrow["{\ell_g}"', two heads, from=2-4, to=3-5]
	\arrow["{r_f}"', hook, from=3-2, to=2-3]
	\arrow["{r_g}"', hook, from=3-5, to=2-6]
\end{tikzcd}\]
Now choose a horizontally invertible cell $\theta$ according to the condition of the proposition:

\[\begin{tikzcd}[ampersand replacement=\&]
	{A'} \&\& {\im(f)} \\
	{B'} \&\& {\im(g)}
	\arrow["{d_1}", from=1-1, to=1-3]
	\arrow[""{name=0, anchor=center, inner sep=0}, "{\im(\alpha)}"', "\shortmid"{marking}, from=1-1, to=2-1]
	\arrow[""{name=1, anchor=center, inner sep=0}, "{\im(\alpha)'}", "\shortmid"{marking}, from=1-3, to=2-3]
	\arrow["{d_2}"', from=2-1, to=2-3]
	\arrow["\theta"{description}, draw=none, from=0, to=1]
\end{tikzcd}\]
Now let $\ell'_\alpha=\theta\circ\ell_\alpha$ and $r'_\alpha=r_\alpha\circ \theta^{-1}$, where $\theta^{-1}$ is the horizontal inverse of $\theta$.
Since $\theta$ is horizontally invertible, $\ell'_\alpha$ is in the left class on $\D_1$ and $r'_\alpha$ is in the right class on $\D_1$ and with this new choice of factorizations, the source and target functors have become strict.
\end{proof}

In the following theorem we discuss properties on a double category that assure us that there is at least one double orthogonal factorization system that extends a given factorization system on its arrows. We will make use of the notions of extension (or, cartesian cell) and restriction (or, opcartesian cell) in a double category \citep{shulman}.

\begin{definition}
\begin{enumerate}
    \item 
   A double category has an {\em extension}, or {\em opcartesian cell}, for the diagram on the left below if there is a cell $\gamma_{s,s'}^M$ as on the right,

\[\begin{tikzcd}
	A & B && A & B \\
	{A'} & {B'} && {A'} & {B'}
	\arrow["s", from=1-1, to=1-2]
	\arrow["M"'{inner sep=.8ex}, "\shortmid"{marking}, from=1-1, to=2-1]
	\arrow["s", from=1-4, to=1-5]
	\arrow[""{name=0, anchor=center, inner sep=0}, "M"'{inner sep=.8ex}, "\shortmid"{marking}, from=1-4, to=2-4]
	\arrow[""{name=1, anchor=center, inner sep=0}, "{\operatorname{ext}_{s,s'}(M)}"{inner sep=.8ex}, "\shortmid"{marking}, from=1-5, to=2-5]
	\arrow["{s'}"', from=2-1, to=2-2]
	\arrow["{s'}"', from=2-4, to=2-5]
	\arrow["{\gamma_{s,s'}^M}"{description}, draw=none, from=0, to=1]
\end{tikzcd}\]
with the following universal property:
for each double cell 

\[\begin{tikzcd}
	A & B & C \\
	{A'} & {B'} & {C'}
	\arrow["s", from=1-1, to=1-2]
	\arrow[""{name=0, anchor=center, inner sep=0}, "M"'{inner sep=.8ex}, "\shortmid"{marking}, from=1-1, to=2-1]
	\arrow["h", from=1-2, to=1-3]
	\arrow[""{name=1, anchor=center, inner sep=0}, "N"{inner sep=.8ex}, "\shortmid"{marking}, from=1-3, to=2-3]
	\arrow["{s'}"', from=2-1, to=2-2]
	\arrow["{h'}"', from=2-2, to=2-3]
	\arrow["\theta"{marking, allow upside down}, draw=none, from=0, to=1]
\end{tikzcd}\]
there is a unique cell $\theta'$ such that

\[\begin{tikzcd}
	A & B & C && A & B && C \\
	{A'} & {B'} & {C'} && {A'} & {B'} && {C'}
	\arrow["s", from=1-1, to=1-2]
	\arrow[""{name=0, anchor=center, inner sep=0}, "M"'{inner sep=.8ex}, "\shortmid"{marking}, from=1-1, to=2-1]
	\arrow["h", from=1-2, to=1-3]
	\arrow[""{name=1, anchor=center, inner sep=0}, "N"{inner sep=.8ex}, "\shortmid"{marking}, from=1-3, to=2-3]
	\arrow["s", from=1-5, to=1-6]
	\arrow[""{name=2, anchor=center, inner sep=0}, "M"'{inner sep=.8ex}, "\shortmid"{marking}, from=1-5, to=2-5]
	\arrow["h", from=1-6, to=1-8]
	\arrow[""{name=3, anchor=center, inner sep=0}, "{\operatorname{ext}_{s,s'}(M)}"{inner sep=.8ex}, "\shortmid"{marking}, from=1-6, to=2-6]
	\arrow[""{name=4, anchor=center, inner sep=0}, "N"{inner sep=.8ex}, "\shortmid"{marking}, from=1-8, to=2-8]
	\arrow["{s'}"', from=2-1, to=2-2]
	\arrow["{h'}"', from=2-2, to=2-3]
	\arrow["{s'}"', from=2-5, to=2-6]
	\arrow["{h'}"', from=2-6, to=2-8]
	\arrow["\theta"{marking, allow upside down}, draw=none, from=0, to=1]
	\arrow["{=}"{marking, allow upside down}, draw=none, from=1, to=2]
	\arrow["{\gamma_{s,s'}^M}"{marking, allow upside down}, draw=none, from=2, to=3]
	\arrow["{\theta'}"{marking, allow upside down, pos=0.7}, draw=none, from=3, to=4]
\end{tikzcd}\]
\item The definition of {\em restriction}, or {\em cartesian cell},  is dual to that of extension.
\item A double category has all extensions along arrows in a class $\mathcal{S}$ if it has all extensions for diagrams where the arrows are in $\mathcal{S}$.
\item Dually, a double category has all restrictions along arrows in a class $\mathcal{S}$ if it has all restrictions for diagrams where the arrows are in $\mathcal{S}$.
   \end{enumerate}
\end{definition}

\begin{proposition}\label{prop:ofsExtensions}
    Let $\dD$ be a double category with an orthogonal factorization system $(\cL,\cR)$ on $\D_0$.
    \begin{enumerate}
        \item If $\dD$ has all extensions along the arrows in $\cL$, it has a lax DOFS $(\dL,\dR)=(\cL,\cR)_{\operatorname{extn}}$ with $(\cL_0,\cR_0)=(\cL,\cR)$, $\cL_1$ consists of all extension cells with horizontal boundaries in $\cL$, and $\cR_1$ consists of all double cells with horizontal boundaries in $\cR$;
        \item If $\dD$ has all restrictions along arrows in $\cR$, it has a colax DOFS $(\dL,\dR)=(\cL,\cR)_{\operatorname{restr}}$ with $(\cL_0,\cR_0)=(\cL,\cR)$, $\cL_1$ has all double cells with horizontal boundaries in $\cL$, and $\cR_1$ has all restriction cells with horizontal boundaries in $\cR$;
    \end{enumerate} 
\end{proposition}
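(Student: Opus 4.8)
The plan is to prove part (1) carefully and then obtain part (2) by formal duality, since the definition of restriction (cartesian cell) is literally dual to that of extension. For part (1), I would verify that the pair of classes $(\dL,\dR)=(\cL_1,\cR_1)$ on the double cells of $\dD$ satisfies the axioms of the cell-wise description of a lax DOFS given in Proposition~\ref{prop:explicitDOFS}, with the modification $(2')$ that only $\dR$ need be vertically closed. The key technical input is the universal property of extension (opcartesian) cells: for any $s\:A\to B$, $s'\:A'\to B'$ in $\cL$ and any proarrow $M\:A\proarrow A'$, the cell $\gamma^M_{s,s'}\:M\to \operatorname{ext}_{s,s'}(M)$ is initial among cells out of $M$ with horizontal boundary $(s,s')$. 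First I would record the chosen factorization of an arbitrary cell $\varphi\:M\proarrow N$ with horizontal boundary $(f,g)$: factor $f=r_f\c\ell_f$ and $g=r_g\c\ell_g$ in $\D_0$; form the extension $\operatorname{ext}_{\ell_f,\ell_g}(M)$; the opcartesian property produces a unique cell $r_\varphi\:\operatorname{ext}_{\ell_f,\ell_g}(M)\proarrow N$ with boundary $(r_f,r_g)$ through which $\varphi$ factors, and $\ell_\varphi:=\gamma^M_{\ell_f,\ell_g}$ is the extension cell. This gives Condition (4)(a), and (4)(b)/(4)(c) follow because the construction of $\operatorname{ext}$ depends only on the source proarrow and the boundary arrows, and the extension of a unit cell along identities is a unit cell.

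Next I would check closure under composition (Conditions (1), $(2')$, (3)). Horizontal closure of $\cR_1$ (cells with boundary in $\cR$) is immediate from closure of $\cR$ under composition in $\D_0$; horizontal closure of $\cL_1$ (extension cells with boundary in $\cL$) uses that a horizontal composite of opcartesian cells is opcartesian, a standard fact about cartesian/opcartesian cells. Both classes contain the horizontally invertible cells: a horizontal iso is an extension along isomorphisms (its universal property is trivial) and its boundary arrows are isos, hence in both $\cL$ and $\cR$. For $(2')$: a vertical composite of cells with horizontal boundary in $\cR$ again has horizontal boundary in $\cR$ since source/target boundaries are unchanged under vertical composition — so $\cR_1$ is vertically closed. (The left class is \emph{not} vertically closed in general, which is exactly why this is only a \emph{lax} DOFS; I would flag this rather than try to prove something false.) Condition (3), that source/target unit cells of a cell in each class lie in the same class, holds because the boundary arrows of $i(\src\alpha)$ are identities, hence in $\cL\cap\cR$.

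The main obstacle, and the step I would spend the most care on, is Condition (5): the unique diagonal lifting for cells. Given $\ell\in\dL$ (an extension cell with boundary in $\cL$) and $r\in\dR$ (boundary in $\cR$) and an equation $r\cdot\varphi\cdot\ell = \psi$ of pasted cells, I must produce a unique $\theta$ with $\theta\cdot\ell=\psi$ and $r\cdot\theta=\varphi$ (reading the pastings as in the Proposition). The strategy is a two-stage lift: first use the opcartesian universal property of $\ell$ to produce a unique candidate $\theta$ with $\theta\cdot\ell=\psi$ (this is exactly what initiality of the extension cell gives, once one checks the boundary of the required $\theta$ is the diagonal arrow obtained from the orthogonal lifting in $\D_0$ of the boundary square against $\ell$'s boundary in $\cL$ and $r$'s boundary in $\cR$); then verify $r\cdot\theta=\varphi$ by showing both sides, when precomposed with $\ell$, agree (they both give $\psi$ composed appropriately) and invoking the opcartesian cancellation property of $\ell$ once more. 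Uniqueness follows from the uniqueness in the opcartesian universal property together with uniqueness of the orthogonal lifting in $\D_0$ for the horizontal boundaries. The clause that $\theta$ is a unit cell when $\ell,\varphi,\psi,r$ are units follows because all the lifts produced are canonical and the extension of units along identities is a unit. Having verified axioms (1), $(2')$, (3), (4), (5), Proposition~\ref{prop:explicitDOFS} yields the lax DOFS $(\cL,\cR)_{\operatorname{extn}}$, and reversing all proarrows and exchanging ``opcartesian'' for ``cartesian'' gives the colax DOFS $(\cL,\cR)_{\operatorname{restr}}$ of part (2).
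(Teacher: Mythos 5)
Your plan follows the paper's proof essentially step for step: the paper also factors the horizontal boundary in $(\cL,\cR)$, takes the extension of the source proarrow along the left parts and obtains the right factor from the opcartesian universal property; it proves the lifting property by first producing the diagonal arrows from orthogonality in $\D_0$ and then the diagonal cell from the universal property of the extension cell, with the second triangle and uniqueness coming from the uniqueness clause of that universal property; and it obtains part (2) by duality. The closure statements you justify via ``composites of opcartesian cells are opcartesian'' are dispatched as ``clear'' in the paper, and your framing through the cell-wise axioms of Proposition~\ref{prop:explicitDOFS} is the same verification the paper carries out implicitly.

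There is, however, one concrete slip. Your justification of Condition (3) is incorrect: for a cell $\alpha$ with source arrow $f=\src(\alpha)$, the unit cell $i(f)$ has horizontal source and target both equal to $f$, not to identities (you appear to be conflating $i(f)$ with the identity morphism of a proarrow in $\D_1$). For the right class this does not matter, since $i(f)$ then has boundaries in $\cR$ and so lies in $\cR_1$ by definition. But for the left class, Conditions (3) and (4)(c) require that $i(f)$ be an \emph{extension cell} whenever $f\in\cL$, equivalently that $\operatorname{ext}_{f,f}(\id{})$ be the unit proarrow via the unit cell. This is not a formal consequence of the boundary arrows lying in $\cL$, and it is the genuinely delicate point of the statement: in $\Span(\Set)$, for example, $\operatorname{ext}_{f,f}(\id{A})$ is the span $B\leftarrow A\rightarrow B$ with both legs $f$, which is not the unit span on $B$ unless $f$ is invertible. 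The paper's own proof is also silent on this point, addressing units only through its closing remark on the choice of extension cells, so your architecture matches the paper's; but this is the one step you would need to prove honestly (or isolate as an additional hypothesis) rather than derive from a claim about boundaries that is not true.
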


\begin{proof}
\begin{enumerate}
\item 
It is clear that both the class of extension cells with $\cL$-boundaries and the class of cells with $\cR$-boundaries are closed under composition and contain all horizontally invertible cells. 

To factor an arbitrary double cell,
factor its horizontal boundary arrows in $(\cL,\cR)$, and take the extension of its vertical source arrow $M$ along the arrows in $\cL$. Then the right side of the factorization is the cell $\theta'$ that exists by the universal property of the extension cell.

\[\begin{tikzcd}
	\bullet & \bullet && \bullet & \bullet && \bullet \\
	\bullet & \bullet && \bullet & \bullet && \bullet
	\arrow["f", from=1-1, to=1-2]
	\arrow[""{name=0, anchor=center, inner sep=0}, "M"'{inner sep=.8ex}, "\shortmid"{marking}, from=1-1, to=2-1]
	\arrow[""{name=1, anchor=center, inner sep=0}, "N"{inner sep=.8ex}, "\shortmid"{marking}, from=1-2, to=2-2]
	\arrow["{\ell_f}", from=1-4, to=1-5]
	\arrow[""{name=2, anchor=center, inner sep=0}, "M"'{inner sep=.8ex}, "\shortmid"{marking}, from=1-4, to=2-4]
	\arrow["{r_f}", from=1-5, to=1-7]
	\arrow[""{name=3, anchor=center, inner sep=0}, "{\operatorname{ext}_{\ell_f,\ell_g}^M}"{inner sep=.8ex}, "\shortmid"{marking}, from=1-5, to=2-5]
	\arrow[""{name=4, anchor=center, inner sep=0}, "\shortmid"{marking}, from=1-7, to=2-7]
	\arrow["g"', from=2-1, to=2-2]
	\arrow["{\ell_g}"', from=2-4, to=2-5]
	\arrow["{r_g}"', from=2-5, to=2-7]
	\arrow["\theta"{description}, draw=none, from=0, to=1]
	\arrow["{=}"{description}, draw=none, from=2, to=1]
	\arrow["{\gamma_{\ell_f,\ell_g}^M}"{description}, draw=none, from=2, to=3]
	\arrow["{\theta'}"{description, pos=0.6}, draw=none, from=3, to=4]
\end{tikzcd}\]

Finally, to prove the lifting property, we will denote a double cell $\theta$ as in the previous diagram by $M\xrightarrow[]{(f,\theta,g)}N$.
With this notation, we are considering a commutative square of the form,

\[\begin{tikzcd}
	M && N \\
	E && P
	\arrow["{(h,\theta,h')}", from=1-1, to=1-3]
	\arrow["{(\ell,\gamma,\ell')}"', from=1-1, to=2-1]
	\arrow["{(r,\delta,r')}", from=1-3, to=2-3]
	\arrow["{(k,\tau,k')}"', from=2-1, to=2-3]
\end{tikzcd}\]
where $\gamma$ is an extension cell, $\ell,\ell'\in\cL$ and $r,r'\in\cR$.
This gives us the following two diagrams in the arrow category $\D_0$,

\[\begin{tikzcd}
	\bullet & \bullet & \bullet & \bullet \\
	\bullet & \bullet & \bullet & \bullet
	\arrow["h", from=1-1, to=1-2]
	\arrow["\ell"', from=1-1, to=2-1]
	\arrow["r", from=1-2, to=2-2]
	\arrow["{h'}", from=1-3, to=1-4]
	\arrow["{\ell'}"', from=1-3, to=2-3]
	\arrow["{r'}", from=1-4, to=2-4]
	\arrow["d"{description}, dashed, from=2-1, to=1-2]
	\arrow["k"', from=2-1, to=2-2]
	\arrow["{d'}"{description}, dashed, from=2-3, to=1-4]
	\arrow["{k'}"', from=2-3, to=2-4]
\end{tikzcd}\]
which have unique diagonals $d$ and $d'$ because $(\cL,\cR)$ is an OFS. Hence, $h=d\circ\ell$ and $h'=d'\circ\ell'$.
The universal property of $\gamma$ as extension cell gives rise to a unique double cell $\theta'$ as in the following diagram,

\[\begin{tikzcd}
	\bullet & \bullet & \bullet && \bullet & \bullet & \bullet \\
	\bullet & \bullet & \bullet && \bullet & \bullet & \bullet
	\arrow["\ell", from=1-1, to=1-2]
	\arrow[""{name=0, anchor=center, inner sep=0}, "M"'{inner sep=.8ex}, "\shortmid"{marking}, from=1-1, to=2-1]
	\arrow["d", from=1-2, to=1-3]
	\arrow[""{name=1, anchor=center, inner sep=0}, "N"{inner sep=.8ex}, "\shortmid"{marking}, from=1-3, to=2-3]
	\arrow["\ell", from=1-5, to=1-6]
	\arrow[""{name=2, anchor=center, inner sep=0}, "M"'{inner sep=.8ex}, "\shortmid"{marking}, from=1-5, to=2-5]
	\arrow["d", from=1-6, to=1-7]
	\arrow[""{name=3, anchor=center, inner sep=0}, "E"{inner sep=.8ex}, "\shortmid"{marking}, from=1-6, to=2-6]
	\arrow[""{name=4, anchor=center, inner sep=0}, "N"{inner sep=.8ex}, "\shortmid"{marking}, from=1-7, to=2-7]
	\arrow["{\ell'}"', from=2-1, to=2-2]
	\arrow["{d'}"', from=2-2, to=2-3]
	\arrow["{\ell'}"', from=2-5, to=2-6]
	\arrow["{d'}"', from=2-6, to=2-7]
	\arrow["\theta"{description}, draw=none, from=0, to=1]
	\arrow["{=}"{description}, draw=none, from=1, to=2]
	\arrow["\gamma"{description}, draw=none, from=2, to=3]
	\arrow["{\theta'}"{description}, draw=none, from=3, to=4]
\end{tikzcd}\]
This gives the commutativity of the upper triangle in the following diagram,

\[\begin{tikzcd}
	M && N \\
	E && P
	\arrow["{(h,\theta,h')}", from=1-1, to=1-3]
	\arrow["{(\ell,\gamma,\ell')}"', from=1-1, to=2-1]
	\arrow["{(r,\delta,r')}", from=1-3, to=2-3]
	\arrow["{(d,\theta',d')}"{description}, from=2-1, to=1-3]
	\arrow["{(k,\tau,k')}"', from=2-1, to=2-3]
\end{tikzcd}\]
For the lower triangle, note that
\begin{eqnarray*}
((r,\delta,r')\circ(d,\theta',d')) \circ(\ell,\gamma,\ell')
    &=&(r,\delta,r')\circ((d,\theta',d')\circ(\ell,\gamma,\ell'))\\
    &=&(r,\delta,r')\circ(h\theta,h')\\
    &=&(k,\tau,k')\circ(\ell,\gamma,\ell')
\end{eqnarray*}
The uniqueness in the universal property of $\gamma$ now gives us that $(r,\delta,r')\circ(d,\theta',d')=(k,\tau,k')$. So $(d,\theta',d')$ is the required lifting. Its uniqueness follows again from the universal property of 
$\gamma$.

Finally, a choice of an extension cell for each
diagram,

\[\begin{tikzcd}
	\bullet & \bullet \\
	\bullet & \bullet
	\arrow["\ell", from=1-1, to=1-2]
	\arrow["M"'{inner sep=.8ex}, "\shortmid"{marking}, from=1-1, to=2-1]
	\arrow["{\ell'}"', from=2-1, to=2-2]
\end{tikzcd}\]
gives rise to a choice of factorizations of the double cells such that their sources and targets are the chosen factorizations on the arrows. So we have a DOFS.
\item This follows from the first part by duality.
\end{enumerate}
\end{proof}

Recall that each DOFS on $\dD$ gives rise to a suitable (i.e., normal oplax, lax or pseudo, depending on the type of factorization system) double functor from $\dD^{\rightarrow}$ to $\dD$. 
Now a morphism between two double orthogonal factorization systems on the same double category with the same factorization system on the arrow category corresponds to a diagram
 $$
    \begin{tikzcd}
	{\mathbb{D}^\to} & {\mathbb{D}^\to} \\
	{\mathbb{D}} & {\mathbb{D}}
	\arrow["{F^\to}", from=1-1, to=1-2]
	\arrow["{Q^\mathbb{D}}"', from=1-1, to=2-1]
	\arrow["\digamma"{description}, shorten <=4pt, shorten >=4pt, Rightarrow, from=1-2, to=2-1]
	\arrow["{(Q^\mathbb{D})'}", from=1-2, to=2-2]
	\arrow["F"', from=2-1, to=2-2]
\end{tikzcd}
    $$ 
    where $F^\to$ is the identity on $\mathbb{D}^\to$. Hence, $F$ is the identity on $\mathbb{D}$. So a morphism between such double orthogonal factorization systems is a horizontal identity on objects transformation between the double functors representing the two factorization systems. Note that \citep{grandispare-adjoint}
gives a detailed description of horizontal transformations, where the domain and the codomain may be double functors with distinct types of laxity.

\begin{thm}\label{thm:comparaisonCartDOFS}
  \begin{enumerate}
      \item 
      Let $\dD$ be a double category with an OFS $(\cL_0,\cR_0)$ on its category of arrows and all extensions along the left class $\cL_0$. Then for any lax $(\dL,\dR)$ on $\dD$ with $(\cL_0,\cR_0)$ as the arrow part, there is a unique morphism $(\dL,\dR)\to(\cL_0,\cR_0)_{\operatorname{extn}}$ of DOFS on $\dD$.
      \item 
      Let $\dD$ be a double category with an OFS $(\cL_0,\cR_0)$ on its category of arrows and all restrictions along the right class $\cR_0$. Then for any $(\dL,\dR)$ on $\dD$ with $(\cL_0,\cR_0)$ as the arrow part, 
      there is a unique morphism $(\cL_0,\cR_0)_{\operatorname{restr}}\to(\dL,\dR)$ of DOFS on $\dD$.      
  \end{enumerate}  
\end{thm}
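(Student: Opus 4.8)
The plan is to exploit the reformulation recalled just before the theorem: for two DOFS on $\dD$ sharing the arrow part $(\cL_0,\cR_0)$, a morphism between them is a horizontal identity-on-objects transformation $\digamma$ between the (oplax/lax/pseudo) double functors $\dD^{\to}\to\dD$ that represent them. By \thex\ref{teormonadicitydouble} together with \prox\ref{propmorphismsofalgebrassqmonad} applied levelwise to $\D_0$ and $\D_1$, giving such a $\digamma$ is the same as giving data $(\digamma_0,\digamma_1)$ of a lax morphism of orthogonal factorization systems on each of $\D_0$ and $\D_1$, and when it exists it is uniquely determined. Since both DOFS carry the \emph{same} OFS on arrows with the same chosen factorizations, the two representing functors agree on all of $\D_0^{\to}$, so $\digamma_0$ is forced to be the identity and the whole problem reduces to producing, uniquely, the cell-direction component $\digamma_1$.

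For part (1): let $\alpha$ be a double cell of $\dD$ with horizontal boundaries $f,g$ and vertical source $M$, and let $\alpha=r_\alpha\c\ell_\alpha$ be its $(\dL,\dR)$-factorization, $\ell_\alpha\in\dL$, $r_\alpha\in\dR$. Because $\src$ and $\tgt$ are strict morphisms of orthogonal factorization systems (part of the definition of a DOFS), $\ell_\alpha$ has horizontal boundaries $\ell_f,\ell_g$ (the chosen $\cL_0$-factors of $f,g$) and vertical source $M$ — exactly the shape to which the universal property of the extension cell $\gamma^{M}_{\ell_f,\ell_g}$ (available since $\ell_f,\ell_g\in\cL_0$) applies, with the auxiliary horizontal arrows taken to be identities. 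This produces a \emph{unique} horizontally globular cell $\digamma_\alpha$ from the proarrow $\operatorname{ext}_{\ell_f,\ell_g}(M)$ to the proarrow $\operatorname{Im}_{(\dL,\dR)}(\alpha)$ with $\ell_\alpha=\digamma_\alpha\c\gamma^{M}_{\ell_f,\ell_g}$. Setting $r^{\operatorname{extn}}_\alpha:=r_\alpha\c\digamma_\alpha$ — which lies in $\dR^{\operatorname{extn}}$ since $r_\alpha$ has $\cR_0$-boundaries and $\digamma_\alpha$ is globular — one gets $\alpha=r^{\operatorname{extn}}_\alpha\c\gamma^{M}_{\ell_f,\ell_g}$, and by the uniqueness clause in the proof of \prox\ref{prop:ofsExtensions}(1) this is precisely the chosen $(\cL_0,\cR_0)_{\operatorname{extn}}$-factorization of $\alpha$. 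One then checks that $\alpha\mapsto\digamma_\alpha$ is natural and compatible with horizontal composition, vertical composition and unit cells, each time by invoking the uniqueness half of the extension cell's universal property; this exhibits $\digamma=(\id{},(\digamma_\alpha)_\alpha)$ as a morphism of DOFS $(\dL,\dR)\to(\cL_0,\cR_0)_{\operatorname{extn}}$. Uniqueness is immediate: any morphism of DOFS into $(\cL_0,\cR_0)_{\operatorname{extn}}$ must, by compatibility with the left parts of the chosen factorizations, have cell-component on $\alpha$ a globular cell factorizing $\ell_\alpha$ through $\gamma^{M}_{\ell_f,\ell_g}$, hence equal to $\digamma_\alpha$.

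Part (2) is the formal dual, with restriction (cartesian) cells in place of extension cells: given $\alpha$ with $(\dL,\dR)$-factorization $r_\alpha\c\ell_\alpha$, the cell $r_\alpha\in\dR$ has vertical target $N$ and horizontal boundaries $r_f,r_g\in\cR_0$, so the universal property of the restriction cell $\gamma^{N}_{r_f,r_g}$ yields a unique globular cell $\digamma_\alpha$ from $\operatorname{Im}_{(\dL,\dR)}(\alpha)$ to $\operatorname{restr}_{r_f,r_g}(N)$ with $r_\alpha=\gamma^{N}_{r_f,r_g}\c\digamma_\alpha$; the same bookkeeping then delivers the unique morphism of DOFS $(\cL_0,\cR_0)_{\operatorname{restr}}\to(\dL,\dR)$. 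Here $(\dL,\dR)$ is allowed to be of any laxity, the comparison being, when source and target have different laxity, one of the cross-laxity morphisms of \remx\ref{morearrows} realized by the pseudo double functor $F=\id{}$; only the existence of all restrictions along $\cR_0$ is used.

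The main obstacle I expect is not the construction of $\digamma_\alpha$ but the compatibility checks of the second paragraph: confirming that $(\digamma_\alpha)_\alpha$ genuinely assembles into a horizontal transformation of double functors — in particular that it is compatible with the laxity $2$-cell of vertical composition in both the source and the target DOFS, and with the associators — and, equivalently through \prox\ref{propmorphismsofalgebrassqmonad}, that it is simultaneously a lax morphism of the comonad $L$ and of the monad $R$ on $\D_1$. Each of these is a single diagram chase closed by a uniqueness clause, but there are several of them and they must be organized carefully; they are of the same flavour as the verifications in the proof of \prox\ref{prop:ofsExtensions}, whose notation and arguments can be reused.
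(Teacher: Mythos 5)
Your proposal is correct and follows essentially the same route as the paper: both compare the chosen $(\dL,\dR)$-factorization of a cell with its extension factorization and use the universal property of the extension cell $\gamma^{M}_{\ell_f,\ell_g}$ to produce the unique globular comparison cell (your $\digamma_\alpha$ is the paper's $\ell'_\theta$), assemble these with identity components on objects into a horizontal transformation, and get uniqueness from the same universal property, with part (2) by duality. The extra bookkeeping you flag (compatibility with vertical composition, laxity cells, associators) is exactly what the paper also dispatches by repeated appeal to the uniqueness clause of the extension cell.
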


\begin{proof}
\begin{enumerate}
    \item Let $(\dL,\dR)$ be a DOFS on $\dD$, with corresponding oplax double functor $Q\colon \dD^\to\to\dD$. For a double cell $\theta$ in $\dD$ we obtain two factorizations with the same factorizations of the boundary horizontal cells:
    
\[\begin{tikzcd}
	\bullet & \bullet && \bullet && {Q(f)} && \bullet \\
	\bullet & \bullet && \bullet && {Q(g)} && \bullet \\
	\bullet & \bullet && \bullet && {Q(f)} && \bullet \\
	\bullet & \bullet && \bullet && {Q(g)} && \bullet
	\arrow["f", from=1-1, to=1-2]
	\arrow[""{name=0, anchor=center, inner sep=0}, "M"'{inner sep=.8ex}, "\shortmid"{marking}, from=1-1, to=2-1]
	\arrow[""{name=1, anchor=center, inner sep=0}, "N"{inner sep=.8ex}, "\shortmid"{marking}, from=1-2, to=2-2]
	\arrow["{\ell_f}", from=1-4, to=1-6]
	\arrow[""{name=2, anchor=center, inner sep=0}, "M"'{inner sep=.8ex}, "\shortmid"{marking}, from=1-4, to=2-4]
	\arrow["{r_f}", from=1-6, to=1-8]
	\arrow[""{name=3, anchor=center, inner sep=0}, "{\operatorname{ext}_{\ell_f,\ell_g}^M}"{inner sep=.8ex}, "\shortmid"{marking}, from=1-6, to=2-6]
	\arrow[""{name=4, anchor=center, inner sep=0}, "N"{inner sep=.8ex}, "\shortmid"{marking}, from=1-8, to=2-8]
	\arrow["g"', from=2-1, to=2-2]
	\arrow["{\ell_g}"', from=2-4, to=2-6]
	\arrow["{r_g}", from=2-6, to=2-8]
	\arrow["f", from=3-1, to=3-2]
	\arrow[""{name=5, anchor=center, inner sep=0}, "M"'{inner sep=.8ex}, "\shortmid"{marking}, from=3-1, to=4-1]
	\arrow[""{name=6, anchor=center, inner sep=0}, "N"{inner sep=.8ex}, "\shortmid"{marking}, from=3-2, to=4-2]
	\arrow["{\ell_f}", from=3-4, to=3-6]
	\arrow[""{name=7, anchor=center, inner sep=0}, "M"'{inner sep=.8ex}, "\shortmid"{marking}, from=3-4, to=4-4]
	\arrow["{r_f}", from=3-6, to=3-8]
	\arrow[""{name=8, anchor=center, inner sep=0}, "{Q(\theta)}"{inner sep=.8ex}, "\shortmid"{marking}, from=3-6, to=4-6]
	\arrow[""{name=9, anchor=center, inner sep=0}, "N"{inner sep=.8ex}, "\shortmid"{marking}, from=3-8, to=4-8]
	\arrow["g"', from=4-1, to=4-2]
	\arrow["{\ell_g}"', from=4-4, to=4-6]
	\arrow["{r_g}"', from=4-6, to=4-8]
	\arrow["\theta"{description}, draw=none, from=0, to=1]
	\arrow["{=}"{description}, draw=none, from=1, to=2]
	\arrow["{\gamma^M_{\ell_f,\ell_g}}"{description}, draw=none, from=2, to=3]
	\arrow["{\theta'}"{description, pos=0.7}, draw=none, from=3, to=4]
	\arrow["\theta"{description}, draw=none, from=5, to=6]
	\arrow["{=}"{description}, draw=none, from=7, to=6]
	\arrow["{\ell_\theta}"{description, pos=0.4}, draw=none, from=7, to=8]
	\arrow["{r_{\theta}}"{description}, draw=none, from=8, to=9]
\end{tikzcd}\]
    By the universal property of the extension cell $\gamma^M_{\ell_f,\ell_g}$ there is a unique globular double cell 
   
\[\begin{tikzcd}
	{Q(f)} && {Q(f)} \\
	{Q(g)} && {Q(g)}
	\arrow["{\operatorname{id}_{Q(f)}}", from=1-1, to=1-3]
	\arrow[""{name=0, anchor=center, inner sep=0}, "{\operatorname{ext}_{\ell_f,\ell_g}^M}"'{inner sep=.8ex}, "\shortmid"{marking}, from=1-1, to=2-1]
	\arrow[""{name=1, anchor=center, inner sep=0}, "{Q(\theta)}"{inner sep=.8ex}, "\shortmid"{marking}, from=1-3, to=2-3]
	\arrow["{\operatorname{id}_{Q(g)}}"', from=2-1, to=2-3]
	\arrow["{\ell'_\theta}"{description}, draw=none, from=0, to=1]
\end{tikzcd}\]
Let $Q_{\operatorname{extn}}\colon\dD^\to\to\dD$ be the double functor representing the extension DOFS. Then we define the components of a horizontal transformation

\[\begin{tikzcd}
	{\mathbb{D}^\to} & {\mathbb{D}^\to} \\
	{\mathbb{D}} & {\mathbb{D}}
	\arrow["{\operatorname{id}}", from=1-1, to=1-2]
	\arrow[""{name=0, anchor=center, inner sep=0}, "Q"', from=1-1, to=2-1]
	\arrow[""{name=1, anchor=center, inner sep=0}, "{Q_{\operatorname{extn}}}", from=1-2, to=2-2]
	\arrow["{\operatorname{id}}"', from=2-1, to=2-2]
	\arrow["\zeta", between={0.2}{0.8}, Rightarrow, from=1, to=0]
\end{tikzcd}\]
by $\zeta_f=\operatorname{id}_{Q(f)}$ for all objects $f$ of $\dD^{\to}$ (i.e., arrows of $\dD$) and $\zeta_\theta=\ell'_\theta$ for each proarrow $\theta$ of $\dD^\rightarrow$.
Using the universal property of the extension cells it is easy to check that this family does indeed define a horizontal transformation, and hence a morphism  of DOFS on $\dD$, $(\dL,\dR)\to(\cL_0,\cR_0)_{\operatorname{extn}}$. For each component of such a morphism there is only one cell that could fulfill the role, so this morphism is unique. 
    \item 
    This follows from the first part by duality.
\end{enumerate}
\end{proof}

Recall that a fibrant category \citep{aleiferi}, also called a framed bicategory \citep{shulman}, is a double category with all extensions and restrictions.

\begin{corollary}
    For any fibrant double category $\dD$ with an OFS $(\cL,\cR)$ on its category of arrows, 
    the DOFS $(\cL,\cR)_{\operatorname{restr}}$ is initial and 
    $(\cL,\cR)_{\operatorname{extn}}$ is terminal
    among the DOFS on $\dD$ with $(\cL,\cR)$ as arrow part.
\end{corollary}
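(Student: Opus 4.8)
The plan is to read this off from Theorem~\ref{thm:comparaisonCartDOFS} once we check that both extremal systems actually exist. Since $\dD$ is fibrant it has, by definition, all extensions (opcartesian cells) and all restrictions (cartesian cells); in particular it has all extensions along the arrows of $\cL$ and all restrictions along the arrows of $\cR$. So Proposition~\ref{prop:ofsExtensions} applies on both sides and yields the lax DOFS $(\cL,\cR)_{\operatorname{extn}}$ and the colax DOFS $(\cL,\cR)_{\operatorname{restr}}$ on $\dD$, each with $(\cL,\cR)$ as its arrow part. These are the two candidates, and it remains only to exhibit the universal maps to and from an arbitrary DOFS extending $(\cL,\cR)$.

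For initiality: given an arbitrary DOFS $(\dL,\dR)$ on $\dD$ with $(\cL,\cR)$ as arrow part, Theorem~\ref{thm:comparaisonCartDOFS}(2) provides a unique morphism of DOFS $(\cL,\cR)_{\operatorname{restr}}\to(\dL,\dR)$, which is exactly the statement that $(\cL,\cR)_{\operatorname{restr}}$ is initial. Dually, for terminality: when $(\dL,\dR)$ is a lax DOFS --- and hence, since strict and pseudo morphisms of categories with an OFS are in particular lax (Remark~\ref{rempreservefactorizationslaxly}), also when it is strict or pseudo --- Theorem~\ref{thm:comparaisonCartDOFS}(1) provides a unique morphism $(\dL,\dR)\to(\cL,\cR)_{\operatorname{extn}}$; the colax case follows by the dual of part~(1). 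Uniqueness of the comparison map is built into Theorem~\ref{thm:comparaisonCartDOFS}, so no separate argument is needed there.

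The only genuinely delicate point --- the ``main obstacle'', such as it is for a corollary --- is bookkeeping the laxity flavours: $(\cL,\cR)_{\operatorname{extn}}$ is lax and $(\cL,\cR)_{\operatorname{restr}}$ is colax, so to speak of them as terminal and initial ``among the DOFS on $\dD$'' one must first fix the ambient category in which comparisons are taken. I would take this to be the combined category of double categories carrying a DOFS of any flavour, with morphisms as described in Remark~\ref{morearrows}; in particular the identity double functor on $\dD$ is a pseudofunctor, so it is legitimate as the underlying double functor of a morphism from a colax DOFS to the lax DOFS $(\cL,\cR)_{\operatorname{extn}}$, and conversely from the colax DOFS $(\cL,\cR)_{\operatorname{restr}}$ to a lax one. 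With that ambient category pinned down, Theorem~\ref{thm:comparaisonCartDOFS} supplies precisely the data and uniqueness required, and the corollary follows immediately.
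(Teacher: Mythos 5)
Your proposal is correct and is essentially the argument the paper intends: the corollary is stated without proof precisely because fibrancy gives all extensions and restrictions, so Proposition~\ref{prop:ofsExtensions} produces both extremal systems and Theorem~\ref{thm:comparaisonCartDOFS} supplies the unique comparison morphisms. Your extra care about the laxity flavours and the ambient category of comparison is consistent with how the paper handles this in Remark~\ref{morearrows}, and is a reasonable clarification rather than a deviation.
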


\subsection{Examples}

We will now present examples of double categories with multiple DOFS that share the same OFS on the arrows of the double category.

\begin{example}\label{ex:Mod} (The double category $\mathbb{M}\mathrm{od}$.)
In the category $\mathcal{R}ing$ of associative unital rings, the class $\mathcal{L}$ of surjective morphisms together with the class $\mathcal{R}$ of injective morphisms form an orthogonal factorization system. This follows from the fact that surjective and injective functions form an orthogonal factorization system in the category of sets, and the image of a ring morphism is itself a ring. Moreover, the bijection between two distinct factorizations into a surjective and an injective morphism is itself a ring isomorphism. The OFS $(\mathcal{L},\mathcal{R})$ of $\mathcal{R}ing$ appears as the factorization system for the horizontal arrows in a colax DOFS on the double category $\mathbb{M}\mathrm{od}$ of associative unital rings, bimodules, and equivariant morphisms. The left class $\cL_1$ of the factorization system on the cells consists of surjective cells whose horizontal morphisms are surjective, while the right class $\cR_1$ consists of injective cells whose horizontal morphisms are injective. Every cell $\varphi$ in $\mathbb{M}\mathrm{od}$ admits a factorization of the form:
    \[\begin{tikzcd}[ampersand replacement=\&]
	A \& C \& A \& {\text{Im}f} \& C \\
	B \& D \& B \& {\text{Im}g} \& C
	\arrow["f", from=1-1, to=1-2]
	\arrow[""{name=0, anchor=center, inner sep=0}, "M"', "\shortmid"{marking}, from=1-1, to=2-1]
	\arrow[""{name=1, anchor=center, inner sep=0}, "N", "\shortmid"{marking}, from=1-2, to=2-2]
	\arrow[two heads, from=1-3, to=1-4]
	\arrow[""{name=2, anchor=center, inner sep=0}, "M"', "\shortmid"{marking}, from=1-3, to=2-3]
	\arrow[hook, from=1-4, to=1-5]
	\arrow[""{name=3, anchor=center, inner sep=0}, "{\text{Im}\varphi}"{description}, from=1-4, to=2-4]
	\arrow[""{name=4, anchor=center, inner sep=0}, "N", "\shortmid"{marking}, from=1-5, to=2-5]
	\arrow["g"', from=2-1, to=2-2]
	\arrow[two heads, from=2-3, to=2-4]
	\arrow[hook, from=2-4, to=2-5]
	\arrow["\varphi"{description}, draw=none, from=0, to=1]
	\arrow["{=}"{description}, draw=none, from=1, to=2]
	\arrow["{\overline\varphi}"{description}, draw=none, from=2, to=3]
	\arrow["i"{description}, draw=none, from=3, to=4]
\end{tikzcd}\] where $\bar \varphi$ denotes the restriction of $\varphi$ to its image, and $i$ is the inclusion cell.
Furthermore, $\cL_1$ is closed under vertical composition, since the tensor product of surjective morphisms is again surjective. However, the right class $\cR_1$ is not closed under vertical composition. For instance, the vertical composition of the cell 
\[\begin{tikzcd}[ampersand replacement=\&]
	{\mathbb{R}} \& {\mathbb{C}} \\
	{\mathbb{R}} \& {\mathbb{C}}
	\arrow[hook, from=1-1, to=1-2]
	\arrow[""{name=0, anchor=center, inner sep=0}, "{\mathbb{C}}"', "\shortmid"{marking}, from=1-1, to=2-1]
	\arrow[""{name=1, anchor=center, inner sep=0}, "{\mathbb{C}}", "\shortmid"{marking}, from=1-2, to=2-2]
	\arrow[hook, from=2-1, to=2-2]
	\arrow["\id{}"{description}, draw=none, from=0, to=1]
\end{tikzcd}\] with itself does not belong to the right class $\cR$. This is because $i\otimes_\mathbb{R} 1 \neq 1\otimes_\mathbb{R}i$, while over $\mathbb{C}$ one has $i\otimes_\mathbb{C}1 = 1\otimes_\mathbb{C} i$. Here, the left vertices labeled $\mathbb{R}$ in the cell $\mathrm{id}$, as well as the subscript in the tensor operator, refer to the field of real numbers.

This is not the only DOFS on $\mathbb{M}\mathrm{od}$, because the double category $\mathbb{M}\mathrm{od}$ is fibrant \citep{shulman}. The restriction and extension cells are of the form 
\[\begin{tikzcd}[ampersand replacement=\&]
	A \& C \&\& A \& C \\
	B \& D \&\& B \& D
	\arrow["f", from=1-1, to=1-2]
	\arrow[""{name=0, anchor=center, inner sep=0}, "N"', "\shortmid"{marking}, from=1-1, to=2-1]
	\arrow[""{name=1, anchor=center, inner sep=0}, "N", "\shortmid"{marking}, from=1-2, to=2-2]
	\arrow["f", from=1-4, to=1-5]
	\arrow[""{name=2, anchor=center, inner sep=0}, "M"', "\shortmid"{marking}, from=1-4, to=2-4]
	\arrow[""{name=3, anchor=center, inner sep=0}, "{C\otimes_A M\otimes_B D}", from=1-5, to=2-5]
	\arrow["g"', from=2-1, to=2-2]
	\arrow["g"', from=2-4, to=2-5]
	\arrow["id"{description}, draw=none, from=0, to=1]
	\arrow["{,}"', shift right=3, draw=none, from=1, to=2]
	\arrow["{1\otimes id\otimes 1}"{description}, draw=none, from=2, to=3]
\end{tikzcd}\]
respectively. Here the left proarrow $N$ in the restriction is the $A\text{-}B$-bimodule induced by $f$ and $g$, and the right proarrow $C\otimes_A M\otimes_B D$ in the extension is the $C\text{-}D$-bimodule obtained by tensoring the $C\text{-}A$-bimodule $C$, the $A\text{-}B$-bimodule $M$, and the $B\text{-}D$-bimodule $D$. The morphism $1\otimes id\otimes 1$ maps an element $m\in M$ to $1\otimes m\otimes 1$.

By Proposition \ref{prop:ofsExtensions}, there exist a DOFS $(\mathcal{L}, \mathcal{R})_\text{restr}$ and a DOFS $(\mathcal{L},\mathcal{R})_\text{extn}$  in $\mathbb{M}\mathrm{od}$, induced by restrictions and extensions cells. Then, every cell $\varphi$ admits two factorizations of the form:
\[\begin{tikzcd}[ampersand replacement=\&]
	A \& C \& A \& {\text{Im}f} \& C \\
	B \& D \& B \& {\text{Im}g} \& D
	\arrow["f", from=1-1, to=1-2]
	\arrow[""{name=0, anchor=center, inner sep=0}, "M"', "\shortmid"{marking}, from=1-1, to=2-1]
	\arrow[""{name=1, anchor=center, inner sep=0}, "\shortmid"{marking}, from=1-2, to=2-2]
	\arrow[two heads, from=1-3, to=1-4]
	\arrow[""{name=2, anchor=center, inner sep=0}, "M"', "\shortmid"{marking}, from=1-3, to=2-3]
	\arrow[hook, from=1-4, to=1-5]
	\arrow[""{name=3, anchor=center, inner sep=0}, "N"{description}, from=1-4, to=2-4]
	\arrow[""{name=4, anchor=center, inner sep=0}, "N", "\shortmid"{marking}, from=1-5, to=2-5]
	\arrow["g"', from=2-1, to=2-2]
	\arrow[two heads, from=2-3, to=2-4]
	\arrow[hook, from=2-4, to=2-5]
	\arrow["\varphi"{description}, draw=none, from=0, to=1]
	\arrow["{=}"{description}, draw=none, from=1, to=2]
	\arrow["\varphi"{description}, draw=none, from=2, to=3]
	\arrow["id"{description}, draw=none, from=3, to=4]
\end{tikzcd}\]
and
\[\begin{tikzcd}[ampersand replacement=\&]
	A \& C \& A \&\& {\text{Im}f} \&\& C \\
	B \& D \& B \&\& {\text{Im}g} \&\& D
	\arrow["f", from=1-1, to=1-2]
	\arrow[""{name=0, anchor=center, inner sep=0}, "M"', "\shortmid"{marking}, from=1-1, to=2-1]
	\arrow[""{name=1, anchor=center, inner sep=0}, "\shortmid"{marking}, from=1-2, to=2-2]
	\arrow[two heads, from=1-3, to=1-5]
	\arrow[""{name=2, anchor=center, inner sep=0}, "M"', "\shortmid"{marking}, from=1-3, to=2-3]
	\arrow[hook, from=1-5, to=1-7]
	\arrow[""{name=3, anchor=center, inner sep=0}, "{\text{Im}f\otimes_A M \otimes_B \text{Im}g}"{description}, from=1-5, to=2-5]
	\arrow[""{name=4, anchor=center, inner sep=0}, "N", "\shortmid"{marking}, from=1-7, to=2-7]
	\arrow["g"', from=2-1, to=2-2]
	\arrow[two heads, from=2-3, to=2-5]
	\arrow[hook, from=2-5, to=2-7]
	\arrow["\varphi"{description}, draw=none, from=0, to=1]
	\arrow["{=}"{description}, draw=none, from=1, to=2]
	\arrow["{1\otimes id\otimes 1}"{description, pos=0.35}, draw=none, from=2, to=3]
	\arrow["{\_ \varphi \_}"{description, description, pos=0.65}, draw=none, from=3, to=4]
\end{tikzcd}\]
where the cell $\_\varphi\_$ sends an element $f(a)\otimes_A m\otimes_B g(b)$ to $f(a)\varphi(m)g(b)$. 

The DOFS $(\mathcal{L},\mathcal{R})_\text{restr}$ and $(\mathcal{L},\mathcal{R})_\text{extn}$ differ from the DOFS $(\mathbb{L},\mathbb{R})$. By Theorem \ref{thm:comparaisonCartDOFS}, there exist a morphism of DOFS $(\mathbb{L},\mathbb{R})\to (\mathcal{L},\mathcal{R})_\text{restr}$ and $(\mathcal{L},\mathcal{R})_\text{extn}\to (\mathbb{L},\mathbb{R})$. Explicitly, the globular double cells that define these morphisms are
\[\begin{tikzcd}[ampersand replacement=\&]
	{\text{Im}f} \& {\text{Im}f} \&\&\& {\text{Im}f} \& {\text{Im}f} \\
	{\text{Im}g} \& {\text{Im}g} \&\&\& {\text{Im}g} \& {\text{Im}g}
	\arrow[Rightarrow, no head, from=1-1, to=1-2]
	\arrow[""{name=0, anchor=center, inner sep=0}, "{\text{Im}\varphi}"', "\shortmid"{marking}, from=1-1, to=2-1]
	\arrow[""{name=1, anchor=center, inner sep=0}, "N", "\shortmid"{marking}, from=1-2, to=2-2]
	\arrow[Rightarrow, no head, from=1-5, to=1-6]
	\arrow[""{name=2, anchor=center, inner sep=0}, "{\text{Im}f\otimes_A M \otimes_B \text{Im}g}"', "\shortmid"{marking}, from=1-5, to=2-5]
	\arrow[""{name=3, anchor=center, inner sep=0}, "{\text{Im}\varphi}", "\shortmid"{marking}, from=1-6, to=2-6]
	\arrow[Rightarrow, no head, from=2-1, to=2-2]
	\arrow[Rightarrow, no head, from=2-5, to=2-6]
	\arrow["i"{description}, draw=none, from=0, to=1]
	\arrow["{,}"'{pos=0.3}, draw=none, from=1, to=2]
	\arrow["{\_\overline{\varphi}\_}"{description}, draw=none, from=2, to=3]
\end{tikzcd}\]  
respectively, where the cell $\_\overline{\varphi}\_$ maps an element $f(a)\otimes_A m\otimes_B g(b)$ to $f(a)\overline{\varphi}(m)g(b)$. 
\end{example}

Our next example will be the double category $\Prof$ of categories, profunctors and functors. This is double category is also known to be fibrant, so any orthogonal factorization system on $\Cat$ will give rise to one or more DOFS on $\Prof$.
Here we want to consider the factorization systems by strong epimorphisms and functors that are faithful and injective on objects. We first remind the reader of the definitions.

\begin{definition}\label{def:almosFull}
    A functor is \dfn{compositionally surjective} if its codomain coincides with the subcategory generated by its pointwise image.
\end{definition}

\begin{observation}\label{obs:ofsInCat}
Observe that the class $\mathcal{L}$ of compositionally surjective functors and the class $\mathcal{R}$ of  faithful functors which are injective on objects form an orthogonal factorization system in $\Cat$. For a functor $F\colon \mathcal{A}\to\mathcal{B}$ we write $\mathcal{I}m(F)$ for the subcategory of $\mathcal{B}$ on the objects of the form $F(A)$ with $A$ in $\mathcal{A}$ and arrows generated under composition in $\mathcal{B}$ by the arrows of the form $F(f)$ with $f$ in $\mathcal A$. We then factor $F$ as 
\[\begin{tikzcd}[ampersand replacement=\&]
	{\mathcal{A}} \&\& {\mathcal{I}m(F)} \&\& {\mathcal{B}}
	\arrow["{\overline{F}}", from=1-1, to=1-3]
	\arrow["{J_F}", from=1-3, to=1-5]
\end{tikzcd}\]
where $\overline{F}$ is the restriction of $F$ and $J_F$ is the inclusion of the subcategory into $\mathcal{B}$.
Note that this makes the compositionally surjective functors the strong epimorphisms in $\Cat$.
We will use this factorization system in the next example. 
\end{observation}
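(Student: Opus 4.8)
The plan is to verify the three clauses of \defx\ref{defnofs} for the pair $(\mathcal{L},\mathcal{R})$ (with $\mathcal{L}$ the compositionally surjective functors of \defx\ref{def:almosFull}), taking as chosen factorization the one displayed in the statement, $F = J_F\circ\overline{F}$; note in passing that this choice is normalized in the sense of Remark \ref{monadic-dfn}, since $\mathcal{I}m(\operatorname{id}_{\mathcal{A}})=\mathcal{A}$ forces an identity functor to factor as two identities. Two of the clauses are routine. For closure: composites of faithful functors are faithful and composites of functors injective on objects are injective on objects, while every isomorphism is faithful and bijective on objects, so $\mathcal{R}$ is closed under composition and contains the isomorphisms; dually, if $F\colon\mathcal{A}\to\mathcal{B}$ and $G\colon\mathcal{B}\to\mathcal{C}$ are compositionally surjective then $G\circ F$ is surjective on objects, and a morphism of $\mathcal{C}$ written as a composite of morphisms $G(g_i)$ with each $g_i$ a composite of morphisms $F(f_{ij})$ is thereby a composite of morphisms $GF(f_{ij})$, so (together with $\operatorname{id}_{GF(A)}=GF(\operatorname{id}_A)$) the pointwise image of $G\circ F$ generates $\mathcal{C}$; isomorphisms are plainly compositionally surjective. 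For the factorization clause, $J_F$ is a subcategory inclusion, hence faithful and injective on objects, so $J_F\in\mathcal{R}$; and $\overline{F}\colon\mathcal{A}\to\mathcal{I}m(F)$ is surjective on objects while, by the very definition of $\mathcal{I}m(F)$, its pointwise image generates all of $\mathcal{I}m(F)$, so $\overline{F}\in\mathcal{L}$.

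The substantive clause is the unique diagonal lifting property, and the plan is to construct the filler by hand. Suppose given a commutative square $Q\circ E = M\circ P$ with $E\colon\mathcal{A}\to\mathcal{B}$ in $\mathcal{L}$, $M\colon\mathcal{C}\to\mathcal{D}$ in $\mathcal{R}$, and $P\colon\mathcal{A}\to\mathcal{C}$, $Q\colon\mathcal{B}\to\mathcal{D}$. Define $D\colon\mathcal{B}\to\mathcal{C}$ on objects by $D(B):=P(A)$ for any $A$ with $E(A)=B$ --- such $A$ exists since $E$ is surjective on objects, and the value is independent of $A$ because $M(P(A))=Q(E(A))=Q(B)$ with $M$ injective on objects. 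Define $D$ on morphisms by $D(b):=P(f_n)\circ\cdots\circ P(f_1)$ for any presentation $b=E(f_n)\circ\cdots\circ E(f_1)$, which exists because $E$ is compositionally surjective (identities being such composites too, since $\operatorname{id}_{E(A)}=E(\operatorname{id}_A)$). One then checks, in order: that this composite is well formed with domain $D(\operatorname{dom} b)$ and codomain $D(\operatorname{cod} b)$, using injectivity on objects of $M$; that it does not depend on the chosen presentation of $b$, using faithfulness of $M$; and that the resulting $D$ is a functor satisfying $D\circ E = P$ and $M\circ D = Q$. Uniqueness follows at once: any filler agrees with $D$ on objects since $E$ is surjective on objects, and on morphisms since it must send $E(f_n)\circ\cdots\circ E(f_1)$ to $P(f_n)\circ\cdots\circ P(f_1)$.

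The crux --- and the step I expect to be the main obstacle --- is the well-definedness of $D$ on morphisms: because $\mathcal{I}m(E)$ is only \emph{generated by}, not equal to, the pointwise image of $E$, a morphism of $\mathcal{B}$ admits many presentations as a composite of $E$-images, and these must all yield the same value. This is precisely where the right class being \emph{faithful} is used --- all candidate values share the domain $D(\operatorname{dom} b)$ and codomain $D(\operatorname{cod} b)$ and are sent by $M$ to $Q(b)$, hence coincide; this pins down exactly why $\mathcal{R}$ is the class it is. Finally, for the closing remark that $\mathcal{L}$ is the class of strong epimorphisms of $\Cat$, one first notes that the monomorphisms of $\Cat$ are exactly the faithful functors injective on objects (test against the terminal category and against the walking arrow), so $\mathcal{R}$ is the class of \emph{all} monomorphisms; and whenever the right class of an orthogonal factorization system is the class of all monomorphisms, its left class must be the strong epimorphisms --- the lifting axiom gives that every map in $\mathcal{L}$ lifts against all monomorphisms (and an equalizer argument, factoring $\ell$ through the equalizer of two competing maps and lifting, then shows such an $\ell$ is epic), while conversely factoring a strong epi as $e = M\circ\overline{e}$ and lifting $e$ against $M$ exhibits $M$ as a split monomorphism, hence an isomorphism, so $e\in\mathcal{L}$.
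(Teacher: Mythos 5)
Your proposal is correct, and it supplies in full the verification that the paper leaves implicit (the statement is given only as an Observation, with the chosen factorization $F=J_F\circ\overline F$ but no proof of orthogonality or of the identification with strong epimorphisms). Your construction of the diagonal filler is exactly the argument one would want here, and you correctly isolate the one delicate point — independence of the value $D(b)$ from the chosen presentation of $b$ as a composite of $E$-images, which is precisely where faithfulness of the right class is needed — as well as giving a sound equalizer argument for the closing remark that $\mathcal{L}$ is the class of strong epimorphisms.
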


\begin{example}
    The OFS in $\Cat$ given in the above observation and the surjective-injective OFS in $\Set$ induce the following DOFS in the double category of profunctors $\Prof$. The left class $\cL_1$ of double cells consists of surjective natural transformations with compositionally surjective functors as horizontal arrows. The right class $\mathbb{R}$ consists of injective natural transformations in which horizontal functors are faithful functors that are injective on objects. It is clear that both $\cL_1$ and $\cR_1$ are closed under horizontal composition and contain all horizontally invertible cells. Before showing that any cell 
    \[\begin{tikzcd}[ampersand replacement=\&]
	{\mathcal{A}} \& {\mathcal{C}} \\
	{\mathcal{B}} \& {\mathcal{D}}
	\arrow["F", from=1-1, to=1-2]
	\arrow[""{name=0, anchor=center, inner sep=0}, "P"', "\shortmid"{marking}, from=1-1, to=2-1]
	\arrow[""{name=1, anchor=center, inner sep=0}, "Q", "\shortmid"{marking}, from=1-2, to=2-2]
	\arrow["G"', from=2-1, to=2-2]
	\arrow["\varphi"{description}, draw=none, from=0, to=1]
\end{tikzcd}\]
     has an $(\mathbb{L}, \mathbb{R})$-factorization, it is necessary to define the profunctor 
    $\mathcal{I}m(\varphi)\: \mathcal{I}m(G)^{\op}\times \mathcal{I}m(F) \to Set.$
    For any objects $x\in \mathcal{I}m(F)$, $y\in\mathcal{I}m(G)$, with $a\in \mathcal{A}$ and $b\in\mathcal{B}$ that satisfy $F(a)=x$ and $G(b) =y$, consider the surjective-injective factorization for the function $\varphi_{b,a}$:
    \[\begin{tikzcd}[ampersand replacement=\&,  row sep=normal, column sep=small]
	{P(b,a)} \&\& {Q(y,x)} \\
	\& {\im\varphi_{b,a}}
	\arrow["{\varphi_{b,a}}", from=1-1, to=1-3]
	\arrow["{\overline{\varphi}_{b,a}}"', from=1-1, to=2-2]
	\arrow["{j_{b,a}}"', hook, from=2-2, to=1-3]
\end{tikzcd}\]
Taking the coproduct over every pair of elements in the fibers of $x$ and $y$, we get the commutative diagram for each pair $(b,a)$ in the fiber over $(y,x)$:
\[\begin{tikzcd}[ampersand replacement=\&]
	{P(b,a)} \& {Q(y,x)} \\
	{\im\varphi_{b,a}} \& \begin{array}{c} \bigsqcup_{\substack{F(a)=x \\ G(b)=y}}\im\varphi_{b,a} \end{array}
	\arrow["{\varphi_{b,a}}", from=1-1, to=1-2]
	\arrow["{\overline\varphi_{b,a}}"', from=1-1, to=2-1]
	\arrow["{j_{b,a}}",  hook, from=2-1, to=1-2]
	\arrow["{k_{b,a}}"', from=2-1, to=2-2]
	\arrow["{q_{y,x}}"', from=2-2, to=1-2]
\end{tikzcd}\]

     The functor $\mathcal{I}m(\varphi)$ is defined on objects as $$\mathcal{I}m(\varphi)(y,x):= \bigsqcup_{\substack{F(a)=x \\ G(b)=y}} \im\varphi_{b,a}$$
    On morphisms it is defined as follows: if $(s,t)$ is a morphism in $\mathcal{I}m(G)^{\op}\times \mathcal{I}m(F)$, then there are morphisms $s_1,\ldots,s_m$ in $\mathcal{B}^{op}$ and $t_1,\ldots,t_n$ in $\mathcal{A}$ such that  $s = G(s_m)\cdots G(s_1)$ and $t=F(t_n)\cdots F(t_1).$ Without loss of generality, $n=m$ (otherwise add identity arrows as needed). For every $i=1,\ldots,n$ there is a unique arrow $\im\varphi(Gs_i,Ft_i)$ given by the functorial property applied to the diagram:
\[\begin{tikzcd}[ampersand replacement=\&, row sep = small, column sep= tiny]
	{P(b_i,a_i)} \&\& {Q(Gb_i,Fa_i)} \\
	\& {\im\varphi_{b_i,a_i}} \\
	\& {\im\varphi_{b_{i+1},a_{i+1}}} \\
	{P(b_{i+1},a_{i+1})} \&\& {Q(Gb_{i+1},Fa_{i+1})}
	\arrow["\varphi_{b_i,a_i}", from=1-1, to=1-3]
	\arrow["{\overline{\varphi}}"', from=1-1, to=2-2]
	\arrow["{P(s_i,t_i)}"', from=1-1, to=4-1]
	\arrow["{Q(Gs_i,Ft_i)}", from=1-3, to=4-3]
	\arrow["j_{b_i,a_i}"', hook, from=2-2, to=1-3]
	\arrow["{\im\varphi(Gs_i,Ft_i)}", dashed, from=2-2, to=3-2]
	\arrow["j_{b_{i+1},a_{i+1}}", hook, from=3-2, to=4-3]
	\arrow["{\overline{\varphi}}", from=4-1, to=3-2]
	\arrow["\varphi_{b_{i+1},a_{i+1}}"', from=4-1, to=4-3]
\end{tikzcd}\]

The composition $\im\varphi(Gs_n,Ft_n)\cdots \im\varphi(Gs_1,Ft_1)$ does not depend on the selection of the $s_i$ and the $t_i$ for $i=1,\ldots,n$ because $j_{b_{n+1},a_{n+1}}$ 
is injective (it is left cancelable) and for each choice it holds that
    \begin{eqnarray*}j_{b_{n+1},a_{n+1}} \im\varphi(Gs_n,Ft_n)\cdots \im\varphi(Gs_1,Ft_1) &=& Q(Gs_n,Ft_n)\cdots Q(Gs_1, Ft_1)j_{b_1,a_1}\\ &=& Q(s,t)j_{b_1,a_1}.\end{eqnarray*}
    
    The functor $\mathcal{I}m\varphi$ is defined on a morphism $(s,t)$ as 
    $$\mathcal{I}m(\varphi)(s,t) := \bigsqcup \im\varphi(Gs_n,Ft_n)\cdots\im\varphi(Gs_1,Ft_1).$$ If $f\:a\to c$ is a morphism in $\mathcal{A}$ and $h\: d\to b$ is a morphism in $\mathcal{B}$, set $\alpha = \im\varphi(Gh, Ff)$. This verifies that the next diagram commutes:
\[\begin{tikzcd}[ampersand replacement=\&]
	{P(b,a)} \& {Q(y,x)} \\
	{\im\varphi_{b,a}} \& \begin{array}{c} \bigsqcup_{\substack{F(a)=x \\ G(b)=y}}\im\varphi_{b,a} \end{array} \\
	{\im\varphi_{d,c}} \& \begin{array}{c} \bigsqcup_{\substack{F(c)=w \\ G(d)=z}}\im\varphi_{d,c} \end{array} \\
	{P(d,c)} \& {Q(z,w)}
	\arrow["\varphi", from=1-1, to=1-2]
	\arrow["{\overline\varphi}"', from=1-1, to=2-1]
	\arrow["{P(h,f)}"', shift right=5, curve={height=30pt}, from=1-1, to=4-1]
	\arrow["{Q(Gh,Ff)}", shift left=5, curve={height=-30pt}, from=1-2, to=4-2]
	\arrow["j", hook, from=2-1, to=1-2]
	\arrow["k"', from=2-1, to=2-2]
	\arrow["\alpha"', from=2-1, to=3-1]
	\arrow["q"', from=2-2, to=1-2]
	\arrow["\bigsqcup\alpha", from=2-2, to=3-2]
	\arrow["k", from=3-1, to=3-2]
	\arrow["{\overline\varphi}"', from=4-1, to=3-1]
	\arrow["j"', hook', from=3-1, to=4-2]
	\arrow["q", from=3-2, to=4-2]
	\arrow["\varphi"', from=4-1, to=4-2]
\end{tikzcd}\]
Thus, $k\overline{\varphi}$ and $q$ are natural transformations and therefore the next equation holds:
    \[\begin{tikzcd}[ampersand replacement=\&]
	{\mathcal{A}} \& {\mathcal{C}} \& {\mathcal{A}} \& {\mathcal{I}m(F)} \& {\mathcal{C}} \\
	{\mathcal{B}} \& {\mathcal{D}} \& {\mathcal{B}} \& {\mathcal{I}m(G)} \& {\mathcal{D}}
	\arrow["F", from=1-1, to=1-2]
	\arrow[""{name=0, anchor=center, inner sep=0}, "P"', "\shortmid"{marking}, from=1-1, to=2-1]
	\arrow[""{name=1, anchor=center, inner sep=0}, "Q", "\shortmid"{marking}, from=1-2, to=2-2]
	\arrow["{\overline{F}}", from=1-3, to=1-4]
	\arrow[""{name=2, anchor=center, inner sep=0}, "P"', "\shortmid"{marking}, from=1-3, to=2-3]
	\arrow["{J_F}", hook, from=1-4, to=1-5]
	\arrow[""{name=3, anchor=center, inner sep=0}, "{\mathcal{I}m(\varphi)}"{description}, from=1-4, to=2-4]
	\arrow[""{name=4, anchor=center, inner sep=0}, "Q", "\shortmid"{marking}, from=1-5, to=2-5]
	\arrow["G"', from=2-1, to=2-2]
	\arrow["{\overline{G}}"', from=2-3, to=2-4]
	\arrow["{J_G}"', hook, from=2-4, to=2-5]
	\arrow["\varphi"{description}, draw=none, from=0, to=1]
	\arrow["{=}"{description}, draw=none, from=1, to=2]
	\arrow["{k\overline\varphi}"{description}, draw=none, from=2, to=3]
	\arrow["q"{description}, draw=none, from=3, to=4]
\end{tikzcd}\]
 The left class $\cL_1$ is closed under vertical composition because the tensor product of surjective morphisms of profunctors is surjective. The counterexample in Example \ref{ex:Mod} shows that the right class $\mathbb{R}$ is not closed under vertical composition.

Now, let us explicitly describe the vertical composition. Given two profunctors $P'\:\mathcal{E}^{op}\times \mathcal{B} \to \Set$ and $P\:\mathcal{B}^{op}\times \mathcal{A} \to \Set$, their vertical composition $P'\otimes_{\mathcal{B}} P$ is given by the coend $$P'\otimes_\mathcal{B} P := \int^{b\in\mathcal{B}}P'(e,b)\times P(b,a).$$
The double category of profunctors is fibrant. The restriction and extension cells are of the form
\[\begin{tikzcd}[ampersand replacement=\&]
	{\mathcal{A}} \& {\mathcal{C}} \&\& {\mathcal{A}} \&\& {\mathcal{C}} \\
	{\mathcal{B}} \& {\mathcal{D}} \&\& {\mathcal{B}} \&\& {\mathcal{D}}
	\arrow["F", from=1-1, to=1-2]
	\arrow[""{name=0, anchor=center, inner sep=0}, "{Q\circ(G\times F)}"', "\shortmid"{marking}, from=1-1, to=2-1]
	\arrow[""{name=1, anchor=center, inner sep=0}, "Q", "\shortmid"{marking}, from=1-2, to=2-2]
	\arrow["F", from=1-4, to=1-6]
	\arrow[""{name=2, anchor=center, inner sep=0}, "P"', "\shortmid"{marking}, from=1-4, to=2-4]
	\arrow[""{name=3, anchor=center, inner sep=0}, "{\text{Hom}_\mathcal{D}(\_,G)\otimes_\mathcal{B} P\otimes_\mathcal{A}\text{Hom}_\mathcal{C}(F_,\_)}", "\shortmid"{marking}, from=1-6, to=2-6]
	\arrow["G"', from=2-1, to=2-2]
	\arrow["G"', from=2-4, to=2-6]
	\arrow["id"{description}, draw=none, from=0, to=1]
	\arrow["{,}"', shift right=2, draw=none, from=1, to=2]
	\arrow["{\text{id}_{G}\otimes id_P\otimes \text{id}_F}"{description}, shorten <=13pt, shorten >=13pt, Rightarrow, from=2, to=3]
\end{tikzcd}\]
respectively, where for each pair $(b,a)$ the natural transformation $\text{id}_G\otimes id_P\otimes\text{id}_F$ is the function $P(b,a) \to \text{Hom}_\mathcal{D}(Gb,G)\otimes_\mathcal{B} P\otimes_\mathcal{A}\text{Hom}(F_,Fa)$ sending $x\in P(b,a)$ to $\text{id}_{Gb}\otimes x\otimes \text{id}_{Fa}$. Then, using the OFS in $\Cat$ described in Observation \ref{obs:ofsInCat}, Proposition \ref{prop:ofsExtensions} yields two DOFS on the double category $\Prof$, namely $(\mathcal{L},\mathcal{R})_\text{restr}$ and $(\mathcal{L},\mathcal{R})_\text{extn}$. In these, each cell $\varphi$ factors

\[\begin{tikzcd}[ampersand replacement=\&]
	{\mathcal{A}} \& {\mathcal{C}} \& {\mathcal{A}} \&\& {\mathcal{I}mF} \&\& {\mathcal{C}} \\
	{\mathcal{B}} \& {\mathcal{D}} \& {\mathcal{B}} \&\& {\mathcal{I}mG} \&\& {\mathcal{D}}
	\arrow["F", from=1-1, to=1-2]
	\arrow[""{name=0, anchor=center, inner sep=0}, "P"', "\shortmid"{marking}, from=1-1, to=2-1]
	\arrow[""{name=1, anchor=center, inner sep=0}, "Q", "\shortmid"{marking}, from=1-2, to=2-2]
	\arrow["{\overline{F}}", from=1-3, to=1-5]
	\arrow[""{name=2, anchor=center, inner sep=0}, "P"', "\shortmid"{marking}, from=1-3, to=2-3]
	\arrow["{i_F}", from=1-5, to=1-7]
	\arrow[""{name=3, anchor=center, inner sep=0}, "{Q\circ(i_G\times i_F)}"{description}, from=1-5, to=2-5]
	\arrow[""{name=4, anchor=center, inner sep=0}, "Q", "\shortmid"{marking}, from=1-7, to=2-7]
	\arrow["G"', from=2-1, to=2-2]
	\arrow["{\overline{G}}"', from=2-3, to=2-5]
	\arrow["{i_G}"', from=2-5, to=2-7]
	\arrow["\varphi"{description}, draw=none, from=0, to=1]
	\arrow["{=}"{description}, draw=none, from=1, to=2]
	\arrow["\varphi"{description, pos=0.4}, draw=none, from=2, to=3]
	\arrow["id"{description, pos=0.6}, draw=none, from=3, to=4]
\end{tikzcd}\]
and
\[\begin{tikzcd}[ampersand replacement=\&]
	{\mathcal{A}} \& {\mathcal{C}} \& {\mathcal{A}} \&\&\&\& {\mathcal{I}mF} \&\&\& {\mathcal{C}} \\
	{\mathcal{B}} \& {\mathcal{D}} \& {\mathcal{B}} \&\&\&\& {\mathcal{I}mG} \&\&\& {\mathcal{D}}
	\arrow["F", from=1-1, to=1-2]
	\arrow[""{name=0, anchor=center, inner sep=0}, "P"', "\shortmid"{marking}, from=1-1, to=2-1]
	\arrow[""{name=1, anchor=center, inner sep=0}, "Q", "\shortmid"{marking}, from=1-2, to=2-2]
	\arrow["{\overline{F}}", from=1-3, to=1-7]
	\arrow[""{name=2, anchor=center, inner sep=0}, "P"', "\shortmid"{marking}, from=1-3, to=2-3]
	\arrow["{i_F}", from=1-7, to=1-10]
	\arrow[""{name=3, anchor=center, inner sep=0}, "{\text{Hom}_{\mathcal{I}mG}(\_,G)\otimes_\mathcal{B} P\otimes_\mathcal{A}\text{Hom}_{\mathcal{I}mF}(F_,\_)}"{description}, from=1-7, to=2-7]
	\arrow[""{name=4, anchor=center, inner sep=0}, "Q", "\shortmid"{marking}, from=1-10, to=2-10]
	\arrow["G"', from=2-1, to=2-2]
	\arrow["{\overline{G}}"', from=2-3, to=2-7]
	\arrow["{i_G}"{description}, from=2-7, to=2-10]
	\arrow["\varphi"{description}, draw=none, from=0, to=1]
	\arrow["{=}"{description}, draw=none, from=1, to=2]
	\arrow["{\text{id}_{\overline{G}}\otimes id_P\otimes \text{id}_{\overline{F}}}"{description, pos=0.3}, draw=none, from=2, to=3]
	\arrow["{Q(\_,\_)\varphi}"{description, pos=0.8}, draw=none, from=3, to=4]
\end{tikzcd}\]
respectively. Here, the function $$(Q(\_,\_)\varphi)_{(Gb,Fa)}\:\text{Hom}(Gb,G)\otimes P \otimes \text{Hom}(F,Fa) \to Q(Gb,Fa)$$ is defined on an element $g\otimes x\otimes f$ by $$(Q(\_,\_)\varphi)_{(Gb,Fa)} (g\otimes x\otimes g)= Q(g,f)(\varphi (x)).$$ 

The DOFS $(\mathcal{L}, \mathcal{R})_\text{restr}$ and $(\mathcal{L}, \mathcal{R})_\text{extn}$ differ from the DOFS $(\mathbb{L}, \mathbb{R})$. By Theorem \ref{thm:comparaisonCartDOFS}
there exist a morphism of DOFS $(\mathbb{L}, \mathbb{R}) \to (\mathcal{L}, \mathcal{R})_\text{restr}$ and $(\mathcal{L}, \mathcal{R})_\text{extn} \to (\mathbb{L}, \mathbb{R})$. Explicitly,
the globular double cells that define these morphisms are

\[\begin{tikzcd}[ampersand replacement=\&]
	{\mathcal{I}mF} \& {\mathcal{I}mF} \\
	{\mathcal{I}mG} \& {\mathcal{I}mG}
	\arrow[Rightarrow, no head, from=1-1, to=1-2]
	\arrow[""{name=0, anchor=center, inner sep=0}, "{\mathcal{I}m(\varphi)}"', "\shortmid"{marking}, from=1-1, to=2-1]
	\arrow[""{name=1, anchor=center, inner sep=0}, "{Q\circ(i_G\times i_F)}", "\shortmid"{marking}, from=1-2, to=2-2]
	\arrow[Rightarrow, no head, from=2-1, to=2-2]
	\arrow["q"{description}, draw=none, from=0, to=1]
\end{tikzcd}\]
and
\[\begin{tikzcd}[ampersand replacement=\&]
	{\mathcal{I}mF} \&\& {\mathcal{I}mF} \\
	{\mathcal{I}mG} \&\& {\mathcal{I}mG}
	\arrow[Rightarrow, no head, from=1-1, to=1-3]
	\arrow[""{name=0, anchor=center, inner sep=0}, "{\text{Hom}_{\mathcal{I}mG}(\_,G)\otimes_\mathcal{B} P\otimes_\mathcal{A}\text{Hom}_{\mathcal{I}mF}(F_,\_)}"', "\shortmid"{marking}, from=1-1, to=2-1]
	\arrow[""{name=1, anchor=center, inner sep=0}, "{\mathcal{I}m(\varphi)}", "\shortmid"{marking}, from=1-3, to=2-3]
	\arrow[Rightarrow, no head, from=2-1, to=2-3]
	\arrow["{Q(\_,\_)k\overline{\varphi}}"{description}, draw=none, from=0, to=1]
\end{tikzcd}\]
respectively. Here, natural trasnformation $Q(\_,\_)k\overline{\varphi}$ is defined analogously to  $Q(\_,\_)\varphi$.
\end{example}

\begin{example}{(Quantale-valued relations, $Q$-$\mathbb{R}\mathrm{el}$)}
    Let $Q$ be a unital quantale.  A {\em $Q$-valued relation} between sets $X, Y$ is a function $R\: X \times Y \to Q$.  Such relations may be composed; given $R\: X \proarrow Y, S\: Y \proarrow Z$, there exists a $Q$-valued relation $R \otimes S\: X \proarrow Z$, given by \[(R \otimes S)(x,z) := \bigvee_{y \in Y} R(x,y) \odot S(y,z)\]
    
    where $\odot$ denotes multiplication in $Q$. There is a double category $Q$-$\mathbb{R}\mathrm{el}$ whose objects are sets, arrows are functions, and proarrows are $Q$-relations, with vertical composition given as above.  A cell 
\[\begin{tikzcd}
	X & {X'} \\
	Y & {Y'}
	\arrow["f", from=1-1, to=1-2]
	\arrow["R"', "\shortmid"{marking}, from=1-1, to=2-1]
	\arrow["\leq"{description}, draw=none, from=1-1, to=2-2]
	\arrow["R'", "\shortmid"{marking}, from=1-2, to=2-2]
	\arrow["g"', from=2-1, to=2-2]
\end{tikzcd}\]
    exists when $R(x,y) \leq R'(f(x), g(y))$ for all $x \in X, y \in Y$. If the inequality is in fact an equality for all $x\in X$ and $y\in Y$, we indicate it by writing an equality symbol inside the cell. The unit proarrow associated to a set $X$ is the function $1:X\times X\to Q$ that sends the diagonal to the top element $1\in Q$ and all other pairs to the bottom element $0\in Q$. The unit cell associated to an arrow $f:X\to X'$ is the cell
    \[\begin{tikzcd}
	X & X' \\
	X & X'
	\arrow["f", from=1-1, to=1-2]
	\arrow[""{name=0, anchor=center, inner sep=0}, "1"', "\shortmid"{marking}, from=1-1, to=2-1]
	\arrow[""{name=1, anchor=center, inner sep=0}, "1", "\shortmid"{marking}, from=1-2, to=2-2]
	\arrow["f"', from=2-1, to=2-2]
	\arrow["{=}"{description}, draw=none, from=0, to=1]
\end{tikzcd}\]

This double category $Q$-$\mathbb{R}\mathrm{el}$ is fibrant. The restriction and extension cells are of the form
\[\begin{tikzcd}[ampersand replacement=\&]
	X \& {X'} \&\& X \& {X'} \\
	Y \& {Y'} \&\& Y \& {Y'}
	\arrow["f", from=1-1, to=1-2]
	\arrow[""{name=0, anchor=center, inner sep=0}, "{\overline{R'}}"', "\shortmid"{marking}, from=1-1, to=2-1]
	\arrow[""{name=1, anchor=center, inner sep=0}, "{R'}", "\shortmid"{marking}, from=1-2, to=2-2]
	\arrow["f", from=1-4, to=1-5]
	\arrow[""{name=2, anchor=center, inner sep=0}, "R"', "\shortmid"{marking}, from=1-4, to=2-4]
	\arrow[""{name=3, anchor=center, inner sep=0}, "{\underline{R}}", from=1-5, to=2-5]
	\arrow["g"', from=2-1, to=2-2]
	\arrow["g"', from=2-4, to=2-5]
	\arrow["{=}"{description}, draw=none, from=0, to=1]
	\arrow["{,}"', shift right=3, draw=none, from=1, to=2]
	\arrow["\leq"{description}, draw=none, from=2, to=3]
\end{tikzcd}\]
respectively. Here, $\overline{R'}:=R'\circ(f\times g)$ and $\underline{R}(x',y') := \bigvee_{\substack{f(x)=x' \\ g(y)=y'}} R(x,y)$. 

Consider the surjective-injective orthogonal factorization system in the category of sets. Let $\mathcal{L}$ denote the class of surjective functions and $\mathcal{R}$ the class of injective functions. By Proposition \ref{prop:ofsExtensions}, there exist two DOFS in $Q$-$\mathbb{R}\mathrm{el}$, namely $(\mathcal{L},\mathcal{R})_\text{restr}$ and $(\mathcal{L},\mathcal{R})_\text{extn}$ . These two DOFS provide distinct ways to factor a cell:
\[\begin{tikzcd}[ampersand replacement=\&]
	X \& {X'} \& X \& {\text{Im}f} \& {X'} \\
	Y \& {Y'} \& Y \& {\text{Im}g} \& {Y'}
	\arrow["f", from=1-1, to=1-2]
	\arrow[""{name=0, anchor=center, inner sep=0}, "R"', "\shortmid"{marking}, from=1-1, to=2-1]
	\arrow[""{name=1, anchor=center, inner sep=0}, "R'", "\shortmid"{marking}, from=1-2, to=2-2]
	\arrow[two heads, from=1-3, to=1-4]
	\arrow[""{name=2, anchor=center, inner sep=0}, "R"', "\shortmid"{marking}, from=1-3, to=2-3]
	\arrow[hook, from=1-4, to=1-5]
	\arrow[""{name=3, anchor=center, inner sep=0}, "{\overline{R'}}"{description}, from=1-4, to=2-4]
	\arrow[""{name=4, anchor=center, inner sep=0}, "R'", "\shortmid"{marking}, from=1-5, to=2-5]
	\arrow["g"', from=2-1, to=2-2]
	\arrow[two heads, from=2-3, to=2-4]
	\arrow[hook, from=2-4, to=2-5]
	\arrow["\leq"{description}, draw=none, from=0, to=1]
	\arrow["{=}"{description}, draw=none, from=1, to=2]
	\arrow["\leq"{description}, draw=none, from=2, to=3]
	\arrow["{=}"{description}, draw=none, from=3, to=4]
\end{tikzcd}\]
and
\[\begin{tikzcd}[ampersand replacement=\&]
	X \& {X'} \& X \& {\text{Im}f} \& {X'} \\
	Y \& {Y'} \& Y \& {\text{Im}g} \& {Y'}
	\arrow["f", from=1-1, to=1-2]
	\arrow[""{name=0, anchor=center, inner sep=0}, "R"', "\shortmid"{marking}, from=1-1, to=2-1]
	\arrow[""{name=1, anchor=center, inner sep=0}, "R'", "\shortmid"{marking}, from=1-2, to=2-2]
	\arrow[two heads, from=1-3, to=1-4]
	\arrow[""{name=2, anchor=center, inner sep=0}, "R"', "\shortmid"{marking}, from=1-3, to=2-3]
	\arrow[hook, from=1-4, to=1-5]
	\arrow[""{name=3, anchor=center, inner sep=0}, "{\underline{R}}"{description}, from=1-4, to=2-4]
	\arrow[""{name=4, anchor=center, inner sep=0}, "R'", "\shortmid"{marking}, from=1-5, to=2-5]
	\arrow["g"', from=2-1, to=2-2]
	\arrow[two heads, from=2-3, to=2-4]
	\arrow[hook, from=2-4, to=2-5]
	\arrow["\leq"{description}, draw=none, from=0, to=1]
	\arrow["{=}"{description}, draw=none, from=1, to=2]
	\arrow["\leq"{description}, draw=none, from=2, to=3]
	\arrow["\leq"{description}, draw=none, from=3, to=4]
\end{tikzcd}\]
respectively.
We verify how these two factorization systems are related.
For $a\in \text{Im}f$ and $b\in\text{Im} g$, we calculate:
\begin{eqnarray*}
\underline{R}(a,b)&=&\bigvee_{\substack{f(x)=a \\ g(y)=b}}R(x,y)\\
&\le&\bigvee_{\substack{f(x)=a \\ g(y)=b}}R'(f(x),g(y))\\
&=&R'(a,b)\\
&=&\overline{R'}(a,b).
\end{eqnarray*}
So we obtain a morphism of DOFS extending the epi-mono factorization system $(\cL,\cR)$ on $\Set$, $(\mathcal{L},\mathcal{R})_{\operatorname{restr}}\to (\mathcal{L},\mathcal{R})_{\operatorname{extn}}$.

 In the particular case $Q = \{0, 1\}$ we recover the double category $\mathbb{R}\mathrm{el}(\mathrm{Set})$ from Example \ref{ex:rel}, and the DOFS $(\mathcal{L},\mathcal{R})_\text{extn}$ coincides with the DOFS  given in that example.
\end{example}

\begin{example} (The double category $\mathbb{S}\mathrm{pan}(\mathcal{C})$)\label{examplespan2}
Let $\C$ be a category with an OFS $(\cL,\cR)$. In this example we want to expand on the work in Example \ref{examplespan}.
Let $Q\colon \C^\to\to\C$ be the functor that represents the OFS  $(\cL,\cR)$. And write $f=r_f\circ\ell_f$ for the chosen factorizations of the arrows in $\C$.
Then the factorization of a double cell in the DOFS of Example \ref{examplespan} is given by:

\[\begin{tikzcd}
	\bullet & \bullet && \bullet & \bullet & \bullet \\
	\bullet & \bullet && \bullet & \bullet & \bullet \\
	\bullet & \bullet && \bullet & \bullet & \bullet
	\arrow["f", from=1-1, to=1-2]
	\arrow["{\ell_f}", from=1-4, to=1-5]
	\arrow["{r_f}", from=1-5, to=1-6]
	\arrow["u", from=2-1, to=1-1]
	\arrow["m"', from=2-1, to=2-2]
	\arrow["v"', from=2-1, to=3-1]
	\arrow["{u'}"', from=2-2, to=1-2]
	\arrow["{=}"{description}, draw=none, from=2-2, to=2-4]
	\arrow["{v'}", from=2-2, to=3-2]
	\arrow["u", from=2-4, to=1-4]
	\arrow["{\ell_m}"', from=2-4, to=2-5]
	\arrow["v"', from=2-4, to=3-4]
	\arrow["{Q(u,u')}"', from=2-5, to=1-5]
	\arrow["{r_m}"', from=2-5, to=2-6]
	\arrow["{Q(v,v')}", from=2-5, to=3-5]
	\arrow["{u'}"', from=2-6, to=1-6]
	\arrow["{v'}", from=2-6, to=3-6]
	\arrow["g"', from=3-1, to=3-2]
	\arrow["{\ell_g}"', from=3-4, to=3-5]
	\arrow["{r_g}"', from=3-5, to=3-6]
\end{tikzcd}\]
Extensions in $\mathbb{S}\mathrm{pan}(\mathcal{C})$ are double cells of the form:

\[\begin{tikzcd}
	\bullet & \bullet \\
	\bullet & \bullet \\
	\bullet & \bullet
	\arrow["f", from=1-1, to=1-2]
	\arrow["u", from=2-1, to=1-1]
	\arrow["{\operatorname{id}}", from=2-1, to=2-2]
	\arrow["v"', from=2-1, to=3-1]
	\arrow["fu"', from=2-2, to=1-2]
	\arrow["gv", from=2-2, to=3-2]
	\arrow["g"', from=3-1, to=3-2]
\end{tikzcd}\]
Hence, the factorization of cells in $(\cL,\cR)_{\operatorname{extn}}$
is given by:

\[\begin{tikzcd}
	\bullet & \bullet && \bullet & \bullet & \bullet \\
	\bullet & \bullet && \bullet & \bullet & \bullet \\
	\bullet & \bullet && \bullet & \bullet & \bullet
	\arrow["f", from=1-1, to=1-2]
	\arrow["{\ell_f}", from=1-4, to=1-5]
	\arrow["{r_f}", from=1-5, to=1-6]
	\arrow["u", from=2-1, to=1-1]
	\arrow["m"', from=2-1, to=2-2]
	\arrow["v"', from=2-1, to=3-1]
	\arrow["{u'}"', from=2-2, to=1-2]
	\arrow["{=}"{description}, draw=none, from=2-2, to=2-4]
	\arrow["{v'}", from=2-2, to=3-2]
	\arrow["u", from=2-4, to=1-4]
	\arrow["{\operatorname{id}}"', from=2-4, to=2-5]
	\arrow["v"', from=2-4, to=3-4]
	\arrow["{\ell_fu}"', from=2-5, to=1-5]
	\arrow["m"', from=2-5, to=2-6]
	\arrow["{\ell_gv}", from=2-5, to=3-5]
	\arrow["{u'}"', from=2-6, to=1-6]
	\arrow["{v'}", from=2-6, to=3-6]
	\arrow["g"', from=3-1, to=3-2]
	\arrow["{\ell_g}"', from=3-4, to=3-5]
	\arrow["{r_g}"', from=3-5, to=3-6]
\end{tikzcd}\]
If $\C$ has pullbacks, $\mathbb{S}\mathrm{pan}(\mathcal{C})$ also has restrictions, defined as follows:

\[\begin{tikzcd}
	&&&& \bullet \\
	\bullet & \bullet &&& \bullet & \bullet \\
	\bullet & \bullet & {\mathrm{where}} & \bullet && \bullet \\
	\bullet & \bullet &&& \bullet & \bullet \\
	&&&& \bullet
	\arrow["f", from=1-5, to=2-6]
	\arrow["f", from=2-1, to=2-2]
	\arrow["{\overline{u'}}", from=2-5, to=1-5]
	\arrow["\lrcorner"{anchor=center, pos=0.125, rotate=45}, draw=none, from=2-5, to=2-6]
	\arrow["{\overline{f}}"{description}, from=2-5, to=3-6]
	\arrow["{\overline{u'}\overline{\overline{g}}}", from=3-1, to=2-1]
	\arrow["n", from=3-1, to=3-2]
	\arrow["{\overline{v'}\overline{\overline{f}}}"', from=3-1, to=4-1]
	\arrow["{u'}"', from=3-2, to=2-2]
	\arrow["{v'}", from=3-2, to=4-2]
	\arrow["{\overline{\overline{g}}}", from=3-4, to=2-5]
	\arrow["\lrcorner"{anchor=center, pos=0.125, rotate=45}, draw=none, from=3-4, to=3-6]
	\arrow["n"{description}, shift left=2, from=3-4, to=3-6]
	\arrow["{\overline{\overline{f}}}"', from=3-4, to=4-5]
	\arrow["u"', from=3-6, to=2-6]
	\arrow["{v'}", from=3-6, to=4-6]
	\arrow["g"', from=4-1, to=4-2]
	\arrow["{\overline{g}}"{description}, from=4-5, to=3-6]
	\arrow["\lrcorner"{anchor=center, pos=0.125, rotate=45}, draw=none, from=4-5, to=4-6]
	\arrow["{\overline{v'}}"', from=4-5, to=5-5]
	\arrow["g"', from=5-5, to=4-6]
\end{tikzcd}\]
and the corresponding factorization of double cells is given by:

\[\begin{tikzcd}
	&&&& \bullet && \bullet & \bullet \\
	\bullet & \bullet & \bullet &&&& \bullet \\
	\bullet & \bullet & \bullet & {\mathrm{where}} & \bullet & \bullet && \bullet \\
	\bullet & \bullet & \bullet &&&& \bullet \\
	&&&& \bullet && \bullet & \bullet
	\arrow["{\ell_f}"{description}, from=1-5, to=1-7]
	\arrow["{r_f}", from=1-7, to=1-8]
	\arrow["{\ell_f}", from=2-1, to=2-2]
	\arrow["{r_f}", from=2-2, to=2-3]
	\arrow["{\overline{u'}}", from=2-7, to=1-7]
	\arrow["\lrcorner"{anchor=center, pos=0.125, rotate=90}, draw=none, from=2-7, to=1-8]
	\arrow["{\overline{r_f}}", from=2-7, to=3-8]
	\arrow["u", from=3-1, to=2-1]
	\arrow["{m'}"', from=3-1, to=3-2]
	\arrow["v"', from=3-1, to=4-1]
	\arrow["{\overline{u'}\overline{\overline{r_f}}}", from=3-2, to=2-2]
	\arrow["n"', from=3-2, to=3-3]
	\arrow["{\overline{v'}\overline{\overline{r_g}}}"', from=3-2, to=4-2]
	\arrow["{u'}"', from=3-3, to=2-3]
	\arrow["{v'}", from=3-3, to=4-3]
	\arrow["u", from=3-5, to=1-5]
	\arrow["{m_f}", curve={height=-12pt}, dashed, from=3-5, to=2-7]
	\arrow["{m'}"', dotted, from=3-5, to=3-6]
	\arrow["{m_g}"', curve={height=12pt}, dashed, from=3-5, to=4-7]
	\arrow["v"', from=3-5, to=5-5]
	\arrow["{\overline{\overline{r_g}}}"{description}, from=3-6, to=2-7]
	\arrow["\lrcorner"{anchor=center, pos=0.125, rotate=45}, draw=none, from=3-6, to=3-8]
	\arrow["n"', shift left=3, from=3-6, to=3-8]
	\arrow["{\overline{\overline{r_f}}}"{description}, from=3-6, to=4-7]
	\arrow["{u'}"', from=3-8, to=1-8]
	\arrow["{v'}", from=3-8, to=5-8]
	\arrow["{\ell_g}"', from=4-1, to=4-2]
	\arrow["{r_g}"', from=4-2, to=4-3]
	\arrow["{\overline{r_g}}"', from=4-7, to=3-8]
	\arrow["{\overline{v'}}"', from=4-7, to=5-7]
	\arrow["\lrcorner"{anchor=center, pos=0.125}, draw=none, from=4-7, to=5-8]
	\arrow["{\ell_g}"', from=5-5, to=5-7]
	\arrow["{r_g}"', from=5-7, to=5-8]
\end{tikzcd}\]
where the dashed and dotted arrows are induced by the universal properties of the pullbacks.

In the following diagram, we see how our original DOFS sits in between the extension and the restriction systems:

\[\begin{tikzcd}
	\bullet & \bullet & \bullet \\
	\bullet & \bullet & \bullet \\
	\bullet & \bullet & \bullet
	\arrow["{\operatorname{id}}"', from=1-2, to=1-1]
	\arrow["{\operatorname{id}}"', from=1-3, to=1-2]
	\arrow["{\overline{u'}\overline{\overline{r_g}}}", from=2-1, to=1-1]
	\arrow["{\overline{v'}\overline{\overline{r_f}}}"', from=2-1, to=3-1]
	\arrow["{Q(u,u')}"', from=2-2, to=1-2]
	\arrow["{r_m'}"', from=2-2, to=2-1]
	\arrow["{Q(v,v')}", from=2-2, to=3-2]
	\arrow["{\ell_fu}"', from=2-3, to=1-3]
	\arrow["{\ell_m}"', from=2-3, to=2-2]
	\arrow["{l_gv}", from=2-3, to=3-3]
	\arrow["{\operatorname{id}}", from=3-2, to=3-1]
	\arrow["{\operatorname{id}}", from=3-3, to=3-2]
\end{tikzcd}\]
where $r_m'$ is the unique arrow into the pullback induced by $r_m$ (analogous to how $m'$ was defined in the previous diagram).

We will continue this example in Example \ref{imagedofs} of the next section where we will add one more factorization system on $\mathbb{S}\mathrm{pan}(\mathcal{C})$.
\end{example}

\section{Double factorization systems and double fibrations}\label{doublefibs}
We study the interaction between double fibrations, introduced in \citep{df}, and our double orthogonal factorization systems. We generalize some results that are well-known in the ordinary case to the double categorical context and we present some examples. In particular, we show that both DOFS and lax DOFS can be lifted along double fibrations.

We start recalling the following folklore result for which we could not find an appropriate reference. The construction in the proof will be used in proving the results for DOFS and double fibrations that follow. 

\begin{proposition}\label{liftalongfib}
Let $P\: \mathcal{E} \to \mathcal{B}$ be an (ordinary) cloven Grothendieck fibration and let $(\mathcal{L}, \mathcal{R})$ be an orthogonal factorization system on $\mathcal{B}$. Then $(\cL, \cR)$ lifts to an orthogonal factorization system $(\mathcal{L}^{P}, \mathcal{R}^{P})$ on $\mathcal{E}$, taking $\mathcal{L}^{P}$ to be the collection of morphisms over morphisms in $\mathcal{L}$ and $\mathcal{R}^{P}$ to be the collection of cartesian morphisms over morphisms in $\mathcal{R}$.
\end{proposition}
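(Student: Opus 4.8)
The plan is to define $\mathcal{L}^P$ and $\mathcal{R}^P$ as stated and verify the three axioms of Definition~\ref{defnofs}, exploiting the standard yoga of cartesian morphisms in a cloven fibration. First I would record the easy closure properties. Both classes are closed under composition: for $\mathcal{L}^P$ this is immediate since $P$ is a functor and $\mathcal{L}$ is closed under composition; for $\mathcal{R}^P$ one uses that a composite of cartesian morphisms is cartesian, together with closure of $\mathcal{R}$. Both classes contain all isomorphisms, since isomorphisms are cartesian and $P$ sends isomorphisms to isomorphisms, which lie in $\mathcal{L}\cap\mathcal{R}$.

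Next I would construct the factorization of an arbitrary morphism $\phi\colon E\to E'$ in $\mathcal{E}$. Set $f=P(\phi)$ and take its chosen factorization $f=r_f\circ\ell_f$ in $\mathcal{B}$ with $\ell_f\in\mathcal{L}$, $r_f\in\mathcal{R}$, through the object $\mathrm{Im}(f)$. Using the chosen cleavage, pick a cartesian lift $\bar r\colon R\to E'$ of $r_f$ with $P(R)=\mathrm{Im}(f)$. Since $P(\phi)=r_f\circ\ell_f$ factors through $P(\bar r)=r_f$, the universal property of the cartesian morphism $\bar r$ produces a unique morphism $\lambda\colon E\to R$ with $\bar r\circ\lambda=\phi$ and $P(\lambda)=\ell_f$. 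Then $\lambda\in\mathcal{L}^P$ (it lies over $\ell_f\in\mathcal{L}$) and $\bar r\in\mathcal{R}^P$ (it is cartesian over $r_f\in\mathcal{R}$), so $\phi=\bar r\circ\lambda$ is the desired factorization; I would take this as the chosen one, noting that on identities it reduces to identities because the chosen cleavage and the chosen factorization on $\mathcal{B}$ do.

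For the orthogonality (unique diagonal lifting) axiom, consider a commutative square in $\mathcal{E}$ with left edge $u\in\mathcal{L}^P$ and right edge $v\in\mathcal{R}^P$, say $v$ cartesian over $v_0\in\mathcal{R}$ and $u$ over $u_0\in\mathcal{L}$. Applying $P$ gives a commutative square in $\mathcal{B}$ with left edge in $\mathcal{L}$ and right edge in $\mathcal{R}$, so orthogonality in $\mathcal{B}$ yields a unique diagonal $e_0$. Now I would lift $e_0$ to $\mathcal{E}$: the bottom-left object of the square sits over the source of $e_0$, and composing with the bottom edge then the known factorization shows the relevant map into the codomain of $v$ factors through $v_0=P(v)$ via $e_0$; since $v$ is cartesian, there is a unique $e\colon(\text{bottom-left})\to(\text{top-right})$ with $P(e)=e_0$ and $v\circ e$ equal to the bottom edge. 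It remains to check $e\circ u=(\text{top edge})$; both sides lie over $e_0\circ u_0=P(\text{top edge})$ (using the diagonal identity in $\mathcal{B}$) and become equal after postcomposition with the cartesian $v$, so the cartesian uniqueness forces them equal. Uniqueness of $e$ itself: any diagonal $e'$ in $\mathcal{E}$ projects to a diagonal of the $\mathcal{B}$-square, hence $P(e')=e_0$, and then $e'=e$ by the cartesian uniqueness of lifts over $e_0$ compatible with $v$.

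The main obstacle I anticipate is purely bookkeeping: making sure that in the orthogonality step the candidate diagonal $e$ obtained from cartesianness genuinely satisfies \emph{both} triangle identities, not just the one with $v$ that cartesianness hands you for free. The trick is to verify the other identity ($e\circ u$ equals the top edge) by observing both morphisms lie over the same base morphism and agree after postcomposition with the cartesian $v$, so they coincide by the uniqueness clause in the definition of cartesian morphism. Everything else is a routine transport of the one-dimensional factorization system along the cleavage, and no genuinely hard idea is needed beyond the compatibility of chosen cleavage with chosen factorizations to make the construction strictly functorial on identities.
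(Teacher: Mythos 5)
Your proposal is correct and follows essentially the same route as the paper: the factorization is obtained by taking the chosen cartesian lift of $r_{P(f)}$ and then the unique morphism over $\ell_{P(f)}$ induced by cartesianness, and the orthogonality is verified exactly via the argument the paper summarizes as "straightforward ... using the fact that the morphisms in $\mathcal{R}^{P}$ are cartesian." You simply spell out in more detail the diagonal-lifting and closure checks that the paper leaves implicit.
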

\begin{proof}
Let $f\: x \to y$ be a morphism in $\mathcal{E}$ and let $P(f)=r_{P(f)}\circ\ell_{P(f)}$ be the chosen factorization in $\mathcal{B}$. Then $f=\operatorname{cart}(r_{P(f)}, y) \c g$, where $\operatorname{cart}(r_{P(f)}, y)$ is the chosen cartesian lifting of $r_{P(f)}$ with codomain $y$ and $g$ is the unique morphism induced by the cartesian property of this lifting as in the following diagram; i.e., $P(g)=\ell_{P(f)}$.
\begin{cd}[6][8]
{x} \arrow[rr,"{f}", bend left=30]\arrow[r,"{\exists !g}"', dashed] \arrow[d,"{}",mapsto]\& {{(r_{P(f)})}^{\star} y} \arrow[r,"{\operatorname{cart}(r_{P(f)}, y)}"'] \arrow[d,"{}",mapsto]\& {y}\arrow[d,"{}",mapsto]\\
{P(x)} \arrow[r,"{\ell_{P(f)}}"', two heads]\& {Q(P(f))} \arrow[r,"{r_{P(f)}}"', hook]\& {P(y).}
\end{cd}
So $f$ can be factorized as a composite of a morphism over a morphism in $\mathcal{L}$ followed by a morphism that is cartesian over a morphism in $\mathcal{R}$.

Moreover, using the fact that the morphisms in $\mathcal{R}^P$ are cartesian, it is straightforward to prove that the required lifting properties hold and hence $(\mathcal{L}^{P}, \mathcal{R}^{P})$ is an orthogonal factorization system on $\mathcal{E}$.
\end{proof}

\begin{rmk}\label{strictfib}
    Note that in the proof we provided a lifting of the factorization system that makes the fibration $P$ a strict morphism between categories with an OFS.
\end{rmk}

We want to generalize the previous result of lifting of OFS along fibrations to the double categorical context. 
Double fibrations have been introduced and studied in \citep{df}. 

We recall that a double fibration $P\: \dE \to  \dB$ between pseudo double categories is a pseudo-category in the category $\Fib$ of fibrations. This amounts to a double functor $P\: \dE \to \dB$ between pseudo double categories:
\begin{eqD}{doublefib}
\begin{cd}*[7][8]
{\dE_1 \times_{\dE_0} \arrow[d,"{P_1 \times_{P_0} P_1}"'] \arrow[r,"{\otimes_{\dE}}"]\dE_1}\& {\dE_1} \arrow[d,"{P_1}"']\arrow[r,"s_{\dE}", shift left=1.6ex] \arrow[r,"t_{\dE}"', shift right=1.8ex] \& {\dE_0} \arrow[d,"{P_0}"]\arrow[l,"{y_{\dE}}"{description}]\\
{\dB_1 \times_{\dB_0} \arrow[r,"{\otimes_{\dB}}"]\dB_1}\& {\dB_1} \arrow[r,"s_{\dB}", shift left=1.6ex] \arrow[r,"t_{\dB}"', shift right=1.8ex] \& {\dB_0} \arrow[l,"{y_{\dB}}"{description}]
\end{cd}
\end{eqD}
such that $P_0$ and $P_1$ are cloven fibrations, $s_{\dE}$ and  $t_{\dE}$ are cleavage preserving, and $y_{\dE}$ and $\otimes_{\dE}$ preserve cartesian morphisms.

\begin{theorem}\label{liftalongdblfib}
Let $P\: \dE \to \dB$ be a double fibration and let $\dofs{\cL}{\cR}$ be a double orthogonal factorization system (resp., a lax one) on $\dB$. Then $\dofs{\cL}{\cR}$ lifts to a double orthogonal factorization system (resp., a lax one) $((\cL_0^P,\cR_0^P),(\cL_1^P,\cR_1^P))$ on $\dE$, taking  $(\cL^P_0, \cR^P_0):=(\cL^{P_0},\cR^{P_0})$ and $(\cL^P_1, \cR^P_1):=(\cL^{P_1},\cR^{P_1})$ (in the notation of \prox\ref{liftalongfib}).
\end{theorem}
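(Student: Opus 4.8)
The plan is to reduce the statement to the one‑dimensional \prox\ref{liftalongfib}, applied \emph{levelwise}, and then to check that the two lifted factorization systems are compatible with the internal‑category structure of $\dE$. First I would apply \prox\ref{liftalongfib} to the cloven fibrations $P_0\:\dE_0\to\dB_0$ and $P_1\:\dE_1\to\dB_1$, obtaining orthogonal factorization systems $(\cL_0^{P_0},\cR_0^{P_0})$ on $\dE_0$ and $(\cL_1^{P_1},\cR_1^{P_1})$ on $\dE_1$, with chosen factorizations assembled from the cleavages of $P_0,P_1$ and the chosen factorizations of $(\cL,\cR)$ on $\dB$ exactly as in that proof; by \remx\ref{strictfib} this makes $P_0$ and $P_1$ strict morphisms of categories with an OFS. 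It then remains to verify that $\src_\dE$, $\tgt_\dE$ and $y_\dE$ are strict morphisms, and that $\otimes_\dE$ is a pseudo morphism (resp., a lax morphism, i.e.\ preserves the right class) of categories with an OFS. Once this is done, the associator $\alpha_\dE$ of the pseudo double category $\dE$ is automatically an internal natural transformation, since $2$-cells in $\Fact$ (resp.\ $\Factlax$) are ordinary natural transformations; the pullbacks $\dE_1\times_{\dE_0}\dE_1$ and their iterates exist in $\Fact$ (resp.\ $\Factlax$) by \prox\ref{propfacthaspullbacks} precisely because $\src_\dE,\tgt_\dE$ are strict; and \defx\ref{def:internalDOFS} then exhibits $\dE$, with this data, as a (lax) DOFS — with $P$ a strict morphism of (lax) DOFS as a bonus.

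For the structure maps: since $\src_\dE,\tgt_\dE$ are cleavage preserving and $P_0\src_\dE=\src_\dB P_1$, $P_0\tgt_\dE=\tgt_\dB P_1$, while $\src_\dB,\tgt_\dB$ are strict (being part of the DOFS on $\dB$), these functors send the chosen factorization of a cell of $\dE_1$ — a chosen cartesian lift of $r_{P_1(\varphi)}$ precomposed with the induced morphism over $\ell_{P_1(\varphi)}$ — to the chosen factorization of its source (resp.\ target), hence are strict. For $\otimes_\dE$ I would use that the OFS on $\dE_1\times_{\dE_0}\dE_1$ is the product one (\prox\ref{propfacthaspullbacks}), so a pair of vertically composable cells lies in the right (resp.\ left) class iff both components do, and that such a pair is cartesian in $\dE_1\times_{\dE_0}\dE_1$ iff both components are cartesian in $\dE_1$; since $\otimes_\dE$ preserves cartesian morphisms and $\otimes_\dB$ preserves the right class of $(\cL_1,\cR_1)$, the vertical composite of two cells in $\cR_1^{P_1}$ is again cartesian over a cell of $\cR_1$, hence in $\cR_1^{P_1}$. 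In the non‑lax case $\otimes_\dB$ also preserves the left class, and being over a morphism of $\cL_1$ is stable under composition with no cartesianness needed, so $\otimes_\dE$ preserves $\cL_1^{P_1}$ too and is a pseudo morphism.

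The main obstacle I expect is the strictness of the unit map $y_\dE$ (and, more mildly, of $\src_\dE,\tgt_\dE$): the definition of double fibration only asks $y_\dE$ and $\otimes_\dE$ to preserve cartesian morphisms, so to get $y_\dE(\ell_f)=\ell_{y_\dE(f)}$ and $y_\dE(r_f)=r_{y_\dE(f)}$ on the nose one needs the chosen cartesian lift of $y_\dB(r_{P_0(f)})$ in $\dE_1$ to coincide with $y_\dE$ of the chosen cartesian lift of $r_{P_0(f)}$ in $\dE_0$. This follows from the compatibility between the cleavages on $\dE_0$ and $\dE_1$ built into the notion of double fibration; alternatively, and this is the safe route, one first checks only that $y_\dE$ preserves the two classes $\cL_1^{P_1},\cR_1^{P_1}$ (which is immediate from $y_\dE$ preserving cartesian cells and $y_\dB$ being strict) and then invokes \prox\ref{propisoextension} to readjust the chosen factorizations of the cells of $\dE$ so that $\src_\dE,\tgt_\dE,y_\dE$ all become strict — the hypothesis of that proposition (a horizontally invertible filler for each diagram of invertible arrows together with a proarrow) holding in $\dE$ because cartesian lifts of isomorphisms are isomorphisms and such fillers can be transported from $\dB$ along the fibrations. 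Everything else is a routine unwinding of definitions, and the lax/colax variants are obtained by the same argument using only the appropriate one of the two closure properties of $\otimes_\dB$.
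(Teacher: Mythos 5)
Your proposal follows the same strategy as the paper's proof: apply \prox\ref{liftalongfib} levelwise to $P_0$ and $P_1$, use \remx\ref{strictfib} and \prox\ref{propfacthaspullbacks} to obtain the required pullbacks in $\Factlax$ (resp.\ $\Fact$), verify strictness of $\src_{\dE}$ and $\tgt_{\dE}$ from cleavage preservation and the strictness of $\src_{\dB}$, $\tgt_{\dB}$, and deduce that $\otimes_{\dE}$ preserves the right class (resp.\ both classes) from the fact that it preserves cartesian morphisms, that cartesian morphisms in the pullback fibration are componentwise cartesian, and that $\otimes_{\dB}$ has the corresponding preservation property. All of this matches the paper.

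The one point of divergence is the unit map, and you are right to single it out: the recalled definition of double fibration only asks $y_{\dE}$ to preserve cartesian morphisms, not the cleavage, so with the factorizations chosen as in \prox\ref{liftalongfib} the identities $y_{\dE}(r_f)=r_{y_{\dE}(f)}$ and $y_{\dE}(\ell_f)=\ell_{y_{\dE}(f)}$ need not hold on the nose (the paper's proof passes over this by asserting that $y$ is cleavage preserving). However, your ``safe route'' is not justified as written: the hypothesis of \prox\ref{propisoextension} --- a horizontally invertible filler for every pair of invertible arrows together with a proarrow --- cannot be ``transported from $\dB$ along the fibrations,'' since nothing guarantees that such fillers exist in $\dB$ in the first place; a DOFS on $\dB$ does not supply them. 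Fortunately you need only the hypothesis-free first step of that proposition's proof: since $\src_{\dE}$ and $\tgt_{\dE}$ are already strict, it suffices to re-choose the factorization of each unit cell $y_{\dE}(f)$ to be $y_{\dE}(r_f)\circ y_{\dE}(\ell_f)$. This is a legitimate choice because $y_{\dE}(\ell_f)$ lies over $y_{\dB}(\ell_{P_0(f)})=\ell_{y_{\dB}P_0(f)}\in\cL_1$ and $y_{\dE}(r_f)$ is cartesian over $r_{y_{\dB}P_0(f)}\in\cR_1$, and it is compatible with the strictness of $\src_{\dE}$ and $\tgt_{\dE}$ since the new middle proarrow is the unit on the middle object of the chosen factorization of $f$. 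With that replacement (or with the cleavage-compatibility assertion the paper itself relies on), your argument closes.
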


\begin{proof}
Assume that $\dofs{\cL}{\cR}$ is a lax double orthogonal factorization system. We need to prove that $((\dE_0, (\cL^{P_0},\cR^{P_0})), (\dE_1, (\cL^{P_1},\cR^{P_1})))$ is a pseudo-category in $\Factlax$ with strict source, target and identity arrows. 
We see immediately that \prox\ref{liftalongfib} and Remark \ref{strictfib} imply that $(\cL^{P_0},\cR^{P_0})$ and $(\cL^{P_1},\cR^{P_1})$ are orthogonal factorization systems on $\dE_0$ and $\dE_1$ respectively such that $P_0$ and $P_1$ are strict morphisms between categories with orthogonal factorization systems. 
We will also show below that the factorization systems on $\dE$ combine into a lax double orthogonal factorization system in the sense that $s_{\dE}$, $t_{\dE}$ and $y_{\dE}$ are strict morphisms between categories with factorization systems with the chosen factorizations from the proof of \prox\ref{liftalongfib}.
It follows then, thanks to \prox\ref{propfacthaspullbacks}, that the required pullbacks in $\Cat$ lift automatically to pullbacks in $\Factlax$.

We now check that $s_{\dE}$ and $t_{\dE}$ preserve the chosen factorizations. 
Let $\alpha$ be a cell in $\dE$,
\[\begin{tikzcd}[ampersand replacement=\&]
	{s_{\dE}(u)} \&\& {s_{\dE}(v)} \\
	{t_{\dE}(u)} \&\& {t_{\dE}(v)}
	\arrow["{s_{\dE}(\alpha)}", from=1-1, to=1-3]
	\arrow["u"', "\shortmid"{marking}, from=1-1, to=2-1]
	\arrow["\alpha"{description}, draw=none, from=1-1, to=2-3]
	\arrow["v", "\shortmid"{marking}, from=1-3, to=2-3]
	\arrow["{t_{\dE}(\alpha)}"', from=2-1, to=2-3]
\end{tikzcd}\]
The factorization of $P_1(\alpha)$ is 
\begin{equation}\label{factn}
    \begin{tikzcd}[ampersand replacement=\&]
	{s_{\dB}P_1(u)} \&\& \bullet \&\& {s_{\dB}P_1v} \\
	{t_{\dB}P_1(u)} \&\& \bullet \&\& {t_{\dB}P_1(v)}
	\arrow["{s_{\dB}\ell_{P_1(\alpha)}}", from=1-1, to=1-3]
	\arrow[""{name=0, anchor=center, inner sep=0}, "{P_1(u)}"', "\shortmid"{marking}, from=1-1, to=2-1]
	\arrow["s_{\dB}{r_{P_1(\alpha)}}", from=1-3, to=1-5]
	\arrow[""{name=1, anchor=center, inner sep=0}, "\shortmid"{marking}, from=1-3, to=2-3]
	\arrow[""{name=2, anchor=center, inner sep=0}, "{P_1(v)}", "\shortmid"{marking}, from=1-5, to=2-5]
	\arrow["{t_{\dB}\ell_{P_1(\alpha)}}"', from=2-1, to=2-3]
	\arrow["{t_{\dB}r_{P_1(\alpha)}}"', from=2-3, to=2-5]
	\arrow["{\ell_{P_1(\alpha)}}"{description}, draw=none, from=0, to=1]
	\arrow["{r_{P_1(\alpha)}}"{description}, draw=none, from=1, to=2]
    \end{tikzcd}
\end{equation}
So the factorization of $\alpha$ is 

\[\begin{tikzcd}[ampersand replacement=\&]
	{s_{\dE}(u)} \&\& \bullet \&\& {s_{\dE}v} \\
	{t_{\dE}(u)} \&\& \bullet \&\& {t_{\dE}v}
	\arrow["{s_{\dE}(\beta)}", from=1-1, to=1-3]
	\arrow[""{name=0, anchor=center, inner sep=0}, "u"', "\shortmid"{marking}, from=1-1, to=2-1]
	\arrow["{s_{\dE}(\operatorname{cart}(r_{P_1(\alpha)},v))}", from=1-3, to=1-5]
	\arrow[""{name=1, anchor=center, inner sep=0}, "\shortmid"{marking}, from=1-3, to=2-3]
	\arrow[""{name=2, anchor=center, inner sep=0}, "v", "\shortmid"{marking}, from=1-5, to=2-5]
	\arrow["{t_{\dE}(\beta)}"', from=2-1, to=2-3]
	\arrow["{t_{E}(\operatorname{cart}(r_{P_1(\alpha)},v))}"', from=2-3, to=2-5]
	\arrow["\beta"{description}, draw=none, from=0, to=1]
	\arrow["{\operatorname{cart}(r_{P_1(\alpha)},v)}"{description}, draw=none, from=1, to=2]
\end{tikzcd}\]
where $\operatorname{cart}(r_{P_1(\alpha)},v)$ is the cartesian lifting of $r_{P_1(\alpha)}$ and $\beta$ is the unique arrow over $\ell_{P_1(\alpha)}$ so that the composition gives $\alpha$.
We want to show that ${s_{\dE}(\operatorname{cart}(r_{P_1(\alpha)},v))}\circ{s_{\dE}(\beta)}$ is the chosen factorization in $\dE_0$ of $s_{\dE}(\alpha)$ and similarly for $t_{\dE}(\alpha)$.

Since $s_{\dB}$ and $t_{\dB}$ are strict, (\ref{factn}) is equal to the diagram
\[\begin{tikzcd}[ampersand replacement=\&]
	{s_{\dB}P_1(u)} \&\& \bullet \&\& {s_{\dB}P_1v} \\
	{t_{\dB}P_1(u)} \&\& \bullet \&\& {t_{\dB}P_1(v)}
	\arrow["{\ell_{s_{\dB}P_1(\alpha)}}", from=1-1, to=1-3]
	\arrow[""{name=0, anchor=center, inner sep=0}, "{P_1(u)}"', "\shortmid"{marking}, from=1-1, to=2-1]
	\arrow["{r_{s_{\dB}P_1(\alpha)}}", from=1-3, to=1-5]
	\arrow[""{name=1, anchor=center, inner sep=0}, "\shortmid"{marking}, from=1-3, to=2-3]
	\arrow[""{name=2, anchor=center, inner sep=0}, "{P_1(v)}", "\shortmid"{marking}, from=1-5, to=2-5]
	\arrow["{\ell_{t_{\dB}P_1(\alpha)}}"', from=2-1, to=2-3]
	\arrow["{r_{t_{\dB}P_1(\alpha)}}"', from=2-3, to=2-5]
	\arrow["{\ell_{P_1(\alpha)}}"{description}, draw=none, from=0, to=1]
	\arrow["{r_{P_1(\alpha)}}"{description}, draw=none, from=1, to=2]
\end{tikzcd}\]

Since $P_0s_{\dE}(\alpha)=s_{\dE}P_1(\alpha)$ and $P_0t_{\dE}(\alpha)=t_{\dE}P_1(\alpha)$ and the $s_{\dE}$ is cleavage preserving as map between fibrations, we have that 
\begin{eqnarray*}
r_{s_{\dE}(\alpha)}&=&\operatorname{cart}(r_{P_0s_{\dE}(\alpha)},s_{\dE}(v))\\
&=& \operatorname{cart}(r_{s_{\dB}P_1(\alpha)},s_{\dE}(v))\\
&=& s_{\dE}\operatorname{cart}(r_{P_1(\alpha)},v)
\end{eqnarray*}
By uniqueness of factorizations through cartesian arrows and the fact that $s_{\dE}(\beta)$ fits we find that $$\ell_{s_{\dE}(\alpha)}=s_{\dE}(\beta)$$
We conclude that $s_{\dE}$ is a strict map between categories with a DOF. The proofs for $t_{\dE}$ and $y_{\dE}$ go similarly, using that $t$ and $y$ are cleavage preserving and that $t_{\dB}$ and $y_{\dB}$ are strict maps of categories with a DOF. 

We observe that $s_{\dE}, t_{\dE}$ and $y_{\dE}$ preserve both the left and the right class of arrows. Indeed, they preserve cartesian morphisms by definition of double fibration and the fact that the images of morphisms in the left and right class of the total category are over morphisms in the left and right class in the base category  is guaranteed by the fact that $s_{\dB}, t_{\dB}$ and $y_{\dB}$ preserve morphisms in the left and right class and by the commutativity of diagram (\ref{doublefib}). For example, given a double cell $\alpha\in \cL^{P_1}$, we have $P_0(s_{\dE}(\alpha))= s_{\dB}(P_1(\alpha))$ and $s_{\dB}(P_1(\alpha))\in \cL_0$ since $P_1(\alpha)\in \cL_1$ and $s_{\dB}$ preserves the left class.

Furthermore, since $\otimes_{\dB}$ preserves the right class, and $\otimes_{\dE}$ preserves cartesian arrows it follows that $\otimes_{\dE}$ preserves the right class. This concludes the proof that $((\cL_0^P,\cR_0^P),(\cL_1^P,\cR_1^P))$ is a lax double orthogonal factorization system.

Let now $\dofs{\cL}{\cR}$ be a strict double orthogonal factorization system. Everything we proved in the lax case still holds in the same way. Moreover, since $\otimes_{\dB}$ also preserves the left class, we obtain that $\otimes_{\dE}$ preserves the left class as  because of uniqueness of the factorizations through a cartesian arrow. Hence, $((\cL_0^P,\cR_0^P),(\cL_1^P,\cR_1^P))$ is a strict double orthogonal factorization system.
\end{proof}

\begin{corollary}\label{pseudovertcartdofs}
Let $P\: \dE \to \dB$ be a double fibration. Then $\dE$ has a double orthogonal factorization system $((\text{pseudo-vertical}, \text{cartesian}), (\text{pseudo-vertical}, \text{cartesian}))$, where  an arrow is pseudo-vertical if $P$ sends it to an isomorphism.
\end{corollary}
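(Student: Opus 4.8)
The plan is to lift, via \thex\ref{liftalongdblfib}, a particularly degenerate double orthogonal factorization system on the base $\dB$. Recall first that on any category $\mathcal{C}$ the pair whose left class is the class of all isomorphisms and whose right class is the class of all morphisms is an orthogonal factorization system: both classes are closed under composition and contain the isomorphisms, every $f$ factors as $f\c\id{}$, and for a commutative square whose left leg $u$ is an isomorphism the unique diagonal filler is $f\c u^{-1}$. We take as chosen factorization of $f$ the pair $\ell_f=\id{}$, $r_f=f$, which meets the convention $\ell_{\id{}}=\id{}=r_{\id{}}$ of Remark~\ref{monadic-dfn}.

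First I would apply this construction to both $\dB_0$ and $\dB_1$: let $\cL_0$ be the isomorphisms of $\dB_0$ and $\cR_0$ all of its morphisms, and let $\cL_1$ be the horizontally invertible cells of $\dB$ (these are exactly the isomorphisms of the category $\dB_1$) and $\cR_1$ all cells. I would then check that these assemble into a DOFS on $\dB$ in the sense of \defx\ref{def:internalDOFS}. The functors $\src$, $\tgt$, $i$ are strict morphisms of categories with an OFS: being functors, they preserve isomorphisms, hence both left classes on the nose, and they send the chosen factorization of each cell — whose left part is the identity cell on the relevant proarrow — to a factorization of the same shape, since $\src(\id{})=\id{}$ and likewise for $\tgt$ and $i$. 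The vertical composition $\otimes\colon\dB_1\times_{\dB_0}\dB_1\to\dB_1$ is a morphism of categories with an OFS as well: it is a functor, so it preserves isomorphisms and the left class of cells is therefore vertically closed, while the right class of cells is everything and is trivially preserved. Finally the associator and unitors of $\dB$ are natural transformations, hence automatically 2-cells in $\Fact$, and the pullbacks needed to form a pseudo-category internal to $\Fact$ exist there by \prox\ref{propfacthaspullbacks} (the relevant structure maps being strict). Thus $\dB$ with this data is a pseudo-category internal to $\Fact$ with strict source, target and unit, i.e.\ a DOFS.

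Next I would apply \thex\ref{liftalongdblfib} to this DOFS along the double fibration $P$, producing a DOFS on $\dE$. By \prox\ref{liftalongfib} its classes are: on $\dE_0$, the morphisms lying over isomorphisms of $\dB_0$ — that is, those $f$ with $P_0(f)$ invertible, which are precisely the pseudo-vertical arrows — as left class, and the cartesian morphisms lying over arbitrary morphisms of $\dB_0$, i.e.\ all cartesian morphisms, as right class; and on $\dE_1$, the cells lying over horizontally invertible cells of $\dB_1$ (the pseudo-vertical cells) as left class and all cartesian cells as right class. This is exactly the double orthogonal factorization system $((\text{pseudo-vertical},\text{cartesian}),(\text{pseudo-vertical},\text{cartesian}))$ in the statement, so the proof is complete.

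The one step that is not purely formal, and hence the one I would treat with care, is the claim that vertical composition preserves horizontal invertibility of cells; this is what ensures that the left class of cells is vertically closed and that we obtain a genuine DOFS rather than merely a lax one. It follows at once from the fact that $\otimes$ is a functor $\dB_1\times_{\dB_0}\dB_1\to\dB_1$ and functors preserve isomorphisms, but it is worth spelling out, since it is precisely the feature absent from the lax examples of spans and relations in Section~\ref{sectionDOFS}. Everything else is a direct reading-off of \thex\ref{liftalongdblfib} and \prox\ref{liftalongfib}.
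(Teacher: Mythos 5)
Your proposal is correct and takes essentially the same route as the paper: the paper's proof also lifts the trivial DOFS $((\text{isomorphisms},\text{all morphisms}),(\text{isomorphisms},\text{all morphisms}))$ on $\dB$ along $P$ via \thex\ref{liftalongdblfib} and reads off the classes from \prox\ref{liftalongfib}. The only difference is that you explicitly verify that the trivial pair assembles into a DOFS on $\dB$ (strictness of $\src$, $\tgt$, $i$ and functoriality of $\otimes$), a point the paper takes for granted.
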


\begin{proof}
By \prox\ref{liftalongdblfib} we can lift the trivial double orthogonal factorization system $((\text{isomorphisms}, \text{all morphisms}), (\text{isomorphisms}, \text{all morphisms}))$ on $\dB$ to a double orthogonal factorization system on $\dE$. By construction, the double orthogonal factorization system obtained this way has left classes given by pseudo-vertical morphisms and right classes given by cartesian morphisms. 
\end{proof}

We can now easily prove the analogous results for opfibrations.

\begin{proposition}\label{liftalongopfib}
Let $P\: \mathcal{E} \to \mathcal{B}$ be an (ordinary) cloven Grothendieck opfibration and let $(\mathcal{L}, \mathcal{R})$ be an orthogonal factorization system on $\mathcal{B}$. Then $(\cL, \cR)$ lifts to an orthogonal factorization system $(\mathcal{L}^{P}, \mathcal{R}^{P})$ on $\mathcal{E}$, taking $\mathcal{L}^{P}$ to be the collection of cocartesian morphisms over morphisms in $\mathcal{L}$ and $\mathcal{R}^{P}$ to be the collection of morphisms over morphisms in $\mathcal{R}$.
\end{proposition}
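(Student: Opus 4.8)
The plan is to obtain this as the formal dual of Proposition \ref{liftalongfib}, using that a cloven Grothendieck opfibration over $\mathcal{B}$ is the same thing as a cloven Grothendieck fibration over $\mathcal{B}\op$.

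First I would record the two dualizations involved. A cloven opfibration $P\:\mathcal{E}\to\mathcal{B}$ is the same as a cloven fibration $P\op\:\mathcal{E}\op\to\mathcal{B}\op$, and under this identification the chosen cocartesian liftings of $P$ become the chosen cartesian liftings of $P\op$. Likewise, an orthogonal factorization system $(\mathcal{L},\mathcal{R})$ on $\mathcal{B}$ with chosen factorizations $f = r_f\circ\ell_f$ gives rise to an orthogonal factorization system $(\mathcal{R}\op,\mathcal{L}\op)$ on $\mathcal{B}\op$, with chosen factorization of $f\op$ given by $\ell_f\op\circ r_f\op$; here one checks the three clauses of Definition \ref{defnofs}, each of which is self-dual once the two classes are swapped (closure under composition and containment of the isomorphisms are symmetric, while the unique-diagonal condition simply has its square read in the opposite category).

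Next I would apply Proposition \ref{liftalongfib} to the fibration $P\op$ and the orthogonal factorization system $(\mathcal{R}\op,\mathcal{L}\op)$ on $\mathcal{B}\op$. It produces an orthogonal factorization system on $\mathcal{E}\op$ whose left class consists of the morphisms lying over morphisms of $\mathcal{R}\op$ and whose right class consists of the cartesian morphisms lying over morphisms of $\mathcal{L}\op$. Reading this back in $\mathcal{E}$ --- an orthogonal factorization system $(A,B)$ on $\mathcal{E}\op$ corresponds to $(B\op,A\op)$ on $\mathcal{E}$ --- yields an orthogonal factorization system $(\mathcal{L}^P,\mathcal{R}^P)$ on $\mathcal{E}$ in which $\mathcal{R}^P$ is the collection of all morphisms over morphisms in $\mathcal{R}$ and $\mathcal{L}^P$ is the collection of cocartesian morphisms over morphisms in $\mathcal{L}$, which is the assertion. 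I would also note, dualizing Remark \ref{strictfib}, that the chosen factorizations obtained this way make $P$ a strict morphism of categories with an OFS; this is the form that will be needed in the double-categorical analogue.

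The only point requiring care --- and it is bookkeeping rather than mathematics --- is keeping the direction of every universal property and the left/right labelling straight through the two passages to opposites. For a reader who prefers to avoid this, one can instead mirror the proof of Proposition \ref{liftalongfib} verbatim: given $f\:x\to y$ in $\mathcal{E}$, factor $P(f)=r_{P(f)}\circ\ell_{P(f)}$ in $\mathcal{B}$, take the chosen cocartesian lifting $\operatorname{cocart}(\ell_{P(f)},x)$ with domain $x$, and let $g$ be the unique morphism over $r_{P(f)}$ with $f = g\circ\operatorname{cocart}(\ell_{P(f)},x)$; the required lifting properties then follow from the universal property of cocartesian arrows exactly as they did from that of cartesian arrows there. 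Neither route presents a genuine obstacle, and I would use the duality argument for brevity.
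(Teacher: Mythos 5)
Your proposal is correct, and it matches the paper's treatment: the paper's own proof of Proposition \ref{liftalongopfib} simply says the argument is analogous to that of Proposition \ref{liftalongfib}, with the right part of the factorization now induced by the cocartesian property of the chosen cocartesian lift of $\ell_{P(f)}$ --- which is precisely your second, ``mirror the proof verbatim'' route. Your primary route via the identification of a cloven opfibration over $\mathcal{B}$ with a cloven fibration over $\mathcal{B}\op$ and the swap $(\mathcal{L},\mathcal{R})\mapsto(\mathcal{R}\op,\mathcal{L}\op)$ is a clean formal packaging of that same ``analogous'' argument; the class-tracking you carry out (left class of the lifted OFS on $\mathcal{E}\op$ = morphisms over $\mathcal{R}\op$, right class = cartesian over $\mathcal{L}\op$, hence on $\mathcal{E}$ the pair (cocartesian over $\mathcal{L}$, all morphisms over $\mathcal{R}$)) comes out right, and your observation that the resulting chosen factorizations make $P$ a strict morphism of categories with an OFS is exactly the dual of Remark \ref{strictfib} that Theorem \ref{liftalongdblopfib} relies on.
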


\begin{proof}
Analogous to the proof of \prox\ref{liftalongfib}. We now induce the right part of the factorization of a given morphism in $\mathcal{E}$ using the cocartesian property of the chosen cocartesian lifting of the left part of the image of the given morphism under $P$. Moreover, the required lifting properties can be proved using the cocartesian property of the morphisms in $\cL^{P}$.
\end{proof}

\begin{theorem}\label{liftalongdblopfib}
Let $P: \dE \to \dB$ be a double opfibration and let $\dofs{\cL}{\cR}$ be a double orthogonal factorization system (respectively, a lax one) on $\dB$. Then $\dofs{\cL}{\cR}$ lifts to a double orthogonal factorization system (respectively, a lax one) \linebreak $\dofs{\cL^P}{\cR^P}$  on $\dE$, taking  $(\cL^P_0, \cR^P_0):=(\cL^{P_0},\cR^{P_0})$ and $(\cL^P_1, \cR^P_1):=(\cL^{P_1},\cR^{P_1})$ (in the notation of \prox\ref{liftalongopfib}).
\end{theorem}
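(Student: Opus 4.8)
The plan is to follow the proof of \thex\ref{liftalongdblfib} almost verbatim, replacing each appeal to \prox\ref{liftalongfib} by one to \prox\ref{liftalongopfib} and dualizing the roles of cartesian and cocartesian arrows throughout. Thus, given a double opfibration $P\:\dE\to\dB$ (so that $P_0,P_1$ are cloven opfibrations, $s_{\dE},t_{\dE}$ are opcleavage preserving, and $y_{\dE},\otimes_{\dE}$ preserve cocartesian morphisms) and a lax DOFS $\dofs{\cL}{\cR}$ on $\dB$, I must verify that $((\dE_0,(\cL^{P_0},\cR^{P_0})),(\dE_1,(\cL^{P_1},\cR^{P_1})))$ is a pseudo-category in $\Factlax$ whose source, target and identity arrows are strict morphisms of categories with an OFS. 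By \prox\ref{liftalongopfib} (and the opfibration analogue of \remx\ref{strictfib}), $(\cL^{P_0},\cR^{P_0})$ and $(\cL^{P_1},\cR^{P_1})$ are orthogonal factorization systems on $\dE_0$ and $\dE_1$ for which $P_0$ and $P_1$ are strict; and once the structure functors are shown strict, \prox\ref{propfacthaspullbacks} lifts the pullbacks needed for the pseudo-category structure from $\Cat$ to $\Factlax$, exactly as in \thex\ref{liftalongdblfib}.

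The main work is the dualized bookkeeping of chosen factorizations of cells. Given a cell $\alpha$ of $\dE$ with vertical domain $u$ and codomain $v$, one factors $P_1(\alpha)=r_{P_1(\alpha)}\c\ell_{P_1(\alpha)}$ in $\dB_1$; then $\alpha=\beta\c\operatorname{cocart}(\ell_{P_1(\alpha)},u)$, where $\operatorname{cocart}(\ell_{P_1(\alpha)},u)$ is the chosen cocartesian lifting with domain $u$ and $\beta$ is the unique cell over $r_{P_1(\alpha)}$ supplied by the cocartesian universal property, so $\ell_\alpha=\operatorname{cocart}(\ell_{P_1(\alpha)},u)\in\cL^{P_1}$ and $r_\alpha=\beta\in\cR^{P_1}$. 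Since $s_{\dB}$ is strict, the chosen factorization of $P_0 s_{\dE}(\alpha)=s_{\dB}P_1(\alpha)$ has left part $\ell_{s_{\dB}P_1(\alpha)}=s_{\dB}(\ell_{P_1(\alpha)})$; since $s_{\dE}$ is opcleavage preserving, $s_{\dE}\!\left(\operatorname{cocart}(\ell_{P_1(\alpha)},u)\right)=\operatorname{cocart}(\ell_{P_0 s_{\dE}(\alpha)},s_{\dE}(u))=\ell_{s_{\dE}(\alpha)}$, and uniqueness of factorizations through a cocartesian arrow then forces $s_{\dE}(\beta)=r_{s_{\dE}(\alpha)}$. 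Hence $s_{\dE}$ preserves the chosen factorizations; the arguments for $t_{\dE}$ and $y_{\dE}$ are the same, using that $t,y$ are opcleavage preserving and that $t_{\dB},y_{\dB}$ are strict.

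It remains to handle preservation of the two classes. The functors $s_{\dE},t_{\dE},y_{\dE}$ preserve cocartesian morphisms by definition of a double opfibration, and commutativity of \refs{doublefib} together with the fact that $s_{\dB},t_{\dB},y_{\dB}$ preserve both classes shows that these functors send $\cL^{P_1}$ into $\cL^{P_0}$ and $\cR^{P_1}$ into $\cR^{P_0}$ (e.g.\ for $\alpha\in\cR^{P_1}$ one has $P_0 s_{\dE}(\alpha)=s_{\dB}P_1(\alpha)\in\cR$). For the vertical composition: since $\cR^{P_1}$ consists of \emph{all} cells lying over $\cR$ and $P_1\c\otimes_{\dE}=\otimes_{\dB}\c(P_1\times_{P_0}P_1)$ with $\otimes_{\dB}$ preserving $\cR$ (the lax hypothesis), $\otimes_{\dE}$ preserves $\cR^{P_1}$; this settles the lax case. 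When $\dofs{\cL}{\cR}$ is a DOFS, $\otimes_{\dB}$ also preserves $\cL$, and since $\otimes_{\dE}$ preserves cocartesian arrows, uniqueness of cocartesian factorizations gives that $\otimes_{\dE}$ preserves $\cL^{P_1}$ as well, so $\dofs{\cL^P}{\cR^P}$ is a DOFS.

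I expect the only genuinely delicate point -- exactly as in \thex\ref{liftalongdblfib} -- to be the uniqueness-of-cocartesian-liftings bookkeeping that pins down $s_{\dE}(\beta)=r_{s_{\dE}(\alpha)}$ (and the analogous statement for $\otimes_{\dE}$); everything else is a routine transcription of the fibration case. One could instead try to deduce the result by applying \thex\ref{liftalongdblfib} to the horizontal opposite $P\op\:\dE\op\to\dB\op$, but passing to $(-)\op$ swaps the two classes of an orthogonal factorization system, so a lax DOFS on $\dB$ becomes a colax DOFS on $\dB\op$; this route therefore needs the (unstated, though entirely dual) colax analogue of \thex\ref{liftalongdblfib} and saves no work.
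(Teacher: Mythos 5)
Your proposal is correct and is exactly the argument the paper intends: its own proof of this theorem consists of the single line ``Analogous to the one of \thex\ref{liftalongdblfib},'' and your write-up is a faithful dualization of that proof (cocartesian liftings of the left part of the projected factorization, opcleavage preservation by $s_{\dE},t_{\dE},y_{\dE}$ giving strictness, and $\otimes_{\dE}$ preserving $\cR^{P_1}$ trivially since that class consists of all cells over $\cR$). Your closing remark about why the horizontal-opposite shortcut would require the unstated colax analogue is also accurate.
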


\begin{proof}
Analogous to the one of \thex\ref{liftalongdblfib}.
\end{proof}

\begin{corollary}\label{cocartpseudovertdofs}
Let $P\: \dE \to \dB$ be a double opfibration. Then $\dE$ has a double orthogonal factorization system $(( \text{cocartesian},\text{pseudovertical}), ( \text{cocartesian},\text{pseudovertical}))$.
\end{corollary}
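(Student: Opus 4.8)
The plan is to obtain this DOFS by dualizing the proof of Corollary \ref{pseudovertcartdofs}. There, the claim followed by feeding the trivial DOFS $\bigl((\text{isomorphisms},\text{all morphisms}),(\text{isomorphisms},\text{all morphisms})\bigr)$ on $\dB$ into the lifting Theorem \ref{liftalongdblfib}; here I would feed the \emph{other} trivial DOFS into the opfibration version, Theorem \ref{liftalongdblopfib}.

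First I would record that every double category $\dB$ carries a (strict) DOFS in which the orthogonal factorization system on the category of arrows $\dB_0$ is $(\text{all arrows},\text{isomorphisms})$ and the one on the category of cells $\dB_1$ is $(\text{all cells},\text{horizontally invertible cells})$. Checking this against the axioms of Proposition \ref{prop:explicitDOFS} is routine: both classes are closed under horizontal composition and contain the horizontally invertible cells; the chosen factorization of a cell $\varphi$ with vertical codomain $N$ is $\varphi$ itself followed horizontally by the identity cell $\id{N}$; the source, target and identity functors preserve these chosen factorizations on the nose; the lifting property reduces, level by level, to the elementary one-dimensional fact that $(\text{all},\text{iso})$ is an orthogonal factorization system; and vertical composition carries horizontally invertible cells to horizontally invertible cells and visibly respects the chosen factorizations, so it is even a strict morphism of categories with an OFS.

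Then I would apply Theorem \ref{liftalongdblopfib} to this DOFS along the double opfibration $P\colon\dE\to\dB$, obtaining a DOFS $\bigl((\cL^P_0,\cR^P_0),(\cL^P_1,\cR^P_1)\bigr)$ on $\dE$. By the explicit description in Proposition \ref{liftalongopfib}, for $i=0,1$ the class $\cL^{P_i}$ consists of the cocartesian morphisms of $\dE_i$ lying over morphisms of $\dB_i$ in the left class; but that left class is everything, so $\cL^{P_i}$ is exactly the class of cocartesian morphisms. Dually $\cR^{P_i}$ consists of the morphisms of $\dE_i$ lying over isomorphisms of $\dB_i$, i.e.\ the pseudovertical ones in the sense of Corollary \ref{pseudovertcartdofs}. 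This is precisely the DOFS $\bigl((\text{cocartesian},\text{pseudovertical}),(\text{cocartesian},\text{pseudovertical})\bigr)$ asserted by the statement. I do not expect any genuine obstacle: the only step that needs a few lines is confirming that the trivial pair above really satisfies the DOFS axioms, and this is immediate; all the substance has already been carried out in Theorem \ref{liftalongdblopfib} and Proposition \ref{liftalongopfib}.
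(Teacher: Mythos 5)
Your proposal is correct and follows the same overall strategy as the paper: produce the system by feeding a trivial DOFS on $\dB$ into the opfibration lifting theorem, Theorem \ref{liftalongdblopfib}. In fact your version is the more careful one. The paper's printed proof feeds in the trivial DOFS $((\text{isomorphisms},\text{all}),(\text{isomorphisms},\text{all}))$ --- the same one used for the fibration case in Corollary \ref{pseudovertcartdofs} --- but by the explicit description in Proposition \ref{liftalongopfib} that choice would yield as left class the cocartesian morphisms \emph{over isomorphisms} (which are just the isomorphisms) and as right class \emph{all} morphisms, i.e.\ the trivial system again, not the asserted $(\text{cocartesian},\text{pseudovertical})$ pair. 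Your choice of the other trivial DOFS, $((\text{all},\text{isomorphisms}),(\text{all},\text{horizontally invertible cells}))$, is the one that actually produces $\cL^{P_i}=\text{cocartesian}$ and $\cR^{P_i}=\text{pseudovertical}$ under Proposition \ref{liftalongopfib}. The only thing you need to add is the short verification that this second trivial pair really is a (strict) DOFS, and your sketch of that is fine: the chosen factorization of a cell $\varphi$ is $\varphi$ followed horizontally by the identity cell on its codomain proarrow, source/target/unit preserve this on the nose, the lifting property is the one-dimensional $(\text{all},\text{iso})$ orthogonality at each level, and horizontally invertible cells are closed under $\otimes$ because $\otimes\colon\dB_1\times_{\dB_0}\dB_1\to\dB_1$ is a functor and hence preserves isomorphisms.
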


\begin{proof}
By \prox\ref{liftalongdblopfib} we can lift the trivial double orthogonal factorization system $((\text{isomorphisms}, \text{all morphisms}), (\text{isomorphisms}, \text{all morphisms}))$ on $\dB$ to a double orthogonal factorization system on $\dE$. By construction, the obtained double orthogonal factorization system has left classes given by cocartesian morphisms and right classes given by pseudovertical morphisms. 
\end{proof}

We now present some applications of the results in this section.

\begin{example}[Double Elements Construction]
Following \citep{df}, we can associate to any lax double pseudofunctor $F\: \dD\op\to \Span(\Cat)$ (with $\dD$ a pseudo double category) its double elements construction $\Pi\: \dEl{F} \to \dD$. Recall that $\Pi_0$ is the (ordinary) elements construction of $F_0\: \dD_0\op\to \Cat$, while $\Pi_1$ is the elements construction of the composite
$$\dD_1\op\ar{F_1} \Span(\Cat)_1 \ar{\operatorname{apx}} \Cat,$$
where $\operatorname{apx}$ is the apex functor that sends any span to its middle object. Explicitly, by Theorem 3.46 of \citep{df}, the pseudo double category $\dEl{F}$ is given as follows:
\begin{itemize}
\item[-] objects: pairs $(C,X)$ with $C\in \dD_0$ and $X\in F(C)$;
\item[-] arrows: pairs $(f,\overline{f})\: (C,X) \to (D,Y)$ with $f\: C\to D$ in $\dD$ and $\overline{f}\:X \to f^{\star}Y$ in $F(C)$;
\item[-] proarrows: pairs $(m,\overline{m})\: (C,X) \proarrow (D,Y)$ with $m\: C \proarrow D$ in $\dD$ and $\overline{m} \in F(m)$, such that $D_m (\overline{m})=X$ and $U_m (\overline{m})=Y$ where the span
\begin{cd}[1.5][1.5]
{} \& {F(B)} \\
{F(m)} \arrow[ru,"{U_m}"] \arrow[rd,"{D_m}"'] \& {} \\
{} \& {F(A)}
\end{cd}
is the image of the proarrow $m$ under $F$;
\item[-] double cells: pairs $(\theta, \overline{\theta})$ displayed 
\sq[d][5][5][{(\theta, \overline{\theta})}]{(A,X)}{(C,Z)}{(B,Y)}{(D,W)}{(f,\overline{f})}{(m, \overline{m})}{(n,\overline{n})}{(g,\overline{g})}
with
\sq[d][5.8][5.8][\theta]{A}{C}{B}{D}{f}{m}{n}{g}
a double cell in $\dD$ and $\overline{\theta}\: \overline{m} \to \theta^{\star}\overline{n}$ an arrow of $F(m)$ such that $D_m (\overline{\theta}) =\overline{f}$ and $U_m(\overline{\theta}) =\overline{g}$.
\end{itemize}

\corx\ref{pseudovertcartdofs} yields a ((pseudo-vertical,cartesian),(pseudo-vertical,cartesian)) DOFS on $\dEl{F}$, by lifting the trivial DOFS on $\dD$ along $\Pi$. In terms of the explicit description of the double elements construction, we obtain the following factorizations. By the proof of \prox\ref{liftalongfib}, an arrow $(f,\overline{f})\: (C,X) \to (D,Y)$  in $\dEl{F}$  factorizes as the composite
$$(C,X) \ar{(\id{}, \overline{f})} (C, f^{\star}Y) \ar{(f, \id{})} (D,Y),$$
since $(f,\id{})$ is the chosen cartesian lifting of $f$ to $Y$ along the fibration $\Pi_0\:\dEl{F}_0 \to \dD_0$ and the arrow $(\id{}, \overline{f})$ is the vertical arrow induced by cartesianity of $(f,\id{})$. A double cell 
\sq[d][5][5][{(\theta, \overline{\theta})}]{(A,X)}{(C,Z)}{(B,Y)}{(D,W)}{(f,\overline{f})}{(m, \overline{m})}{(n,\overline{n})}{(g,\overline{g})}
then factorizes as the composite 
\begin{cd}[5][5]
{(A,X)} \arrow[r,"{(\id{}, \overline{\theta})}", phantom, shift right=5.5ex] \arrow[r,"{(\id{},f)}"] \arrow[d,"{(m,\overline{m})}"',proarrow ]\& {(A,f^{\star} Z)} \arrow[r,"{(\theta, \id{})}", phantom, shift right=5.5ex] \arrow[r,"{(f, \id{})}"] \arrow[d,"{(m, \theta^{\star} \overline{n})}"{pos=0.3}, proarrow]  \& {(C, Z)} \arrow[d,"{(n, \overline{n})}", proarrow] \\
{(B,Y)} \arrow[r,"{(\id{},\overline{g})}"']\& {(B,g ^{\star}W)} \arrow[r,"{(g, \id{})}"'] \& {(D,W)} 
\end{cd}
where the double cell $(\theta, \id{})$ is the chosen cartesian lifting of $\theta$ along $\Pi_1\:\dEl{F}_1 \to \dD_1$ and the vertical double cell $(\id{}, \overline{\theta})$ is the one induced by cartesian property of $(\theta, \id{})$.
\end{example}

\begin{example}[Domain Double Fibration] \label{dom}
Consider the domain double fibration
$$\dom\: \dD^{\to} \to \dD$$
that sends:
\begin{itemize}
\item an arrow $A\ar{f}B$ to $A$;
\item a commutative square 
\begin{tikzcd}
	A & {A'} \\
	B & {B'}
	\arrow["u"{pos=0.6}, from=1-1, to=1-2]
	\arrow["f"', from=1-1, to=2-1]
	\arrow["g", from=1-2, to=2-2]
	\arrow["v"'{pos=0.6}, from=2-1, to=2-2]
\end{tikzcd}
to $A\ar{u} A'$;
\item a double cell \begin{tikzcd}
	A & B \\
	C & D
	\arrow[""{name=0, anchor=center, inner sep=0}, "f", from=1-1, to=1-2]
	\arrow["m"', "\shortmid"{marking}, from=1-1, to=2-1]
	\arrow["n", "\shortmid"{marking}, from=1-2, to=2-2]
	\arrow[""{name=1, anchor=center, inner sep=0}, "h"', from=2-1, to=2-2]
	\arrow["\alpha"{description}, draw=none, from=0, to=1]
\end{tikzcd} 
to \begin{tikzcd}
	A \\
	C
	\arrow["m", "\shortmid"{marking}, from=1-1, to=2-1]
\end{tikzcd}
\item a cube 
\begin{tikzcd} [row sep=2.5ex, column sep=2.5ex]
	& {A'} && {B'} \\
	A && B \\
	& {C'} && {D'} \\[0.7ex]
	C && D
	\arrow[""{name=0, anchor=center, inner sep=0}, "g", from=1-2, to=1-4]
	\arrow["{m'}"'{pos=0.4}, "\shortmid"{marking}, from=1-2, to=3-2]
	\arrow[""{name=1, anchor=center, inner sep=0}, "{n'}"{pos=0.4}, "\shortmid"{marking}, from=1-4, to=3-4]
	\arrow[""{name=2, anchor=center, inner sep=0}, "u"{description}, from=2-1, to=1-2]
	\arrow[""{name=3, anchor=center, inner sep=0}, "f"{pos=0.7}, shift right, from=2-1, to=2-3]
	\arrow["m"', "\shortmid"{marking}, from=2-1, to=4-1]
	\arrow["y"{description}, from=2-3, to=1-4]
	\arrow[""{name=4, anchor=center, inner sep=0}, "n", "\shortmid"{marking}, from=2-3, to=4-3]
	\arrow[""{name=5, anchor=center, inner sep=0}, "k"{pos=0.4}, shift left, from=3-2, to=3-4]
	\arrow[""{name=6, anchor=center, inner sep=0}, "v"{description}, from=4-1, to=3-2]
	\arrow[""{name=7, anchor=center, inner sep=0}, "h"', from=4-1, to=4-3]
	\arrow["z"{description}, from=4-3, to=3-4]
	\arrow["\beta"{description, pos=0.3}, draw=none, from=0, to=5]
	\arrow["\sigma"{description}, draw=none, from=2, to=6]
	\arrow["\alpha"{description, pos=0.6}, shift left=4, draw=none, from=3, to=7]
	\arrow["\tau"{description}, draw=none, from=4, to=1]
\end{tikzcd} to \begin{tikzcd}
	A & {A'} \\
	C & {C'}
	\arrow[""{name=0, anchor=center, inner sep=0}, "u", from=1-1, to=1-2]
	\arrow["m"', "\shortmid"{marking}, from=1-1, to=2-1]
	\arrow["{m'}", "\shortmid"{marking}, from=1-2, to=2-2]
	\arrow[""{name=1, anchor=center, inner sep=0}, "v"', from=2-1, to=2-2]
	\arrow["\sigma"{description}, draw=none, from=0, to=1]
\end{tikzcd}
\end{itemize}

Given a double orthogonal factorization system $\dofs{\cL}{\cR}$ on $\dD$, \thex\ref{liftalongdblfib} lets us lift it to a double orthogonal factorization system 
$((\cL^{\dom}_0,\cR^{\dom}_0),(\cL^{\dom}_1,\cR^{\dom}_1))$. 

In particular, since a square
\begin{eqD} {square}
\begin{tikzcd}
	A & {A'} \\
	B & {B'}
	\arrow["u"{pos=0.6}, from=1-1, to=1-2]
	\arrow["f"', from=1-1, to=2-1]
	\arrow["g", from=1-2, to=2-2]
	\arrow["v"'{pos=0.6}, from=2-1, to=2-2]
\end{tikzcd}
\end{eqD}
is cartesian if and only if $v$ is an isomorphism, we obtain that $\cR^{\dom}_0$ is the collection of commutative squares with top morphism in $\cR_0$ and bottom morphism an  isomorphism while $\cL^{\dom}_0$ is the collection of commutative squares with top morphism in $\cL_0$. Moreover, the square in diagram (\ref{square}) factorizes as the composite
\begin{eqD*}
\begin{tikzcd}[row sep=2.5ex]
	A & {Q(u)} & {A'} \\
	& {A'} \\
	B & {B'} & {B'}
	\arrow["{\ell_u}", two heads, from=1-1, to=1-2]
	\arrow["f"', from=1-1, to=3-1]
	\arrow["{r_u}", hook, from=1-2, to=1-3]
	\arrow["{r_u}", hook, from=1-2, to=2-2]
	\arrow["g", from=1-3, to=3-3]
	\arrow["g", from=2-2, to=3-2]
	\arrow[from=3-1, to=3-2]
	\arrow[no head, from=3-2, to=3-3]
	\arrow[shift right, no head, from=3-2, to=3-3]
\end{tikzcd}
\end{eqD*}
Analogously, a cube 
\begin{eqD}{cube}
\begin{tikzcd}[row sep=2.5ex, column sep=2.5ex]
	& {A'} && {B'} \\
	A && B \\
	& {C'} && {D'} \\[0.7ex]
	C && D
	\arrow[""{name=0, anchor=center, inner sep=0}, "g", from=1-2, to=1-4]
	\arrow["{m'}"'{pos=0.4}, "\shortmid"{marking}, from=1-2, to=3-2]
	\arrow[""{name=1, anchor=center, inner sep=0}, "{n'}"{pos=0.4}, "\shortmid"{marking}, from=1-4, to=3-4]
	\arrow[""{name=2, anchor=center, inner sep=0}, "u"{description}, from=2-1, to=1-2]
	\arrow[""{name=3, anchor=center, inner sep=0}, "f"{pos=0.7}, shift right, from=2-1, to=2-3]
	\arrow["m"', "\shortmid"{marking}, from=2-1, to=4-1]
	\arrow["y"{description}, from=2-3, to=1-4]
	\arrow[""{name=4, anchor=center, inner sep=0}, "n", "\shortmid"{marking}, from=2-3, to=4-3]
	\arrow[""{name=5, anchor=center, inner sep=0}, "k"{pos=0.4}, shift left, from=3-2, to=3-4]
	\arrow[""{name=6, anchor=center, inner sep=0}, "v"{description}, from=4-1, to=3-2]
	\arrow[""{name=7, anchor=center, inner sep=0}, "h"', from=4-1, to=4-3]
	\arrow["z"{description}, from=4-3, to=3-4]
	\arrow["\beta"{description, pos=0.3}, draw=none, from=0, to=5]
	\arrow["\sigma"{description}, draw=none, from=2, to=6]
	\arrow["\alpha"{description, pos=0.6}, shift left=4, draw=none, from=3, to=7]
	\arrow["\tau"{description}, draw=none, from=4, to=1]
\end{tikzcd}
\end{eqD}
is cartesian if and only if the cell $\tau$ is horizontally invertible and thus $\cR^{\dom}_1$ is the collection of cubes  with left face in $\cR_1$ and right face horizontally invertible, while $\cL^{\dom}_1$ is the collection of cubes with left face in $\cL_1$. Moreover, a factorization of the cube of diagram (\ref{cube}) is given by the composite:
% Matt: I was able to clean up the long equality arrows as we talked about in the meeting
\[\begin{tikzcd}[row sep=3.9ex,column sep=4ex]
	&&&[-4.1ex]&&&[-4.1ex]&& {} \\
	\\
	&&&&&& {A'} && {B'} \\
	&&& {Q(u)} && {B'} \\
	A && B &&&& {C'} && {D'} \\
	&&& {Q(v)} && {D'} \\
	C && D
	\arrow[""{name=0, anchor=center, inner sep=0}, "g", from=3-7, to=3-9]
	\arrow["{{m'}}", "\shortmid"{marking}, from=3-7, to=5-7]
	\arrow["{{n'}}"', "\shortmid"{marking}, from=3-9, to=5-9]
	\arrow[""{name=1, anchor=center, inner sep=0}, "{{r_u}}"{description}, hook, from=4-4, to=3-7]
	\arrow["{{r_u}}"{description}, hook, from=4-4, to=4-6]
	\arrow["\shortmid"{marking}, from=4-4, to=6-4]
	\arrow[shift right, equals, from=4-6, to=3-9]
	\arrow[""{name=2, anchor=center, inner sep=0}, "{{n'}}"{pos=0.4}, "\shortmid"{marking}, from=4-6, to=6-6]
	\arrow[""{name=3, anchor=center, inner sep=0}, "{{\ell_u}}"{description}, two heads, from=5-1, to=4-4]
	\arrow[""{name=4, anchor=center, inner sep=0}, "f"{pos=0.7}, shift right, from=5-1, to=5-3]
	\arrow["m"', "\shortmid"{marking}, from=5-1, to=7-1]
	\arrow["y"{description}, from=5-3, to=4-6]
	\arrow[""{name=5, anchor=center, inner sep=0}, "n"', "\shortmid"{marking}, from=5-3, to=7-3]
	\arrow[""{name=6, anchor=center, inner sep=0}, "k", shift left, from=5-7, to=5-9]
	\arrow[""{name=7, anchor=center, inner sep=0}, "{{r_v}}"{description}, hook, from=6-4, to=5-7]
	\arrow["{{k\circ r_v}}"{description}, from=6-4, to=6-6]
	\arrow[equals, from=6-6, to=5-9]
	\arrow[""{name=8, anchor=center, inner sep=0}, "{{\ell_v}}"{description}, two heads, from=7-1, to=6-4]
	\arrow[""{name=9, anchor=center, inner sep=0}, "h"', from=7-1, to=7-3]
	\arrow["z"{description}, from=7-3, to=6-6]
	\arrow["\beta"{description}, draw=none, from=0, to=6]
	\arrow["{{r_{\sigma}}}"{description}, draw=none, from=1, to=7]
	\arrow["{{\ell_{\sigma}}}"{description}, draw=none, from=3, to=8]
	\arrow["\alpha"{description}, draw=none, from=4, to=9]
	\arrow["\tau"{description}, draw=none, from=5, to=2]
\end{tikzcd}\]

Furthermore, we notice that if $\dofs{\cL}{\cR}$ is the trivial double orthogonal factorization system, the obtained factorizations coincide with the factorization system of the free algebra for the squaring monad (see Construction \ref{conssquaringmonaddouble}).
\end{example}

\begin{example}[Codomain Double Opfibration]\label{codop}
Dual to Example \ref{dom}, we can lift a double orthogonal factorization system $\dofs{\cL}{\cR}$ on $\dD$ along the double opfibration
$$\cod\: \dD^{\to} \to \dD$$
defined dually to $\dom\: \dD^{\to} \to \dD$. A square is cocartesian exactly when its top morphism is an isomorphism. So, by \thex\ref{liftalongdblopfib}, $\cL^{\cod}_0$ is the collection of commutative squares with bottom morphism in $\cL_0$ and top morphism an isomorphic while $\cR^{\cod}_0$ is the collection of commutative squares with bottom morphism in $\cR_0$. And analogously for double cells. The obtained factorizations of squares and cubes in $\dD^{\to}$ are dual to the ones described in Example \ref{dom}.
\end{example}

\begin{example}[Codomain Double Fibration]\label{codfib}
Let $\dD$ be a pseudo-category in the 2-category of finitely-complete categories, limit-preserving functors and natural transformations. Assume also that source and target (strictly) preserve finite limits and identity and composition preserve finite limits up to isomorphism.

Under these hypotheses, the codomain double functor $\cod\: \dD^{\to} \to \dD$ has a structure of double fibration as well, with cartesian liftings given by pullbacks (see Proposition 2.29 of \citep{df}). Cartesian squares are exactly pullback squares and cartesian cubes as pullback squares of double cells in $\dD_1$.

Given a double orthogonal factorization system $\dofs{\cL}{\cR}$ on $\dD$, its lifting along the double fibration $\cod$ produces a double orthogonal factorization system on $\dD^{\to}$ as follows. The left class of squares consists of the squares with bottom morphism in $\cL_0$, while the right class is the collection of pullback squares with bottom morphism in $\cR_0$. And analogously for cubes, considering pullbacks in $\dD_1$.
The square 
\begin{eqD*} 
\begin{tikzcd}
	A & {A'} \\
	B & {B'}
	\arrow["u"{pos=0.6}, from=1-1, to=1-2]
	\arrow["f"', from=1-1, to=2-1]
	\arrow["g", from=1-2, to=2-2]
	\arrow["v"'{pos=0.6}, from=2-1, to=2-2]
\end{tikzcd}
\end{eqD*}
factorizes as the composite 
\begin{eqD*}
\begin{tikzcd}
	A & {A'\times_{B'} Q(v)} & {A'} \\
	B & {Q(v)} & {B',}
	\arrow["a", from=1-1, to=1-2]
	\arrow["f"', from=1-1, to=2-1]
	\arrow[from=1-2, to=1-3]
	\arrow[from=1-2, to=2-2]
	\arrow["\lrcorner"{anchor=center, pos=0.125}, draw=none, from=1-2, to=2-3]
	\arrow["g", from=1-3, to=2-3]
	\arrow["{\ell_v}"', two heads, from=2-1, to=2-2]
	\arrow["{r_v}"', hook, from=2-2, to=2-3]
\end{tikzcd}
\end{eqD*}
where $a$ is the unique morphism induced by the universal property of the pullback $A'\times_{B'} Q(v)$ starting from the pair $(u,\ell_v \c f)$. Factorizations of cubes are given analogously.
\end{example}

\begin{example}[Image Double Fibration]\label{imagedofs}
Consider the image double functor $\operatorname{Im}\: \Span \to \Rel$ such that $\operatorname{Im}_0\: \Set \to \Set$ is the identity and $\operatorname{Im}_1\: \Span_1 \to \Rel_1$ sends a span $A \xleftarrow{u} S \ar{v} B$ to the image $\operatorname{Im}(u,v)$ of $S \ar{(u,v)} A \times B$ (seen as a relation) and a morphism of spans

\begin{equation}\label{m.spans}\begin{tikzcd}
	A & C \\
	S & R \\
	B & D
	\arrow["f", from=1-1, to=1-2]
	\arrow["u", from=2-1, to=1-1]
	\arrow["m", from=2-1, to=2-2]
	\arrow["v"', from=2-1, to=3-1]
	\arrow["u'"', from=2-2, to=1-2]
	\arrow["v'", from=2-2, to=3-2]
	\arrow["g"', from=3-1, to=3-2]
\end{tikzcd}\end{equation}
to the morphism of relations 

\[\begin{tikzcd}
	{\operatorname{Im}(u,v)} && {\operatorname{Im}(u',v')} \\
	{A\times B} && {C\times D}
	\arrow["{\overline{m}}", from=1-1, to=1-3]
	\arrow[hook, from=1-1, to=2-1]
	\arrow[hook, from=1-3, to=2-3]
	\arrow["{f \times g}"', from=2-1, to=2-3]
\end{tikzcd}\]
where $\overline{m}\: \operatorname{Im}(u,v) \to \operatorname{Im}(u',v')$ is the function defined by,
$$(u(s), v(s))\mapsto(u'(m(s)), v'(m(s))).$$ 
The double functor $\operatorname{Im}$ is a double fibration (Example 2.25, \citep{df}). A morphism of spans $(f,m,g)$ as in (\ref{m.spans}) is cartesian precisely when the square

\[\begin{tikzcd}
	S  & R \\
	{\operatorname{Im}(u,v)} & {\operatorname{Im}(u',v')}
	\arrow["m", from=1-1, to=1-2]
	\arrow["{(u,v)}"', from=1-1, to=2-1]
	\arrow["\lrcorner"{anchor=center, pos=0.125}, draw=none, from=1-1, to=2-2]
	\arrow["{(u',v')}", from=1-2, to=2-2]
	\arrow["{\overline{m}}"', from=2-1, to=2-2]
\end{tikzcd}\]
is a pullback. 
We can now consider two distinct double orthogonal factorization systems on $\Rel$: the one defined by extensions and the one defined by restrictions. The one defined by extensions was also described in detail in Example \ref{examplerel}.
Lifting this DOFS along the image double fibration, we obtain the following DOFS on $\Span$. The left and right classes of arrows are given by surjective and injective functions as expected. The left class of double cells $\cL_1^{\operatorname{Im}}$ is given by morphisms of spans $(f,m,g)$ such that $f$ and $g$ are surjective and $\overline{m}$ is surjective, while the right class $\cR_1^{\operatorname{Im}}$ is given by those morphisms of spans $(f,m,g)$ such that $f$ and $g$ are injective, $\overline{m}$ is injective and $(f,m,g)$ is cartesian. We can now compare this DOFS on $\Span$ with the one described in Example \ref{examplespan}. Since the surjectivity of $m$ implies the surjectivity of $\overline{m}$ (but the two are not equivalent) we have that $\cL_1^{\operatorname{Im}}$ strictly contains the left class of double cells of Example \ref{examplespan}. On the other hand, as expected, since the injectivity of $\overline{m}$ implies the injectivity of $m$, we have that $\cR_1^{\operatorname{Im}}$ is strictly contained in the right class of double cells described in Example \ref{examplespan}. To obtain the morphisms between this DOFS on $\Span$ and the system described in Example \ref{examplespan} as double factorization systems over a common factorization system on the arrows, we spell out the chosen factorizations of the cells in $\Span$ with this system.
We will denote this factorization by

\[\begin{tikzcd}
	A & {A'} && A & {\operatorname{Im}(f)} & {A'} \\
	S & R & {=} & S & {\mathcal{I}m(S)} & R \\
	B & {B'} && B & {\operatorname{Im}(g)} & {B'}
	\arrow["f", from=1-1, to=1-2]
	\arrow["{\ell_f}", from=1-4, to=1-5]
	\arrow["{r_f}", from=1-5, to=1-6]
	\arrow["u", from=2-1, to=1-1]
	\arrow["m", from=2-1, to=2-2]
	\arrow["v"', from=2-1, to=3-1]
	\arrow["{u'}"', from=2-2, to=1-2]
	\arrow["{v'}", from=2-2, to=3-2]
	\arrow["u", from=2-4, to=1-4]
	\arrow["{\lambda_m}", from=2-4, to=2-5]
	\arrow["v"', from=2-4, to=3-4]
	\arrow["{q_1}"', from=2-5, to=1-5]
	\arrow["{\rho_m}", from=2-5, to=2-6]
	\arrow["{q_2}", from=2-5, to=3-5]
	\arrow["{u'}"', from=2-6, to=1-6]
	\arrow["{v'}", from=2-6, to=3-6]
	\arrow["g"', from=3-1, to=3-2]
	\arrow["{\ell_g}"', from=3-4, to=3-5]
	\arrow["{r_g}"', from=3-5, to=3-6]
\end{tikzcd}\]
To determine the central object of this diagram and the arrows in and out of it, 
let $\operatorname{Im}(u,v)\ar{\ell_{\overline{m}}}X\ar{r_{\overline{m}}}\operatorname{Im}(u',v')$ be the epi-mono factorization of $\overline{m}$. Then define
$\mathcal{I}m(S)$ as the pullback in the following diagram,

\[\begin{tikzcd}
	S \\
	& {\mathcal{I}m(S)} & T \\
	{\operatorname{Im}(u,v)} & X & {\operatorname{Im}(u',v')} \\
	{A\times B} & {\operatorname{Im}(f)\times\operatorname{Im}(g)} & {A'\times B'}
	\arrow["{\lambda_m}", dashed, from=1-1, to=2-2]
	\arrow["m", curve={height=-12pt}, from=1-1, to=2-3]
	\arrow["{(u,v)}"', from=1-1, to=3-1]
	\arrow["{\rho_m}", hook, from=2-2, to=2-3]
	\arrow["{(q_1,q_2)}"', from=2-2, to=3-2]
	\arrow["\lrcorner"{anchor=center, pos=0.125}, draw=none, from=2-2, to=3-3]
	\arrow["{(u',v')}", from=2-3, to=3-3]
	\arrow["{\ell_{\overline{m}}}", two heads, from=3-1, to=3-2]
	\arrow[hook, from=3-1, to=4-1]
	\arrow["{r_{\overline{m}}}", hook, from=3-2, to=3-3]
	\arrow[hook, from=3-2, to=4-2]
	\arrow[hook, from=3-3, to=4-3]
	\arrow["{\ell_f\times\ell_g}"', two heads, from=4-1, to=4-2]
	\arrow["{r_f\times r_g}"', hook, from=4-2, to=4-3]
\end{tikzcd}\]
Note that the arrow $\mathcal{I}m(S)\to X$ is of the form $(q_1,q_2)$ for unique arrows $q_1\colon \mathcal{I}m(S)\to \im(f)$ and $q_2\colon \mathcal{I}m(S)\to \im(g)$ because $X\hookrightarrow \im(f)\times\im(g)$.
To compare the factorization $m=r_m\circ \ell_m$ (the chosen epi-mono factorization) with $m=\rho_m\circ\lambda_m$, there is an epi-mono factorization of $\lambda_m$ of the form $\lambda_m=\lambda'_m\circ \ell_m.$
Then there is a morphism  from this lifted system to the system from Example \ref{examplespan} given by the cells

\[\begin{tikzcd}
	{\operatorname{Im}(f)} & {\operatorname{Im}(f)} \\
	{\mathcal{I}m(S)} & {\operatorname{Im}(m)} \\
	{\operatorname{Im}(g)} & {\operatorname{Im}(g)}
	\arrow["{\operatorname{Id}}"', from=1-2, to=1-1]
	\arrow["{q_1}", from=2-1, to=1-1]
	\arrow["{q_2}"', from=2-1, to=3-1]
	\arrow["{Q(u,u')}"', from=2-2, to=1-2]
	\arrow["{\lambda'_m}"', from=2-2, to=2-1]
	\arrow["{Q(v,v')}", from=2-2, to=3-2]
	\arrow["{\operatorname{Id}}", from=3-2, to=3-1]
\end{tikzcd}\]

\end{example}

\bmhead{Acknowledgements}

This paper is the result of research that started as a project for the Adjoint School 2024 at Oxford University. We thank the sponsors of the Adjoint School for their support. Furthermore, travel expenses for the second, fifth and sixth authors were covered by the NSERC Discovery Grant of the sixth author. The fourth author also acknowledges financial support from the SECIHTI program 001458.

\section*{Declarations}

\bmhead{Competing Interests:} We declare that there is no conflict of interest for this manuscript.

\bibliography{dof}% common bib file
%% if required, the content of .bbl file can be included here once bbl is generated
%%\input sn-article.bbl

\end{document}